\documentclass[ 11pt]{article}

\usepackage[utf8]{inputenc}  
\usepackage[T1]{fontenc}         % Police contenant les caractÃƒÆ’Ã‚Â¨res franÃƒÆ’Ã‚Â§ais
\usepackage{geometry}         % DÃƒÆ’Ã‚Â©finir les marges
\usepackage[english]{babel}  % Placez ici une liste de langues, la
\usepackage{cite}
\usepackage{amssymb}   
\usepackage{amsmath} 
\usepackage{amscd}
\usepackage{graphicx, color}    
\usepackage{amsthm}                 
\usepackage{latexsym}     % derniÃƒÆ’Ã‚Â¨re ÃƒÆ’Ã‚Â©tant la langue principale
\usepackage[all]{xy}

\newtheorem{thm}{Theorem}[section]
\newtheorem{cor}[thm]{Corollary}
\newtheorem{prop}[thm]{Proposition}
\newtheorem{lem}[thm]{Lemma}% \pagestyle{headings}        % Pour mettre des entÃƒÆ’Ã‚Âªtes avec les titres

\theoremstyle{definition}
\newtheorem{definition}[thm]{Definition}

                              % des sections en haut de page

\newcommand{\bbN}{{\mathbb N}}

\newcommand{\bbQ}{{\mathbb Q}}

\newcommand{\cF}{{\mathcal F}}

\newcommand{\cO}{{\mathcal O}}

\newcommand{\cS}{{\mathcal S}}

\newcommand{\cU}{{\mathcal U}}
\newcommand{\cV}{{\mathcal V}}

\newcommand{\cX}{{\mathcal X}}
\newcommand{\cY}{{\mathcal Y}}
\newcommand{\cZ}{{\mathcal Z}}

\newcommand{\ra}{\rightarrow}

\newcommand{\hra}{\hookrightarrow}

% Pour mes grands titres

\title{\textbf{Complexes of Groups and Boundaries}}           % Les paramÃƒÆ’Ã‚Â¨tres du titreÃƒâ€š : titre, auteur, date
\author{Alexandre Martin}
\date{}

\begin{document}
\maketitle

\begin{abstract} \noindent Given a complex of groups over a finite simplicial complex in the sense of Haefliger \cite{HaefligerOrbihedra}, we give conditions under which it is possible to build an $E\cZ$-structure in the sense of Farrell-Lafont \cite{FarrellLafontEZStructures} for its fundamental group out of such structures for its local groups. As an application, we prove a combination theorem that yields a procedure for getting hyperbolic groups as fundamental groups of simple complexes of hyperbolic groups. The construction provides a description of the Gromov boundary of such groups.\end{abstract}

In \cite{BestvinaZBoundaries}, Bestvina defined a fundamental notion of boundary that is relevant to geometric group theory. He showed how the topology of the boundary $\partial G$ of a group $G$ encodes the cohomology with group ring coefficients of $G$. This notion of boundary was further generalised to the notion of an equivariant compactification by the work of Farrell and Lafont \cite{FarrellLafontEZStructures}, who proved the Novikov conjecture for groups admitting what they call an $E\cZ$-structure, that is to say a classifying space for proper actions together with an equivariant $\cZ$-compactification.

The existence of $E\cZ$-structures, and their generalisation for groups with torsion, is known for groups that admit a classifying space for proper actions with a sufficiently nice geometry. For a group $G$ acting properly and cocompactly on a CAT(0) space $X$, the compactification of $X$ obtained by adding the visual boundary $\partial X$ yields an $E\cZ$-structure for $G$. In the case of a torsionfree hyperbolic group $G$ (\cite{GromovHyperbolicGroups} ), a classifying space is given by an appropriate Rips complex (see \cite{CoornaertDelzantPapadopoulos}). Bestvina and Mess \cite{BestvinaMessBoundaryHyperbolic} proved that such a space can be compactified by adding the Gromov boundary of $G$ to get an $E\cZ$-structure for $G$. This result was further generalised in \cite{MeintrupSchickEGhyperbolic} to the case of hyperbolic groups with torsion, where they show that such a compactification yields an $E\cZ$-structure in the sense of Carlsson-Pedersen \cite{CarlssonPedersenEZBoundaries}. The existence of such an $E\cZ$-structure is also know for systolic groups by work of Osajda and Przytycki \cite{OsajdaPrzytyckiBoundarySystolic}. \\

In this article, we adress the following combination problem: Given a group $G$ acting cocompactly by simplicial isometries on a simplicial complex $X$, are there natural conditions under which it is possible to build an $E\cZ$-structure for $G$, assuming that the stabilisers of simplices all admit such a structure?

There are already some special cases for which such a combination theorem is know to hold. For instance, Tirel \cite{TirelProducts} explained how to build a $\cZ$-boundary for free and direct products of groups admitting $\cZ$-boundaries. Furthermore, Dahmani \cite{DahmaniBoundaryRelativelyHyperbolic} built an $E\cZ$-structure for a torsionfree group that is hyperbolic relative to a group admitting an $E\cZ$-structure.

The main result of this article deals with acylindrical actions on CAT(0) spaces; recall that an action is called \textit{acylindrical} if the diameter of sets with infinite pointwise stabiliser is uniformly bounded above\footnote{The original definition of acylindricity by Sela \cite{SelaAccessibility} considers nontrivial stabilisers instead of infinite ones. Here we use a more general notion of acylindricity introduced by Delzant \cite{DelzantAcylindrique} that is more suitable for proper actions.}. More precisely, we consider a non-positively curved, hence developable (see \cite{BridsonHaefliger}), complex of groups $G(\cY)= (G_\sigma, \psi_a, g_{a,b})$ over a finite simplicial complex $Y$ endowed with a $M_\kappa$-structure, $\kappa \leq 0$, in the sense of Bridson \cite{BridsonPhD}, such that the stabiliser of every simplex $\sigma$ of $Y$ admits an $E\cZ$-structure $(\overline{EG_\sigma}, \partial G_\sigma)$. We further assume that these structures define an $E\cZ$-\textit{complex of space compatible with} $G(\cY)$ (see \ref{EZcomplexofspaces} for a precise definition), that is, there are embeddings $\phi_{\sigma, \sigma'}: \overline{EG_{\sigma'}} \hra \overline{EG_\sigma}$, for all $\sigma \subset \sigma'$, that are equivariant with respect to the local maps of $G(\cY)$, and such that the induced diagram of embeddings is commutative up to multiplication by twisting elements of $G(\cY)$.

\begin{thm}[Combination Theorem for Boundaries of Groups]

Let $G(\cY)$ be a non-positively curved complex of groups over a finite simplicial complex $Y$ endowed with a $M_\kappa$-structure, $\kappa \leq 0$. Let $G$ be the fundamental group of $G(\cY)$ and $X$ be a universal covering \footnote{The simplicial complex $X$ naturally inheritates a $M_\kappa$-structure from that of $Y$, which makes it a complete geodesic metric space by work of Bridson \cite{BridsonPhD}; the CAT(0) property follows from the Cartan-Hadamard theorem.} of $G(\cY)$. Suppose that the following global condition holds:
\begin{itemize}
      \item[$(i)$] The action of $G$ on $X$ is acylindrical.
\end{itemize}			
Further assume that there is an $E\cZ$-complex of spaces compatible with $G(\cY)$ that satisfies each of the following local conditions: 
\begin{itemize}  
      \item[$(ii)$]  the \textit{limit set property}: For every pair of simplices $\sigma \subset \sigma'$ of $Y$, the embedding $\overline{EG_{\sigma'}} \hra \overline{EG_\sigma}$ realises an equivariant homeomorphism from $\partial G_{\sigma'}$ to the limit set $\Lambda G_{\sigma'} \subset \partial G_\sigma$. Furthermore, for every simplex $\sigma$ of $Y$, and every pair of subgroups $H_1, H_2$ in the family $\cF_\sigma = \left\lbrace  \bigcap_{i=1}^{n} g_i G_{\sigma_i} g_i^{-1}  | ~ g_1, \ldots, g_n \in G_\sigma, \sigma_1, \ldots,  \sigma_n \subset \mbox{st}(\sigma), n \in \bbN \right\rbrace,$ we have $ \Lambda H_1 \cap \Lambda H_2 = \Lambda(H_1 \cap H_2)  \subset \partial G_\sigma.$ 
      \item[$(iii)$]  the \textit{convergence property}: for every pair of simplices $\sigma \subset \sigma'$ in $Y$ and every sequence $(g_n)$ of $G_\sigma$ whose projection is injective in $G_\sigma / G_{\sigma'}$, there exists a subsequence such that $(g_{\varphi(n)}\overline{EG_{\sigma'}})$ uniformly converges to a point in $\overline{EG_\sigma}$.
       \item[$(iv)$] the \textit{finite height property}: for every pair of simplices $\sigma \subset \sigma'$ of $Y$, $G_{\sigma'}$ has finite height in $G_\sigma$ (see \cite{SageevWidth}), that is, there exist an upper bound on the number of distinct cosets $\gamma_1 G_{\sigma'} ,\ldots,\gamma_n G_{\sigma'} \in G_\sigma/G_{\sigma'}$ such that the intersection $\gamma_1 G_{\sigma'} \gamma_1^{-1} \cap \ldots \cap \gamma_n G_{\sigma'} \gamma_n^{-1}$ is infinite.
\end{itemize}
Then $G$ admits an $E\cZ$-structure $(\overline{EG}, \partial G)$ in the sense of Farrell-Lafont. 

Furthermore, the following properties hold:
\begin{itemize}
 \item[$(ii')$]  For every simplex $\sigma$ of $Y$, the map $\overline{EG_\sigma} \ra \overline{EG}$ realises an equivariant embedding from $\partial G_\sigma$ to $\Lambda G_\sigma \subset \Lambda G$. Moreover, for every pair $H_1, H_2$ of subgroups in the family $\cF = \left\lbrace \bigcap_{i=1}^{n} g_i G_{\sigma_i} g_i^{-1} | ~ g_1, \ldots, g_n \in G, ~ \sigma_1, \ldots, \sigma_n \in \mbox{S}(Y), ~ n\in \bbN \right\rbrace,$ we have $\Lambda H_1 \cap \Lambda H_2 = \Lambda(H_1 \cap H_2) \subset  \partial G.$
      \item[$(iii')$] For every simplex $\sigma$ of $Y$, the embedding $\overline{EG_\sigma} \hra \overline{EG}$ satisfies the convergence property.
       \item[$(iv')$] For every simplex $\sigma$ of $Y$, the local group $G_\sigma$ has finite height in $G$.
\end{itemize}
\label{maintheorem1}
\end{thm}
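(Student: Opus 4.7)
The plan is to assemble $\overline{EG}$ from the given $E\cZ$-complex of spaces, mirroring how the universal cover $X$ is assembled from $Y$ via the Haefliger development. First I would build $EG$ as a complex of spaces over $X$: above each open simplex of $X$ of type $\sigma \in Y$ I place a copy of $EG_\sigma$, and glue these copies via the equivariant embeddings $\phi_{\sigma,\sigma'}$ twisted by the elements $g_{a,b}$. The resulting $G$-space comes with an equivariant projection $p: EG \to X$. Contractibility of $EG$ follows from a Mayer--Vietoris argument based on the contractibility of $X$ and of each $EG_\sigma$, while properness and cocompactness of the action make $EG$ a cocompact model for $\underline{E}G$.

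Next I would define $\overline{EG} = EG \sqcup \partial G$ set-theoretically, where $\partial G$ is formed by attaching to the visual boundary $\partial X$ the local boundaries $g \partial G_\sigma$ (one for each translate of each stratum) according to the limit set identifications from $(ii)$. The topology is the delicate point: around $\xi \in \partial X$ a basic neighborhood combines a visual cone in $\overline{X}$ with all strata $g\overline{EG_\sigma}$ that project into the cone, while around $\eta \in g \partial G_\sigma$ it combines a neighborhood of $\eta$ in $g\overline{EG_\sigma}$ with the strata ``above'' it, i.e.\ corresponding to simplices $\sigma' \supset \sigma$ in the star of $\sigma$. Making this precise and showing the result is compact, Hausdorff, metrizable, and an absolute retract is where the hypotheses bite: the limit set property ensures Hausdorffness, since distinct boundary points are separated by disjoint cones thanks to $\Lambda H_1 \cap \Lambda H_2 = \Lambda(H_1 \cap H_2)$; the convergence property produces compactness and the ``shrinking'' of distant translates; and acylindricity yields the local finiteness of strata needed for the ANR property.

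The main obstacle is proving that $\partial G$ is a $\cZ$-set in $\overline{EG}$, equivalently that compact subsets of $EG$ become small near $\partial G$. For $\xi \in \partial X$, finite height $(iv)$ bounds, up to the $G$-action, the collection of strata $g\overline{EG_\sigma}$ that can reach a fixed neighborhood of $\xi$, and acylindricity combined with $(iii)$ ensures that along any sequence all but finitely many translates $g_n \overline{EG_{\sigma'}}$ uniformly converge to points of the ambient boundary. One then patches $\cZ$-homotopies: the geodesic flow towards $\xi$ in $X$ pushes $EG$ into a tubular neighborhood of $\partial G$, corrected on the finitely many residual strata by the local $\cZ$-homotopies provided by each $\overline{EG_\sigma}$. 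For a point $\eta \in \partial G_\sigma$ the argument runs fiberwise, the local $E\cZ$-structure of $\overline{EG_\sigma}$ providing the pushing homotopy and $(iii)$ controlling the overlapping strata; the whole construction must be done $G$-equivariantly and compatibly on overlaps, which is the most technical point and absorbs the bulk of the proof.

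Once $\overline{EG}$ is built, the global properties are deduced from their local counterparts. Property $(ii')$ follows because each $\overline{EG_\sigma}$ embeds as a closed subspace of $\overline{EG}$ by construction, and the identifications made in forming $\partial G$ are precisely the limit set identifications; the global intersection formula for $\cF$ is then obtained from the local one iterated along chains of simplices in $X$, using the acylindricity of the action to guarantee that infinite common intersections are supported on bounded sub-complexes. Property $(iii')$ reduces to $(iii)$ by projecting sequences of cosets of $G_\sigma$ in $G$ onto the fibers of $p$ over bounded neighborhoods in $X$. Finally, property $(iv')$ follows by induction on simplicial distance in $X$: acylindricity forces any chain of cosets of $G_\sigma$ in $G$ with infinite common intersection to have representatives living in a bounded region of $X$, where the local finite height hypothesis $(iv)$ applies and yields the desired bound.
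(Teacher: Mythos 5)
Your overall architecture matches the paper's: realise $EG$ as a complex of spaces over $X$, form $\partial G$ from the visual boundary $\partial X$ together with the local boundaries glued via the limit-set identifications, topologise, and verify the $E\cZ$ axioms; the deductions of $(ii')$--$(iv')$ are also in the right spirit. But two of your steps have genuine gaps. First, your basic neighbourhoods of a point $\eta \in g\partial G_\sigma$ --- a neighbourhood in $g\overline{EG_\sigma}$ together with the strata over $\mbox{st}(\sigma)$ --- are not enough. A point $\xi \in \partial_{Stab} G$ generally lies in $\partial G_\tau$ for a whole \emph{domain} $D(\xi)$ of simplices $\tau$, so a basic neighbourhood must be specified coherently over all of $D(\xi)$ (this is what the paper's $\xi$-families do), and it must moreover contain points of $EG$ and of $\partial X$ lying arbitrarily far from $D(\xi)$, namely those whose geodesic from the basepoint exits a small thickening of $D(\xi)$ through a simplex whose stratum is close to $\xi$. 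Without this ``cone'' part the space cannot be compact: a sequence $x_n$ going to infinity in $X$ whose geodesics exit $D(\xi)$ through simplices $\sigma_n$ with $\overline{EG_{\sigma_n}}$ converging to $\xi$ would have no convergent subsequence. This is where the bulk of the paper's work lies: acylindricity, finite height and the limit-set identity are used to prove that domains are finite convex subcomplexes of uniformly bounded diameter, and a toolbox of geodesic lemmas (crossing, reattachment, refinement, star) makes the cone neighbourhoods well defined and filtering. Your attribution of the hypotheses (limit sets for Hausdorffness, acylindricity for the ANR property, finite height for counting strata near $\partial X$) does not match how they actually have to be deployed.

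Second, your $\cZ$-set argument via ``the geodesic flow towards $\xi$ pushes $EG$ into a tubular neighbourhood of $\partial G$, corrected by local $\cZ$-homotopies'' is not available as stated: there is no equivariant lift of the geodesic flow of $X$ to $EG$, since the fibres change along a geodesic, and patching local homotopies equivariantly across infinitely many overlapping strata is exactly the difficulty you cannot wave away. The paper sidesteps this entirely with the Bestvina--Mess criterion: it suffices to exhibit, inside each basic neighbourhood $U$ of a boundary point, a subneighbourhood $V$ such that the inclusion of $V \setminus \partial G$ into $U \setminus \partial G$ is null-homotopic, and this is obtained purely from the homotopy equivalence between $p^{-1}(A)$ and $A$ for suitable contractible subcomplexes $A$ of $X$ --- no global homotopy is ever constructed. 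I would rework your plan around these two points.
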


As an application of the previous construction, we prove a higher dimensional combination theorem for hyperbolic groups, in the case of acylindrical complexes of groups of arbitrary dimension.\\

\begin{thm}[Combination Theorem for Hyperbolic Groups] Let $G(\cY)$ be a strictly developable non-positively curved simple complex of groups over a finite simplicial complex $Y$ endowed with a $M_\kappa$-structure, $\kappa \leq 0$. Let $G$ be the fundamental group of $G(\cY)$ and $X$ be a universal covering of $G(\cY)$. Assume that:
\begin{itemize}
\item The universal covering $X$ is hyperbolic\footnote{For instance, when $\kappa <0$.}, when endowed with the simplicial metric coming from the induced $M_\kappa$-complex structure,
\item The local groups are hyperbolic and all the local maps are quasiconvex embeddings,
\item The action of $G$ on $X$ is acylindrical. 
\end{itemize}
Then $G$ is hyperbolic. Furthermore, the local groups embed in $G$ as quasiconvex subgroups.
\label{maintheorem2}
\end{thm}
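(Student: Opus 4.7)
The strategy is to reduce to the Combination Theorem for Boundaries (Theorem \ref{maintheorem1}) by equipping each local group with its Bestvina--Mess boundary, verifying that the resulting $E\cZ$-complex of spaces satisfies the three local conditions, and then upgrading the conclusion of Theorem \ref{maintheorem1} to hyperbolicity of $G$ via a geometric combination argument; quasiconvexity of the local embeddings will then fall out of the property (ii') of the $E\cZ$-structure.

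For each hyperbolic local group $G_\sigma$ take $\overline{EG_\sigma}$ to be a Rips complex of sufficiently large parameter compactified by the Gromov boundary $\partial G_\sigma$; this is an $E\cZ$-structure by Bestvina--Mess \cite{BestvinaMessBoundaryHyperbolic}, extended to the torsion case by Meintrup--Schick \cite{MeintrupSchickEGhyperbolic}. Since the local maps of $G(\cY)$ are quasiconvex embeddings, they extend to equivariant embeddings $\phi_{\sigma,\sigma'}: \overline{EG_{\sigma'}} \hookrightarrow \overline{EG_\sigma}$ sending $\partial G_{\sigma'}$ homeomorphically onto the limit set $\Lambda G_{\sigma'} \subset \partial G_\sigma$, and the diagram structure of $G(\cY)$ assembles these into an $E\cZ$-complex of spaces compatible with $G(\cY)$. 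The three local hypotheses of Theorem \ref{maintheorem1} then become classical features of quasiconvex subgroups of hyperbolic groups: (ii) is the fact that the limit set of a quasiconvex subgroup is its Gromov boundary, together with the limit-set intersection identity $\Lambda(H_1\cap H_2) = \Lambda H_1 \cap \Lambda H_2$ for finite intersections of quasiconvex subgroups (Gitik--Mitra--Rips--Sageev); (iii) is the uniform convergence of translates $g_n \overline{EG_{\sigma'}}$ across distinct cosets, a direct fellow-travelling argument; (iv) is finite height of quasiconvex subgroups (again Gitik--Mitra--Rips--Sageev, cited here as \cite{SageevWidth}). Since acylindricity (i) is part of the hypotheses, Theorem \ref{maintheorem1} produces an $E\cZ$-structure $(\overline{EG}, \partial G)$ satisfying (ii'), (iii'), (iv').

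The main obstacle is to deduce hyperbolicity of $G$ from this structure. My approach is to construct a concrete hyperbolic $G$-space $\widehat{X}$ by equivariantly gluing, over each simplex $\sigma$ of $X$, a Rips complex of $G_\sigma$ along its fixed-point set; the action of $G$ on $\widehat{X}$ is then geometric. Since $X$ itself is hyperbolic, each glued piece is hyperbolic, the attaching subgroups embed quasiconvexly, and the action is acylindrical with attaching subgroups of finite height, a combination/flaring argument in the spirit of Bestvina--Feighn and Mj--Reeves should establish hyperbolicity of $\widehat{X}$: acylindricity together with finite height supplies the flaring condition at the seams and rules out long thick hallways between copies of the Rips pieces. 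This is the technical heart of the argument --- one must control geodesics that alternate between the hyperbolic geometry of $X$ and the hyperbolic geometry of the glued vertex-stabiliser pieces --- and is where the full strength of the three combined hypotheses (hyperbolicity of $X$, quasiconvexity of local maps, acylindricity) is used. Granting hyperbolicity of $\widehat{X}$, the group $G$ is hyperbolic, and quasiconvexity of each $G_\sigma \hookrightarrow G$ is read off from (ii'): the limit set of $G_\sigma$ in $\partial G$ is canonically identified with its Gromov boundary $\partial G_\sigma$, which for subgroups of hyperbolic groups is equivalent to quasiconvexity.
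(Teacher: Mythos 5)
Your first stage --- equipping each local group with a Rips complex compactified by its Gromov boundary, extending the local maps via quasiconvexity, and checking conditions (ii)--(iv) using the standard facts about limit sets, convergence of cosets, and finite height of quasiconvex subgroups --- is exactly what the paper does in Section 6.2, so that part is fine.

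The gap is in your deduction of hyperbolicity of $G$. You propose to build an auxiliary space $\widehat{X}$ by gluing Rips complexes of the stabilisers onto $X$ and to establish its hyperbolicity by ``a combination/flaring argument in the spirit of Bestvina--Feighn and Mj--Reeves.'' But those combination theorems are proved for graphs of groups, i.e.\ trees of spaces over a one-dimensional base; there is no existing flaring criterion for complexes of spaces over a higher-dimensional base that you could invoke here. Indeed the paper points out that this theorem is, to the authors' knowledge, the first combination theorem for higher-dimensional complexes of groups, so the step you label as ``the technical heart'' and defer to an unproven flaring argument is precisely the content of the theorem; as written, your proposal assumes what is to be proved. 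The paper takes a different and genuinely boundary-theoretic route: having already built $(\overline{EG},\partial G)$ via Theorem \ref{maintheorem1}, it proves directly that $G$ acts on $\partial G$ as a \emph{uniform convergence group} (splitting into three cases according to whether the sequence $(g_n)$ fixes a vertex, stays bounded, or escapes to infinity, and then showing every point of $\partial G$ is conical), and concludes hyperbolicity from the Bowditch--Tukia characterisation. This avoids any gluing construction and any flaring condition. A secondary issue: even granting hyperbolicity of your $\widehat{X}$, your quasiconvexity argument reads the conclusion off from (ii$'$), which requires identifying $\partial G$ from Theorem \ref{maintheorem1} with the Gromov boundary of $G$; in the paper this identification is exactly what the Bowditch--Tukia theorem delivers, whereas in your setup $\partial G$ and $\partial\widehat{X}$ are a priori unrelated and you would owe an additional comparison.
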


\noindent Note that a complex of groups over a simply connected simplicial complex is developable if and only if it is strictly developable (see \cite{BridsonHaefliger}). Hence one might try to create new hyperbolic groups as fundamental groups of non-positively curved complexes of hyperbolic groups over a simply-connected finite complex (see Theorem II.12.28 of \cite{BridsonHaefliger}). 

Such a result is already known for acylindrical graphs of groups: the hyperbolicity is a direct consequence of the much more general combination theorem of Bestvina and Feighn \cite{BestvinaFeighnCombinationHyperbolic}, while the quasiconvexity of vertex stabilisers follows from a result of Kapovich \cite{KapovichCombinationQuasiconvexity}. To the best of our knowledge, Theorem 0.2 is the first combination theorem for higher dimensional complexes of groups.

Our construction follows the strategy of Dahmani \cite{DahmaniCombination}, who applied this idea to amalgamate Bowditch boundaries of relatively hyperbolic groups in the case of acylindrical graphs of groups. The proofs in our case are significantly more involved as the topology of $X$ can be much more complicated than that of a tree. Generalizing an argument of Dahmani, we prove that $G$ is a uniform convergence group on $\partial G$ (see Section 6 for definitions), which implies the hyperbolicity of $G$ by a celebrated result of Bowditch \cite{BowditchTopologicalCharacterization} and Tukia \cite{TukiaUniformConvergenceGroups}.\\

The article is organised as follows. In Section 1, we study complexes of spaces over a simplicial complex. These spaces are direct generalisations of graphs of spaces studied by Scott and Wall in the context of Bass-Serre theory \cite{ScottWall}. In Section 2, we give conditions under which it is possible to build a classifying space for proper actions of the fundamental group of a complex of groups as a complex of spaces over its universal covering. We also define the boundary $\partial G$ of $G$ and the compactification $\overline{EG}$ of $EG$ as sets. In Section 3, we investigate geometric properties of CAT(0) spaces and we define a topology on $\overline{EG}$. We prove that this topology makes $\overline{EG}$ a compact metrisable space in Section 4. The proof of Theorem \ref{maintheorem1} is completed in Section 5, where the properties of $\partial G$ are investigated. Finally, Section 6 is devoted to the dynamics of $G$ on its boundary and to the proof of the combination theorem \ref{maintheorem2} for hyperbolic groups. 
\\

\noindent \textbf{Notations.} Throughout this paper, $X$ is a simplicial complex. For an element $x$ of $X$, we denote by $\sigma_x$ the unique simplex containing $x$ in its interior. For a simplex $\sigma$ of $X$, we denote by $\mbox{st}(\sigma)$ the open star of $\sigma$ and by $\overline{\mbox{st}}(\sigma)$ its closed star. We denote by $S(X)$ the set of simplices of $X$, and by $V(X)$ the set of its vertices. For an element $x$ of $X$ and a constant $r>0$, we denote by $B(x,r)$ (resp. $\overline{B}(x,r)$) the open (resp. closed) $r$-ball centered at $x$.

All the types of classifying spaces we will consider in this paper are classifying spaces for proper actions (see Section 2 for definitions). Consequently, we will simply speak of \textit{classifying spaces} rather than \textit{classifying spaces for proper actions}. Moreover, although the notation $\underbar{E}G$ is well spread in the litterature to mean a classifying space for proper actions of a discrete group $G$, we will simply use the notation $EG$ so as to avoid the somehow unaesthetic notation $\overline{\underbar{E}G}$ when speaking of an $E\cZ$-compactification of a classifying space for proper actions of $G$.\\

\noindent \textbf{Acknowledgements.} I am grateful to Thomas Delzant for proposing this problem to me, as well as for his help and advice during this work.

\section{Complexes of spaces and their topology.}
In this section, we study a class of spaces with a projection to a given simplicial complex $X$, called \textit{complexes of spaces over $X$}, which are high-dimensional analogues of graphs of spaces studied in the context of actions on trees (see \cite{ScottWall}). This notion of complexes of spaces is close but more precise than the one studied by various authours (see \cite{CorsonComplexesofGroups}, \cite{HaefligerExtension}).

\subsection{A few geometric facts about $M_\kappa$-simplical complexes.}
Since the present article deals with nonproper actions of a group, the simplicial complex on which it acts is generally non locally finite. In \cite{BridsonPhD}, Bridson defined a class of spaces that is suitable for a geometric approach.

\begin{definition}[$M_\kappa$-simplicial complexes with $\kappa \leq 0$, \cite{BridsonPhD}]
 Let $\kappa \leq 0$. A simplicial complex $X$ is called a $M_\kappa$-simplicial if it satisfies the following two conditions:
\begin{itemize}
 \item Each simplex of $X$ is modeled after a geodesic simplex in some $M_\kappa^n$, where $M_\kappa^n$ is the simply-connected $n$-Riemannian manifold of constant curvature $\kappa$.
 \item If $\sigma$ and $\sigma'$ are two simplices of $X$ sharing a common face $\tau$, the identity map from $\tau \subset \sigma$ to $\tau \subset \sigma'$ is an isometry.
\end{itemize}
To such a $M_\kappa$-complex $X$ is associated a canonical simplicial metric.
\end{definition}

\begin{thm}[Bridson \cite{BridsonPhD}]
 If $X$ is a $M_\kappa$-simplicial complex, $\kappa \leq 0$, with finitely many isometry types of simplices, the simplicial metric is complete and geodesic.
\label{Mkappacomplexes}
\end{thm}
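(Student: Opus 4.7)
The plan is to construct the simplicial metric as an infimum of lengths of piecewise geodesic paths, extract from the finite shapes hypothesis a single uniform local estimate, and then use it first to secure completeness and second to produce geodesics.

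First, I would set up the candidate metric. For $x,y$ in the same component of $X$, call a path $\gamma:[0,1]\to X$ \emph{piecewise geodesic} if there is a subdivision $0=t_0<\cdots<t_k=1$ such that each $\gamma([t_{i-1},t_i])$ is contained in a single closed simplex and is a geodesic there with respect to the model metric on that simplex. Set $d(x,y)$ to be the infimum of the lengths $L(\gamma)$ over all such paths. Symmetry and the triangle inequality are immediate; the nontrivial property to verify is positive-definiteness.

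Second, I would establish the key uniform estimate. Because the isometry types of simplices form a finite list, elementary comparison in the model spaces $M_\kappa^n$ yields a positive constant $\epsilon_0=\epsilon_0(\kappa,\text{shapes})$ with the following property: for every $x\in X$ there is a positive number $r(x)$, depending only on the shape and the position of $x$ in $\sigma_x$, such that any piecewise geodesic path from $x$ to a point outside $\overline{\mathrm{st}}(\sigma_x)$ has length at least $r(x)$. The reason is that leaving $\overline{\mathrm{st}}(\sigma_x)$ requires crossing a face of some adjacent simplex that does not contain $\sigma_x$, and within any given shape the distance from an interior point to a non-containing face is strictly positive and uniformly controlled across the finite list. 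Positive-definiteness of $d$ follows at once.

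Third, I would prove completeness. Given a Cauchy sequence $(x_n)$, the estimate of the previous step forces the carrier simplices $\sigma_{x_n}$ to stabilise in a controlled way: if $(x_n)$ does not eventually lie in a single closed simplex, the Cauchy condition combined with the local estimate funnels the sequence into the intersection of a descending chain of faces, and ultimately onto a fixed closed subsimplex. Either way the problem reduces to convergence of a Cauchy sequence within a single closed simplex isometric to a compact subset of $M_\kappa^n$, which is complete.

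Fourth, I would obtain geodesicity. For points at distance less than $\epsilon_0/2$, the local estimate places both in a common closed star, where a minimizing piecewise geodesic can be read off from the flat or hyperbolic model geometry. For arbitrary $x,y$, consider a minimizing sequence of $1$-Lipschitz piecewise geodesic parametrizations on $[0,d(x,y)]$. Subdivide the interval into pieces of length at most $\epsilon_0/4$; at each subdivision point invoke the completeness secured above to extract a convergent subsequence, and diagonalise across the finitely many subdivision points. Joining short local geodesics between the limit values and checking that the result still has length $d(x,y)$ yields a global geodesic.

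The main obstacle is the second step, together with its careful application in the third. The complex $X$ need not be locally compact—a vertex may have infinite link—so classical Hopf-Rinow and Arzelà–Ascoli arguments are not available off the shelf. The finite shapes hypothesis is exactly what provides the uniform control below scale $\epsilon_0$ that substitutes for local compactness; turning this local control into a global existence statement for geodesics still requires a diagonal extraction, but once the uniform estimate is in hand all remaining arguments are standard length-geometry manipulations.
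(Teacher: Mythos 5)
The paper does not actually prove this statement; it is imported verbatim from Bridson's thesis (cf.\ Bridson--Haefliger, Theorem I.7.19), so there is no internal argument to compare yours against. That said, your first three steps — the length metric via piecewise geodesic paths, positive-definiteness from the pointwise lower bound $r(x)$ on the length of any path from $x$ to the complement of $\overline{\mbox{st}}(\sigma_x)$, and the reduction of completeness to compactness of a single model simplex by transferring a tail of the Cauchy sequence into one of the finitely many shapes — do follow the standard opening of Bridson's proof, modulo some vagueness in how the sequence gets ``funnelled'' onto a fixed simplex.

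The geodesicity step contains a genuine gap, in two places. First, the constant $\epsilon_0$ you use there does not exist: there is no uniform $\epsilon_0>0$ such that two points at distance less than $\epsilon_0$ share a closed star. Your own estimate is the pointwise $r(x)$, which degenerates as $x$ approaches a proper face of its carrier; concretely, two points in the interiors of distinct top-dimensional triangles glued along an edge, each very close to the midpoint of that edge, are arbitrarily close yet lie in no common closed simplex or closed star. So subdividing $[0,d(x,y)]$ into pieces of length $\epsilon_0/4$ does not localise the problem. Second, and more seriously, ``invoke the completeness secured above to extract a convergent subsequence'' at each subdivision point is not a legitimate move: completeness does not produce subsequential limits of bounded sequences, and as you yourself note $X$ is neither proper nor locally compact. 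Finiteness of shapes gives compactness only of the \emph{model} simplices; the points $\gamma_k(t_i)$ may wander through infinitely many distinct simplices of $X$ of the same shape, so their images in the model converge without there being any single point of $X$ realising the limit. Bridson's actual argument supplies the missing compactness combinatorially: he works with \emph{taut} strings and proves that a taut string of length at most $\ell$ traverses at most $N(\ell,\mathrm{Shapes})$ simplices (a statement of the same nature as the containment lemma \ref{containment} quoted later in the paper), which allows one to minimise over strings with a fixed combinatorial model, each such family being parametrised by a compact product of model simplices. Without this bounded-size lemma, or an explicit substitute for it, the diagonal extraction in your fourth step does not get started.
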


\begin{center}
\textit{From now on, every simplicial complex will implicitely be given the structure of a $M_\kappa$-complex, $\kappa \leq 0$.}
\end {center}

\subsection{Complexes of spaces.}
\begin{definition} A \textit{complex of spaces $C(\cX)$ over} $X$ consists of the following data:
\begin{itemize}
 \item  for every simplex $\sigma$ of $X$, a pointed CW-complex $ C_\sigma$, called a \textit{fibre}, 
 \item for every pair of simplices $\sigma \subset \sigma'$, a pointed embedding $\phi_{\sigma', \sigma}: \displaystyle{ C_{\sigma'} \hra C_\sigma}$, called a \textit{gluing map}, such that for every $\sigma \subset \sigma' \subset \sigma''$, we have $\phi_{\sigma, \sigma''} = \phi_{\sigma, \sigma'} \circ \phi_{\sigma', \sigma''}$.
\end{itemize}
\end{definition}

\begin{definition}[realisation of a complex of spaces] Let $C(\cX)$ be a complex of spaces over $X$. The \textit{realisation} of $C(\cX)$  is the quotient space
$$ |C(\cX)|= \big( \coprod_{\sigma  \in S(X)} \sigma \times C_\sigma~ \big) / \simeq  $$
where 
$$ (i_{\sigma,\sigma'}(x), s) \simeq(x , \phi_{\sigma, \sigma'}(s)) \mbox{ for } x \in \sigma \subset \sigma'\mbox{ and } s \in C_{\sigma'}, $$
where $i_{\sigma,\sigma'}: \sigma \hookrightarrow \sigma' $ is the natural inclusion. The class in $|C(\cX)|$ of a point $(x,s)$ will be denoted $[x, s]$.
\end{definition}

\begin{definition} A complex of spaces $C(\cX)$ will be called \textit{locally finite} if for every simplex $\sigma$ of $X$ and every element $x \in C_\sigma$, there exists an open set $U$ of $C_\sigma$ containing $x$ and such that there are only finitely many simplices $ \sigma'$ in the open star of $\sigma$ satisfying $ U \cap \mbox{Im}( \phi_{\sigma, \sigma'} )\neq \varnothing $.
\end{definition}
\begin{prop} Let $C(\cX)$ be a locally finite complex of spaces. Then $| C(\cX) |$ admits a natural locally finite CW-complex structure, for which the $\sigma \times C_\sigma$ embed as subcomplexes. \qed
\end{prop}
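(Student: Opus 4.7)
The plan is to exhibit a CW structure on $|C(\cX)|$ cell by cell and then derive local finiteness from the hypothesis. For each simplex $\sigma$ of $X$ and each open cell $e$ of the CW-complex $C_\sigma$, set $c(\sigma, e) := \{[x, s] : x \in \mbox{int}(\sigma),\ s \in e\}$. Since the equivalence relation defining $|C(\cX)|$ only identifies points whose $X$-coordinate lies on the boundary of some simplex, the quotient map is injective on $\mbox{int}(\sigma) \times C_\sigma$, so $c(\sigma, e)$ is homeomorphic to the open product $\mbox{int}(\sigma) \times e$, an open ball of dimension $\dim \sigma + \dim e$. Every point of $|C(\cX)|$ lies in exactly one such set (via the unique $\sigma_x$ with $x \in \mbox{int}(\sigma_x)$ and the unique cell of $C_{\sigma_x}$ containing $s$), so the $c(\sigma, e)$ partition $|C(\cX)|$.

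For the characteristic map of $c(\sigma,e)$ I would take the composition
\[
D^{\dim \sigma} \times D^{\dim e} \longrightarrow \sigma \times \overline{e} \hookrightarrow \sigma \times C_\sigma \longrightarrow |C(\cX)|,
\]
built from characteristic maps for $\sigma$ and for $\overline{e}$ in $C_\sigma$, followed by the quotient. The image of the boundary lies in $\sigma \times \partial e \cup \partial\sigma \times \overline{e}$: the first piece maps into cells $c(\sigma, e')$ with $e' \subset \partial e$, and the second, at a face $\tau \subsetneq \sigma$, is identified via the equivalence relation with $\tau \times \phi_{\tau,\sigma}(\overline{e}) \subset \tau \times C_\tau$ and so maps into cells $c(\tau, e'')$ with $e''$ a cell of $\phi_{\tau,\sigma}(\overline{e})$. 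For these to give genuine cellular attachments one needs each $\phi_{\tau,\sigma}$ to be cellular; if it is not, a compatible refinement of the CW-structures on the fibres makes it so.

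The CW structure on $|C(\cX)|$ is then produced by induction on the skeleta of $X$: $|C(\cX^{(0)})| = \coprod_{v \in V(X)} C_v$ is a disjoint union of CW-complexes, and $|C(\cX^{(n+1)})|$ arises from $|C(\cX^{(n)})|$ by attaching $\sigma \times C_\sigma$ for each $(n+1)$-simplex $\sigma$ along the cellular map $\partial\sigma \times C_\sigma \to |C(\cX^{(n)})|$ prescribed by the gluing maps (well-defined thanks to the cocycle identity $\phi_{\tau,\sigma''} = \phi_{\tau,\sigma'}\circ\phi_{\sigma',\sigma''}$). This is a standard pushout of CW-complexes along a subcomplex inclusion, so the closure-finiteness and weak-topology axioms for $|C(\cX)|$ follow; each $\sigma \times C_\sigma$ embeds as a subcomplex by construction.

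For local finiteness, pick $p = [x,s]$ with $x \in \mbox{int}(\sigma)$ and $s \in C_\sigma$. Choose an open neighborhood $U \ni s$ in $C_\sigma$ such that $U \cap \mbox{Im}(\phi_{\sigma,\sigma'}) \neq \varnothing$ for only finitely many $\sigma' \in \mbox{st}(\sigma)$ (local finiteness of $C(\cX)$), and such that $U$ meets only finitely many cells of $C_\sigma$ (local finiteness of the fibre, which is required for the conclusion to hold). For each of the finitely many contributing $\sigma'$, the open preimage $\phi_{\sigma,\sigma'}^{-1}(U)$ meets only finitely many cells of $C_{\sigma'}$. Taking $V$ a small neighborhood of $x$ contained in $\mbox{st}(\sigma)$, the product $V \times U$ descends to a neighborhood of $p$ meeting only finitely many cells of $|C(\cX)|$. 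The main subtlety in the whole argument is forcing the gluing maps to be cellular so that the attaching maps on the boundaries of product cells decompose correctly into cells of the realisation; once that is taken care of, the verifications are routine.
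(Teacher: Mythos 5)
The paper states this proposition without proof (it is marked as immediate), and your argument is exactly the intended one: product cells $\mathrm{int}(\sigma)\times e$, characteristic maps assembled from those of $\sigma$ and of $\overline{e}$ in $C_\sigma$, attachment by induction over the skeleta of $X$, and local finiteness of the realisation extracted from the defining condition on the complex of spaces. The two caveats you flag are real but harmless implicit hypotheses of the statement --- the gluing maps must be cellular embeddings onto subcomplexes (your ``refine the CW structures'' fix is not available for arbitrary embeddings, but every gluing map used in the paper, e.g.\ between Rips complexes with nested generating sets, is already a subcomplex inclusion) and the fibres must themselves be locally finite CW-complexes (true for the cocompact classifying spaces used here) --- so your write-up is a correct completion of the omitted routine argument.
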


\subsection{Topology of complexes of spaces with contractible fibres.}
\begin{definition}[quotient complex of spaces]
Let $C(\cX)$ be a complex of spaces over $X$ and $Y\subset X$ a subcomplex. We denote $C_Y(\cX)$ the complex of spaces over $X$ defined as follows:
\begin{itemize}
\item $(C_Y)_\sigma= C_\sigma$ if $\sigma \nsubseteq Y$, $(C_Y)_\sigma$ is the basepoint of $C_\sigma$ otherwise,
\item $\phi_{\sigma,\sigma'}^Y$ is the composition $(C_Y)_{\sigma'} \hra C_{\sigma'} \xrightarrow{\phi_{\sigma, \sigma'}} C_\sigma  \twoheadrightarrow (C_Y)_\sigma$.
\end{itemize}
\end{definition}

We denote by $p_Y: |C(\cX)| \rightarrow |C_Y(\cX)|$ the canonical projection, and simply $p$ for $p_X: |C(\cX)| \ra X$. In the same way, if $Y \subset Y'$ are subcomplexes of $X$, we denote by $p_{Y,Y'}:|C_Y(\cX)| \rightarrow |C_{Y'}(\cX)|$ the canonical projection.

\begin{lem} Let $C(\cX)$ be a locally finite complex of spaces over $X$  with contractible fibres, and let $Y$ be a finite subcomplex of $X$. Then $p_Y: |C(\cX)| \rightarrow |C_Y(\cX)|$ is a homotopy equivalence.
\label{quotientequivalence}
\end{lem}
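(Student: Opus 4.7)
The strategy is induction on the number $n$ of simplices of $Y$, reducing everything to a single ``one-simplex at a time'' quotient, for which the collapse of a contractible fibre can be analysed as a pushout. The base case $n=0$ is trivial because $C_\emptyset(\cX) = C(\cX)$ and $p_\emptyset$ is the identity. For the inductive step I would choose a simplex $\sigma$ \emph{maximal} in $Y$ (not a proper face of any other simplex of $Y$), so that $Y' := Y \setminus \{\sigma\}$ remains a subcomplex with $n-1$ simplices. Writing $p_Y = p_{Y',Y} \circ p_{Y'}$ and applying the inductive hypothesis to $p_{Y'}$, the whole task reduces to proving that the one-step quotient $p_{Y',Y} : |C_{Y'}(\cX)| \to |C_Y(\cX)|$ is a homotopy equivalence.

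Next I would express $p_{Y',Y}$ as a pushout. The complexes of spaces $C_{Y'}(\cX)$ and $C_Y(\cX)$ are identical everywhere except at $\sigma$, where the fibre drops from $C_\sigma$ to a point, so $p_{Y',Y}$ is already a homeomorphism off the closed subspace $A := \overline{p^{-1}(\mbox{int}(\sigma))} \subseteq |C_{Y'}(\cX)|$. Because $\sigma$ is maximal in $Y$, every proper face $\tau \subsetneq \sigma$ already lies in $Y'$, so $(C_{Y'})_\tau$ is a single point. Chasing the equivalence relation defining the realisation shows that $A$ is the quotient $(\sigma \times C_\sigma)/\sim$ in which $(x,s) \sim (x,s')$ for every $x \in \partial\sigma$ and all $s,s' \in C_\sigma$, while its image $B \subseteq |C_Y(\cX)|$ is simply a copy of $\sigma$. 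The restricted map $p_{Y',Y}|_A$ is the projection $(x,s) \mapsto x$, and one checks that the square
\[
\begin{CD}
A  @>{p_{Y',Y}|_A}>> B \\
@VVV @VVV \\
|C_{Y'}(\cX)|  @>>{p_{Y',Y}}> |C_Y(\cX)|
\end{CD}
\]
is a pushout in the category of topological spaces (the inclusions are the obvious ones, and $p_{Y',Y}$ is a homeomorphism off $A$).

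To finish, I would verify the two ingredients needed for the pushout to transport homotopy equivalences. On the one hand, a contraction of $C_\sigma$ to its basepoint $*$ extends fibrewise over $\sigma$ to a deformation retraction of $A$ onto the section $\sigma \times \{*\}$; this is well-defined on the quotient $A$ because the contraction fixes basepoints, so the relation $(x,s) \sim (x,s')$ for $x \in \partial\sigma$ is preserved along the homotopy. Since $p_{Y',Y}|_A$ restricts to a homeomorphism from this section onto $B$, it follows that $p_{Y',Y}|_A$ is a homotopy equivalence. On the other hand, the locally finite CW structure provided by the proposition preceding the lemma exhibits $A$ as a subcomplex of $|C_{Y'}(\cX)|$, so the left vertical inclusion is a cofibration. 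Applying the standard gluing lemma --- the pushout of a homotopy equivalence along a cofibration is a homotopy equivalence --- yields the desired conclusion for $p_{Y',Y}$ and completes the induction. The main technical point is the identification of $A$ and of the equivalence relation on it: one must verify carefully that the gluings over each face $\tau \subsetneq \sigma$ really do crush the whole $C_\sigma$-fibre above $\tau$ to a single point, which is precisely what makes both the pushout diagram and the descent of the fibrewise contraction to a well-defined deformation retraction of $A$ go through.
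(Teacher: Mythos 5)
Your proof is correct and takes essentially the same route as the paper's: both arguments reduce to collapsing the fibre over one simplex at a time (the paper phrases this as reducing to $Y$ a single closed simplex) and then exploit the CW structure together with the contractibility of $C_\sigma$. The only difference is cosmetic and lies in the final step --- the paper collapses the contractible subcomplex $\sigma \times C_\sigma$ to a point on both sides of a commutative square and invokes the fact that quotienting a CW-complex by a contractible subcomplex is a homotopy equivalence, whereas you keep the base simplex and invoke the gluing lemma for the pushout of $A \rightarrow B$ along the cofibration $A \hookrightarrow |C_{Y'}(\cX)|$; these are two standard packagings of the same cofibration argument.
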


\begin{proof} It amounts to proving the result for $Y$ consisting of a single closed simplex $\sigma$. We have the following commutative diagram:
$$ \xymatrix{
     |C(\cX)|  \ar[d]_{\simeq} \ar[r]_{}^{p_Y} & |C_Y(\cX)| \ar[d]^{\simeq} \\
    |C(\cX)|/\left(\sigma \times C_\sigma\right) \ar[r]_{=} & |C_Y(\cX)|/\left(\sigma \times \star\right). \\
  }$$\\
The vertical arrows are homotopy equivalences, since we are quotienting by contractible subcomplexes, hence the result.  
\end{proof}

\begin{thm}[Dowker \cite{Dowker52}]
 The (continuous) identity map $X \ra X$ from $X$ with its CW topology to $X$ with its simplicial metric is a homotopy equivalence. \qed
\end{thm}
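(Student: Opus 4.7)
The plan is to exhibit the continuous identity $\iota \colon X_c \to X_m$ (writing $X_c$ and $X_m$ for $X$ endowed with the CW and metric topologies, respectively) as a weak homotopy equivalence via simplicial approximation, and then upgrade this to a genuine homotopy equivalence by showing that both sides have the homotopy type of a CW complex.

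For the weak equivalence, I would work with an open cover of $X_m$ by metric balls $B(v, \varepsilon_v)$ at each vertex $v \in V(X)$, with $\varepsilon_v$ chosen small enough that each ball is geodesically star-shaped from $v$ and the nonempty $k$-fold intersections reproduce the simplicial structure of $X$. Such choices exist because the $M_\kappa$-structure has finitely many isometry types of simplices, which gives a uniform lower bound on the distance from $v$ to simplices not containing it as a vertex. Given a continuous $f \colon S^n \to X_m$, compactness of $S^n$ together with Lebesgue's lemma yield a finite subcover and a triangulation of $S^n$ fine enough to admit a simplicial approximation $\widetilde f$ landing in the finite subcomplex spanned by the relevant vertices; on this finite subcomplex the CW and metric topologies coincide, so $\widetilde f$ is simultaneously continuous into $X_c$. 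Geodesic straight-line homotopies inside each star-shaped ball then connect $f$ to $\widetilde f$ in $X_m$, so $\pi_n(\iota)$ is surjective; the same argument applied to homotopies $S^n \times I \to X_m$ yields injectivity.

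For the second step, $X_c$ is by construction a CW complex, and $X_m$, being a complete geodesic metric space locally modeled on $M_\kappa$-geodesic simplices with only finitely many isometry types, is an ANR (local retractions exist via barycentric projection onto finite subcomplexes), hence of CW homotopy type by Milnor's theorem. Whitehead's theorem then promotes the weak equivalence $\iota$ to a homotopy equivalence. The main obstacle is that the combinatorial open stars $\mathrm{st}(v) = t_v^{-1}((0,1])$ need not be open in $X_m$ when $X$ fails to be locally finite — the $v$-th barycentric coordinate $t_v$ can be discontinuous on $X_m$ at $v$ — which forces one to work with metric balls instead of combinatorial stars and to verify carefully that their intersection pattern faithfully reproduces the simplicial structure of $X$.
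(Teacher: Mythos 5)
The paper does not actually prove this statement: it is quoted from Dowker's 1952 article, whose argument constructs a homotopy inverse directly. Dowker covers $X_m$ (your notation) by the open stars of vertices, takes a locally finite partition of unity subordinate to that cover to define a continuous map $X_m \to X_c$ sending each $x$ into $\overline{\sigma_x}$, and checks that the affine interpolation between this map and the identity is continuous for both topologies. Your route (weak equivalence by simplicial approximation, then ANR, Milnor and Whitehead) is genuinely different and heavier, and as written it has two gaps.

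First, the cover you propose does not cover $X$: balls $B(v,\varepsilon_v)$ chosen small enough to sit inside $\mathrm{st}(v)$ and to keep the nerve correct will miss, for instance, the barycentre of any $2$-simplex whose inradius exceeds the relevant $\varepsilon_v$, so the Lebesgue number and simplicial approximation step cannot even begin. Moreover the obstacle that pushed you away from open stars is not present in this setting: because $X$ has finitely many isometry types of simplices, Bridson's lemma (the same fact the paper uses when normalising so that ``the distance from any simplex to the boundary of its closed star is at least $1$'') provides for every $x$ an $\varepsilon(x)>0$ with $B(x,\varepsilon(x))\subset\mathrm{st}(\sigma_x)$; hence each open star $\mathrm{st}(v)$ \emph{is} open in $X_m$ even when $X$ is not locally finite, it covers, and its nerve is $X$. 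Second, your justification that $X_m$ is an ANR does not work near a vertex of infinite valence: no metric neighbourhood of such a vertex is contained in a finite subcomplex, so there is no ``barycentric projection onto a finite subcomplex'' available. The conclusion is nevertheless true (metric realisations of arbitrary simplicial complexes are ANRs; this is classical and should simply be cited, after observing that with finitely many shapes the $M_\kappa$ metric topology agrees with Dowker's barycentric metric topology), but it must be imported, not derived as you indicate. With these two repairs --- open stars in place of small vertex balls, and a citation for the ANR property --- your argument goes through; alternatively, Dowker's partition-of-unity construction reaches the conclusion in one step and avoids Whitehead's theorem entirely.
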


\begin{prop} Let $C(\cX)$ be a locally finite complex of space over $X$ with contractible fibres. If $X$ has finitely many type of simplices and is contractible, then $|C(\cX)|$ is contractible.
\label{homotopytype}
\end{prop}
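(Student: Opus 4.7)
The strategy I would use is to prove that $|C(\cX)|$ is weakly contractible and then invoke Whitehead's theorem; this is legitimate because $|C(\cX)|$ carries a CW structure by Proposition 1.6 (here the local finiteness of $C(\cX)$ enters). So the goal reduces to showing that $\pi_n(|C(\cX)|) = 0$ for every $n \geq 0$.

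Let $f \colon S^n \to |C(\cX)|$ be continuous. Compactness of $S^n$ forces $f(S^n)$ to lie in a finite subcomplex of $|C(\cX)|$, and projecting by $p$ gives $f(S^n) \subset p^{-1}(Y_0)$ for some finite subcomplex $Y_0 \subset X$. Contractibility of $X$ (in the CW topology, equivalently in the simplicial metric by Dowker's theorem) makes $Y_0 \hookrightarrow X$ null-homotopic, and the image of such a null-homotopy is compact, hence contained in a finite subcomplex $Y$ of $X$ with $Y_0 \subset Y$. In particular, the inclusion $Y_0 \hookrightarrow Y$ is null-homotopic inside $Y$.

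I would then apply Lemma \ref{quotientequivalence} to this finite $Y$: the projection $p_Y \colon |C(\cX)| \to |C_Y(\cX)|$ is a homotopy equivalence. By construction the fibres $(C_Y)_\sigma$ for $\sigma \subseteq Y$ are single basepoints, so $p_Y$ sends $p^{-1}(Y)$ onto the canonical embedded copy of $Y$ inside $|C_Y(\cX)|$ given by $y \mapsto [y, \star]$. Therefore $p_Y \circ f$ factors as $S^n \to Y_0 \hookrightarrow Y \subset |C_Y(\cX)|$, and the null-homotopy of $Y_0 \hookrightarrow Y$ produced above provides a null-homotopy of $p_Y \circ f$ inside $|C_Y(\cX)|$. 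Pulling this back through the homotopy equivalence $p_Y$ yields a null-homotopy of $f$ in $|C(\cX)|$. The case $n=0$ (path-connectedness) is handled by the same argument with a path in place of a sphere.

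The main technical point to verify carefully is that $p_Y$ really does collapse $p^{-1}(Y)$ onto the canonical copy of $Y$ inside $|C_Y(\cX)|$; this rests on the pointedness of the gluing maps $\phi_{\sigma, \sigma'}$ in the definition of $C_Y(\cX)$. Beyond this, the argument is simply a compactness reduction: the contractibility of $X$ is used only through its finite subcomplexes, exactly as needed to apply Lemma \ref{quotientequivalence}.
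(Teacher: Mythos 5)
Your proposal is correct and follows essentially the same route as the paper: reduce to weak contractibility via the CW structure and Whitehead's theorem, use Dowker's theorem and compactness to push everything into a finite subcomplex $Y$ of $X$, and then invoke Lemma \ref{quotientequivalence} for that $Y$. You merely spell out the details (enlarging $Y_0$ to a finite $Y$ containing a null-homotopy, and checking that $p_Y$ collapses $p^{-1}(Y)$ onto the canonical copy of $Y$) that the paper's terser argument leaves implicit.
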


\begin{proof} By the previous theorem, it is enough to show that the projection $p: |C(\cX)| \ra X$ induces isomorphisms on homotopy groups, when $X$ is endowed with its CW topology. For that topology, a continuous map from a compact space to $X$ has its image contained in a finite subcomplex, to which Lemma \ref{quotientequivalence} applies. 
\end{proof}

\section{Construction of $EG$ and $\partial G$ for developable complexes of groups.}
In this section, given a developable simple complex of groups $G(\cY)$ over a finite simplicial complex $Y$, we build a classifying space for its fundamental group. \\

\noindent \textbf{Notation}: We choose once and for all a non-positively curved complex of groups $G(\cY)$ over a finite simplicial complex endowed with a $M_\kappa$-structure, $\kappa \leq 0$. Recall that a complex of groups consists of the data $(G_\sigma, \psi_a, g_{a,b})$ of local groups $(G_\sigma)$, local maps $(\psi_a)$ and twisting elements $(g_{a,b})$. For the background on complexes of groups, we refer the reader to \cite{BridsonHaefliger}. We fix a maximal tree $T$ in the 1-skeleton of the first barycentric subdivision of $Y$, which allows us to define the fundamental group $G = \pi_1( G(\cY), T)$ and the canonical morphism $\iota_T: G(\cY) \ra G$ given by the collection of injections $G_\sigma \ra G$ (see \cite{BridsonHaefliger} p.553). Finally, we define $X$ as the universal covering of $G(\cY)$ associated to $\iota_T$. The simplicial complex $X$ naturally inheritates a $M_\kappa$-structure from that of $Y$ and the simplicial metric $d$ on $X$ makes it a complete geodesic metric space by work of Bridson \cite{BridsonPhD}. This space is CAT(0) by the curvature assumption on $G(\cY)$ (see \cite{BridsonHaefliger} p.562).

\subsection{Construction of $EG$ and $\partial_{Stab} G$.}

\begin{definition}[(cofinite and finite dimensional) classifying space for proper actions]
Let $\Gamma$ be a countable discrete group. A \textit{cofinite and finite dimensional classifying space for proper actions of $\Gamma$} (or briefly a \textit{classifying space for $\Gamma$}) is a contractible CW-complex $E\Gamma$ with a proper cocompact  and cellular action of $\Gamma$, and such that:
\begin{itemize}
\item for every finite subgroup $H$ of $\Gamma$, the fixed point set $E\Gamma^H$ is nonempty and contractible,
\item every infinite subgroup $H$ of $\Gamma$ has an empty fixed point set.
\end{itemize}
\end{definition}

\begin{definition}
A \textit{complex of spaces $D(\cY)$ compatible with the complex of groups $G(\cY)$} consists of the following:
\begin{itemize}
 \item For every vertex $\sigma$ of $\cY$, a space $D_\sigma$ that is a model of classifying space for proper actions $EG_\sigma$ of the local group $G_\sigma$,
 \item For every edge $a$ of $\cY$ with initial vertex $i(a)$ and terminal vertex $t(a)$, an embedding $\phi_a: EG_{t(a)} \hra EG_{i(a)}$ which is $G_{t(a)}$-equivariant, that is, for every $g \in G_{t(a)}$ and every $x \in EG_{t(a)}$, we have 
$$ \phi_{a}(g.x) = \psi_{a}(g).\phi_{a}(x),$$
and such that for every pair $(a,b)$ of composable edges of $\cY$, we have:
$$g_{a,b} \circ \phi_{ab} = \phi_a  \phi_b,$$

\end{itemize}
 We say that $D(\cY)$ extends to an \textit{$E\cZ$-complex of spaces} compatible with the complex of groups $G(\cY)$ if it satisfies the following extra conditions:
\begin{itemize}
 \item Each fibre $D_\sigma= EG_\sigma$ is endowed with an $E\cZ$-structure $(\overline{EG_ \sigma}, \partial G_\sigma),$
 \item The equivariant embeddings $(\phi_a)$ extend to equivariant embeddings $\phi_a:\overline{EG_{i(a)}} \ra \overline{EG_{t(a)}}$, such that for every pair $(a,b)$ of composable edges of $\cY$, we have:
$$g_{a,b} \circ \phi_{ab} = \phi_a  \phi_b.$$
\end{itemize}
\label{EZcomplexofspaces}
\end{definition}
Note that a complex of spaces compatible with the complex of groups $G(\cY)$ is \textit{not} a complex of spaces over $Y$ when the twisting elements $g_{a,b}$ are not tivial. Nonetheless, this data is used to build a complex of spaces over $X$, as explained in the following definition.
\begin{definition}
 We define the space 
$$E(\cY) =  \bigg( G \times \coprod_{\sigma \in V(\cY)} ( \sigma \times EG_\sigma) \bigg) / \simeq$$
where
$$(g, i_{\sigma, \sigma'}(x), s) \simeq \bigg(g \iota_T\big( [\sigma\sigma']\big)^{-1}, x, \phi_{(\sigma \sigma')}(s)\bigg) \mbox{ if } [\sigma\sigma'] \in E(\cY), x \in \sigma', g \in G,$$
$$ (gg', x, s) \simeq (g, x, g's)  \mbox{ if } x \in \sigma, s \in EG_\sigma, g' \in G_\sigma, g \in G.$$

The canonical projection $G \times \coprod_{\sigma \in V(\cY)} ( \sigma \times EG_\sigma) \ra G \times \coprod_{\sigma \in V(\cY)}  \sigma $ yields a map 
$$p: E(\cY) \ra X.$$

The action of $G$ on $G \times \coprod_{\sigma \in V(\cY)} ( \sigma \times EG_\sigma)$ on the first factor by left multiplication yields an action of $G$ on $E(\cY)$, making the projection $p: E(\cY) \ra X$ a $G$-equivariant map.

Note that $E(\cY)$ can be seen as a complex of spaces over $X$, the fibre of a simplex $[g, \sigma]$ being the classifying space $EG_\sigma$. Indeed, for en edge $[g,a]$ of the first barycentric subdivision of $X$, the gluing map $\phi_{[g\iota_T(a)^{-1}, i(a)], [g, t(a)]}: EG_{i(a)} \ra EG_{t(a)}$ is defined as $\phi_{i(a), t(a)}$.

 For every simplex $\sigma$ of $X$, we denote by $EG_\sigma$ the fibre over $\sigma$ of that complex of space. For simplices $\sigma, \sigma'$ of $X$ such that $\sigma' \subset \sigma$, we denote by $\phi_{\sigma', \sigma}: EG_\sigma \ra EG_{\sigma'}$ the associated gluing map.
\end{definition}

\begin{thm}
The space $E(\cY)$ is a classifying space for proper actions of $G$. 
\end{thm}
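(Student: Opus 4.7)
The strategy is to view $E(\cY)$ as a locally finite complex of spaces over the CAT(0) complex $X$ with contractible fibres, apply Proposition \ref{homotopytype} to obtain contractibility, and then analyse the $G$-action fibrewise.

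\textbf{Step 1 (Structure and contractibility of $E(\cY)$).} By construction, $E(\cY)$ is a complex of spaces over $X$ in which the fibre over a simplex $[g,\sigma]$ of $X$ is the classifying space $EG_\sigma$, which is contractible; gluing maps are the equivariant embeddings $\phi_a$ coming from the $E\cZ$-complex of spaces. Since $Y$ is finite, $X$ has finitely many isometry types of simplices and is contractible (CAT(0) by the curvature assumption and Cartan--Hadamard). To invoke Proposition \ref{homotopytype} I must check local finiteness. For a simplex $\sigma$ of $X$, the simplices $\sigma'\supset \sigma$ of $X$ project to finitely many simplices of $Y$ in $\mbox{st}(\bar\sigma)$, so they form finitely many $G_\sigma$-orbits. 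Thus up to the $G_\sigma$-action, there are only finitely many images $\phi_{\sigma,\sigma'}(EG_{\sigma'})\subset EG_\sigma$; since $EG_\sigma$ is a locally finite CW-complex on which $G_\sigma$ acts properly and cellularly, and each image is a subcomplex, a small enough neighbourhood of any point meets only finitely many of their $G_\sigma$-translates. Hence $E(\cY)$ is a locally finite complex of spaces with contractible fibres over a contractible base, so $|E(\cY)|$ is contractible by Proposition \ref{homotopytype}.

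\textbf{Step 2 (Properness, cocompactness, cellularity).} The $G$-action on $E(\cY)$ is cellular by construction. Unwinding the definition, the stabiliser of a point $[g,x,s]$ with $x$ in the interior of $\sigma\subset X$ equals $g\,\mbox{Stab}_{G_\sigma}(x,s)\,g^{-1}=g\,\mbox{Stab}_{G_\sigma}(s)\,g^{-1}$, which is finite because $G_\sigma$ acts properly on $EG_\sigma$. Cocompactness follows from $Y$ being finite together with the cocompactness of each $G_\sigma \curvearrowright EG_\sigma$: a fundamental domain is built by choosing, for each simplex $\sigma$ of $Y$, a compact fundamental domain for $G_\sigma$ acting on $\sigma\times EG_\sigma$.

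\textbf{Step 3 (Fixed points of finite subgroups).} Let $H\leq G$ be finite. Since $X$ is CAT(0) and the $G$-action is by isometries, $H$ admits a global fixed point in $X$, so after choosing a minimal $H$-invariant simplex one may assume, up to conjugation, that $H\subset G_\sigma$ for some simplex $\sigma$ of $Y$. The fixed locus $E(\cY)^H$ fibres via $p$ over $X^H$; at a simplex $\tau$ fixed pointwise by $H$, the fibre is $(EG_\tau)^H$, which is nonempty and contractible because $EG_\tau$ is a classifying space for proper actions of $G_\tau\supset H$. Thus $E(\cY)^H$ is itself a complex of spaces over $X^H$ (which is a convex, hence contractible, subcomplex of $X$ with finitely many types of simplices) with contractible fibres, and the local-finiteness argument of Step 1 applies verbatim. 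A second application of Proposition \ref{homotopytype} shows $E(\cY)^H$ is contractible. Conversely, if $H$ is infinite and fixes some $[g,x,s]$, then $g^{-1}Hg$ fixes both $x\in X$ and $s\in EG_{\sigma_x}$; but the latter forces $g^{-1}Hg$ to be finite by properness of $G_{\sigma_x}\curvearrowright EG_{\sigma_x}$, a contradiction. Hence $E(\cY)^H=\varnothing$ for $H$ infinite.

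\textbf{Main obstacle.} The subtlest point is verifying local finiteness of the complex of spaces $E(\cY)$, since the star of a simplex in $X$ is typically infinite and the images of gluing maps are non-compact subcomplexes of $EG_\sigma$; the argument relies on combining the finiteness of $G_\sigma$-orbits of simplices in $\mbox{st}(\sigma)$ (from cocompactness of $G\curvearrowright X$) with properness of $G_\sigma\curvearrowright EG_\sigma$ at the level of locally finite CW-complexes. Once this is in place, the rest of the proof is a fibrewise unwinding of standard facts about classifying spaces for proper actions.
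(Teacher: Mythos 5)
Your proposal is correct and follows essentially the same route as the paper: local finiteness via cocompactness of $G\curvearrowright X$ plus properness of the local actions, contractibility via Proposition \ref{homotopytype}, cocompactness from a union of fibrewise fundamental domains, and the fixed-set conditions by viewing $E(\cY)^H$ as a complex of spaces with contractible fibres over the convex subcomplex $X^H$. The only abbreviation is in the properness step, where you verify only that point stabilisers are finite; the paper additionally records that a finite subcomplex meets only finitely many of its $G$-translates, which is the statement actually needed, though it follows routinely from the finiteness of cell stabilisers together with local finiteness and cellularity.
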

From now on, we denote by $EG$ this classifying space.

\begin{proof}
\textit{Local finiteness:} Let $\sigma$ be a simplex of $X$ and $U$ be an open set of $EG_\sigma$ that is relatively compact. It is enough to prove that for any injective sequence $(\sigma_n)$ of simplices of $X$ containing $\sigma$ there are only finitely many $n$ such that the image of $\phi_{\sigma, \sigma_n}$ meets $U$. By cocompacity of the action, we can assume that all the $\sigma_n$ are above a same simplex $\overline{\sigma}$ of $Y$. But since the action of $G_\sigma$ on $EG_\sigma$ is proper, it follows that for every simplex $\sigma'$ containing $\sigma$ and every compact subset $K$ of $EG_\sigma$, only finitely many distinct cosets $gEG_{\sigma'}$ in $EG_\sigma$ can meet $K$, hence the result.\\

\textit{Contractibility:} Since $E(\cY)$ is locally finite, $|E(\cY)|$ has the same homotopy groups as $X$ by \ref{homotopytype}, which is contractible by assumption. Thus $|E(\cY)|$ is a CW-complex whose homotopy groups vanish, hence it is contractible.\\

\textit{Cocompact action:} For every simplex $\sigma$ of $Y$, we choose a compact fundamental domain $K_\sigma$ for the action of $G_\sigma$ on $D_\sigma= EG_\sigma$. Now the image in $|E(\cY)|$ of $\bigcup_{\sigma \in S(Y)} \sigma \times K_\sigma$
clearly defines a compact subset of $|E(\cY)|$ meeting every $G$-orbit.\\

\textit{Proper action:} As $|E(\cY)|$ is a locally finite CW-complex, hence a locally compact space, it is enough to show that every finite subcomplex intersects only finitely many of its $G$-translates. \\
Let us first show that for every cell $\sigma$ of $|E(\cY)|$, there are only finitely many $g\in G$ such that $g\sigma = \sigma$. Indeed, let $g\in G$ such that  $g\sigma=\sigma$. The canonical projection $|E(\cY)| \rightarrow X$ is $G$-equivariant and sends a cell of $|E(\cY)|$ on a simplex of $X$, thus $g$ also stabilises the simplex $p(\sigma) \subset X$. Since $G$ acts without inversion on $X$, $g$ pointwise stabilises the vertices of $p(\sigma)$. Let $s$ be such a vertex. Then $g \in G_s$ and, by construction of $|C(\cY, \iota_T)|$, the restriction to $G_s$ of the action of $G$ on $|E(\cY)|$ is just the action of $G_s$ on $C_s = EG_s$. Thus, by definition of a classifying space for proper actions, this implies that there are only finitely many possibilites for $g$. 

Now, let $F$ be a finite subcomplex of $|E(\cY)|$. We have $ \left\{g \in G ~~|~~ gF \cap F \neq \varnothing \right\} \subset \underset{\sigma = \sigma' \mathrm{ mod }~G}{\bigcup_{\sigma, \sigma' \in S(F)}} \left\{g \in G ~~|~~ g\sigma = \sigma' \right\},$ and $\left\{g \in G ~~|~~ g\sigma = \sigma' \right\}$ has the same cardinal as\\ $\left\{g \in G ~~|~~ g\sigma = \sigma \right\}$, which is finite by the previous argument.\\

\textit{Fixed sets:} Let $H$ be a finite subgroup of $G$. As $G$ acts without inversion on the CAT(0) complex $X$, the subset $X^H$ is a nonempty convex subcomplex of $X$. Furthermore, for every simplex $\sigma$ of $X^H$, the subcomplex $(EG_\sigma)^H$ of $EG_\sigma$ is nonempty and contractible. Thus $|E(\cY)|^H$ is the realisation of a complex of spaces over the contractible complex $X^H$ and with contractible fibres, hence it is nonempty and contractible by \ref{homotopytype}.

If $H$ is an infinite subgroup of $G$, we prove by contradiction that $|E(\cY)|^H$ is empty. If this was not the case, there would exist a simplex $\sigma$ fixed pointwise under $H$ and an element $x $ of $EG_\sigma$ that is fixed under $H \subset G_\sigma$. But this is absurd as $(EG_\sigma)^H = \varnothing$ by assumption.
\end{proof}

We now turn to the construction of a boundary of $G$. As stated in \cite{FarrellLafontEZStructures}, the definition of an $E\cZ$-structure only applies to torsionfree groups. Here we use a notion of $\cZ$-structure suitable for groups with torsion, which was introduced by Dranishnikov \cite{DranishnikovBestvinaMessFormula}. 

\begin{definition}[$\cZ$-structures, $E\cZ$-structures]
 Let $\Gamma$ be a discrete group. A $\cZ$-\textit{structure} for $\Gamma$ is a pair $(Y,Z)$ of spaces such that:
\begin{itemize}
 \item $Y$ is a Euclidean retract, that is, a compact, contractible and locally contractible space with finite covering dimension,
 \item $Y \setminus Z$ is a classifying space for proper actions of $\Gamma$,
 \item $Z$ is a $\cZ$-set in $Y$, that is, $Z$ is a closed subpace of $Y$ such that for every open set $U$ of $Y$, the inclusion $U \setminus Z \hra U$ is a homotopy equivalence,
 \item Compact sets \textit{fade at infinity}, that is, for every compact set $K$ of $Y \setminus Z$, every element $z \in Z$ and every neighbourhood $U$ of $z$ in $Z$, there exists a subneighbourhood $V \subset U$ with the property that if a $\Gamma$-translate of $K$ intersects $V$, then it is contained in $U$.
\end{itemize}
The pair $(Y,Z)$ is called an $E\cZ$-\textit{structure} if in addition we have:
\begin{itemize}
 \item The action of $\Gamma$ on $Y \setminus Z$ continuously extends to $Y$.
\end{itemize}
\end{definition}

\begin{definition}
We define the space 

$$\Omega(\cY) =  \bigg( G \times \coprod_{\sigma \in V(\cY)} ( \left\lbrace \sigma \right\rbrace \times \partial G_\sigma) \bigg) / \simeq$$
where
$$ \big(gg', (x, s)\big) \simeq \big(g, (x, g's)\big)  \mbox{ if } x \in \sigma, s \in EG_\sigma, g' \in G_\sigma, g \in G.$$
Note that $\Omega(\cY)$ comes with a natural projection to the set of simplices of $X$. If $\sigma$ is a simplex of $X$, we denote by $\partial G_{\sigma}$ the preimage of $\left\lbrace \sigma \right\rbrace$ under that projection. We now define

$$\partial_{Stab} G = \Omega(\cY)/ \sim $$
where $\sim$ is the equivalence relation generated by the following identifications:
$$\bigg[g, \left\lbrace \sigma\right\rbrace , \xi\bigg] \sim \bigg[ g F( [\sigma\sigma'])^{-1}, \{ \sigma'\} , \phi_{[\sigma\sigma']}(\xi) \bigg] \mbox{ if } g \in G, [\sigma\sigma'] \in E(\cY) \mbox{ and } \xi \in \partial G_\sigma.$$

The action of $G$ on $G \times \coprod_{\sigma \in V(\cY)} ( \left\lbrace \sigma \right\rbrace \times \partial G_\sigma)$ by left multiplication on the first factor yields an action of $G$ on $\Omega(\cY)$ and on $\partial_{Stab} G$.
\label{defboundary}
\end{definition}

\begin{definition}
 We define the spaces $\partial G = \partial_{Stab} G \cup \partial X$ and $\overline{EG} = EG \cup \partial G$.

Moreover, we extend the projection $p: EG \ra X$ to a map $\bar{p}$ from $\overline{EG}$ to the set of subsets of $\overline{X}$ in the following way: 
\begin{itemize}
\item For $\widetilde{x} \in EG $, we define $\bar{p}(z)$ to be the singleton $ \left\{ p(\widetilde{x}) \right\}$.
\item For $\eta \in \partial X$, we define $\bar{p}(\eta )$ to be the singleton $ \left\{ \eta \right\}$.
\item For $\xi \in \partial_{Stab} G$, we set $\bar{p}(\xi) = D(\xi)$.
\item Finally, for  $K \subset \overline{EG}$, we set $\bar{p}(K) = \bigcup_{z \in K} \bar{p}(z)$.
\end{itemize}
\end{definition}

Our aim is to endow $\overline{EG}$ with a topology that makes $(\overline{EG}, \partial G)$ an $E\cZ$-structure for $G$.\\

\noindent \textbf{Notations}: Since the $\phi_{\sigma, \sigma'}$ are embeddings, we will identify $\phi_{\sigma, \sigma'}(\overline{EG_{\sigma'}})$ with $\overline{EG_{\sigma'}}$. For instance, if $U$ is an open subset of $EG_{\sigma'}$, we will simply write " we have $EG_\sigma \subset U$ in $EG_{\sigma'}$" instead of "we have $\phi_{\sigma, \sigma'}(EG_\sigma) \subset U$ in $EG_{\sigma'}$".\\

\begin{center}
\textit{From now on, we assume that there is a complex of spaces $D(\cY)$ that extends to an $E\cZ$-complex of spaces compatible with the complex of groups $G(\cY)$.}
\end{center}

\subsection{Further properties of $E\cZ$-complexes of spaces.}
In this paragraph, we define additional properties of $E\cZ$-complexes of spaces, which will enable us to study the properties of the equivalence relation $\sim$ previously defined.
\subsubsection{The limit set property.}
Recall that for a discrete group $\Gamma$ together with an $E\cZ$-structure $(\overline{E\Gamma}, \partial \Gamma)$ and a subgroup $H$, the \textit{limit set} $\Lambda H$ of $H$ in $\partial \Gamma$ is the set $ \overline{Hx} \cap \partial \Gamma,$ where $x$ is an arbitrary point of $E\Gamma$. 

\begin{definition}[Limit set property for an $E\cZ$-complex of spaces]
We say that the $E\cZ$-complex of spaces $D(\cY)$ compatible with the complex of groups $G(\cY)$ satifies the \textit{limit set property}  if the following conditions are satisfied: 
 \begin{itemize}
  \item For every pair of simplices $\sigma \subset \sigma'$ of $Y$, the map $\phi_{\sigma, \sigma'}$ realises an equivariant homeomorphism from $\partial G_{\sigma'}$ to the limit set $\Lambda G_{\sigma'} \subset \partial G_\sigma$.
  \item For every simplex $\sigma$ of $Y$, and every pair of subgroups $H_1, H_2$ in the family $\cF_\sigma = \left\lbrace  \bigcap_{i=1}^{n} g_i G_{\sigma_i} g_i^{-1}  | ~ g_1, \ldots, g_n \in G_\sigma, \sigma_1, \ldots,  \sigma_n \subset \mbox{st}(\sigma), n \in \bbN \right\rbrace,$ we have $ \Lambda H_1 \cap \Lambda H_2 = \Lambda(H_1 \cap H_2).$
\end{itemize}
\label{limitsetproperty}
\end{definition}

\noindent \textbf{Remarks.}  $(i)$ Let $\Gamma$ be a hyperbolic group, and $H$ a subgroup. Then $H$ is quasiconvex if and only if its limit set in $\partial \Gamma$ is equivariantly homeomorphic to $\partial H$, by a result of Bowditch \cite{BowditchConvergenceGroups}.\\
$(ii)$ Let $\Gamma$ be a hyperbolic group and $\partial \Gamma$ its Gromov boundary. Let $H_1$ and $H_2$ be two quasiconvex subgroups of $\Gamma$. Then $\Lambda H_1 \cap \Lambda H_2 = \Lambda(H_1 \cap H_2)$ by a result of \cite{GromovAsymptotic}.\\

\subsubsection{The finite height property.}

Recall that, for $\Gamma$ a discrete group and $H$ a subgroup, the \textit{height} of $H$ is the supremum of the set of integers $n \in \mathbb N$ such that there exist distinct cosets $\gamma_1 H ,\ldots,\gamma_n H \in G/H$ such that the intersection $\gamma_1 H \gamma_1^{-1} \cap \ldots \cap \gamma_n H \gamma_n^{-1}$ is infinite (see \cite{SageevWidth}). If such a supremum is infinite, we say that $H$ is of \textit{infinite height} in $\Gamma$. Otherwise, $H$ is said to be of \textit{finite height} in $\Gamma$. A quasiconvex subgroup of a hyperbolic group is of finite height, by a result of \cite{SageevWidth}.\\

\begin{definition}[finite height property]
We say that the $E\cZ$-complex of spaces $D(\cY)$ compatible with the complex of groups $G(\cY)$ satifies the \textit{finite height property} if for every pair of simplices $\sigma \subset \sigma'$ of $Y$, $G_{\sigma'}$ is of finite height in $G_\sigma$.
\label{finiteheightproperty}
\end{definition}

\section{The geometry of the action.}
In this section, we gather a few geometric tools that will be used to construct a topology on $\overline{EG} = EG \cup \partial G$. From now on, we assume that the $E\cZ$-complex of spaces $D(\cY)$ compatible with $G(\cY)$ satisfies the limit set property and the finite height property. We further assume that the action of $G$ on $X$ is acylindrical and we fix an \textit{acylindricity constant} $A>0$, that is, a constant such that every subcomplex of $X$ of diameter at least $A$ has a finite pointwise stabiliser.
\subsection{Some geometric results about $M_\kappa$-simplicial complexes.}
\begin{definition}
Let $d$ be the simplicial metric on $X$, and choose a base point $v_0 \in X$. For $x \in X$, we denote by $[v_0, x]$ the unique geodesic segment from $v_0$ to $x$, and by $\gamma_x: [0, d(v_0,x)] \ra X$ its parametrisation. For subsets $K, L$ of $X$, we define $\mbox{Geod}(K,L)$ as the set of points lying on a geodesic segment from a point of $K$ to a point of $L$.
\end{definition}

We recall the following proposition of Bridson.
\begin{prop}[containment lemma, Bridson \cite{BridsonPhD}]
For every $n$ there exists a constant $k$ such that for every finite subcomplex $K \subset X$ spanned by at most $n$ simplices, any geodesic path contained in the open star of $K$ meets at most $k$ simplices. \qed
\label{containment}
\end{prop}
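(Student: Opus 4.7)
The plan is to exploit the fact that $X$ inherits its $M_\kappa$-structure from the finite complex $Y$, so $X$ has only finitely many isometry types of simplices. This yields uniform positive lower bounds on various geometric quantities: the inradius of each simplex, the length of each edge, and most importantly a uniform lower bound $\alpha_0 > 0$ on the angle, measured in the link of a face, between any two distinct simplices meeting along that common face. These bounds come from compactness of the space of isometry types and will control how a geodesic can ``spread'' across simplices sharing a common face.

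First I would handle the case $n = 1$, where $K$ is a single simplex $\sigma$. Open simplices of $\mbox{st}(\sigma)$ are in bijection with simplices of the link $\mbox{lk}(\sigma)$, itself an $M_1$-complex with finitely many isometry types in dimension $\dim X - \dim \sigma - 1$. A geodesic segment contained in $\mbox{st}(\sigma)$ can be tracked by radial projection onto $\mbox{lk}(\sigma)$: the simplices of $X$ it crosses correspond to the simplices of $\mbox{lk}(\sigma)$ visited by the projected local geodesic. By induction on the dimension of $X$ (the case $\dim X = 1$ being immediate from the uniform lower bound on edge length), one reduces to the analogous statement for local geodesics in the link, yielding a constant $k_1$ depending only on $\dim X$ and the isometry types involved.

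For the general case I would enumerate the simplices of $K$ as $\sigma_1, \ldots, \sigma_m$ with $m \leq n$, and observe that $\mbox{st}(K) = \bigcup_i \mbox{st}(\sigma_i)$. The portion of the geodesic $\gamma$ inside each $\mbox{st}(\sigma_i)$ splits into a finite union of geodesic subsegments, each meeting at most $k_1$ simplices by the base case, so it suffices to bound the number of maximal subsegments of $\gamma$ lying in a single $\mbox{st}(\sigma_i)$. Here the CAT(0) property of $X$ together with the uniform angle bound $\alpha_0$ forces any transition of $\gamma$ out of one $\mbox{st}(\sigma_i)$ and into another to consume a definite amount of angular turning, so that only finitely many such transitions, bounded by some $k_2 = k_2(n)$, can occur along a single geodesic. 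Setting $k := n \cdot k_1 \cdot k_2$ then finishes the argument.

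The main obstacle is the dimensional induction underlying the base case: the link of a simplex in $X$ is a spherical ($M_1$) complex, so one must verify that the finite-isometry-types hypothesis and the lower angle bound survive the passage from a non-positively curved complex to its positively curved link, and that local-geodesic control in the link translates back to genuine geodesic control in $X$. This is where the CAT(0) property of $X$ enters essentially, guaranteeing that the radial projection of a geodesic is well-behaved and that small-angle shortcuts across simplex boundaries are ruled out.
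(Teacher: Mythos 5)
The paper does not actually prove this proposition: it is quoted from Bridson's thesis and stated with the proof omitted, so the only comparison available is with the standard argument there. Your overall strategy---use the finiteness of isometry types of simplices to extract uniform geometric constants, and reduce to links by induction on dimension---is indeed the skeleton of Bridson's proof, but as written your sketch has two genuine gaps.

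First, you never record the one observation that makes any counting argument possible: since $X$ has finitely many shapes, the closed star of a connected subcomplex spanned by at most $n$ simplices has diameter bounded by a constant $L_n$, so the geodesic in question has length at most $L_n$. (The paper itself extracts exactly this constant $L_n$ in the proof of Lemma \ref{finite}.) Without a length bound there is nothing to play the number of crossings off against. Second, and more seriously, the step bounding the number of maximal subsegments of $\gamma$ inside a single $\mbox{st}(\sigma_i)$ by an ``angular turning'' argument does not work: a geodesic in a CAT(0) space accumulates no turning, an excursion of $\gamma$ out of $\mbox{st}(\sigma_i)$ and back can be arbitrarily short (it can graze a low-dimensional face), so no definite quantity---angular or metric---is consumed per transition; moreover the decomposition into maximal subsegments is not even well defined, since the preimages $\gamma^{-1}(\mbox{st}(\sigma_i))$ overlap and may a priori have many components. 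What actually closes the argument is Bridson's taut-string theorem: in an $M_\kappa$-complex with finitely many shapes, a geodesic of length at most $\ell$ meets at most $N(\ell)$ simplices. Combined with the diameter bound $L_n$ this yields the lemma at once, with no need to decompose over the stars of the individual simplices of $K$. That theorem is proved by essentially the link-and-dimension induction you gesture at in your base case, but it must be set up for \emph{local} geodesics of bounded length in $M_1$-complexes (links are positively curved, and the projected paths are only local geodesics of angular length at most $\pi$); this is the genuinely hard part of the proof, and in your sketch it is asserted rather than carried out.
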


\begin{lem}[finiteness lemma]
Let $X$ be as before. Then
\begin{itemize}
 \item[$(i)$]  for every $n \geq 1$, there exists a constant $k$ such that for every pair of finite connected subcomplexes $K, K'$ spanned by at most $n$ simplices, $\mbox{Geod}(v_0, K)$ meets at most $k$ simplices inside the open star of $K'$.
 \item[$(ii)$] for every finite subcomplex $K \subset X$, $\mbox{Geod}(v_0,K)$ meets only finitely many simplices.
\end{itemize}
\label{finite}
\end{lem}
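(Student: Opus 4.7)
The argument splits into deducing (ii) from (i) by a compactness argument, and proving (i) by combining the containment lemma with CAT(0) convexity.

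\textbf{Proof of (ii) assuming (i).} The map $(x,t)\in K\times[0,1]\mapsto \gamma_x(t\cdot d(v_0,x))\in X$ is continuous since geodesics in a CAT(0) space depend continuously on their endpoints, so its image $\mbox{Geod}(v_0,K)$ is compact as the continuous image of the compact set $K\times[0,1]$. Every point of $X$ lies in the open star of some vertex, so $\{\mbox{st}(v):v\in V(X)\}$ is an open cover of $\mbox{Geod}(v_0,K)$ admitting, by compactness, a finite subcover $\mbox{st}(v_1),\ldots,\mbox{st}(v_m)$. Applying (i) to each pair $(K,\{v_i\})$ bounds the number of simplices met in each $\mbox{st}(v_i)$; summing over $i$ yields (ii).

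\textbf{Proof of (i).} Fix $K,K'$ finite connected subcomplexes spanned by at most $n$ simplices. Since every simplex of $\mbox{st}(K')$ has a face in $K'$ and $K'$ contains at most $n$ simplices, a pigeonhole argument reduces the problem to the case where $K'=\{\sigma\}$ is a single simplex, at the cost of a factor of $n$. Since $X$ has finitely many isometry types of simplices, simplex diameters are uniformly bounded by some constant $D$, so $\mbox{st}(\sigma)\subset \overline{N_D}(\sigma):=\{y\in X:d(y,\sigma)\le D\}$. As $\sigma$ is geodesically convex, $d(\cdot,\sigma)$ is a convex function on the CAT(0) space $X$ and $\overline{N_D}(\sigma)$ is a closed convex subset. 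Consequently, for every $x\in K$, the intersection $[v_0,x]\cap \overline{N_D}(\sigma)$ is a single connected subsegment $J_x\subset[v_0,x]$; the restriction of $[v_0,x]$ to any connected component of $J_x\cap\mbox{st}(\sigma)$ is a geodesic path contained in $\mbox{st}(\sigma)$, which by the containment lemma (Proposition \ref{containment}) crosses at most $k_0$ simplices, with $k_0$ depending only on $n$.

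\textbf{Main obstacle.} The delicate step is upgrading this per-geodesic control into a uniform bound on the \emph{total} number of simplices of $\mbox{st}(\sigma)$ crossed by the whole family $\{[v_0,x]:x\in K\}$, and simultaneously bounding the number of connected components of each $J_x\cap\mbox{st}(\sigma)$. I would handle this by exploiting the cone structure of $\mbox{Geod}(v_0,K)$: the directions in which geodesics $[v_0,x]$, $x\in K$, enter and leave $\sigma$ form a continuous image, in the CAT(1) link $\mbox{Lk}(\sigma,X)$, of the compact complex $K$, which has bounded combinatorial complexity. A containment-type argument inside $\mbox{Lk}(\sigma,X)$ (or, equivalently, an iteration of the previous CAT(0) convexity step now applied in the link) should then bound the number of simplices $\tau\supset\sigma$ meeting $\mbox{Geod}(v_0,K)$ uniformly in $n$; multiplying by the at most $n$ choices of $\sigma\in K'$ yields the desired constant $k=k(n)$.
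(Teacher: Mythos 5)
Your proof of part (i) stops exactly where the lemma has content. The per-geodesic bound you extract from the containment lemma is the easy half: since $X$ is not locally finite, an uncountable family of geodesics, each individually meeting at most $k_0$ simplices of $\mbox{st}(\sigma)$, could a priori sweep through infinitely many distinct simplices in total, and it is this uniformity over the family $\{[v_0,x]:x\in K\}$ that must be proved. The mechanism you sketch for it does not close the gap. First, the map sending $x\in K$ to the entry/exit direction of $[v_0,x]$ in the link of $\sigma$ need not be continuous: as $x$ varies, $[v_0,x]$ can pass from meeting $\sigma$ to missing it altogether, and entry points can jump between faces. Second, and more fundamentally, even a genuinely compact subset of the CAT(1) link $\mbox{Lk}(\sigma,X)$ can meet infinitely many simplices when the link is not locally finite -- that is precisely the statement you are trying to prove for $X$, transplanted one dimension down, so a ``containment-type argument in the link'' is not available without essentially redoing the whole lemma there. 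You also flag but do not resolve the bound on the number of components of $J_x\cap\mbox{st}(\sigma)$. What is missing is a uniform comparison between the \emph{different} geodesics of the family. The paper supplies it by a quantitative pigeonhole: using the uniform lower bound $l>0$ on the distance from a closed simplex to its link, it covers $K$ by boundedly many balls of radius $l/4$ and cuts the bounded range of times at which the geodesics can traverse $\mbox{st}(K')$ into intervals of length $l/4$; if more than $k$ simplices were met, one extracts $n+1$ distinct simplices met at points that are pairwise at distance at most $3l/4$ (here convexity of the CAT(0) metric is what compares two different geodesics issued from $v_0$), whence by the definition of $l$ these simplices are pairwise incident and all lie in the star of a single vertex, contradicting the bound on the number of simplices of $K'$. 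Some argument of this kind, comparing the geodesics to one another rather than treating each separately, is indispensable and absent from your write-up.

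Your deduction of (ii) from (i), by contrast, is correct and is genuinely different from the paper's: $\mbox{Geod}(v_0,K)$ is compact as the continuous image of $K\times[0,1]$ (geodesics in a complete CAT(0) space depend continuously on endpoints), open stars of vertices are metrically open because $X$ has finitely many isometry types of simplices, so a finite subcover by stars exists and (i) applied to each pair $(K,\{v_i\})$ finishes the argument. The paper instead runs an induction on the maximal number of simplices that a single geodesic from $v_0$ to $K$ can meet, peeling off one layer of simplices at a time; your compactness argument is shorter and avoids that bookkeeping. But since (ii) rests entirely on (i), the proposal as a whole remains incomplete.
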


\begin{proof}
 $(i)$ Since $X$ has finitely many isometry types of simplices, there is a lower bound $l$ on the set of distances from a closed simplex to its link (that is, the boundary of its open star), and an upper bound $L_n$ on the set of diameters of the closed star of finite connected subcomplexes spanned by at most $n$ simplices. Moreover, there exists an integer $m$ such that any finite subcomplex consisting of at mosts $n$ simplices is covered by $m$ balls of radius $\frac{l}{4}$. Now take $k$ to be an integer greater than $4(n+1)(L_n/l +1) +1$. 

We now show by contradiction that $\mbox{Geod}(v_0,K) $ meets at most $d_n$ simplices inside the open star of $K'$. If this was not the case, a pigeonhole argument shows that there exist distinct elements $x_0,\ldots,x_n$ in $K$ and distinct closed simplices $\sigma_0, \ldots, \sigma_n$  in the open star of $K'$ such that the geodesic segment from $v_0$ to $x_i$ meets $\sigma_i$ at a time $t_i$, with $|t_i - t_j| \leq \frac{l}{4}$ for distinct $i,j =0, \ldots, n$. Now since $X$ is a CAT(0) space, 
$$d(\gamma_{x_i}(t_i), \gamma_{x_j}(t_j)) \leq d(\gamma_{x_i}(t_i), \gamma_{x_i}(t_j)) + d(\gamma_{x_i}(t_j), \gamma_{x_j}(t_j)) \leq |t_i - t_j| + 2\frac{l}{4} \leq 3\frac{l}{4}.$$
Hence, by definition of $l$, distinct simplices $\sigma_i$, $\sigma_j$ have a nonempty intersection. In particular, there exists a vertex $v$ in the closed star of $K'$ containing each simplex $\sigma_i, i=0, \ldots, n$. But since $K'$ meets at most $n$ simplices, there can exist only $n$ simplices in the open star of $K'$ and contained in the open star of a single vertex. We thus have our contradiction.\\
$(ii)$ By the containment lemma \ref{containment}, for every finite sucomplex $K$ there exists an integer $n$ such that any geodesic from $v_0$ to $K$ meets at most $n$ simplices. The proof thus boils down to the following property, which we prove by induction on $n$.\\

$(H_n):$ Let $K$ be a finite subcomplex such that any geodesic from $v_0$ to $K$ meets at most $n$ simplices. Then $\mbox{Geod}(v_0,K)$ meets only finitely many simplices.\\

The result is clear for $n=1$. Suppose we have proven it up to rank $n$, and let $K$ be a finite subcomplex such that any geodesic from $v_0$ to $K$ meets at most $n+1$ simplices. Using the previous result, we know that $\mbox{Geod}(v_0,K) \cap \mbox{st}(K)$ meets finitely many simplices. For every $x \in K$, pick the last simplex touched by $[v_0,x]$ before entering the open star of $K$. These simplices span a finite subcomplex such that any geodesic from $v_0$ to $K'$ meets at most $n$ simplices. Now, a simplex meeting $\mbox{Geod}(v_0, K)$ either meets $ \mbox{Geod}\big(v_0, \mbox{st}(K) \setminus \mbox{st}(K) \big)$ or is contained in the open star of $K$. By the induction hypothesis, there are finitely many simplices meeting  $ \mbox{Geod}\big(v_0, \mbox{st}(K) \setminus \mbox{st}(K) \big)$. And by $(i)$, there are finitely many simplices in the open star of $K$ which meet $\mbox{Geod}(v_0,K)$, which concludes the induction step.
\end{proof}

\begin{definition}
For $\xi \in \partial_{Stab} G$, the finiteness lemma \ref{finite} yields an integer $m_\xi$ such that for every $x$ in $X$, the subset $\mbox{Geod}(x, D(\xi))$ meets at most $m_\xi$ simplices in $\mbox{st}(D(\xi))\setminus D(\xi)$.
\end{definition}

Up to rescaling the metric of $X$, we can assume that the constant $l$ defined in the previous proof is at least $1$, which will lighten notations in the rest of the article: 

\begin{center}
\noindent \textit{From now on, we will assume that the distance from any simplex to the boundary of its closed star is at least $1$}.
\end{center}

\subsection{Domains and their geometry.}
In this paragraph, we study the topological properties of the identifications made to build the boundary of $G$. 

\begin{definition}
 Let $\xi \in \partial_{Stab} G$. We define $D(\xi)$, called the \textit{domain} of $\xi$, as the subcomplex spanned by simplices $\sigma$ such that $\xi \in \partial G_\sigma$. We denote by $V(\xi)$ the set of vertices of $D(\xi)$.
\end{definition}

The aim of this paragraph is to prove the following:

\begin{prop}
Domains are finite convex subcomplexes of $X$ whose diameters are uniformly bounded above. 
\label{finitedomain}
\end{prop}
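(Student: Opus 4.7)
The plan is to verify four properties of $D(\xi)$ in turn---that it is closed under faces, locally finite at each simplex, has uniformly bounded diameter, and is convex---from which finiteness is immediate. The three ingredients at our disposal are the limit set property \ref{limitsetproperty}, the finite height property \ref{finiteheightproperty}, and the acylindricity of the $G$-action on $X$ with constant $A$.

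First I would check closure under faces: if $\tau \subset \sigma$ and $\sigma \in D(\xi)$, the embedding $\phi_{\tau, \sigma}\colon \partial G_\sigma \hookrightarrow \partial G_\tau$ realises $\partial G_\sigma$ as $\Lambda G_\sigma \subset \partial G_\tau$ by the limit set property, so $\xi$ corresponds to a point of $\partial G_\tau$ and $\tau \in D(\xi)$. Next I would establish local finiteness at each simplex $\sigma \in D(\xi)$: a simplex $\tau \supsetneq \sigma$ lies in $D(\xi)$ exactly when $\xi \in \Lambda G_\tau \subset \partial G_\sigma$, and the second part of the limit set property, applied inductively, gives that for any finite family $\tau_1, \ldots, \tau_n$ of such simplices
$$\xi \in \Lambda G_{\tau_1} \cap \cdots \cap \Lambda G_{\tau_n} = \Lambda(G_{\tau_1} \cap \cdots \cap G_{\tau_n}),$$
forcing $G_{\tau_1} \cap \cdots \cap G_{\tau_n}$ to be infinite. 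There are only finitely many $G_\sigma$-orbits of simplices strictly containing $\sigma$ (since $Y$ is finite), and within each orbit of a representative $\tau_0$, the simplices in $D(\xi)$ correspond to distinct cosets of $G_{\tau_0}$ in $G_\sigma$ whose stabilisers are the conjugates $gG_{\tau_0}g^{-1}$; the finite height of $G_{\tau_0}$ in $G_\sigma$ then bounds the number of such cosets yielding an infinite intersection of conjugates. Hence $\{\tau \in D(\xi) : \sigma \subset \tau\}$ is finite.

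The core of the argument is the uniform diameter bound, which is where acylindricity enters. For any finite family $\{\tau_i\} \subset D(\xi)$ sharing a common face $\rho$, the intersection $\bigcap_i G_{\tau_i}$ is infinite by the preceding step and pointwise stabilises $\bigcup_i \tau_i$, so the acylindricity assumption gives $\mbox{diam}(\bigcup_i \tau_i) \leq A$. To leverage this globally I would use the fact that two representatives of $\xi \in \partial_{Stab} G$ are linked by a chain of identifications along edges of the barycentric subdivision of $X$, so any two simplices of $D(\xi)$ can be joined by a chain inside $D(\xi)$ whose consecutive simplices are in face relation. Combining the local acylindricity estimate with a careful selection of such chains running along CAT(0) geodesics should give both the convexity of $D(\xi)$ (locally convex around each simplex by the star argument, and connected via the chain) and a uniform bound $\mbox{diam}(D(\xi)) \leq A$. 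Finiteness is then automatic: a connected subcomplex of bounded diameter in which each simplex has only finitely many strict cofaces inside $D(\xi)$ must itself have finitely many simplices.

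The step I expect to be the main obstacle is exactly this global passage from the local limit set property to the global diameter bound and convexity: the limit set property is stated inside each individual $\partial G_\sigma$ and must be propagated through the face-relation chain while keeping the pointwise stabiliser of the emerging subcomplex infinite. This is precisely the point where the interplay between acylindricity and the iterated limit set property should do the decisive work.
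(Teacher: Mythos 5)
Your treatment of local finiteness is essentially the paper's: the iterated limit set property forces $\bigcap_i G_{\tau_i}$ to be infinite for cofaces $\tau_i$ of a fixed $\sigma$ inside $D(\xi)$, and cocompactness plus finite height then bound their number. But there is a genuine gap at exactly the point you flag as ``the main obstacle'', and it is not a technicality one can route around by chaining local estimates. Your local acylindricity statement --- that a family of simplices of $D(\xi)$ sharing a common face has diameter at most $A$ --- cannot be propagated along a face-relation chain to a \emph{uniform} bound: concatenating $k$ such local bounds only yields something like $kA$, and nothing in your argument controls $k$. Likewise ``locally convex around each simplex plus connected via the chain'' is not established and would not by itself give convexity.

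The missing idea is that the inductive application of the limit set property should be carried out along the \emph{entire} chain (what the paper calls a $\xi$-path), not just within a single star. Concretely: if $v_0,\ldots,v_n$ are the vertices of a chain of identifications representing $\xi$, one shows by induction on $n$ that $\xi\in\Lambda\bigl(\bigcap_{0\le i\le n}G_{v_i}\bigr)$ and hence that the full intersection $H=\bigcap_{0\le i\le n}G_{v_i}$ is infinite; the induction step uses $\Lambda\bigl(\bigcap_{i\le n}G_{v_i}\bigr)\cap\Lambda G_{[v_n,v_{n+1}]}=\Lambda\bigl(\bigcap_{i\le n+1}G_{v_i}\bigr)$ inside $\partial G_{v_n}$. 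With this single infinite group $H$ fixing all the $v_i$ pointwise, acylindricity applies \emph{once} to the whole configuration and gives $\mathrm{diam}\,D(\xi)\le A$ directly, with no chaining. The same $H$ yields convexity: since $X$ is CAT(0) and $H$ fixes $\sigma_x$ and $\sigma_{x'}$ pointwise, $H$ fixes the geodesic $[x,x']$ pointwise, and the fixed-point set of an infinite subgroup whose limit set contains $\xi$ lies in $D(\xi)$ (this is the paper's Lemma \ref{stabiliserfix}, again proved by induction along a path of simplices). Without this global-intersection lemma your argument does not close.
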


\begin{cor}
 For $\xi \in \partial_{Stab} G$, the containment lemma \ref{containment} yields an integer $d_\xi$ such that any geodesic path inside $\mbox{st}(D(\xi))$ meets at most $d_\xi$ simplices. Furthermore, let $D$ be an upper bound of all the $d_\xi, \xi \in \partial_{Stab} G$.
\end{cor}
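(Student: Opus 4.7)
The statement is a direct consequence of the containment lemma \ref{containment} combined with Proposition \ref{finitedomain}, together with the finiteness of isometry types of simplices. My plan is as follows.

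First I would fix $\xi \in \partial_{Stab} G$ and apply Proposition \ref{finitedomain}: $D(\xi)$ is a finite subcomplex of $X$, spanned by some number $n_\xi$ of simplices. Applying the containment lemma \ref{containment} with $n = n_\xi$ yields a constant $k = k(n_\xi)$ such that any geodesic path contained in $\mbox{st}(D(\xi))$ meets at most $k$ simplices; I would then set $d_\xi := k$. This gives the first sentence.

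For the "furthermore" clause, the point is to see that $d_\xi$ can in fact be chosen uniformly in $\xi$. Proposition \ref{finitedomain} provides a uniform upper bound $\Delta$ on the diameters of all domains $D(\xi)$. Because the $M_\kappa$-structure on $X$ is pulled back from the finite complex $Y$, it has only finitely many isometry types of simplices; in particular there is a lower bound on simplex volumes and hence a uniform bound $N$ on the number of simplices contained in any subcomplex of diameter $\leq \Delta$. Thus $n_\xi \leq N$ for every $\xi \in \partial_{Stab} G$. Applying the containment lemma \emph{once}, with input $n = N$, produces a single integer $k = k(N)$ that dominates every $d_\xi$, and one sets $D := k$.

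I do not expect any genuine obstacle here — the corollary is essentially a bookkeeping step. The only point requiring a moment of care is the passage from "uniformly bounded diameter" to "uniformly bounded number of simplices," which relies on the standing assumption that $X$ inherits its $M_\kappa$-structure from the finite complex $Y$ and therefore has only finitely many isometry types of simplices. Once this is noted, the containment lemma is applied a single time to a uniform value of $n$, and both conclusions follow simultaneously.
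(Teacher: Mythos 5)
The first half of your argument is exactly what the paper intends: $D(\xi)$ is finite by \ref{finitedomain}, so the containment lemma \ref{containment} applied with $n=n_\xi$ produces $d_\xi$. The corollary carries no proof in the paper, and this step really is immediate.

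Your uniformity argument, however, has a genuine gap. You pass from the uniform diameter bound $\Delta$ on domains to a uniform bound $N$ on the number of simplices they span via a lower bound on simplex volume. That inference fails because $X$ is in general \emph{not} locally finite --- the paper stresses this at the start of Section 1, since the actions considered are non-proper --- so a subcomplex of diameter at most $\Delta$ may contain arbitrarily many simplices (picture infinitely many triangles sharing a common edge: bounded diameter, unbounded simplex count, unbounded total volume). Finitely many isometry types gives a volume lower bound per simplex but no upper bound on the volume of a bounded region, so no counting bound follows. The correct source of the uniform bound $n_\xi \leq N$ is the mechanism inside the proof of \ref{finitedomain}: the limit set property and the finite height property, applied over the finitely many simplices of the compact quotient $Y$, give a \emph{uniform} bound on the number of simplices of $D(\xi)$ containing any given simplex (uniform local finiteness of domains), and this, combined with the uniform diameter bound coming from acylindricity, bounds $n_\xi$ uniformly. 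This is what the paper means when it asserts at the end of the proof of \ref{finitedomain} that the bound can be chosen uniform in $\xi$, and it is the fact invoked again in \ref{basepoint}. Once $n_\xi \leq N$ is justified this way, your single application of \ref{containment} with $n=N$ does finish the proof.
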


Recall that $\Omega(\cY)$ is defined in \ref{defboundary} as the disjoint union of the $\partial G_v$'s ($v \in V(X)$) and that $\partial_{Stab} G$ is a quotient of $\Omega(\cY)$ defined by making identifications along edges of $X$. We start by proving the following proposition: 

\begin{prop}
 Let $v$ be a vertex of X. Then the projection $\pi : \partial G_v \ra \partial G$ is injective.
\label{xiloop}
\end{prop}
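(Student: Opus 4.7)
The proposition asserts that the equivalence relation $\sim$ on $\Omega(\cY)$ does not identify distinct elements within a single fiber $\partial G_v$. My plan is to use the simple connectivity of $X$, together with the cocycle condition built into the definition of an $E\cZ$-complex of spaces compatible with $G(\cY)$, to show that the ``holonomy'' of any chain of elementary identifications returning to $v$ is trivial.

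First, I would unpack the hypothesis. Suppose $\xi_0, \xi_1 \in \partial G_v$ have $\pi(\xi_0) = \pi(\xi_1)$. By the definition of $\partial_{Stab} G = \Omega(\cY)/\sim$, there is a finite chain $z_0 \sim z_1 \sim \cdots \sim z_n$ of elementary identifications in $\Omega(\cY)$, with $z_0$ a representative of $\xi_0$ and $z_n$ a representative of $\xi_1$. Each $z_i = [g_i, \tau_i, s_i]$ sits over a simplex $\gamma_i := g_i \tau_i$ of $X$, and each elementary step $z_i \sim z_{i+1}$ corresponds to a face/coface relation between $\gamma_i$ and $\gamma_{i+1}$ in $X$. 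Thus the sequence $(\gamma_i)$ defines an edge path in the first barycentric subdivision $X'$ of $X$, based at the vertex $v$.

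The main step is to reduce this edge path to the constant loop at $v$, while tracking the effect on boundary points. Since $X$ is CAT(0), it is simply connected, hence so is $X'$ (and its 2-skeleton suffices for $\pi_1$-arguments). By standard combinatorial topology, any edge loop in a simply connected simplicial complex can be reduced to the trivial loop via backtracking cancellations (follow an edge and then its reverse) and triangle reductions (replace two edges of a 2-simplex by the third). I would verify that each such move preserves the overall identification between endpoints. Backtracking is immediate because the limit set embeddings $\phi_{[\sigma \sigma']}$ are injective, so an edge identification and its formal inverse compose to the identity where defined. Triangle reductions correspond to chains $\sigma^1 \subset \sigma^2 \subset \sigma^3$ of nested simplices of $X$ that span a 2-simplex of $X'$, and here the cocycle relation $g_{a,b} \circ \phi_{ab} = \phi_a \circ \phi_b$ from the definition of an $E\cZ$-complex of spaces compatible with $G(\cY)$, combined with the $G_\sigma$-equivariance of the boundary embeddings and with the orbit equivalence $\simeq$ defining $\Omega(\cY)$, makes the composition of the three elementary identifications around such a triangle equal the identity. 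Reducing our edge path to the trivial loop then yields $\xi_0 = \xi_1$.

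The main obstacle will be the careful bookkeeping of the twisting elements $F([\tau \tau'])^{\pm 1}$ inserted into the group coordinate at each elementary step: they do not vanish, but combine in a controlled way around each triangle, where the cocycle relations on $G(\cY)$ and on $D(\cY)$ interact with $\simeq$ to precisely absorb the extra factors. Once this is verified, the desired injectivity follows from the simple connectivity of $X'$ and the two types of reductions described above.
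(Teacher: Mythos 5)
Your proposal has a genuine gap at its central step: the claim that a null-homotopy of the support loop in the (simply connected) barycentric subdivision $X'$ can be decorated with boundary points. The elementary identifications defining $\sim$ only transfer a point $\xi$ across a simplex $\sigma$ if $\xi$ actually lies in $\partial G_\sigma$, i.e.\ in the image of $\phi_{v,\sigma}$, which by the limit set property is the limit set $\Lambda G_{\sigma}\subset\partial G_v$. Along the original chain of identifications this is guaranteed, but a null-homotopy in $X'$ necessarily passes through triangles $\{\hat\sigma^1,\hat\sigma^2,\hat\sigma^3\}$ ($\sigma^1\subset\sigma^2\subset\sigma^3$) whose top simplex $\sigma^3$ was never visited by the path; for a general loop with no backtracking and no two consecutive edges in a common triangle, every reduction must begin with a \emph{lengthening} move that introduces such a new simplex, and there is no reason that $\xi\in\Lambda G_{\sigma^3}$ there. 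At such a triangle the ``identification around the 2-cell'' you want to invoke is simply undefined, so simple connectivity of $X$ alone does not let you carry $\xi$ around the homotopy. (This is also visible from the hypotheses: your argument uses none of the limit set property, the finite height property, or acylindricity, yet the proposition is stated under all of them, and the paper's proof genuinely uses the first.)

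The paper's proof supplies exactly the missing control. It first shows (Lemmas \ref{stabiliserfix} and \ref{xipath1}, using the limit set property) that the pointwise stabiliser $H=\bigcap_i G_{v_i}$ of the vertices of a $\xi$-loop is \emph{infinite} and that $\xi\in\Lambda H$; since $X$ is CAT(0), $H$ then fixes pointwise a contractible region containing the loop, and the finiteness lemma extracts from it a finite contractible 2-complex (the ``hull'') bounding the loop. Every triangle $\sigma$ of the hull satisfies $H\subset G_\sigma$, hence $\xi\in\Lambda H\subset\Lambda G_\sigma=\partial G_\sigma$, so one can lift $\xi$ to $\partial G_\sigma$ and the cocycle/commutativity relations (the part of your argument that is correct) let one peel off triangles one at a time by induction on the size of the hull. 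In short, your bookkeeping of the twisting elements is the easy half; the essential content is choosing the null-homotopy inside the fixed-point set of an infinite group whose limit set contains $\xi$, and that step is absent from your proposal.
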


\begin{definition}
 Let $\xi \in \partial_{Stab} G$. A $\xi$-\textit{path} is the data $\{(v_i)_{0 \leq i \leq n}, (\xi_i)_{0 \leq i \leq n}, (x_i)_{1 \leq i \leq n }\}$ of: 
\begin{itemize}
 \item a sequence $v_0, \ldots, v_n$ of adjacent vertices of $X$,
 \item a sequence $\xi_0, \ldots, \xi_n$ of elements of $\Omega(\cY)$, such that $\xi \in \partial G_{v_i}$ for every $i$, and such that each $\xi_i$ is in the equivalence class $\xi$,
 \item a sequence $x_1, \ldots, x_n$ of elements of $\Omega(\cY)$, such that $x_i \in \partial G_{[v_{i-1}, v_i]}$ for every $i$, and such that $\phi_{v_{i-1},[v_{i-1},v_i]}(x_i) = \xi_{i-1}$ (resp $\phi_{v_{i},[v_{i-1},v_i]}(x_i) = \xi_{i}$). 
\end{itemize}
To lighten notations, a $\xi$-path will sometimes just be denoted $[v_0, \ldots, v_n]_\xi$. The path in the $1$-skeleton of $X$ induced by a $\xi$-path is called the \textit{support} of $[v_0, \ldots, v_n]_\xi$, and denoted $[v_0, \ldots, v_n]$. If $v_0=v_n$, a $\xi$-path will rather be called a $ \xi$-\textit{loop}.
\end{definition}

\begin{definition}
 A \textit{path of simplices} is a sequence of open simplices $\sigma_1, \ldots, \sigma_n$ such that $\overline{\sigma_i} \subset \overline{\sigma_{i+1}}$ or $\overline{\sigma_{i+1}} \subset \overline{\sigma_{i}}$ for every $i= 1, \ldots, n-1$. Equivalently, it is a finite path in the first barycentric subdivion of $X$. The integer $n$ is called the \textit{length} of the path of simplices.
\end{definition}

\begin{lem}
 Let $v_0, \ldots, v_n$ be vertices of $X$, $H = \cap _{0\leq i \leq n} G_{v_i}$, and $K$ be a connected subcomplex of $X$ pointwise fixed by $H$. Suppose that $H$ is infinite, and let $\xi \in \partial_{Stab} G$ such that, in $G_{v_0}$, we have 
$$\xi \in \Lambda H \subset \partial G_{v_0}.$$ 
Then $\xi \in \Lambda H \subset \partial G_{\sigma}$ for every simplex $\sigma$ of $K$, hence $K \subset D(\xi)$.
\label{stabiliserfix}
\end{lem}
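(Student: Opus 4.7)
The plan is to transport $\xi$ step-by-step from $\partial G_{v_0}$ through simplices pointwise fixed by $H$, exploiting that $H \subset G_\sigma$ whenever $\sigma$ is pointwise fixed and that the gluing maps are equivariant.

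The key compatibility statement I would prove first is: whenever $\tau \subset \tau'$ are simplices of $X$ both pointwise fixed by $H$, the embedding $\phi_{\tau,\tau'}: \overline{EG_{\tau'}} \hookrightarrow \overline{EG_\tau}$ restricts to a homeomorphism from $\Lambda H \subset \partial G_{\tau'}$ onto $\Lambda H \subset \partial G_\tau$. Since $H \subset G_{\tau'}$, the map $\phi_{\tau,\tau'}$ is $H$-equivariant; for any $x \in EG_{\tau'}$ and $x' = \phi_{\tau,\tau'}(x) \in EG_\tau$ this gives $\phi_{\tau,\tau'}(Hx) = Hx'$. Because $\phi_{\tau,\tau'}$ is a continuous injection of compact Hausdorff spaces, it is a topological embedding and so preserves closures; thus $\phi_{\tau,\tau'}(\overline{Hx}) = \overline{Hx'}$, and intersecting with the respective boundaries gives the claim (using the limit set property to know that $\phi_{\tau,\tau'}$ sends $\partial G_{\tau'}$ into $\partial G_\tau$).

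Next I would build a combinatorial path from $v_0$ to any simplex $\sigma$ of $K$ that stays inside the pointwise fixed set $X^H$. Since $X$ is CAT(0) and $H$ acts by isometries, $X^H$ is closed and convex; and since $G$ acts on $X$ without inversions, every simplex meeting $X^H$ in an interior point is fixed pointwise, so $X^H$ is a convex subcomplex of $X$, whose $1$-skeleton is therefore connected. As $v_0 \in X^H$ and $K \subset X^H$, there is an edge path $v_0 = w_0, w_1, \ldots, w_m$ in the $1$-skeleton of $X^H$ terminating at a vertex $w_m$ of $\sigma$: concatenate a path from $v_0$ to some vertex of $K$ with a path from there to a vertex of $\sigma$ through $K^{(1)}$. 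The edges $e_i = [w_{i-1}, w_i]$ also lie in $X^H$, hence are pointwise fixed by $H$ as well.

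Finally I would iterate the compatibility statement along this path: starting with $\xi \in \Lambda H \subset \partial G_{v_0}$, at the $i$-th step the unique preimage in $\Lambda H \subset \partial G_{e_i}$ under $\phi_{w_{i-1},e_i}$ pushes forward via $\phi_{w_i,e_i}$ to an element of $\Lambda H \subset \partial G_{w_i}$, and all three represent the same class in $\partial_{Stab} G$ by the equivalence relation defining that quotient. After $m$ steps one reaches $\xi \in \Lambda H \subset \partial G_{w_m}$, and a final application of the compatibility statement to the face inclusion $w_m \subset \sigma$ yields $\xi \in \Lambda H \subset \partial G_\sigma$, so that $K \subset D(\xi)$. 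The main obstacle is the compatibility statement of the second paragraph, which is where both the limit set property and the topological behaviour of $\phi_{\tau,\tau'}$ on the full compactification (not merely on boundaries) are genuinely needed; the rest is mechanical iteration along a finite combinatorial path.
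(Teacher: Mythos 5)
Your proof is correct and is essentially the paper's argument: the paper disposes of this lemma in one line as ``an easy induction on the number of simplices contained in a path of simplices'' in $K$, and your compatibility statement (that each gluing map $\phi_{\tau,\tau'}$ between simplices pointwise fixed by $H$ carries $\Lambda H$ homeomorphically onto $\Lambda H$, via $H$-equivariance, the closed-embedding property, and the limit set property) is precisely the induction step being invoked. If anything, you are slightly more careful than the paper in using convexity of $X^H$ to connect $v_0$ to $K$ when $v_0$ need not lie in $K$.
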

\begin{proof}
 As $K$ is connected, it is enough to prove that for every path of simplices $\sigma_0=v_0, \ldots, \sigma_d$ contained in $K$, we have $\xi \in \partial H \subset \partial G_{\sigma}$. Now this follows from an easy induction on the number of simplices contained in such a path.
\end{proof}

\begin{lem}
Let $\xi \in \partial_{Stab} G$, $[v_0, \ldots, v_n]_\xi$ a $\xi$-path and $H= \cap _{0\leq i \leq n} G_{v_i}$. Then
\begin{itemize}
 \item $H$ is infinite,
 \item $\xi \in \Lambda H \subset \partial G_{v_i}$ for every $i= 0, \ldots, n$.
\end{itemize}
\label{xipath1}
\end{lem}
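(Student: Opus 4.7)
The plan is to proceed by strong induction on the length $n$ of the $\xi$-path, propagating limit-set information along the path via the limit set property. The base case $n = 0$ is immediate: $H = G_{v_0}$ must be infinite since $\partial G_{v_0} \ni \xi$ (the $E\cZ$-boundary of a finite group is empty), and $\xi \in \Lambda G_{v_0}$ tautologically. For $n = 1$, the key identification $G_{e_1} = G_{v_0} \cap G_{v_1} = H$ together with the equivariant homeomorphism $\phi_{v_i, e_1}: \partial G_{e_1} \cong \Lambda G_{e_1}$ supplied by the limit set property gives $\xi_i = \phi_{v_i, e_1}(x_1) \in \Lambda H \subset \partial G_{v_i}$ for $i = 0, 1$, and infiniteness of $H$ follows from the existence of $x_1 \in \partial G_{e_1}$.

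For the inductive step with $n \geq 2$, I apply the inductive hypothesis to the sub-$\xi$-path $[v_0, \ldots, v_{n-1}]_\xi$ to obtain $K := \bigcap_{j=0}^{n-1} G_{v_j}$ infinite and $\xi \in \Lambda K \subset \partial G_{v_{n-1}}$, and I apply the base case to $[v_{n-1}, v_n]_\xi$ to deduce $\xi \in \Lambda G_{e_n} \subset \partial G_{v_{n-1}}$. Working in $\partial G_{v_{n-1}}$, the goal is to combine these via the intersection part of the limit set property to produce
$$\xi \in \Lambda K \cap \Lambda G_{e_n} = \Lambda(K \cap G_{e_n}) = \Lambda H.$$

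The main obstacle is to verify that $K$ belongs to the family $\cF_{v_{n-1}}$, since the vertices $v_0, \ldots, v_{n-2}$ need not lie in $\mbox{st}(v_{n-1})$. The CAT(0) structure of $X$ supplies a canonical enlargement: for each $j \leq n-2$, any element of $G_{v_{n-1}} \cap G_{v_j}$ fixes the unique geodesic from $v_{n-1}$ to $v_j$ pointwise and in particular fixes its first simplex $\tau_j$, which lies in $\mbox{st}(v_{n-1})$; a direct verification shows $G_{v_{n-1}} \cap G_{v_j} = G_{\tau_j} \cap G_{v_j}$, so $K = \tilde K \cap \bigcap_{j \leq n-2} G_{v_j}$ where $\tilde K := \bigcap_{j \leq n-2} G_{\tau_j} \in \cF_{v_{n-1}}$. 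Since $G_{e_n} \in \cF_{v_{n-1}}$ as well, the limit set property applied to the pair $\tilde K, G_{e_n}$ yields $\xi \in \Lambda(\tilde K \cap G_{v_n})$, which is a priori larger than $\Lambda H$; the finite height property is then used to bound the discrepancy between $\tilde K \cap G_{v_n}$ and $H$ — the point being that only finitely many infinite intersections of conjugates of the edge stabilisers arise, and these finite combinatorics force $\Lambda(\tilde K \cap G_{v_n}) = \Lambda H$. This family-membership argument is the technically delicate heart of the proof.

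Once $\xi \in \Lambda H$ is established in $\partial G_{v_{n-1}}$, propagation to the remaining $\partial G_{v_i}$ is immediate: since $H$ is contained in every edge stabiliser $G_{e_k}$ of the $\xi$-path, the equivariant homeomorphism $\partial G_{e_k} \cong \Lambda G_{e_k}$ transports $\Lambda H$ coherently between $\partial G_{v_{k-1}}$ and $\partial G_{v_k}$, and iteration along the path extends the conclusion to every vertex. Infiniteness of $H$ follows automatically from the fact that $\Lambda H \ni \xi$ is nonempty.
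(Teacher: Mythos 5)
Your overall strategy --- induction on the length of the $\xi$-path, using the limit set property in $\partial G_{v_{n-1}}$ to convert $\Lambda K \cap \Lambda G_{e_n}$ into $\Lambda(K \cap G_{e_n}) = \Lambda H$, and then propagating $\xi \in \Lambda H$ to the remaining vertices along the path --- is the same as the paper's (the propagation step is exactly Lemma \ref{stabiliserfix}). The base cases, the identification $H = K \cap G_{e_n}$ via $G_{[v,w]} = G_v \cap G_w$, and the remark that $\Lambda H \ni \xi$ forces $H$ to be infinite are all fine.

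The problem is precisely the step you flag as the ``technically delicate heart''. Replacing each $G_{v_j}$ by $G_{\tau_j}$ with $\tau_j \subset \mbox{st}(v_{n-1})$ does produce a subgroup $\tilde K \in \cF_{v_{n-1}}$, but $\tilde K \supset K$, so the limit set property only yields $\xi \in \Lambda(\tilde K \cap G_{e_n})$, and the inclusion $\Lambda(K \cap G_{e_n}) \subset \Lambda(\tilde K \cap G_{e_n})$ goes the wrong way: membership in the larger limit set does not give membership in $\Lambda H$. The appeal to the finite height property cannot close this gap --- finite height bounds the number of cosets whose associated conjugates have infinite intersection and says nothing about two nested subgroups having equal limit sets; and indeed $\tilde K \cap G_{e_n}$ is in general strictly larger than $H$ (an element may fix $v_{n-1}$, $v_n$ and the first simplex of $[v_{n-1},v_j]$ without fixing $v_j$). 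So as written the inductive step does not establish $\xi \in \Lambda H$. For comparison, the paper does not perform any such enlargement: it applies the intersection formula directly to the pair $\bigl(\bigcap_{0 \le i \le n} G_{v_i},\, G_{[v_n,v_{n+1}]}\bigr)$, implicitly treating the first group as an admissible member of the relevant family. The membership subtlety you noticed is therefore a fair observation, but your proposed repair does not prove the statement, and you would need either to justify applying the intersection property to $K$ itself or to find a genuinely different argument for the equality $\Lambda(\tilde K \cap G_{e_n}) \cap \Lambda K = \Lambda H$.
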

\begin{proof}
 We show the result by induction on $n \geq 1$. The result is immediate for $n=1$ by definition of $\sim$. Suppose the result true up to rank $n$ and let $\xi \in \partial_{Stab} G$ together with a $\xi$-path $[v_0, \ldots, v_{n+1}]_\xi$. By restriction, we get a $\xi$-path $[v_0 \ldots, v_n]_\xi$ for which the result is true by the induction hypothesis. Thus $\xi \in \Lambda (\cap_{0 \leq i \leq n} G_{v_i}) \subset \partial G_{v_n}$. But since $\xi$ is also in $ \partial G_{[v_n,v_{n+1}]} = \Lambda G_{[v_n,v_{n+1}]}$ by assumption, we get
$$ \xi \in \Lambda \big( \bigcap_{0 \leq i \leq n}  G_{v_i} \big) \cap \Lambda G_{[v_n, v_{n+1}]} = \Lambda \big( \bigcap_{0 \leq i \leq n+1} G_{v_i} \big) \subset \partial G_{v_n},$$
the previous equality following from the limit set property. Now, by \ref{stabiliserfix}, we get $\xi \in \Lambda (\cap _{0\leq i \leq n} G_{v_i}) \subset \partial G_{v_i}$ for every $i= 0, \ldots, n$, which concludes the induction.
\end{proof}

\begin{proof}[Proof of \ref{xiloop}]
 Let $\xi, \xi'$ two elements of $\Omega(\cY)$ being in the image of $\partial G_v$, that are equivalent for the equivalence relation $\sim$. Then there exists a $\xi$-loop $\{(v_i)_{0 \leq i \leq n}, (\xi_i)_{0 \leq i \leq n}, (x_i)_{1 \leq i \leq n }\}$ with $\xi_0=\xi$, $\xi_n=\xi'$. It is enough to prove the result when the support of that $\xi$-loop is injective. Let $Y$ be the set of all points on a geodesic between two elements of $[v_0, \ldots, v_n]$, and $Y'$ the set of all points on a geodesic between two points of $Y$. By the previous lemma, there is an infinite subgroup $H$ of $G$ stabilising pointwise $v_0, \ldots, v_n$. As $X$ is CAT(0), $H$ also stabilises pointwise every element in $Y$. As $[v_0, \ldots, v_n] \subset Y$ is contractible inside $Y'$ (the latter space being pointwise fixed by $H$), the finiteness lemma \ref{finite} implies that we can choose a finite contractible $2$-complex $F$ bordering $[v_0, \ldots, v_n]$ that is pointwise fixed by $H$. We call such a subcomplex a \textit{hull} of the loop $[v_0, \ldots, v_n]$. Hence the result will follow from the following fact, which we now prove by induction.\\

$(H_d)$: For every $\xi \in \partial_{Stab} G$ and every $\xi$-loop $\{(v_i)_{0 \leq i \leq n}, (\xi_i)_{0 \leq i \leq n}, (x_i)_{1 \leq i \leq n }\}$ admitting a hull containing at most $d$ simplices, then $\xi_0 = \xi_n$.\\

If $d=1$, then $n=2$, and the hull considered is just a single triangle $\sigma$. Since $H \subset G_\sigma$ because $H$ stabilises $\sigma$ pointwise, we can choose $x \in \partial G_\sigma$ such that $\phi_{v_1, \sigma}(x) = \xi_1$. From the commutativity of the diagram of embeddings for a simplex, it follows that $\phi_{ [v_0, v_1], \sigma}(x) = x_1$ and $\phi_{ [v_1, v_2], \sigma}(x) = x_2$. Hence $\xi_0 = \phi_{v_0, [v_0, v_1]}(x_1) = \phi_{v_0,\sigma}(x) =\phi_{v_0, [v_2, v_0]}(x_2) = \xi_2$.

Suppose the result true up to rank $d$, and let $\xi \in \partial_{Stab} G$, together with a $\xi$-loop $\{(v_i)_{0 \leq i \leq n}, (\xi_i)_{0 \leq i \leq n}, (x_i)_{1 \leq i \leq n }\}$ admitting a hull $F$ containing at most $d+1$ simplices. Choose any triangle $\sigma$ of $F$ containing the segment $[v_1,v_2]$. As $\sigma$ is stabilised by $H$, we can find $x \in \partial G_\sigma$ such that $\phi_{v_1, \sigma}(x) = \xi_1$. There are now two possible cases, depending of the nature of $\sigma$:
\begin{itemize}
 \item If another side of $\sigma$ is contained in the support of the $\xi$-loop, for example $[v_2,v_3]$, we set $x' = \phi_{ [v_1, v_3], \sigma}(x)$. 

Now the commutativity of the diagram of embeddings for $\sigma$ yields the following new $\xi$-loop
$$\{ (v_0, v_1, v_3, v_4, \ldots, v_n), (\xi_0, \xi_1, \xi_3, \ldots, \xi_n), (x_1,x', x_4, \ldots, x_n)\}.$$
A hull for that new loop is given by the closure of $F \setminus \sigma$, thus containing at most $d$ simplices, and we are done by induction.
\item If no other side of $\sigma$ is contained in the support of the $\xi$-loop, we set $a$ to be the remaining vertex of $\sigma$, $\alpha = \phi_{a, \sigma}(x)$, $x_2 = \phi_{ [v_1,a], \sigma}(x)$ and $x_2' = \phi_{ [a, v_2], \sigma}(x)$.

The commutativity of the diagram of embeddings for $\sigma$ yields the following new $\xi$-loop:
$$\{ (v_0, v_1, a, v_2, \ldots, v_n), (\xi_0, \xi_1, \alpha, \xi_2, \ldots, \xi_n), (x_1,x_2, x_2', x_3, \ldots, x_n)\}.$$
A hull for that new loop is given by the closure of $F \setminus \sigma$, thus containing at most $d$ simplices, and we are done by induction. \qedhere
\end{itemize}
\end{proof}

\begin{proof}[Proof of \ref{finitedomain}]
\textit{Convexity}: Let $x, x'$ be two points of $D(\xi)$. Let $v$ (resp. $v'$) be a vertex of $\sigma_x$ (resp. $\sigma_{x'}$). We can thus find a $\xi$-path $\{(v_i)_{0 \leq i \leq n}, (\xi_i)_{0 \leq i \leq n}, (x_i)_{1 \leq i \leq n }\}$ with $v_0=v$ and $v_n = v'$. As $\xi \in \partial G_{\sigma_{x}}$ and $\xi \in \partial G_{\sigma_{x'}}$, we can assume without loss of generality that its support $[v_0, \ldots, v_n]$ contains all the vertices of $\sigma_x$ and $\sigma_{x'}$. By \ref{xipath1}, this implies that the subgroup $H= \cap_{0 \leq i \leq p} G_{v_i}$ is infinite and that $\xi \in \Lambda H \subset \partial G_{v_0}$. Now since $H$ fixes pointwise all the vertices of $\sigma_x$ and $\sigma_{x'}$, and since $X$ is CAT(0), $H$ also fixes pointwise the geodesic segment $[x,x']$.  But by \ref{stabiliserfix}, the fixed-point set of $H$ is contained in $D(\xi)$, hence so is $[x, x']$. Thus $D(\xi)$ is convex.

\textit{Finiteness}: Let $\sigma$ be a simplex of $D(\xi)$ and $\sigma_1, \sigma_2,\ldots$ be a (possibly empty) sequence of simplices containing strictly $\sigma$ and contained in $D(\xi)$. It follows from the proof of \ref{xiloop} that $\xi \in \partial G_{\sigma_i} \subset \partial G_\sigma$ for every $i$. Thus, the limit set property \ref{limitsetproperty}, the finite height property \ref{finiteheightproperty} and the cocompacity of the action imply that there can be only finitely many such simplices. Thus $D(\xi)$ locally finite. To prove that it is also bounded, consider $x,x'$ two points of $D(\xi)$. By \ref{xipath1} the stabiliser of $\{x,x'\}$ is infinite. Thus the domain of $\xi$ has a diameter bounded above by the acylindricity constant. The complex $D(\xi)$ is locally finite and bounded, hence finite. Moreover, it is clear from the above argument that the bound can be chosen uniform on $\xi$.
\end{proof}

\subsection{Nestings and Families.}
\subsubsection{The convergence property and nestings.} 

\begin{definition}[the convergence property]
 We say that an $E\cZ$-complex of spaces compatible with $G(\cY)$ satisfies the \textit{convergence property} if, for every pair of simplices $\sigma \subset \sigma'$ in $Y$ and every injective sequence $(g_nG_{\sigma'})$ of cosets of $G_\sigma / G_{\sigma'}$, there exists a subsequence such that $(g_{\varphi(n)}\overline{EG_{\sigma'}})$ uniformly converges to a point in $\overline{EG_\sigma}$.
\end{definition}

\begin{center}
\textit{From now on, besides the limit set property, the finite height property and the acylindricity assumption, we assume that the $E\cZ$-complex of spaces $D(\cY)$ satisfies the convergence property.}
\end{center}

\begin{definition}
 Let $\xi \in \partial_{Stab} G$, $v$ a vertex of $D(\xi)$,  and $U$ a neighbourhood of $\xi$ in $\overline{EG_v}$. We say that a subneighbourhood $V \subset U$ containing $\xi$ is \textit{nested} in $U$ if for every simplex of $\mbox{st}(v)$ not contained in $D(\xi)$, we have
$$ \overline{EG_{\sigma}} \cap V \neq \varnothing \Rightarrow \overline{EG_{\sigma}} \subset U.$$
\end{definition}

\begin{lem}[nesting lemma]
 Let $\xi \in \partial_{Stab} G$, $v$ a vertex of $D(\xi)$ and $U$ a neighbourhood of $\xi$ in $\overline{EG_v}$. Then there exists a subneighbourhood of $\xi$ in $\overline{EG_v}$, $V \subset U$, which is nested in $U$.
\label{nesting}
\end{lem}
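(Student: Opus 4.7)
The plan is to argue by contradiction, producing a sequence of ``bad'' simplices touching arbitrarily small neighbourhoods of $\xi$ and then deriving an incompatibility either from the limit set property or from the convergence property. Since $(\overline{EG_v}, \partial G_v)$ is an $E\cZ$-structure, the space $\overline{EG_v}$ is compact metrisable; fix a compatible metric $d_v$. If no nested subneighbourhood of $\xi$ inside $U$ existed, then for each $n \geq 1$ the open set $V_n := U \cap \{y \in \overline{EG_v} : d_v(\xi,y) < 1/n\}$ would fail to be nested in $U$, so one could find a simplex $\tau_n$ of $\mbox{st}(v)$ with $\tau_n \not\subset D(\xi)$, $\overline{EG_{\tau_n}} \cap V_n \neq \varnothing$, and $\overline{EG_{\tau_n}} \not\subset U$.

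Because $Y$ is finite and every simplex of $\mbox{st}(v)$ lies over some simplex of the star of the image $\bar v$ of $v$ in $Y$, the $G_v$-action on $\mbox{st}(v)$ has only finitely many orbits. After extraction we may assume all the $\tau_n$ lie in a single $G_v$-orbit $G_v \cdot \sigma$, and write $\tau_n = g_n \sigma$ with $g_n \in G_v$, so that $\overline{EG_{\tau_n}} = g_n \overline{EG_\sigma}$ inside $\overline{EG_v}$. We then split into two cases depending on whether the sequence of cosets $(g_n G_\sigma) \in G_v/G_\sigma$ takes finitely many or infinitely many values.

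In the first case we may further assume $g_n G_\sigma = g_1 G_\sigma$ for every $n$, so $\tau_n = \tau_1$ and $\overline{EG_{\tau_n}} = g_1\overline{EG_\sigma}$ is a single compact subset $F \subset \overline{EG_v}$. As $F$ is closed and meets every $V_n$, the point $\xi$ belongs to $F$; since $\xi \in \partial G_v$, the limit set property identifies $F \cap \partial G_v$ with the image of $\partial G_{\tau_1}$ in $\partial G_v$, so $\xi \in \partial G_{\tau_1}$. By the definition of the domain this forces $\tau_1 \subset D(\xi)$, contradicting the choice of the $\tau_n$. In the second case, after a further extraction the cosets $g_n G_\sigma$ are pairwise distinct; the convergence property, applied to the pair $v \subset \sigma$ (via the identification of $G_v$ with the local group at $\bar v$), yields a sub-subsequence along which $g_n \overline{EG_\sigma}$ converges uniformly to some point $\eta \in \overline{EG_v}$. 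Since each $g_n \overline{EG_\sigma}$ meets $B(\xi, 1/n)$, uniform convergence forces $\eta = \xi$, and hence $g_n \overline{EG_\sigma} \subset U$ for all $n$ large enough, contradicting $\overline{EG_{\tau_n}} \not\subset U$. I expect the main delicate point to be the finite-coset case, where the limit set property has to be combined carefully with the definition of $D(\xi)$ so that the mere membership $\xi \in F$ already forces $\tau_1 \subset D(\xi)$; once that identification is set up, the convergence-property branch is essentially forced.
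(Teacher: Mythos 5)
Your proof is correct and follows essentially the same route as the paper's: argue by contradiction, extract a subsequence of bad simplices lying over a single simplex of $Y$ (equivalently, in a single $G_v$-orbit), and apply the convergence property to get uniform convergence of the $\overline{EG_{\tau_n}}$ to a point, which must be $\xi$ and hence forces eventual containment in $U$. The only difference is that you explicitly treat the case where the cosets $g_nG_\sigma$ repeat, using the limit set property and the definition of $D(\xi)$ to rule it out --- a case the paper's proof passes over silently when it extracts an injective subsequence --- and that added care is a genuine (if minor) improvement in completeness rather than a different method.
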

\begin{proof}
 We show this by contradiction. Consider a countable basis $(V_n)_n$ of neighbourhood of $\xi$ in $\overline{EG_v}$, and suppose that for every $n$, there exists a simplex $\sigma_n \in \mbox{st}(v) \setminus D(\xi)$ such that $\overline{EG_{\sigma_n}} \cap V_n \neq \varnothing$ and $\overline{EG_{\sigma_n}} \subsetneq U$. Up to a subsequence, we can assume that $(\sigma_n)_n$ is injective. By cocompacity of the action, we can also assume that all the $\sigma_n$ are over a unique simplex $\overline{\sigma}$ of $Y$. Now the convergence property implies that there should exist a subsequence $\sigma_{\lambda(n)}$ such that $\overline{EG_{\sigma_{\lambda(n)}}}$ uniformly converges to a point in $\overline{EG_v}$, a contradiction.
\end{proof}

\subsubsection{Families.}
Since, in $\partial G$, boundaries of stabilisers of vertices are glued together along boundaries of stabilisers of edges, we will construct neighbourhoods in $\overline{EG}$ of a point $\xi \in \partial_{Stab} G$ using neighbourhoods of the representatives of $\xi$ in the various $\overline{EG_v}$, where $v$ runs over the vertices of the domain of $\xi$. 

\begin{definition}[$\xi$-family]
 Let $\xi \in \partial_{Stab} G$. A collection $\cU$ of open sets $\left\lbrace U_v, v \in V(\xi) \right\rbrace $ is called a $\xi$-\textit{family} if for every pair of vertices $v,v'$ of $X$ that are joined by an edge $e$ and every $x \in \overline{EG_e}$,
$$\phi_{v,e}(x) \in U_v \Leftrightarrow \phi_{v',e}(x) \in U_{v'}.$$
\label{defxifamily}
\end{definition}

\begin{prop}
 Let $\xi \in \partial_{Stab} G$. For every vertex $v$ of $D(\xi)$, let $U_v$ be a neighbourhood of $\xi$ in $\overline{EG_v}$. Then there exists a $\xi$-family $\cU'$ such that $U_v' \subset U_v$ for every vertex $v$ of $D(\xi)$.
\label{xifamilies}
\end{prop}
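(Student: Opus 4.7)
The plan is to leverage the finiteness of the domain $D(\xi)$ (Proposition~\ref{finitedomain}) to carry out a simultaneous, compatible shrinking of the $U_v$'s across all simplices of $D(\xi)$. I first construct, by downward induction on $\dim\sigma$, an open neighbourhood $W_\sigma$ of the representative $\xi_\sigma$ of $\xi$ in $\overline{EG_\sigma}$, for each simplex $\sigma$ of $D(\xi)$, satisfying: (a) $\phi_{v,\sigma}(W_\sigma) \subset U_v$ for every vertex $v$ of $\sigma$; and (b) $W_\sigma \cap \phi_{\sigma,\sigma'}(\overline{EG_{\sigma'}}) = \phi_{\sigma,\sigma'}(W_{\sigma'})$ for every strict over-simplex $\sigma \subsetneq \sigma'$ in $D(\xi)$. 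For a top-dimensional $\sigma$ only (a) is non-trivial and it is obtained by intersecting the preimages $\phi_{v,\sigma}^{-1}(U_v)$. For a lower-dimensional $\sigma$ I start from an open neighbourhood $V_\sigma$ of $\xi_\sigma$ which fulfils (a) and contains $\phi_{\sigma,\sigma'}(W_{\sigma'})$ for every $\sigma' \supsetneq \sigma$ in $D(\xi)$, and set
\[
W_\sigma := V_\sigma \setminus \bigcup_{\substack{\sigma' \supsetneq \sigma \\ \sigma' \subset D(\xi)}} \phi_{\sigma,\sigma'}\bigl(\overline{EG_{\sigma'}} \setminus W_{\sigma'}\bigr);
\]
each removed set is closed in $\overline{EG_\sigma}$ (as $\phi_{\sigma,\sigma'}$ is a topological embedding of a compact space), avoids $\xi_\sigma$ (since $\xi_{\sigma'} \in W_{\sigma'}$), and there are only finitely many of them thanks to the finiteness of $D(\xi)$.

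Next, for each vertex $v \in V(\xi)$ I define
\[
U_v' := U_v \setminus \bigcup_{\substack{\sigma \subset D(\xi) \\ v \in \sigma,\ \sigma \neq \{v\}}} \phi_{v,\sigma}\bigl(\overline{EG_\sigma} \setminus W_\sigma\bigr),
\]
which is by the same argument an open neighbourhood of $\xi_v$ contained in $U_v$. To verify the $\xi$-family condition, fix an edge $e = [v,v']$ of $X$ with $v,v' \in V(\xi)$; such an edge lies in $D(\xi)$ by the convexity of $D(\xi)$ in $X$ (Proposition~\ref{finitedomain}). Let $y \in \overline{EG_e}$ with $\phi_{v,e}(y) \in U_v'$. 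Taking $\sigma = e$ in the definition of $U_v'$ forces $y \in W_e$, and thus $\phi_{v',e}(y) \in U_{v'}$ by (a). To conclude that $\phi_{v',e}(y) \in U_{v'}'$, I must exclude that $\phi_{v',e}(y) \in \phi_{v',\sigma}(\overline{EG_\sigma} \setminus W_\sigma)$ for any other simplex $\sigma \subset D(\xi)$ with $v' \in \sigma$. When such a coincidence $\phi_{v',e}(y) = \phi_{v',\sigma}(z)$ occurs, I would invoke the compatibility relation $g_{a,b} \circ \phi_{ab} = \phi_a\phi_b$ of Definition~\ref{EZcomplexofspaces} (for interior points) together with the limit set identity $\Lambda G_e \cap \Lambda G_\sigma = \Lambda(G_e \cap G_\sigma)$ of Definition~\ref{limitsetproperty} (for boundary points) in order to locate $y$ and $z$ in a common over-simplex $\tau \supset e \cup \sigma$ inside $D(\xi)$; condition (b) applied to $\sigma \subsetneq \tau$ then places $z$ inside $W_\sigma$, giving the required contradiction.

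The main obstacle is precisely this last overlap analysis: controlling the intersections $\phi_{v',e}(\overline{EG_e}) \cap \phi_{v',\sigma}(\overline{EG_\sigma})$ inside $\overline{EG_{v'}}$ when $e$ and $\sigma$ are distinct simplices of $D(\xi)$ not related by face inclusion. The same technical issue reappears in the inductive step of Step 1, where one must check that the required trace condition (b) can be arranged simultaneously for all $\sigma' \supsetneq \sigma$. Both are resolved using the commutativity-up-to-twisting of the embedding diagram (Definition~\ref{EZcomplexofspaces}) and the limit set property (Definition~\ref{limitsetproperty}); without these hypotheses, the traces of $U_v'$ on the various fibre images would not glue consistently and the $\xi$-family condition would fail.
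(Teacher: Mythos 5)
Your overall strategy --- a downward induction on the dimension of the simplices of the finite domain $D(\xi)$, shrinking the open sets so that their traces on the images of the higher-dimensional fibres are prescribed --- is essentially the paper's. The gap is the one you flag yourself and then only sketch: the overlap analysis. Your condition (b) demands an \emph{exact} trace on each individual over-simplex, $W_\sigma \cap \phi_{\sigma,\sigma'}(\overline{EG_{\sigma'}}) = \phi_{\sigma,\sigma'}(W_{\sigma'})$, and both in the inductive step and in the final verification you must rule out coincidences $\phi_{v',e}(y)=\phi_{v',\sigma}(z)$ for simplices $e,\sigma \subset D(\xi)$ that are not related by face inclusion. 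Your proposed fix --- locating $y$ and $z$ in a common over-simplex $\tau \supset e \cup \sigma$ of $D(\xi)$ --- is not available: two simplices of $D(\xi)$ sharing a vertex need not span any simplex of $X$. Moreover, the limit set property only describes the boundary part of the overlap, and it identifies $\Lambda G_e \cap \Lambda G_\sigma$ with $\Lambda(G_e\cap G_\sigma)$, which is not the boundary attached to any simplex of $D(\xi)$ in general; and nothing in the axioms of Definition \ref{EZcomplexofspaces} constrains the interior overlap $EG_e\cap EG_\sigma$ inside $EG_{v'}$ for two non-nested simplices. So the contradiction your last step requires cannot be extracted from the stated hypotheses.

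The paper does not ask for exact traces on each individual piece. At the step from dimension $k$ to $k-1$ it forms the single union $\bigcup_{\sigma_0\subset\sigma,\ \dim\sigma=k}\phi_{\sigma_0,\sigma}(U'_\sigma)$, notes that it is open in the closed subset $\bigcup_\sigma\phi_{\sigma_0,\sigma}(\overline{EG_\sigma})$ of $\overline{EG_{\sigma_0}}$, and takes $U'_{\sigma_0}$ to be an open set of $\overline{EG_{\sigma_0}}$ inducing that trace on the union; only at the end are the vertex sets intersected with the original $U_v$. The $\xi$-family condition is then read off from the trace on the union, so the pairwise overlaps never have to be computed. To repair your version you would need either to weaken (b) to a trace condition on the union of the over-simplex images, as the paper does, or to supply a genuine argument that the $W_{\sigma'}$ agree on the overlaps $\phi_{\sigma,\sigma'}(\overline{EG_{\sigma'}})\cap\phi_{\sigma,\tau}(\overline{EG_\tau})$ --- and the latter is exactly what the non-existent common over-simplex was supposed to deliver.
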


\begin{proof}
 For every simplex $\sigma$ of $D(\xi)$, we construct open sets $U_{\sigma}'$ by induction on $\mbox{dim}(\sigma)$, starting with simplices of maximal dimension, that we denote $d$.

If $\mbox{dim}(\sigma)=d$, we set 
$$U_\sigma' = \bigcap_{v \in \sigma} \phi_{v, \sigma}^{-1}(U_v).$$
Assume the $U_\sigma'$ constructed for simplices of dimension at least $k \leq d$, and let $\sigma_0$ be of dimension $k-1$. If no simplex of dimension $\geq k$ contains $\sigma_0$, set 
$$U_{\sigma_0}' = \bigcap_{v \in \sigma} \phi_{v, \sigma_0}^{-1}(U_v).$$
Otherwise, since the $\phi_{\sigma, \sigma'}$ are embeddings, 
$$\displaystyle{\underset{\mathrm{dim}(\sigma)=k}{\bigcup_{\sigma_0  \subset  \sigma \subset D(\xi)}} \phi_{\sigma_0, \sigma}(U_\sigma')}$$
 is open in  
$$\displaystyle{\underset{\sigma_0  \subset  \sigma \subset D(\xi)}{\bigcup_{\mathrm{dim}(\sigma)=k}} \phi_{\sigma_0, \sigma}(\overline{EG_\sigma})}.$$ 
We can thus write it as the trace of an open set $U_{\sigma_0}'$ of $\overline{EG_{\sigma_0}}$. This yields for every vertex $v$ of $D(\xi)$ a new open set $U_v'$. By intersecting it with $U_v$, we can further assume that $U_v' \subset U_v$. This new collection of neighbourhoods clearly satisfies the desired property.
\end{proof}

\begin{definition}
 Let $\xi \in \partial_{Stab} G$, together with two $\xi$-families $\cU,  \cU'$. We say that $\cU'$ is \textit{nested} in $\cU$ if for every vertex $v$ of $D(\xi)$, $U_v'$ is nested in $U_v$. Furthermore we say that $\cU'$ is $n$-nested in $\cU$ if there exist $\xi$-families
$$\cU' = \cU^{[0]} \subset \ldots \subset \cU^{[n]} = \cU$$
with $\cU_i$ nested in $\cU_{i+1}$ for every $i=0,\ldots,n-1$.
\label{defnested}
\end{definition}

\subsection{A geometric toolbox.}
We now prove some results which will be the main tools in forthcoming proofs. Since the proofs of these lemmas relie heavily on the geometry of $X$, we start with a few definitions.
\begin{definition} Let $\xi \in \partial_{Stab} G$, $x \in X$, $\eta \in \partial X$ and $\varepsilon \in (0,1)$. 

 We denote by $[v_0, \eta)$ the unique geodesic ray from $v_0$ to $\eta$, and by $\gamma_\eta: [0, \infty) \ra X$ its parametrisation.

We denote by $D^\varepsilon(\xi)$ the open $\varepsilon$-neighbourhood of $D(\xi)$.

We say that a geodesic in $X$ parametrised by $\gamma$ \textit{goes through} (resp. \textit{enters}) $D^\varepsilon(\xi)$ if there exist $t_0$ such that $\gamma(t_0) \in D^\varepsilon(\xi)$ and $t_1 >t_0$ such that $\gamma(t_1) \notin D^\varepsilon(\xi)$ (resp. if there exists $t_0$ such that $\gamma(t_0) \in D^\varepsilon(\xi)$) .

If the geodesic $[v_0,x]$ goes through $D^\varepsilon(\xi)$, we define an \textit{exit simplex} $\sigma_{\xi, \varepsilon}(x)$ as the first simplex touched by $[v_0,x]$ after leaving $D^\varepsilon(\xi)$.
If $x \in D^\varepsilon(\xi)$, we set $\sigma_{\xi, \varepsilon}(x) = \sigma_x.$
\end{definition}
Note that, by the assumption on the distance from a simplex to the boundary of its closed star, we always have $D^\varepsilon(\xi) \subset \mbox{st}(D(\xi))$.

\begin{definition} Let $\xi \in \partial_{Stab} G$ , $\cU$ a $\xi$-family and $\varepsilon \in (0,1)$.
We define $\mbox{Cone}_{\cU, \varepsilon}(\xi)$ (resp. $\widetilde{\mbox{Cone}}_{\cU, \varepsilon}(\xi)$) as the set of points $x$ of $X$ such that the geodesic $[v_0,x]$ goes through (resp. enters) $D^\varepsilon(\xi)$ and such that for some vertex $v$ of $D(\xi)$ (hence for every by \ref{xifamilies}) contained in the exit simplex $\sigma_{\xi, \varepsilon}(x)$, we have, in $\overline{EG_v}$: 
$$\overline{EG_{\sigma_{\xi, \varepsilon}(x)}} \subset U_v.$$
\end{definition}

\begin{definition}
 For $\xi \in \partial_{Stab} G$ and $\cU$ a $\xi$-family (definition \ref{defxifamily}), we call $\mbox{st}_\cU(\xi)$ the subcomplex spanned by simplices $\sigma \subset \mbox{st}(D(\xi))$ such that for some (hence for every) vertex $v$ of $D(\xi) \cap \sigma$, we have, in $\overline{EG_v}$:
$$\overline{EG_\sigma} \cap U_v \neq \varnothing.$$
\end{definition}

\subsubsection{The crossing lemma.}

\begin{lem}[crossing lemma]
Let $\xi \in \partial_{Stab} G$, $\cU$, $\cU'$ two $\xi$-families, and $\sigma_1, \ldots, \sigma_n$ ($n \geq 1$) a path of open simplices contained in $\mbox{st}\big( D(\xi)\big) \setminus D(\xi)$. Suppose that $\cU'$ is $n$-nested in $\cU$ (definition \ref{defnested}), and that $\sigma_1 \subset \mbox{st}_{\cU'}\big(D(\xi)\big)$. Then for every $k \in \left\lbrace 1, \ldots, n \right\rbrace$ and every vertex $v$ of $D(\xi)$ contained in  $\sigma_k$, we have  $\overline{EG_{\sigma_k}} \subset U_v$ in $\overline{EG_v}$.
\label{passage}
\end{lem}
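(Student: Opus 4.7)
The plan is to prove by induction on $k \in \{1, \ldots, n\}$ the stronger statement: \emph{for every vertex $v \in V(\xi)$ contained in $\overline{\sigma_k}$, one has $\overline{EG_{\sigma_k}} \subset U_v^{[k]}$ in $\overline{EG_v}$}, where $\cU' = \cU^{[0]} \subset \cU^{[1]} \subset \cdots \subset \cU^{[n]} = \cU$ is a chain witnessing the $n$-nesting (Definition \ref{defnested}). Since $U_v^{[k]} \subset U_v^{[n]} = U_v$ for each $k$, applying this to every $k$ gives the lemma. Note that each $\sigma_k$ lies in $\mbox{st}(D(\xi)) \setminus D(\xi)$, so $V(\xi) \cap \overline{\sigma_k}$ is nonempty and $\sigma_k$ lies in $\mbox{st}(v) \setminus D(\xi)$ for every $v$ in this set, which is precisely the setting in which the nesting property applies.

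For the base case $k=1$, the hypothesis $\sigma_1 \subset \mbox{st}_{\cU'}(D(\xi))$ produces some $v_0 \in V(\xi) \cap \overline{\sigma_1}$ with $\overline{EG_{\sigma_1}} \cap U_{v_0}^{[0]} \neq \varnothing$, and the nesting of $\cU^{[0]}$ in $\cU^{[1]}$ immediately upgrades this to $\overline{EG_{\sigma_1}} \subset U_{v_0}^{[1]}$. To propagate this to an arbitrary $v \in V(\xi) \cap \overline{\sigma_1}$, I invoke the convexity of $D(\xi)$ (Proposition \ref{finitedomain}): the edge $[v_0, v]$ is a $1$-face of $\sigma_1$, hence a geodesic in the CAT(0) complex $X$ with both endpoints in the convex subcomplex $D(\xi)$, so $[v_0, v] \subset D(\xi)$. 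Since $\overline{EG_{\sigma_1}} \subset \overline{EG_{[v_0,v]}}$, the $\xi$-family property (Definition \ref{defxifamily}) applied to this edge transports containment in $U_{v_0}^{[1]}$ to containment in $U_v^{[1]}$.

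For the inductive step, I distinguish the two possible adjacencies in the path of simplices. If $\overline{\sigma_k} \subset \overline{\sigma_{k+1}}$, the gluing maps give $\overline{EG_{\sigma_{k+1}}} \subset \overline{EG_{\sigma_k}}$; any $v \in V(\xi) \cap \overline{\sigma_k}$ is also a vertex of $\sigma_{k+1}$, and the induction hypothesis yields $\overline{EG_{\sigma_{k+1}}} \subset U_v^{[k]} \subset U_v^{[k+1]}$; I then propagate to every vertex of $V(\xi) \cap \overline{\sigma_{k+1}}$ exactly as in the base case, via edges of $D(\xi)$ supplied by convexity. If instead $\overline{\sigma_{k+1}} \subset \overline{\sigma_k}$, then every $v \in V(\xi) \cap \overline{\sigma_{k+1}}$ is already a vertex of $\sigma_k$, the gluing map gives $\overline{EG_{\sigma_k}} \subset \overline{EG_{\sigma_{k+1}}}$, so $\overline{EG_{\sigma_{k+1}}} \cap U_v^{[k]} \supset \overline{EG_{\sigma_k}} \neq \varnothing$; since $\sigma_{k+1} \in \mbox{st}(v) \setminus D(\xi)$, the nesting of $\cU^{[k]}$ in $\cU^{[k+1]}$ delivers $\overline{EG_{\sigma_{k+1}}} \subset U_v^{[k+1]}$ directly for every such $v$, with no further propagation needed.

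The main obstacle is the propagation step: the nesting assumption only produces the desired containment relative to one witness vertex of $D(\xi) \cap \overline{\sigma_k}$, but the induction must carry it to every such vertex. This is where both the convexity of domains (Proposition \ref{finitedomain}) and the coherence of $\xi$-families across edges of $D(\xi)$ (Definition \ref{defxifamily}) are essential. Matching the $n$ successive nestings with the $n$ simplices of the path is otherwise straightforward bookkeeping.
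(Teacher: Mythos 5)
Your proof is correct and follows essentially the same route as the paper's: an induction along the path, using the nesting of consecutive families to upgrade ``meets $U_v^{[k]}$'' to ``contained in $U_v^{[k+1]}$'' in the face-of case, and the coherence condition of Definition \ref{defxifamily} across edges to transport the containment between the various vertices of $D(\xi)$ in a given simplex. The only cosmetic differences are that you index the induction by the position $k$ in a fixed chain of families rather than by the path length, and that your appeal to convexity of $D(\xi)$ in the propagation step is harmless but unnecessary, since the $\xi$-family condition applies to any edge joining two vertices of $V(\xi)$.
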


\begin{proof}
 We prove the result by induction on $n$, by using the properties of nested families. 

The result for $n=1$ follows from the definition of a nested family. Suppose the result true for $1, \ldots, n$, and let $\sigma_1, \ldots, \sigma_{n+1}$ be a path of simplices in $\mbox{st}\big( D(\xi)\big) \setminus D(\xi)$ and $\cU^{[0]} \subset \ldots \subset \cU^{[n+1]} = \cU$. By induction, the result is true for the path $\sigma_1, \ldots, \sigma_{n}$ and the filtration $\cU^{[0]} \subset \ldots \subset \cU^{[n]}$, so the only inclusions to be proved are the aforementionned ones for $\sigma_{n+1}$.

 If $\sigma_{n} \subset \sigma_{n+1}$, every vertex $v$ of $\sigma_{n}$ is also a vertex of $\sigma_{n+1}$, so the result is already true for vertices of $D(\xi)$ contained in $\sigma_n$. Now by the properties of $\xi$-families (see \ref{xifamilies}), this implies the result for every vertex of $D(\xi) \cap \sigma_{n+1}$.

 Suppose now that $\sigma_n \supset \sigma_{n+1}$, and let $v$ be a vertex of $D(\xi)$ contained in $\sigma_{n+1}$. Since $v$ is also in $\sigma_n$, $\overline{EG_{\sigma_{n}}} \subset U_{v_d}^{[n]}$ in $\overline{EG_{\sigma_n}}$, so we have $\overline{EG_{\sigma_{n+1}}} \cap  U_{v_n}^{[n]} \neq \varnothing$, which in turn implies $\overline{EG_{\sigma_{n+1}}} \subset  U_{v}^{[n+1]} $ since $\cU^{[n]}$ is nested in $\cU^{[n+1]}$. Now by the properties of $\xi$-families \ref{xifamilies}, the same result holds for every vertex $v$ of $D(\xi)$ contained in $\sigma_{n+1}$. \\
\end{proof}

\subsubsection{The geodesic reattachment lemma.}

\begin{lem}
Let $\xi \in \partial_{Stab} G$. There exists a $\xi$-family $\cV_\xi$ such that for every vertex $v$ of $D(\xi)$ and every simplex $\sigma$ of $\big(\mbox{st}(v) \setminus D(\xi)\big) \cap \mbox{Geod}\big( v_0, D(\xi) \big)$, we have $(V_\xi)_v \cap \overline{EG_\sigma} = \varnothing$.
\label{lemmesansnom5}
\end{lem}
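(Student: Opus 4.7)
The plan is to construct, for each vertex $v$ of $D(\xi)$, an open neighbourhood $U_v$ of $\xi$ in $\overline{EG_v}$ that avoids every ``bad'' simplex, and then to refine these into a $\xi$-family via Proposition \ref{xifamilies}.

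First I would reduce the problem to a finite one. Since $D(\xi)$ is a finite subcomplex of $X$ by Proposition \ref{finitedomain}, the second part of the finiteness lemma \ref{finite} guarantees that $\mbox{Geod}(v_0, D(\xi))$ meets only finitely many simplices of $X$. In particular, for each vertex $v$ of $D(\xi)$ the set
$$\cS_v := \big\{ \sigma \in \mbox{st}(v) \setminus D(\xi) \ |\ \sigma \cap \mbox{Geod}(v_0,D(\xi)) \neq \varnothing \big\}$$
is finite.

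The key observation is that for each $\sigma \in \cS_v$ we have $\xi \notin \overline{EG_\sigma}$ in $\overline{EG_v}$. Indeed, suppose $\xi = \phi_{v,\sigma}(\xi')$ for some $\xi' \in \overline{EG_\sigma}$. Since $\xi \in \partial G_v$, the limit set property forces $\xi' \in \partial G_\sigma$. But then the equivalence relation defining $\partial_{Stab}G$, applied along the barycentric edge $[v\sigma]$, identifies $[v,\xi]$ with $[\sigma,\xi']$, which means $\sigma$ belongs to $D(\xi)$ by definition of the domain, contradicting $\sigma \in \mbox{st}(v) \setminus D(\xi)$. Given this, since $\overline{EG_v}$ is a Euclidean retract (hence Hausdorff and metrisable) and each $\overline{EG_\sigma}$ is a compact subset of $\overline{EG_v}$ not containing $\xi$, I can select for every $\sigma \in \cS_v$ an open neighbourhood $W_{v,\sigma}$ of $\xi$ in $\overline{EG_v}$ disjoint from $\overline{EG_\sigma}$, and set $U_v := \bigcap_{\sigma \in \cS_v} W_{v,\sigma}$. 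This is a finite intersection, hence an open neighbourhood of $\xi$ in $\overline{EG_v}$ disjoint from every $\overline{EG_\sigma}$ with $\sigma \in \cS_v$.

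Finally, I would apply Proposition \ref{xifamilies} to the collection $(U_v)_{v\in V(\xi)}$ to produce a $\xi$-family $\cV_\xi=((V_\xi)_v)_{v\in V(\xi)}$ with $(V_\xi)_v \subset U_v$ for every vertex $v$ of $D(\xi)$. Since disjointness is inherited by subsets, this $\cV_\xi$ satisfies the required property. The only delicate step in the argument is the second one, where one must rule out that $\xi$ lies in $\overline{EG_\sigma}$; this rests on the very definition of $D(\xi)$ and on the injectivity principle for $\partial_{Stab}G$ already exploited in Proposition \ref{xiloop}. Everything else is routine finiteness and a single invocation of the $\xi$-family construction.
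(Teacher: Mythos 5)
Your proof is correct and follows essentially the same route as the paper: finiteness of the relevant set of simplices via Lemma \ref{finite}, separating neighbourhoods $W_{v,\sigma}$ chosen for each of the finitely many offending simplices, a finite intersection, and an appeal to Proposition \ref{xifamilies}. The only difference is that you explicitly justify why $\xi \notin \overline{EG_\sigma}$ (a point the paper leaves implicit when it asserts that a disjoint neighbourhood of $\xi$ exists), and that justification is sound.
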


\begin{proof}
Let $v$ be a vertex of $D(\xi)$. For every simplex $\sigma \subset \big(  \mbox{st}(v) \cap \mbox{Geod}(v_0, D(\xi)) \big)  \setminus D(\xi)$, let $V_{v, \sigma}$ be a neighbourhood of $\xi$ in  $\overline{EG_v}$  disjoint from $\overline{EG_{\sigma}}$. As $\big(  \mbox{st}(v) \cap \mbox{Geod}(v_0, D(\xi)) \big)  \setminus D(\xi)$ meets only finitely many simplices by \ref{finite}, set
$$ V_v = \bigcap_{\sigma \subset \big(  \mathrm{st}(v) \cap \mathrm{Geod}(v_0, D(\xi)) \big)  \setminus D(\xi)} V_{v, \sigma}, $$
and we can take $\cV_\xi$ to be a $\xi$-family contained in the collection of open sets $\left\lbrace V_v , v \in V(\xi) \right\rbrace$.
\end{proof}

Recall that by \ref{containment} and \ref{finite} we chose integers $d_\xi$ and $m_\xi$ such that a geodesic segment contained in the open star of $D(\xi)$ meets at most $d_\xi$ simplices, and such that for every $x$ in $X$ the subset $\mbox{Geod}(x, D(\xi))$ meets at most $m_\xi$ simplices in the open star of $D(\xi)$.

\begin{lem}
Let $\xi \in \partial_{Stab} G$. Let $\cU_\xi$ be a $\xi$-family that is $(m_\xi+d_\xi)$-nested in $\cV_\xi$. Let $x \in X \setminus D(\xi)$ such that there exists a simplex $\sigma \subset \bigg( \mbox{st}(D(\xi)) \setminus D(\xi) \bigg)$ that meets $\mbox{Geod}\big(x, D(\xi) \big)$ and such that for some (hence any) vertex $v$ of $\sigma \cap D(\xi)$ we have $ \overline{EG_\sigma} \subset (U_\xi)_v$. Then $x \notin \mbox{Geod}(v_0, D(\xi))$. 
\label{lemmesansnom4}
\end{lem}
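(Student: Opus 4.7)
The plan is a proof by contradiction. I would assume $x \in \mbox{Geod}(v_0, D(\xi))$, pick $y' \in D(\xi)$ with $x \in [v_0, y']$, and use the hypothesis to extract some $y \in D(\xi)$ such that $[x, y]$ meets $\sigma$. The strategy is to exhibit a simplex $\tau \subset \mbox{st}(D(\xi)) \setminus D(\xi)$ lying on $\mbox{Geod}(v_0, D(\xi))$, and then derive two contradictory facts about $\overline{EG_\tau}$: on the one hand, Lemma \ref{lemmesansnom5} will give $(V_\xi)_w \cap \overline{EG_\tau} = \varnothing$ for $w$ a vertex of $D(\xi) \cap \bar{\tau}$; on the other hand, the crossing lemma (Lemma \ref{passage}), applied to a suitable path of simplices from $\sigma$ to $\tau$ together with the $(m_\xi + d_\xi)$-nesting of $\cU_\xi$ in $\cV_\xi$, will yield $\overline{EG_\tau} \subset (V_\xi)_w$. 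Since $\overline{EG_\tau}$ is nonempty, these two statements are incompatible.

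Producing $\tau$ would be direct. By convexity of $D(\xi)$ (Proposition \ref{finitedomain}), the geodesic $[x, y']$ enters $D(\xi)$ at a unique point $p'$, and I take $\tau$ to be the simplex traversed by $[x, y']$ immediately before $p'$. Then $\tau \not\subset D(\xi)$ (otherwise the geodesic would already be in $D(\xi)$ before $p'$), while $p' \in \bar{\tau}$ forces $\bar{\tau}$ to contain a vertex $w$ of $D(\xi)$, placing $\tau$ in $\mbox{st}(D(\xi)) \setminus D(\xi)$. Since $[x, y'] \subset [v_0, y']$, the simplex $\tau$ meets $\mbox{Geod}(v_0, D(\xi))$, and Lemma \ref{lemmesansnom5} applies, giving the first assertion above.

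The crux is the construction of a path of simplices $\sigma = \pi_0, \pi_1, \ldots, \pi_N = \tau$ in $\mbox{st}(D(\xi)) \setminus D(\xi)$ with $N \leq m_\xi + d_\xi$. I would split it into two segments. First, starting at $\sigma$, I follow $[x, y]$ forward toward $y$, collecting the simplices of $\mbox{st}(D(\xi)) \setminus D(\xi)$ that it traverses; by the defining property of $m_\xi$ applied to $\mbox{Geod}(x, D(\xi))$, there are at most $m_\xi$ of them, and the last such simplex $\mu$ sits just before $[x, y]$ enters $D(\xi)$, hence has a vertex of $D(\xi)$ in its closure. Second, I would link $\mu$ to $\tau$ through an auxiliary geodesic in $X$ joining interior points of $\mu$ and $\tau$ chosen sufficiently close to $D(\xi)$: by convexity of $D^{\epsilon}(\xi)$ for small $\epsilon > 0$, this auxiliary geodesic remains inside $\mbox{st}(D(\xi))$, and the containment lemma \ref{containment} bounds the number of simplices it crosses by $d_\xi$. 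Once this path is in hand, the hypothesis $\overline{EG_\sigma} \subset (U_\xi)_v$ implies $\sigma \subset \mbox{st}_{\cU_\xi}(D(\xi))$, and the crossing lemma propagates the containment along the path to deliver $\overline{EG_\tau} \subset (V_\xi)_w$, the required contradiction.

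The main obstacle, by some margin, is ensuring that both segments of the path truly stay in $\mbox{st}(D(\xi)) \setminus D(\xi)$. The open star $\mbox{st}(D(\xi))$ is not convex in general, so between $\sigma$ and $\mu$ the geodesic $[x, y]$ may enter and exit $\mbox{st}(D(\xi))$ several times, meaning one must carefully reassemble a genuine face-path rather than blindly list the simplices encountered. Similarly, the auxiliary geodesic in the second segment must be prevented from passing through $D(\xi)$ itself while still living inside the open star. Handling these two points is where the convexity of $D^\epsilon(\xi)$ in the CAT(0) metric, a judicious choice of basepoints interior to $\mu$ and $\tau$ but arbitrarily close to $D(\xi)$, and the use of face-replacements near $D(\xi)$ (to avoid crossing into $D(\xi)$ itself) all become essential.
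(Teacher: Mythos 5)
Your proposal follows essentially the same route as the paper's proof: assume $x \in \mbox{Geod}(v_0, D(\xi))$, take the last simplex (your $\tau$, the paper's $\sigma'$) crossed by $[v_0,z]$ before it enters $D(\xi)$, join $\sigma$ to it by a path of simplices in $\mbox{st}(D(\xi)) \setminus D(\xi)$ of controlled length, and let the crossing lemma \ref{passage} propagate $\overline{EG_\sigma} \subset (U_\xi)_v$ to $\overline{EG_{\tau}} \subset (V_\xi)_{w}$, contradicting Lemma \ref{lemmesansnom5}. The paper dispatches the connecting path in one sentence (``by construction of $m_\xi$'', claiming length at most $m_\xi$), so your more explicit two-segment construction of length at most $m_\xi + d_\xi$, and the difficulties you flag about keeping it inside $\mbox{st}(D(\xi)) \setminus D(\xi)$, concern precisely the step the paper also leaves implicit; there is no divergence in substance.
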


\begin{proof}
We prove the lemma by contradiction. Let $x$ and $\sigma$ be as in the statement of the lemma. Let $z \in D(\xi)$ such that $x \in [v_0, z]$. Let $\sigma'$ be the last simplex touched by $[v_0, z]$ before entering $D(\xi)$, and $v'$ a vertex of $\sigma'$. By construction of $m_\xi$, there exists a path of simplices of length at most $m_\xi$ in $\mbox{st}(D(\xi)) \setminus D(\xi)$ between $\sigma$ and $\sigma'$. As $\overline{EG_\sigma}\subset (U_\xi)_v$, the crossing lemma \ref{passage} implies that  $\overline{EG_{\sigma'}} \subset (V_\xi)_{v'}$, which contradicts the definition of $\cV_\xi$.
\end{proof}

\begin{center}
\textit{From now on, every $\xi$-family will be assumed to be contained in $\cU_\xi$.}
\end{center}

The next lemma gives a useful criterion that ensures that a given path is a global geodesic. 
\begin{lem}[geodesic reattachment]
 Let $ \xi \in \partial_{Stab} G$ and $x \in X \setminus D(\xi)$. Suppose that there exists a simplex $\sigma \subset \bigg( \mbox{st}(D(\xi)) \setminus D(\xi) \bigg)$ that meets $\mbox{Geod}\big(x, D(\xi) \big)$ such that for some (hence any) vertex $v$ of $\sigma \cap D(\xi)$ we have $ \overline{EG_\sigma} \subset (U_\xi)_v.$ Then $[v_0, x]$ meets $D(\xi)$. Furthermore, if there exist a $\xi$-family $\cU$, and a $\xi$-family $\cU'$ that is $(m_\xi + d_\xi)$-nested in $\cU$ such that for some (hence every) vertex $v$ of $D(\xi) \cap \sigma$, we have $\overline{EG_\sigma} \subset U_v'$, then $x \in \widetilde{\mbox{Cone}}_{\cU, \varepsilon}(\xi) $ for every $\varepsilon \in (0,1)$.
\label{reattachment}
\end{lem}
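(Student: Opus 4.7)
Both assertions are proved by producing a path of simplices in $\mbox{st}(D(\xi)) \setminus D(\xi)$ of length at most $m_\xi + d_\xi$ from $\sigma$ to an appropriate target simplex, and then propagating the given inclusion through the nesting via the crossing lemma~\ref{passage}. This closely parallels the proof of Lemma~\ref{lemmesansnom4}, with the path decomposed into a ``containment'' portion of length at most $d_\xi$ along a single geodesic (bounded by the containment lemma~\ref{containment}) and an ``interpolation'' portion of length at most $m_\xi$ coming from varying a geodesic ending in $D(\xi)$.

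\textbf{First assertion.} Argue by contradiction, assuming $[v_0, x] \cap D(\xi) = \varnothing$. Choose $y \in D(\xi)$ with $\sigma$ on $[x, y]$, parametrise $[v_0, x]$ by $\gamma$, and for each $t \in [0, d(v_0, x)]$ let $\tau_t \in \mbox{st}(D(\xi)) \setminus D(\xi)$ be the last simplex met by the geodesic $[\gamma(t), y]$ before entering $D(\xi)$. Then $\tau_0$ lies on $[v_0, y]$ and therefore meets $\mbox{Geod}(v_0, D(\xi))$, while $\tau_{d(v_0, x)}$ is the last simplex of $[x, y]$ before $D(\xi)$. As $t$ varies continuously, $\tau_t$ takes only finitely many distinct values, each pair of consecutive values sharing a face, and the resulting sequence forms a path of simplices in $\mbox{st}(D(\xi)) \setminus D(\xi)$; by an argument modelled on the proof of Lemma~\ref{lemmesansnom4} and the finiteness lemma~\ref{finite}, this path has length at most $m_\xi$. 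Prepending the at most $d_\xi$ simplices along $[x, y]$ from $\sigma$ to $\tau_{d(v_0, x)}$ produces a path of length at most $m_\xi + d_\xi$ from $\sigma$ to $\tau_0$. The crossing lemma~\ref{passage}, applied through the $(m_\xi + d_\xi)$-nesting of $\cU_\xi$ in $\cV_\xi$ together with the hypothesis $\overline{EG_\sigma} \subset (U_\xi)_v$, now forces $\overline{EG_{\tau_0}} \subset (V_\xi)_{v'}$ for any vertex $v'$ of $\tau_0 \cap D(\xi)$. But $\tau_0$ lies in $\mbox{st}(v') \setminus D(\xi)$ and meets $\mbox{Geod}(v_0, D(\xi))$, contradicting the defining property of $\cV_\xi$ from Lemma~\ref{lemmesansnom5}.

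\textbf{Second assertion.} Fix $\varepsilon \in (0, 1)$. The first assertion gives $[v_0, x] \cap D(\xi) \neq \varnothing$, so $[v_0, x]$ enters $D^\varepsilon(\xi) \supset D(\xi)$ and the exit simplex $\sigma_{\xi, \varepsilon}(x)$ is well-defined. The normalisation that each simplex lies at distance at least~$1$ from the boundary of its open star ensures $D^\varepsilon(\xi) \subset \mbox{st}(D(\xi))$, whence $\sigma_{\xi, \varepsilon}(x) \in \mbox{st}(D(\xi)) \setminus D(\xi)$; moreover, the subsegment of $[v_0, x]$ from the point where it last exits $D(\xi)$ onward is a geodesic from $x$ to a point of $D(\xi)$ passing through $\sigma_{\xi, \varepsilon}(x)$, so this simplex meets $\mbox{Geod}(x, D(\xi))$. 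Since $\sigma$ also meets $\mbox{Geod}(x, D(\xi))$, both simplices lie in the fan from $x$ of at most $m_\xi$ simplices in $\mbox{st}(D(\xi)) \setminus D(\xi)$; an interpolation argument on the endpoint in $D(\xi)$ (identical to the one in the proof of Lemma~\ref{lemmesansnom4}), combined with a containment-lemma chain of length at most $d_\xi$, produces a path of length at most $m_\xi + d_\xi$ between them in $\mbox{st}(D(\xi)) \setminus D(\xi)$. The crossing lemma~\ref{passage}, applied through the $(m_\xi + d_\xi)$-nesting of $\cU'$ in $\cU$ and the hypothesis $\overline{EG_\sigma} \subset U_v'$, then yields $\overline{EG_{\sigma_{\xi, \varepsilon}(x)}} \subset U_v$ for every vertex $v$ of $D(\xi) \cap \sigma_{\xi, \varepsilon}(x)$, which is precisely the defining condition for $x \in \widetilde{\mbox{Cone}}_{\cU, \varepsilon}(\xi)$.

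\textbf{Main obstacle.} The delicate point is the interpolation in the first assertion: one must show that, as the basepoint $\gamma(t)$ of the geodesic to $y$ is pushed along $[v_0, x]$, the succession $t \mapsto \tau_t$ of ``last simplices before $D(\xi)$'' forms a connected path of simplices in $\mbox{st}(D(\xi)) \setminus D(\xi)$ of length bounded by $m_\xi$, uniformly in $v_0$, $x$ and $y$. Unlike the single-fan interpolation of Lemma~\ref{lemmesansnom4}, here it is the basepoint rather than the endpoint of the geodesic that varies, so the argument has to extract the path structure from the CAT(0) geometry (using in particular continuity of the projection onto the convex subcomplex $D(\xi)$) together with the uniform bound of the finiteness lemma~\ref{finite}. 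The $d_\xi$ buffer along $[x, y]$ is precisely what is needed to link $\sigma$ itself to the terminal simplex $\tau_{d(v_0, x)}$ of this interpolation.
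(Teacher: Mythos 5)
Your second assertion is handled essentially as the paper does (both $\sigma$ and the exit simplex $\sigma_{\xi,\varepsilon}(x)$ meet the single fan $\mbox{Geod}(x,D(\xi))$, so a path of length at most $m_\xi+d_\xi$ and the crossing lemma \ref{passage} suffice). But the first assertion has a genuine gap, and it is precisely the step you flag as the ``main obstacle'': the interpolation $t \mapsto \tau_t$ is not controlled by $m_\xi$. The constant $m_\xi$ bounds the number of simplices of $\mbox{st}(D(\xi)) \setminus D(\xi)$ met by $\mbox{Geod}(x, D(\xi))$, i.e.\ by the fan of geodesics issuing from a \emph{single} point towards the uniformly finite domain. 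Your simplices $\tau_t$ are extracted from the geodesics $[\gamma(t), y]$, that is, from the fan issuing from the single point $y \in D(\xi)$ towards the whole segment $[v_0,x]$; the finiteness lemma \ref{finite}$(i)$ only bounds such a fan in terms of the number of simplices spanned by the target $[v_0,x]$, which is unbounded. Concretely, if $X$ is a fan of $N$ Euclidean triangles with common apex $w$, $D(\xi)=\{w\}$, $y=w$, and $[v_0,x]$ runs along the rim, the ``last simplex of $[\gamma(t),w]$ before $w$'' takes $N$ distinct values. Since $\cU_\xi$ is only $(m_\xi+d_\xi)$-nested in $\cV_\xi$, the crossing lemma cannot be pushed along a path of uncontrolled length, and your contradiction does not close.

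The paper avoids sweeping the basepoint altogether. It forms the concatenation $[v_0,v_1]\cup[v_1,v_2]\cup[v_2,x]$ with $[v_0,v_1]\subset\mbox{Geod}(v_0,D(\xi))$, $[v_1,v_2]\subset D(\xi)$ and $[v_2,x]\subset\mbox{Geod}(D(\xi),x)$, and invokes the CAT(0) local-to-global principle: it suffices to rule out a failure of local geodesy at the junction $v_1=v_2$. Such a failure produces a geodesic $[a,b]$ of length less than $2$ which, by convexity of $z\mapsto d(z,D(\xi))$, stays in $D^\varepsilon(\xi)\subset\mbox{st}(D(\xi))$ while missing $D(\xi)$; the containment lemma bounds its contribution by $d_\xi$ simplices, and a single application of the one-point fan $\mbox{Geod}(x,D(\xi))$ (hence at most $m_\xi$ further simplices) links $\sigma$ to $\sigma_b$. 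This is exactly what calibrates the nesting depth to $m_\xi+d_\xi$, and the contradiction is then obtained against the defining property of $\cV_\xi$ from \ref{lemmesansnom5}, as in your argument. You should rework the first assertion along these lines rather than via the basepoint sweep.
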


In such a case, the geodesic from $v_0$ to $x$ meets $D(\xi)$, and is the concatenation of a geodesic segment in $\mbox{Geod}(v_0, D(\xi))$ and a geodesic in $\mbox{Geod}(D(\xi), x)$.

\begin{proof} 
Let $K = \mbox{Geod}(v_0, D(\xi) ) \cup \mbox{Geod}(D(\xi),x)$ and let $[v_0,x]_K$ be the geodesic from $v_0$ to $x$ in $K$ (which meets finitely many simplices by \ref{finite}). Our aim is to prove that $[v_0, x]_K = [v_0, x]$. By \ref{lemmesansnom4}, $x \notin \mbox{Geod}(v_0, D(\xi))$. As $D(\xi)$ is convex by \ref{finitedomain}, let $v_1, v_2 \in D(\xi)$ be such that $[v_0,x]_K = [v_0, v_1] \cup [v_1, v_2] \cup [v_2,x]$ and such that $[v_0, v_1)$ and $(v_2, x]$ do not meet $D(\xi)$. Let $\varepsilon \in (0,1)$. Let $a \in [v_0, v_1]$ such that $d(a, v_1)= \varepsilon$. If $x \notin D^\varepsilon(\xi)$ let $b \in [v_2, x]$ be such that $d(v_2,b)=\varepsilon$. Otherwise, let $b=x$.
Since $X$ is CAT(0), it is enough to prove that $[v_0,x]_K$ is a local geodesic at every point. We already have that $[v_0,v_1] \cup [v_1,v_2]$ and $[v_1, v_2] \cup [v_2,x]$ are geodesics, so it is sufficient to prove the result when $v_1 = v_2$. We thus have 
$$[v_0,x]_K = [v_0,v_1] \cup [v_1,x], $$
with $[v_0,v_1] \subset \mbox{Geod}(v_0, D(\xi))$ and $[v_1,x] \subset \mbox{Geod}(D(\xi), x)$. Assume by contradiction that $[v_0,x]_K$ is not a local geodesic at $v_1$. Then the geodesic segment $[a,b]$ does not meet $D(\xi)$. This geodesic segment yields a path of simplices between $\sigma_a $ and $\sigma_b$ of length at most $d_\xi$ in $\mbox{st}(D(\xi))   \setminus D(\xi)$.  
Furthermore,  there is a path of simplices between $\sigma$ and $\sigma_b$ of length at most $m_\xi$ in $\mbox{st}(D(\xi))   \setminus D(\xi)$. 
Thus, there is a path of simplices between $\sigma$ and $\sigma_a$ of length at most $m_\xi + d_\xi$ in $\mbox{st}(D(\xi))   \setminus D(\xi)$. But since $\overline{EG_b} \subset (U_\xi)_v$ and $\cU_\xi$ is $(m_\xi+d_\xi)$-nested in $\cV_\xi$, the crossing lemma \ref{passage} implies $\overline{EG_a} \subset V_v$, which is absurd since $\sigma_a \subset \big( \mbox{Geod}(v_0, D(\xi)) \cap \mbox{st}(D(\xi)) \big) \setminus D(\xi)$.

Thus $[v_0, x]_K = [v_0, x]$ and $\sigma_b = \sigma_{\xi, \varepsilon}(x)$. If there exist a $\xi$-family $\cU$, and a $\xi$-family $\cU'$ that is $(m_\xi + d_\xi)$-nested in $\cU$ such that for some (hence every vertex $v$ of $D(\xi) \cap \sigma$, we have $\overline{EG_\sigma} \subset U_v'$, then it follows from the above discussion that for some (hence every) vertex $v'$ of $\sigma_{\xi, \varepsilon}(x)$ we have $\overline{EG_{\sigma_{\xi, \varepsilon}(x)}} \subset U_{v'}$, hence $x \in \widetilde{\mbox{Cone}}_{\cU, \varepsilon}(\xi).$
\end{proof}

As a consequence, we get the following:
\begin{cor}
 Let $\xi \in \partial_{Stab} G$, $\cU$ a $\xi$-family and $\varepsilon \in (0,1)$. Then for every $x \in \widetilde{\mbox{Cone}}_{\cU, \varepsilon}(\xi)$, the geodesic segment $[v_0,x]$ meets $D(\xi)$.
\label{Uxi} 
\end{cor}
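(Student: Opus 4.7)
The plan is to invoke the reattachment lemma \ref{reattachment} applied to the exit simplex $\sigma := \sigma_{\xi,\varepsilon}(x)$, whose conclusion is exactly what we want; the argument then reduces to verifying its three hypotheses. The case $x \in D(\xi)$ is trivial (take $[v_0,x] \cap D(\xi) \ni x$), so I assume from now on that $x \notin D(\xi)$.

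Two of the three hypotheses are routine. The inclusion $\overline{EG_{\sigma_{\xi,\varepsilon}(x)}} \subset U_v$ built into the definition of $\widetilde{\mbox{Cone}}_{\cU,\varepsilon}(\xi)$, together with the standing convention that every $\xi$-family is contained in $\cU_\xi$, yields the nesting hypothesis $\overline{EG_{\sigma_{\xi,\varepsilon}(x)}} \subset (U_\xi)_v$ for any vertex $v \in D(\xi) \cap \sigma_{\xi,\varepsilon}(x)$. Moreover, since $\varepsilon \in (0,1)$ and the distance from any simplex to the boundary of its closed star is at least $1$, $\overline{D^\varepsilon(\xi)}$ is contained in $\mbox{st}(D(\xi))$; hence in either case of the definition of the exit simplex (namely $\sigma_x$ when $x \in D^\varepsilon(\xi)$, or the first simplex met by $[v_0,x]$ after leaving $D^\varepsilon(\xi)$ otherwise), one checks that $\sigma_{\xi,\varepsilon}(x)$ sits in $\mbox{st}(D(\xi))$, has a vertex in $D(\xi)$, and is not contained in $D(\xi)$ because it carries a point outside $D(\xi)$.

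The main obstacle is the third hypothesis: that $\sigma_{\xi,\varepsilon}(x)$ meets $\mbox{Geod}(x, D(\xi))$. When $x \in D^\varepsilon(\xi)$ this is immediate from $x \in \sigma_x = \sigma_{\xi,\varepsilon}(x)$ together with the trivial fact $x \in \mbox{Geod}(x, D(\xi))$ as an endpoint. When $x \notin D^\varepsilon(\xi)$, my plan is to select a point $p \in \sigma_{\xi,\varepsilon}(x) \cap [v_0,x]$ immediately past the exit time (so that $d(p, D(\xi))$ exceeds $\varepsilon$ by an arbitrarily small amount), let $z \in D(\xi)$ be its CAT(0) projection, and argue via a thin-triangle/CAT(0) comparison, using convexity of the distance function $d(\cdot, D(\xi))$ along $[v_0,x]$ and the fact that $[p,z]$ has length less than $1$ so stays inside $\mbox{st}(D(\xi))$, that the geodesic $[x,z] \in \mbox{Geod}(x, D(\xi))$ must cross the simplex $\sigma_{\xi,\varepsilon}(x)$. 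This is the genuine geometric content of the corollary; once it is established, \ref{reattachment} delivers $[v_0, x] \cap D(\xi) \neq \varnothing$, as desired.
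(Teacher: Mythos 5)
Your reduction to the reattachment lemma \ref{reattachment} is the right idea, and your treatment of the case $x \in D^\varepsilon(\xi)$ is correct, but the argument you sketch for the remaining hypothesis when $x \notin D^\varepsilon(\xi)$ --- that the exit simplex $\sigma_{\xi, \varepsilon}(x)$ meets $\mbox{Geod}(x, D(\xi))$ --- does not work, and the geometric claim you are trying to establish there is false in general. The exit simplex is determined by the direction in which $[v_0,x]$ crosses the sphere of radius $\varepsilon$ about $D(\xi)$, which depends on $v_0$, whereas a geodesic from $x$ to a point of $D(\xi)$ approaches $D(\xi)$ from the direction of $x$ and has no reason to pass through that particular open simplex. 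Concretely, in a flat triangulated plane with $D(\xi)$ a single vertex $w$, let $[v_0,x]$ be a long chord passing at distance $\varepsilon' < \varepsilon$ from $w$: the exit point sits at angle roughly $\arctan(\varepsilon'/\varepsilon)$ from $w$, while the entire segment $[x,w] = \mbox{Geod}(x, D(\xi))$ lies on the ray at the much smaller angle $\arctan(\varepsilon'/d(x,w))$, so an edge of the triangulation placed between these two angles separates $\sigma_{\xi, \varepsilon}(x)$ from $\mbox{Geod}(x, D(\xi))$ entirely; CAT(0) comparison and convexity of $d(\cdot, D(\xi))$ only tell you that $[x,z]$ passes metrically close to $p$, not that it enters the same open simplex. (In that picture $[v_0,x]$ misses $D(\xi)$, so such an $x$ cannot actually lie in $\widetilde{\mbox{Cone}}_{\cU, \varepsilon}(\xi)$ --- but that is precisely the conclusion you are proving, so you may not use it. Indeed ``$\sigma_{\xi,\varepsilon}(x)$ meets $\mbox{Geod}(x,D(\xi))$'' is essentially equivalent to the conclusion: once $[v_0,x]$ is known to meet $D(\xi)$, its terminal subsegment is itself a geodesic from $D(\xi)$ to $x$ passing through the exit simplex.)

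The fix, which is what the paper does, is to apply \ref{reattachment} not to $x$ but to a point $y$ of $\sigma_{\xi, \varepsilon}(x) \cap [v_0,x]$ lying in $D^\varepsilon(\xi)$. For such a $y$ the problematic hypothesis is trivial --- $y$ belongs to $\sigma_{\xi, \varepsilon}(x)$ and to $\mbox{Geod}(y, D(\xi))$ as an endpoint --- while the containment $\overline{EG_{\sigma_{\xi, \varepsilon}(x)}} \subset (U_\xi)_v$ is exactly the one you already extracted from the definition of $\widetilde{\mbox{Cone}}_{\cU, \varepsilon}(\xi)$ and the standing convention $\cU \subset \cU_\xi$. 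The lemma then gives that $[v_0,y]$ meets $D(\xi)$, and since $[v_0,y] \subset [v_0,x]$ the conclusion follows. This is the same one-line move you already used in the case $x \in D^\varepsilon(\xi)$; it goes through verbatim once you shift the base point of the lemma from $x$ to $y$.
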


\begin{proof}
By \ref{lemmesansnom4}, we get $x \notin \mbox{Geod}(v_0, D(\xi))$. Let $y$ be a point of $\sigma_{\xi, \varepsilon}(x) \cap [v_0, x] \cap D^\varepsilon(\xi)$. It follows from the geodesic reattachment lemma \ref{reattachment} that $[v_0, y]$, hence $[v_0, x]$, meets $D(\xi)$. 
\end{proof}

\subsubsection{The refinement lemma.}

\begin{definition}
Let $\xi \in \partial_{Stab} G$, $\cU$ a $\xi$-family and $n \geq 1$. By \ref{finite}, let $k$ be such that for every connected finite subcomplex $F$ containing at most $n$ simplices, $\mbox{Geod}(v_0, F) \cap \mbox{st}(D(\xi))$ meets at most $k$ simplices. A $\xi$-family that is $k$-nested in $\cU$ is said to be $n$-\textit{refined} in $\cU$.
\end{definition}

\begin{lem}[refinement lemma]
Let $\xi \in \partial_{Stab} G$, $\cU$ a $\xi$-family and $n \geq 1$. Let $\cU'$ be a $\xi$-family which is $n$-refined in $\cU$. Then the following holds:

For every $\varepsilon \in (0,1)$ and every path of simplices $\sigma_1, \ldots, \sigma_n$ in $X \setminus D(\xi)$ such that there exists an element $x_1 \in \sigma_1$ such that $[v_0,x_1]$ enters $D^\varepsilon(\xi)$ and $\sigma_{\xi, \varepsilon}(x_1) \subset \mbox{st}_{\cU'}\big(D(\xi)\big)$, then we have 
$$\sigma_1, \ldots, \sigma_n \subset \widetilde{\mbox{Cone}}_{\cU, \varepsilon}(\xi).$$
\label{refinement}
\end{lem}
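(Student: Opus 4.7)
The plan is to apply the geodesic reattachment lemma \ref{reattachment} uniformly, using the single auxiliary simplex $\sigma^\ast := \sigma_{\xi, \varepsilon}(x_1)$ for every point $x$ in the given path of simplices. The key observation is that $\sigma^\ast$ is a simplex of $\mbox{st}(D(\xi)) \setminus D(\xi)$: it lies in $\mbox{st}(D(\xi))$ because $D^\varepsilon(\xi) \subset \mbox{st}(D(\xi))$, and it cannot be contained in $D(\xi)$ since $[v_0, x_1]$ has just exited $D^\varepsilon(\xi) \supset D(\xi)$ when entering $\sigma^\ast$. In particular, $\sigma^\ast$ has a vertex in $D(\xi)$, which automatically places it in $\mbox{Geod}(x, D(\xi)) \supset D(\xi)$ for every choice of $x$, uniformly.

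First, I would let $F$ be the finite connected subcomplex of $X$ spanned by $\sigma_1, \ldots, \sigma_n$; it is contained in $X \setminus D(\xi)$ and has at most $n$ simplices. By the $n$-refinement hypothesis, there is a constant $k$ such that $\mbox{Geod}(v_0, F) \cap \mbox{st}(D(\xi))$ meets at most $k$ simplices, and $\cU'$ is $k$-nested in $\cU$ via some chain
$$\cU' = \cU^{[0]} \subset \cU^{[1]} \subset \cdots \subset \cU^{[k]} = \cU.$$
Using the freedom in the definition of $n$-refinement to enlarge $k$, I would assume $k \geq m_\xi + d_\xi + 1$, so that $\cU^{[1]}$ is $(m_\xi + d_\xi)$-nested in $\cU$.

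Next, the hypothesis $\sigma^\ast \subset \mbox{st}_{\cU'}(D(\xi))$ provides, for each vertex $v$ of $D(\xi) \cap \sigma^\ast$, the relation $\overline{EG_{\sigma^\ast}} \cap U'_v \neq \varnothing$; one nesting step upgrades this to $\overline{EG_{\sigma^\ast}} \subset U^{[1]}_v$, which is in turn contained in $(U_\xi)_v$ by the standing convention that every $\xi$-family sits inside $\cU_\xi$. Fix any $x \in \sigma_1 \cup \cdots \cup \sigma_n \subset X \setminus D(\xi)$: the hypotheses of the geodesic reattachment lemma \ref{reattachment} are then satisfied for the pair $(x, \sigma^\ast)$ together with the $\xi$-families $\cU^{[1]}$ and $\cU$ (the former being $(m_\xi + d_\xi)$-nested in the latter). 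The lemma yields $x \in \widetilde{\mbox{Cone}}_{\cU, \varepsilon'}(\xi)$ for every $\varepsilon' \in (0,1)$, and in particular for the given $\varepsilon$, so as $x$ was arbitrary we obtain $\sigma_1, \ldots, \sigma_n \subset \widetilde{\mbox{Cone}}_{\cU, \varepsilon}(\xi)$.

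The main subtlety is the uniformity of the auxiliary simplex choice: a priori each $x$ along the path would require its own simplex meeting $\mbox{Geod}(x, D(\xi))$, but the same $\sigma^\ast$ works for every $x$ precisely because it shares a vertex with $D(\xi)$. The only arithmetic technicality is ensuring the nesting constant $k$ provided by $n$-refinement is at least $m_\xi + d_\xi + 1$; this is what justifies the use of $\cU^{[1]}$ in the second conclusion of the geodesic reattachment lemma and is harmless given the flexibility in the definition of $n$-refinement.
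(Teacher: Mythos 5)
There is a genuine gap at the heart of your argument: the claim that the single exit simplex $\sigma^\ast = \sigma_{\xi,\varepsilon}(x_1)$ meets $\mbox{Geod}(x, D(\xi))$ for \emph{every} $x$ in the path, "automatically" because it has a vertex in $D(\xi)$. The hypothesis of the geodesic reattachment lemma \ref{reattachment} cannot mean merely that the closed simplex $\overline{\sigma}$ touches $D(\xi) \subset \mbox{Geod}(x,D(\xi))$ at a vertex: under that reading the hypothesis would be satisfied by every simplex of $\mbox{st}(D(\xi))\setminus D(\xi)$ and every $x$, and the lemma would then assert that $[v_0,x]$ meets $D(\xi)$ for arbitrary $x$, which is absurd. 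What the reattachment lemma actually needs — and what its proof uses, when it produces a path of simplices of length at most $m_\xi$ between $\sigma$ and $\sigma_b$ inside $\mbox{Geod}(x,D(\xi)) \cap \mbox{st}(D(\xi)) \setminus D(\xi)$ — is that a geodesic from $x$ to $D(\xi)$ genuinely passes through $\sigma$ outside of $D(\xi)$. For a point $x$ in, say, $\sigma_n$, far from $x_1$, there is no reason whatsoever that geodesics from $x$ to $D(\xi)$ should approach $D(\xi)$ through the particular simplex $\sigma^\ast$; they will in general enter the star of $D(\xi)$ through entirely different simplices. Your proof also never uses the fact that $\sigma_1,\ldots,\sigma_n$ form a connected path of simplices, nor the constant $k$ built into the definition of $n$-refinement (which is tied to $\mbox{Geod}(v_0,F)$ for $F$ spanned by at most $n$ simplices); if your argument worked, the lemma would hold for an arbitrary unconnected collection of simplices with $\cU'$ only $(m_\xi+d_\xi+1)$-nested in $\cU$, a much stronger and false statement.

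The missing idea is a propagation step. The paper first applies reattachment to $x_1$ alone to get a point $y$ where $[v_0,x_1]$ exits $D(\xi)$; then, for each $x$, it retracts a path from $x_1$ to $x$ inside $\sigma_1 \cup \cdots \cup \sigma_n$ along geodesics toward $D(\xi)$, producing a path of simplices of length at most $k$ from $\sigma_{\xi,\varepsilon}(x_1)$ to a simplex $\sigma$ that \emph{does} meet $\mbox{Geod}(x,D(\xi))$; the crossing lemma \ref{passage} (this is where the $k$-nesting of $\cU'$ in $\cU$ enters) transports the containment $\overline{EG_{\cdot}} \subset U_v$ along this path to $\sigma$, so that reattachment can legitimately be applied to the pair $(x,\sigma)$. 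A second retraction, now along geodesics from $v_0$, connects $\sigma_{\xi,\varepsilon}(x_1)$ to $\sigma_{\xi,\varepsilon}(x)$ by a path of simplices of length at most $k$ and another application of the crossing lemma gives $\overline{EG_{\sigma_{\xi,\varepsilon}(x)}} \subset U_v$, hence $x \in \widetilde{\mbox{Cone}}_{\cU,\varepsilon}(\xi)$. (A secondary point: the constant $k$ furnished by the definition of $n$-refinement is whatever the finiteness lemma gives for $n$; you are not free to assume $k \geq m_\xi + d_\xi + 1$, although this issue is moot once the propagation step is in place, since reattachment is then invoked with the standing family $\cU_\xi$, which is already $(m_\xi+d_\xi)$-nested in $\cV_\xi$.)
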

\begin{proof}
Let us prove that for every $x \in \cup_{1 \leq i \leq n} \sigma_i$, the geodesic segment $[v_0,x]$ meets $D(\xi)$. 
Let $x_1 \in \sigma_1$ such that $\sigma_{\xi, \varepsilon}(x_1) \subset \mbox{st}_{\cU'}(\xi)$. Note that the reattachment lemma \ref{reattachment} implies that $[v_0,x_1]$ meets $D(\xi)$. Let $y \in D(\xi)$ be the last element of $D(\xi)$ touched by $[v_0, y]$ before leaving $D(\xi)$.

Let $x \in \cup_{1 \leq i \leq n} \sigma_i$, and let $\sigma$ be a simplex of $\mbox{st}(D(\xi)) \setminus D(\xi)$ touched by $[y, x]$ after leaving $D(\xi)$. By retracting a path $\gamma$ between $x_1$ and $x$ in $\cup_{1 \leq i \leq n} \sigma_i$ along geodesic segments $[y,z]$ for $z$ in the support of $\gamma$ (see Figure 6), we get a path of simplices between $\sigma_{\xi, \varepsilon}(x_1)$ and $\sigma$ of length at most $k$. 

\begin{center}
%% Creator: Inkscape 0.48.2, www.inkscape.org
%% PDF/EPS/PS + LaTeX output extension by Johan Engelen, 2010
%% Accompanies image file '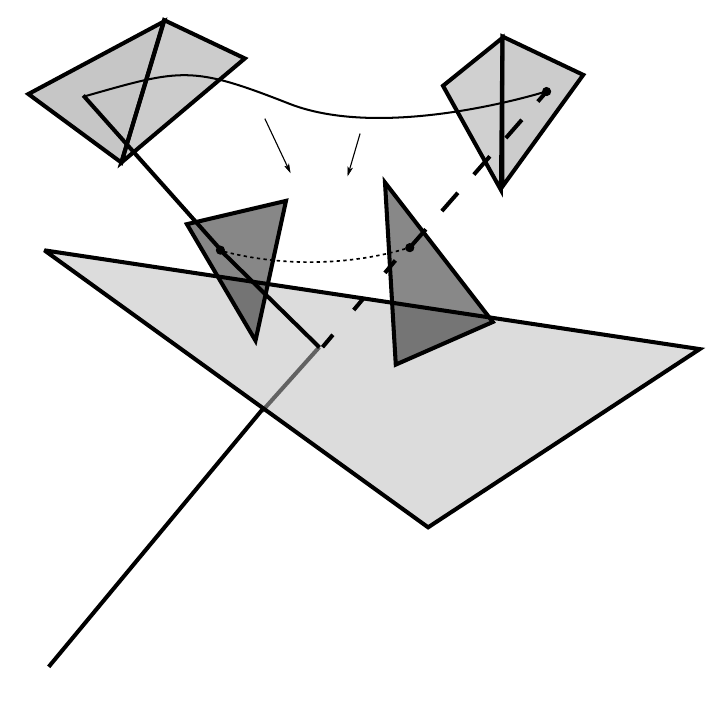' (pdf, eps, ps)
%%
%% To include the image in your LaTeX document, write
%%   \input{<filename>.pdf_tex}
%%  instead of
%%   \includegraphics{<filename>.pdf}
%% To scale the image, write
%%   \def\svgwidth{<desired width>}
%%   \input{<filename>.pdf_tex}
%%  instead of
%%   \includegraphics[width=<desired width>]{<filename>.pdf}
%%
%% Images with a different path to the parent latex file can
%% be accessed with the `import' package (which may need to be
%% installed) using
%%   \usepackage{import}
%% in the preamble, and then including the image with
%%   \import{<path to file>}{<filename>.pdf_tex}
%% Alternatively, one can specify
%%   \graphicspath{{<path to file>/}}
%% 
%% For more information, please see info/svg-inkscape on CTAN:
%%   http://tug.ctan.org/tex-archive/info/svg-inkscape
%%
\begingroup%
  \makeatletter%
  \providecommand\color[2][]{%
    \errmessage{(Inkscape) Color is used for the text in Inkscape, but the package 'color.sty' is not loaded}%
    \renewcommand\color[2][]{}%
  }%
  \providecommand\transparent[1]{%
    \errmessage{(Inkscape) Transparency is used (non-zero) for the text in Inkscape, but the package 'transparent.sty' is not loaded}%
    \renewcommand\transparent[1]{}%
  }%
  \providecommand\rotatebox[2]{#2}%
  \ifx\svgwidth\undefined%
    \setlength{\unitlength}{203.115439bp}%
    \ifx\svgscale\undefined%
      \relax%
    \else%
      \setlength{\unitlength}{\unitlength * \real{\svgscale}}%
    \fi%
  \else%
    \setlength{\unitlength}{\svgwidth}%
  \fi%
  \global\let\svgwidth\undefined%
  \global\let\svgscale\undefined%
  \makeatother%
  \begin{picture}(1,1.00981567)%
    \put(0,0){\includegraphics[width=\unitlength]{dessin1BW.pdf}}%
    \put(-0.00556493,0.16292268){\color[rgb]{0,0,0}\makebox(0,0)[lt]{\begin{minipage}{0.35011979\unitlength}\raggedright $v_0$\end{minipage}}}%
    \put(0.05809325,0.97854268){\color[rgb]{0,0,0}\makebox(0,0)[lt]{\begin{minipage}{0.35011982\unitlength}\raggedright $x_1$\end{minipage}}}%
    \put(0.78618329,1.01832891){\color[rgb]{0,0,0}\makebox(0,0)[lt]{\begin{minipage}{0.21484623\unitlength}\raggedright $x$\end{minipage}}}%
    \put(0.77822603,0.34593978){\color[rgb]{0,0,0}\makebox(0,0)[lt]{\begin{minipage}{0.21086763\unitlength}\raggedright $D(\xi)$\end{minipage}}}%
    \put(0.65886699,0.66423048){\color[rgb]{0,0,0}\makebox(0,0)[lt]{\begin{minipage}{0.10742311\unitlength}\raggedright $\sigma$\end{minipage}}}%
    \put(0.39627719,0.71993136){\color[rgb]{0,0,0}\makebox(0,0)[lt]{\begin{minipage}{0.14323078\unitlength}\raggedright $\sigma_{\xi, \varepsilon}(x_1)$\end{minipage}}}%
  \end{picture}%
\endgroup%

Figure 1.
\end{center}

As $\cU'$ is $k$-nested in $\cU$, the crossing lemma \ref{passage} implies that for some (hence every) vertex $v$ of $\sigma$, we get $\overline{EG_\sigma} \subset U_v \subset (U_\xi)_v$. Thus the geodesic reattachment lemma \ref{reattachment} implies that $[v_0, x]$ meets $D(\xi)$.\\

Now let $x \in \cup_{1 \leq i \leq n} \sigma_i$, and let $ \gamma $ be a path in $\cup_{1 \leq i \leq n} \sigma_i$ from $x_1$ to $x$. By retracting this path along the geodesic segments $[v_0,y]_K$, for $ y $ on the support of $\gamma$, we get a path $\gamma' \subset D^\varepsilon(\xi)\setminus D(\xi)$ from $\sigma_{\xi, \varepsilon}(x_1)$ to $\sigma_{\xi, \varepsilon}(x)$. This path yields a path of simplices between $\sigma_{\xi, \varepsilon}(x_1)$ and $\sigma_{\xi, \varepsilon}(x)$ of length at most $k$ in $\mbox{st}(D(\xi)) \setminus D(\xi)$. Now since $\cU'$ is $k$-nested in $\cU$, the crossing lemma \ref{passage} implies that for every vertex $v$ of $\sigma_{\xi, \varepsilon}(x)$, we have $ \overline{EG_{\sigma_{\xi, \varepsilon}(x)}} \subset U_v,$ hence $x \in \widetilde{\mbox{Cone}}_{\cU, \varepsilon}(\xi)$.
\end{proof}

\subsubsection{The star lemma.}
\begin{lem}
 Let $x, y$ be points of $X$. As $[x,y]$ meets only finitely many simplices by \ref{finite}, there exists $\delta > 0$ such that the open $\delta$-neighbourhood $[x,y]^\delta$ of the geodesic segment $[x,y]$ satisfies
$$ [x,y]^\delta \subset \underset{\sigma \subset \mathrm{Span}([x,y])}{\bigcup} \mbox{st}(\sigma). $$ \qed
\label{lemmesansnom3}
\end{lem}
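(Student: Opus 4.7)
The plan is to apply a standard compactness argument in metric topology. The geodesic segment $[x,y]$ is the image of the compact interval $[0, d(x,y)]$ under the parametrising geodesic, hence is compact. This compactness is the only nontrivial ingredient needed.

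First I would establish the inclusion $[x,y] \subset U$, where $U := \bigcup_{\sigma \subset \mathrm{Span}([x,y])} \mathrm{st}(\sigma)$. Given any $z \in [x,y]$, the unique open simplex $\sigma_z$ containing $z$ in its interior meets $[x,y]$, and therefore is one of the finitely many simplices appearing in $\mathrm{Span}([x,y])$. Since $\sigma_z$ is a face of itself, we have $z \in \sigma_z \subset \mathrm{st}(\sigma_z) \subset U$. The set $U$ is open as a union of open stars of simplices (each open star being an open subset of $X$).

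To produce $\delta$, I would set $F := X \setminus U$, a closed set disjoint from $[x,y]$. The function $z \mapsto d(z, F)$ is continuous on $[x,y]$ and strictly positive at every point (since $F$ is closed and $[x,y] \cap F = \varnothing$). By compactness of $[x,y]$, this function attains a positive minimum $\delta > 0$. Then for any $w \in [x,y]^\delta$ there exists $z \in [x,y]$ with $d(w, z) < \delta \leq d(z, F)$, forcing $w \notin F$, i.e. $w \in U$. This yields the desired inclusion $[x,y]^\delta \subset U$.

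There is no real obstacle here: the argument is essentially the standard fact that a compact set and a disjoint closed set in a metric space have positive distance. The only verification that requires attention is that $[x,y]$ is genuinely contained in the union of open stars of simplices in its span, but this is immediate from the definition of the span and the fact that each point of $X$ lies in the open star of the unique open simplex containing it in its interior.
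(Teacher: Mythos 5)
Your argument is correct and is exactly the standard compactness argument that the paper implicitly relies on: the lemma is stated with no written proof (the \qed\ appears in the statement itself), and the intended justification is precisely that $[x,y]$ is compact and contained in the open set $\bigcup_{\sigma \subset \mathrm{Span}([x,y])}\mbox{st}(\sigma)$, so it has positive distance to the closed complement. The only ingredient you assert without proof --- that open stars are open in the simplicial metric topology --- is a standard fact for $M_\kappa$-complexes with finitely many shapes (Bridson), so there is no gap.
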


\begin{lem}[star lemma]
 Let $\xi \in \partial_{Stab} G$, $\varepsilon \in (0,1)$ and $x \in X \setminus D^{\varepsilon}(\xi)$ such that the geodesic segment $[v_0,x]$ goes through $D^{\varepsilon}(\xi)$. Then there exists $\delta > 0$ such that for every $y \in B(x, \delta) \setminus D^{\varepsilon}(\xi)$, the geodesic segment $[v_0,y]$ goes through $D^{\varepsilon}(\xi)$. Furthermore, for every $y \in B(x, \delta) \setminus D^{\varepsilon}(\xi)$, we have 
$$\sigma_{\xi, \varepsilon}(y) \subset \mbox{st}(\sigma_{\xi, \varepsilon}(x)).$$
\label{star}
\end{lem}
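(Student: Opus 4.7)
The proof rests on two ingredients: the set $D^\varepsilon(\xi)$ is open and convex in $X$ (convexity follows from Proposition \ref{finitedomain}, combined with the fact that metric neighbourhoods of convex subsets of a CAT(0) space are convex), and the map $y \mapsto \gamma_y$ is continuous for the topology of uniform convergence on compact parameter intervals (a standard consequence of the convexity of the distance function along geodesics in a CAT(0) space).

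\textbf{Existence of passage.} Pick $t_0$ with $\gamma_x(t_0) \in D^\varepsilon(\xi)$; since $D^\varepsilon(\xi)$ is open, there is $r > 0$ with $B(\gamma_x(t_0), r) \subset D^\varepsilon(\xi)$. Continuity of the geodesic map yields $\delta_1 > 0$ such that $d(x, y) < \delta_1$ implies $\gamma_y(t_0) \in B(\gamma_x(t_0), r) \subset D^\varepsilon(\xi)$, so $[v_0, y]$ enters $D^\varepsilon(\xi)$. Since $y \notin D^\varepsilon(\xi)$, the geodesic $[v_0, y]$ must also leave $D^\varepsilon(\xi)$, hence it goes through it.

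\textbf{Control of the exit simplex.} For $y$ as above, denote by $T_y$ the last parameter at which $\gamma_y$ lies in $\overline{D^\varepsilon(\xi)}$; the convexity of $D^\varepsilon(\xi)$ makes $T_y$ well-defined and yields the continuity of $y \mapsto T_y$ at $x$. By the definition of the exit simplex, I can fix $\eta > 0$ with $\gamma_x(T_x + \eta)$ in the interior of the open simplex $\sigma_{\xi,\varepsilon}(x)$. The key local fact is that every point of $\mathrm{int}(\sigma_{\xi,\varepsilon}(x))$ has an open neighbourhood in $X$ contained in $\mathrm{st}(\sigma_{\xi,\varepsilon}(x))$, since any open simplex meeting such a point must contain $\sigma_{\xi,\varepsilon}(x)$ as a face. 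Combining the continuity of $T_y$ with that of $\gamma_y$, I obtain $\delta \in (0, \delta_1)$ such that $\gamma_y(T_y + \eta) \in \mathrm{st}(\sigma_{\xi,\varepsilon}(x))$ whenever $d(x,y) < \delta$.

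\textbf{Conclusion and main obstacle.} To identify $\sigma_{\xi,\varepsilon}(y)$, I apply Lemma \ref{lemmesansnom3} to the segment $\gamma_x([T_x, T_x + \eta])$ and shrink $\delta$ further so that $\gamma_y([T_y, T_y + \eta])$ remains in the resulting $\delta_0$-neighbourhood. The first open simplex entered by $\gamma_y$ after $T_y$ is by definition $\sigma_{\xi,\varepsilon}(y)$, and it is met by the segment just described; combining with the previous paragraph, this simplex lies in $\mathrm{st}(\sigma_{\xi,\varepsilon}(x))$. The delicate point I expect to be the main obstacle is to rule out the possibility that the very first simplex touched by $\gamma_y$ after exit is a face of $\sigma_{\xi,\varepsilon}(x)$ not containing $\sigma_{\xi,\varepsilon}(x)$ as a coface. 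This is handled by tracking the tangent direction of $\gamma_y$ at $q_y = \gamma_y(T_y)$ in the CAT(0) space of directions: by continuity of geodesics, this direction is close to the tangent direction of $\gamma_x$ at $q_x$, which points into $\mathrm{int}(\sigma_{\xi,\varepsilon}(x))$, so after a possible further shrinking of $\delta$ the same holds for $\gamma_y$, yielding the desired inclusion.
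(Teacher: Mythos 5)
Your preliminary steps are sound (openness and convexity of $D^\varepsilon(\xi)$, continuity of $y\mapsto\gamma_y$ via CAT(0) convexity, and the use of Lemma \ref{lemmesansnom3}), and you have correctly isolated the crux: what must be controlled is the simplex met by $\gamma_y$ \emph{at and immediately after} its exit point, not merely the position of $\gamma_y(T_y+\eta)$. But the argument you offer for that crux --- comparing tangent directions of $\gamma_x$ at $q_x$ and of $\gamma_y$ at $q_y$ in the space of directions --- is a genuine gap. First, $q_x$ and $q_y$ are in general distinct points, so their links are different spaces and ``these directions are close'' is not well-formed without further work. Second, and more seriously, even from a common basepoint the outgoing direction of a geodesic in a CAT(0) complex does \emph{not} vary continuously with its endpoint: the Alexandrov angle is only upper semicontinuous. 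A flat ``book'' of three pages glued along a spine already defeats the claim: the geodesics from a point of page $1$ to the points $(0,\epsilon)$ of page $2$ and of page $3$ both cross the spine at the same point, with endpoints $2\epsilon$ apart, yet their outgoing directions lie in different pages and are at angle $\pi$ in the link. This is precisely the configuration you need to exclude (exit point lying in a low-dimensional face whose star contains simplices that are not cofaces of $\sigma_{\xi,\varepsilon}(x)$), so no ``further shrinking of $\delta$'' can rescue the step. You also leave untreated the degenerate case $d(x,D(\xi))=\varepsilon$, where $T_x$ is the endpoint of $[v_0,x]$ and $\gamma_x(T_x+\eta)$ does not exist; the paper disposes of this case separately.

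The paper's proof closes the hole by anchoring the comparison differently. Using convexity of $z\mapsto d(z,D(\xi))$ it fixes a time $t_0-r$ at which $\gamma_x$ is still \emph{strictly inside} $D^\varepsilon(\xi)$, shows that $\gamma_y$ passes $\tau$-close both to $\gamma_x(t_0-r)$ and to a point just outside $D^\varepsilon(\xi)$ near $\gamma_x(t_0)$, and then invokes Lemma \ref{lemmesansnom3} together with CAT(0) convexity to trap the entire arc $\gamma_y([t_0-r,t_2])$ inside $\mbox{st}(\sigma_{\xi,\varepsilon}(x))$. Because one anchor is inside $D^\varepsilon(\xi)$ and the other outside, this arc contains the exit point of $\gamma_y$ together with a neighbourhood of it; since an open star is a union of open simplices, every open simplex met there --- in particular $\sigma_{\xi,\varepsilon}(y)$ --- is automatically contained in $\mbox{st}(\sigma_{\xi,\varepsilon}(x))$, with no statement about directions required. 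To repair your write-up, replace the direction argument by this device: compare the two geodesics at a point strictly inside $D^\varepsilon(\xi)$ and at a point beyond the exit, and let convexity carry the inclusion across the exit time.
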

\begin{proof}
 Let $T= \mbox{dist}(v_0, x)$, and let $\gamma_x: [0, T] \ra X$ be the parametrisation of the geodesic segment $[v_0,x]$. Let $t_0 >0$ such that $[v_0,x]$ leaves $D^\varepsilon(\xi)$ at time $t_0$. Since $D(\xi)$ is convex by \ref{finitedomain}, the map $z \mapsto \mbox{dist}(z,D(\xi))$ is convex, hence there exists $r>0$ such that 
\begin{align*}
 \gamma_x\bigg([t_0-r,t_0) \bigg)  &\subset D^\varepsilon(\xi),
\\ \gamma_x  \big( \left[ t_0-r, t_0 +r \right] \cap [0,T] \big) &\subset \mbox{st}\left(\sigma_{\xi, \varepsilon}(x) \right). 
\end{align*}
Using the third inclusion and \ref{lemmesansnom3}, we can choose $\tau>0$ such that for every $y_-, y_+$ in the $\tau$-neighbourhood of $\gamma_x  \big( \left[ t_0-r, t_0 +r \right] \cap [0,T] \big)$, the geodesic segment $[y_-,y_+]$ is contained in $\mbox{st}(\sigma_{\xi, \varepsilon}(x))$.\\
Let 
$$k = \frac{1}{4}\bigg( \varepsilon - \mbox{dist}(\gamma_x(t_0-r), D(\xi))\bigg). $$
We finally set $\delta =\displaystyle{ \frac{1}{100}\mbox{min}( k, \tau, r)}$. Furthermore, in the case $x \notin \overline{D^\varepsilon(\xi)}$, we can assume without loss of generality that $\delta < \frac{1}{100}(T-t_0)$.
Let $y \in B(x, \delta) \setminus D^\varepsilon(\xi)$, and let $\gamma_y$ be its parametrisation. \\
Since $\delta \leq \frac{r}{2}$, we have $d(v_0,y) \geq t_0 - r$. By convexity of $z \mapsto d(z,D(\xi))$, the inequality $\delta < k$ now implies
\begin{align*}
 d\bigg( \gamma_y(t_0-r), D(\xi) \bigg) &\leq d\bigg( \gamma_x(t_0-r), D(\xi) \bigg)  + d\bigg( \gamma_x(t_0-r), \gamma_y(t_0 - r) \bigg)
\\ &< (\varepsilon - 400\delta ) + 3\delta
\\ &< \varepsilon,
\end{align*} 
so $\gamma_y(t_0-r) \in D^\varepsilon(\xi)$. Since $y \notin D^\varepsilon(\xi)$, it follows that the geodesic segment $[v_0,y]$ goes through $D^\varepsilon(\xi)$ and leaves it for some $t_1 \geq t_0-r$. Now, $\gamma_x$ and $\gamma_y$ parametrise geodesics starting at $v_0$ and such that $d(x,y) \ < \delta$, so since $X$ is a CAT(0)-space, we get  $d(\gamma_x(t_0-r), \gamma_y(t_0-r)) \leq 3 \delta \leq \tau $. 

Moreover,  after leaving $D^\varepsilon(\xi)$ the geodesic $[v_0,y]$ meets the $10\delta$-ball centered at $\gamma_x(t_0)$ for some $t_2 \geq t_0 - r$. Indeed, this is clear if $x \in \overline{D^\varepsilon(\xi)}$; If $x \notin \overline{D^\varepsilon(\xi)}$, then $[v_0,y]$ meets the $3\delta$-ball centered at $\gamma_x(t_0 + 5\delta)$, which is contained in $(X \setminus D^\varepsilon(\xi)) \cap B(\gamma_x(t_0), 10\delta)$. Hence, $[v_0,y]$ meets the $B(\gamma_x(t_0), \tau) \setminus D^\varepsilon(\xi)$. By definition of $\tau$, it thus follows from \ref{lemmesansnom3} that 
$$\gamma_y\bigg(  \left[ t_0-r, t_2  \right]  \bigg) \subset \mbox{st}\left(\sigma_{\xi, \varepsilon}(x) \right),$$ which implies $\sigma_{\xi, \varepsilon}(y) \subset \mbox{st}\big( \sigma_{\xi, \varepsilon}(x) \big)$.
\end{proof}

The star lemma \ref{star} immediately implies the following:

\begin{cor}
 Let $\xi \in \partial_{Stab} G$, $\cU$ a $\xi$-family and $\varepsilon \in (0,1)$. Then the set $\mbox{Cone}_{\cU, \varepsilon}(\xi)$ is open in $X$. \qed
\label{opencones}
\end{cor}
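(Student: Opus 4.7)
The plan is to take an arbitrary $x \in \mbox{Cone}_{\cU, \varepsilon}(\xi)$ and produce an explicit ball $B(x, \delta)$ contained in $\mbox{Cone}_{\cU, \varepsilon}(\xi)$. The star lemma \ref{star} does essentially all of the work; the only preliminary is to show that $x$ lies strictly outside $\overline{D^\varepsilon(\xi)}$, so that the ball can be chosen to avoid $D^\varepsilon(\xi)$ entirely and the star lemma's conclusion applies to every point of the ball.

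First I would check that $d(x, D(\xi)) > \varepsilon$. Since $D(\xi)$ is convex by \ref{finitedomain} and $X$ is CAT(0), the function $f(t) := d(\gamma_x(t), D(\xi))$ is convex on $[0, T]$ with $T = d(v_0, x)$. The hypothesis that $[v_0, x]$ goes through $D^\varepsilon(\xi)$ gives $t_0 < t_1 \leq T$ with $f(t_0) < \varepsilon$ and $f(t_1) \geq \varepsilon$; convexity forces $f$ to be non-decreasing on $[t_0, T]$, so $f(T) \geq \varepsilon$. The existence of an exit simplex $\sigma_{\xi, \varepsilon}(x)$ reached strictly after leaving $D^\varepsilon(\xi)$ forces $t_1 < T$ and $f(t_1) > \varepsilon$, giving $f(T) > \varepsilon$ by the same monotonicity, i.e. $x \notin \overline{D^\varepsilon(\xi)}$.

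Then I apply the star lemma \ref{star} to obtain $\delta_0 > 0$ such that for every $y \in B(x, \delta_0) \setminus D^\varepsilon(\xi)$, the geodesic $[v_0, y]$ goes through $D^\varepsilon(\xi)$ and $\sigma_{\xi, \varepsilon}(y) \subset \mbox{st}(\sigma_{\xi, \varepsilon}(x))$. Setting $\delta := \min\bigl(\delta_0,\, d(x, D(\xi)) - \varepsilon\bigr) > 0$ ensures $B(x, \delta) \cap D^\varepsilon(\xi) = \varnothing$, so the conclusion of the star lemma holds at every $y \in B(x, \delta)$.

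Finally, for such a $y$ the inclusion $\sigma_{\xi, \varepsilon}(y) \subset \mbox{st}(\sigma_{\xi, \varepsilon}(x))$ means that $\sigma_{\xi, \varepsilon}(x)$ is a face of $\overline{\sigma_{\xi, \varepsilon}(y)}$. In particular any vertex $v \in D(\xi) \cap \sigma_{\xi, \varepsilon}(x)$ (which exists since $x \in \mbox{Cone}_{\cU,\varepsilon}(\xi)$) is also a vertex of $\sigma_{\xi, \varepsilon}(y) \cap D(\xi)$, and the gluing map of $E(\cY)$ yields an equivariant embedding $\overline{EG_{\sigma_{\xi, \varepsilon}(y)}} \hra \overline{EG_{\sigma_{\xi, \varepsilon}(x)}}$ inside $\overline{EG_v}$. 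Since $\overline{EG_{\sigma_{\xi, \varepsilon}(x)}} \subset U_v$ by hypothesis, one concludes $\overline{EG_{\sigma_{\xi, \varepsilon}(y)}} \subset U_v$, so $y \in \mbox{Cone}_{\cU, \varepsilon}(\xi)$. The only subtle point is the strict inequality $d(x, D(\xi)) > \varepsilon$ in the first paragraph; this is where the definition of the exit simplex (requiring the geodesic to genuinely leave $D^\varepsilon(\xi)$ before reaching $x$) plays a role.
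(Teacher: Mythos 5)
Your proof is correct and follows the same route as the paper: the paper states this corollary as an immediate consequence of the star lemma \ref{star} and gives no written argument, and your write-up is exactly the expansion of that one-liner (the star lemma produces a ball on which exit simplices stay in $\mbox{st}(\sigma_{\xi,\varepsilon}(x))$, so that $\sigma_{\xi,\varepsilon}(x)$ is a face of each $\sigma_{\xi,\varepsilon}(y)$ and the gluing maps transport the containment $\overline{EG_{\sigma_{\xi,\varepsilon}(x)}}\subset U_v$ to $\overline{EG_{\sigma_{\xi,\varepsilon}(y)}}$). The one point to flag is your first paragraph: the strict inequality $d(x,D(\xi))>\varepsilon$ does not follow from convexity alone, since the configuration where $[v_0,x]$ reaches $\partial D^{\varepsilon}(\xi)$ exactly at its endpoint $x$ still satisfies the literal definition of ``goes through''; your exclusion of it rests entirely on reading ``the first simplex touched after leaving $D^{\varepsilon}(\xi)$'' as requiring the geodesic to continue strictly past the exit time. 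That reading is the one under which the statement is true --- a cone point sitting on $\partial D^{\varepsilon}(\xi)$ would have arbitrarily close points inside $D^{\varepsilon}(\xi)$, and such points can never lie in $\mbox{Cone}_{\cU,\varepsilon}(\xi)$ because their geodesics end inside $D^{\varepsilon}(\xi)$ and hence, by convexity of $D(\xi)$, never go through it --- so it is to your credit that you isolated exactly where the delicacy sits; the paper does not address it at all.
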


\subsection{Definition of the topology.}
In this paragraph, we define a topology on $\overline{EG}$, by defining a basis of open neighbourhoods at every point. Since points of $\overline{EG}$ are of three different kinds ($EG$, $\partial X$ and $\partial_{Stab} G$), we treat these cases separately.
\begin{definition}
 Let $\widetilde{x} \in EG$. We define a basis of neighbourhoods of $\widetilde{x}$ in $\overline{EG}$, denoted $\cO_{\overline{EG}}(\widetilde{x})$, as the set of open sets of $EG$ containing $\widetilde{x}$.
\end{definition}

We now turn to the case of elements in the boundary of $X$. Recall that since $X$ is a simplicial CAT(0) space with countably many simplices, the bordification $\overline{X} = X \cup \partial X$ has a natural metrisable topology, though not necessarily compact if $X$ is not locally finite. For every $\eta \in \partial X$, a basis of neighbourhoods of $\eta$ in that bordification is given by the family of
$$V_{r,\delta}(\eta) = \left\lbrace x  \in X \bigg|  d(v_0,x)>r ~~\mathrm{and}~~ \gamma_x(r) \in B(\gamma_\eta(r), \delta)     \right\rbrace, ~ r, \delta >0. $$
\textbf{Remark:} For $r, \delta>0$, $\eta \in \partial X$ and if $\gamma$ is the parametrisation of a geodesic such that there exists $T\geq 0$ with $\gamma(T) \in V_{r, \delta}(\eta)$, then $\gamma(t) \in V_{r, \delta}(\eta)$ for every $t \geq T$.\\

We denote by $\cO_{\overline{X}}(\eta)$ this basis of neighbourhoods of $\eta$ in $\overline{X}$. Endowed with that topology, $\overline{X}$ is a secound countable metrisable space (see \cite{BridsonHaefliger}).

Note that the topology of $\overline{X}$ satisfies the following properties:

\begin{lem}
Let $\eta \in \partial X$. Then there exists a basis of neighbourhoods $(U_n)$ of $\eta$ in $\overline{X}$ such that $U_n$ and $U_n \setminus \partial X$ are contractible for every $n \geq 0$. 
\label{contractibleneighbourhoods}
\end{lem}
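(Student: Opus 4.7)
My plan is to replace the standard basis $(V_{r,\delta}(\eta))$, which in non-locally-finite examples (e.g.\ a simplicial tree with many branches meeting $\gamma_\eta$) fails to be even connected, by open $\delta_n$-tubular neighbourhoods of the ray-tail of $\gamma_\eta$. Namely, for sequences $r_n \nearrow \infty$ and $\delta_n \searrow 0$ I first set
\[
\widetilde U_n = \{x \in X : d(x, \gamma_\eta([r_n, \infty))) < \delta_n\}.
\]
Since $\gamma_\eta([r_n, \infty))$ is a geodesic, hence convex, and the distance function to a convex set in a CAT(0) space is convex, $\widetilde U_n$ is itself convex in $X$, and therefore contractible: one retracts it onto $\gamma_\eta([r_n, \infty))$ via the CAT(0) nearest-point projection, sliding each point along the geodesic to its projection, and then contracts the ray onto a point.

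I then define $U_n$ to be $\widetilde U_n$ together with those boundary points $\eta' \in \partial X$ whose ray $\gamma_{\eta'}$ eventually stays inside $\widetilde U_n$; in particular $\eta \in U_n$. I verify that $U_n$ is open in $\overline X$, using the continuity of $x \mapsto \gamma_x(s)$ on $\overline X \times \mathbb R_+$, and that $(U_n)$ is a neighbourhood basis of $\eta$: any $V_{r,\delta}(\eta)$ contains some $U_n$ and conversely, the translation between the two types of conditions being governed by the CAT(0)-convexity of $s \mapsto d(\gamma_x(s), \gamma_\eta(s))$.

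To upgrade contractibility from $\widetilde U_n$ to $U_n$, I invoke the fact that $\partial X$ is a $\mathcal Z$-set in $\overline X$ (a standard property of the visual compactification in the simplicial CAT(0) setting of the paper): this implies that $\partial X \cap U_n$ is a $\mathcal Z$-set in $U_n$, so the inclusion $U_n \setminus \partial X = \widetilde U_n \hookrightarrow U_n$ is a homotopy equivalence, and the contractibility of $\widetilde U_n$ transfers to $U_n$. Since $U_n \setminus \partial X = \widetilde U_n$ was already shown contractible, both statements of the lemma hold simultaneously.

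The main obstacle is to ensure that the enlarged set $U_n$ is genuinely open in $\overline X$ and that one can legitimately use the $\mathcal Z$-set property here. If this abstract argument is unavailable, the alternative is an explicit contraction of $U_n$ onto $\{\eta\}$: slide each $x \in \widetilde U_n$ along the geodesic to its projection on $\gamma_\eta([r_n, \infty))$ and then along $\gamma_\eta$ to $\eta$, while sliding each boundary point $\eta' \in U_n \cap \partial X$ backward along its own ray $\gamma_{\eta'}$ into $\widetilde U_n$ before joining the interior path; the delicate step is then to check continuity at the interface between $X$ and $\partial X$, which follows from the fact that for $x \to \eta'$ the geodesics $\gamma_x$ converge to $\gamma_{\eta'}$ uniformly on compacta, together with CAT(0)-thinness of triangles.
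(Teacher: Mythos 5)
There is a genuine gap: the sets you construct are not neighbourhoods of $\eta$ in $\overline{X}$. In the cone topology every neighbourhood of $\eta$ must contain some basic set $V_{r,\delta}(\eta)$, and $V_{r,\delta}(\eta)$ is \emph{not} contained in any bounded-width metric tube around the ray $\gamma_\eta([r_n,\infty))$: already in the Euclidean plane $V_{r,\delta}(\eta)$ is a solid cone of angular width roughly $\delta/r$, whose points at distance $R$ from $v_0$ lie at distance about $R\delta/r \to \infty$ from the ray; in a tree $V_{r,\delta}(\eta)$ contains every branch splitting off $\gamma_\eta$ after time $r$. So the ``conversely'' half of your claimed basis property fails, $\widetilde U_n \cup (U_n\cap\partial X)$ has empty interior at $\eta$, and the whole construction collapses at the very first step. (Your diagnosis that the raw $V_{r,\delta}(\eta)$ can be disconnected is correct, but the remedy cannot shrink the set to a tube -- it has to \emph{enlarge} it.) A secondary issue is the appeal to $\partial X$ being a $\cZ$-set in $\overline{X}$: here $X$ is not assumed locally finite, $\overline{X}$ need not be compact, and no such statement is available in the paper; even if it were, it would be moot given the first problem.

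The paper's proof keeps the cone-shaped sets and repairs connectedness by adjoining the ball at the apex: it sets $U_{r,\delta}(\eta) = V_{r,\delta}(\eta)\cup B(\gamma_\eta(r),\delta)$, which is open and contains $V_{r,\delta}(\eta)$, hence is a basis of neighbourhoods. Since every $x\in V_{r,\delta}(\eta)$ satisfies $\gamma_x(r)\in B(\gamma_\eta(r),\delta)$, sliding each point backwards along its geodesic from $v_0$ gives a strong deformation retraction of $U_{r,\delta}(\eta)$ onto the convex, hence contractible, ball $B(\gamma_\eta(r),\delta)$; the same sliding pushes $\partial X$ into the interior, giving the statement for $U_{r,\delta}(\eta)\setminus\partial X$ as well. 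If you want to salvage your write-up, replace the tubes by these apex-ball cones and run your ``slide along geodesics from $v_0$'' homotopy, which is exactly the intended argument.
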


\begin{proof}
For $r, \delta >0$, let $U_{r, \delta}(\eta) = V_{r, \delta}(\eta) \cup B(\gamma_\eta(r), \delta)$. This defines a basis of neighbourhood of $\eta$ in $\overline{X}$. As $U_{r, \delta}(\eta)$ can be retracted by strong deformation along geodesics starting at $v_0$ onto $B(\gamma_{\eta}(r), \delta)$, it is contractible. Furthermore, as $U_{r, \delta}(\eta)$ can be retracted by strong deformation onto $U_{r, \delta}(\eta) \setminus \partial X$, the same holds for $U_{r, \delta}(\eta) \setminus \partial X$. 
\end{proof}

\begin{lem}
Let $\eta \in \partial X$, $U$ a neighbourhood of $\eta$ in $\overline{X}$ and $k \geq 0$. Then there exists a neighbourhood $U'$ of $\eta$ in $\overline{X}$ that is contained in $U$ and such that $d(U' \cap X, X \setminus U) >k.$
\label{CAT(0)nesting}
\end{lem}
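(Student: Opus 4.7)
The plan is to take $U'$ of the form $V_{r,\delta}(\eta)$ for suitable $r,\delta$. Two standard CAT(0) facts will do the work: (a) for geodesic rays $\gamma_1,\gamma_2$ issuing from $v_0$, the function $t \mapsto d(\gamma_1(t),\gamma_2(t))$ is convex and vanishes at $0$, so for every $R \leq r$ one has $d(\gamma_1(R),\gamma_2(R)) \leq (R/r)\,d(\gamma_1(r),\gamma_2(r))$; (b) the nearest-point projection onto the closed ball $\overline{B}(v_0,R)$ is $1$-Lipschitz and sends any point $x$ with $d(v_0,x) \geq R$ to $\gamma_x(R)$.

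The first step is a reduction: I would show that $U$ contains a basic neighbourhood $V_{R,D}(\eta)$ with a \emph{large} angular parameter, namely $D > 2k+2$. Starting from any basic $V_{r_1,\delta_1}(\eta) \subset U$, fact (a) implies $V_{R,D}(\eta) \subset V_{r_1,\delta_1}(\eta)$ whenever $R \geq \max(r_1,\, r_1 D / \delta_1)$, because then $y \in V_{R,D}(\eta)$ gives $d(v_0,y)>R \geq r_1$ and $d(\gamma_y(r_1),\gamma_\eta(r_1)) \leq (r_1/R)\,d(\gamma_y(R),\gamma_\eta(R)) < (r_1/R)D \leq \delta_1$. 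So we may \emph{prescribe} $D$ arbitrarily large at the cost of enlarging $R$, and henceforth assume $U = V_{R,D}(\eta)$ with $D > 2k+2$.

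Next, I would set $U' := V_{R',\delta'}(\eta)$ with $R' := R + k + 2$ and $\delta' := k+1$, and verify both assertions. The inclusion $U' \subset U$ is again an application of (a). For the distance condition, take $y \in U' \cap X$ and $z \in X$ with $d(y,z) \leq k+1$. From $d(v_0,z) \geq d(v_0,y) - (k+1) > R'-(k+1) = R+1$, both $y$ and $z$ lie outside $\overline{B}(v_0,R)$, so fact (b) yields $d(\gamma_y(R),\gamma_z(R)) \leq d(y,z) \leq k+1$. Fact (a) yields $d(\gamma_y(R),\gamma_\eta(R)) \leq (R/R')\delta' < k+1$. The triangle inequality then gives $d(\gamma_z(R),\gamma_\eta(R)) < 2(k+1) < D$, so $d(v_0,z) > R$ and $\gamma_z(R) \in B(\gamma_\eta(R),D)$, i.e.\ $z \in V_{R,D}(\eta) = U$. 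Every point within distance $k+1$ of $U' \cap X$ therefore lies in $U$, so $d(U' \cap X, X \setminus U) \geq k+1 > k$.

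The only subtle point, and the main obstacle to a "one-shot" estimate, is the reduction in the first step: one must know that the neighbourhood basis $\{V_{r,\delta}(\eta)\}$ can be refined by members with a prescribed value of the angular parameter $\delta$ (here $D > 2k+2$), at the cost of taking $r$ large. Without that flexibility the triangle inequality $d(\gamma_z(R),\gamma_\eta(R)) \leq k + d(\gamma_y(R),\gamma_\eta(R))$ would leave no margin for absorbing the term $k$ into the prescribed $\delta_0$ of the original neighbourhood. Convexity of the distance between geodesic rays is exactly what provides this flexibility.
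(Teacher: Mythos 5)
Your proof is correct, and it takes a different route from the paper's. The paper disposes of this lemma in two lines by contradiction: it invokes the fact that two sequences in $X$ at uniformly bounded distance from one another converge to the same point of $\partial X$ or not at all, so a sequence $x_n$ converging to $\eta$ through ever smaller neighbourhoods, paired with points $y_n \notin U$ at distance $\leq k$, would force $y_n \to \eta$ and hence $y_n \in U$ eventually. You instead give a direct, quantitative construction entirely inside the basis $\{V_{r,\delta}(\eta)\}$, using convexity of $t \mapsto d(\gamma_1(t),\gamma_2(t))$ for rays issued from $v_0$ and the $1$-Lipschitz projection onto $\overline{B}(v_0,R)$. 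Your key observation --- that convexity lets one refine any basic neighbourhood by one with a \emph{prescribed, arbitrarily large} angular parameter $D$ at the cost of enlarging $r$, which is exactly the slack needed to absorb the additive error $k$ coming from the triangle inequality at radius $R$ --- is sound, and all the estimates check out (the projection argument applies because $d(v_0,z) > R+1$ guarantees both points lie outside $\overline{B}(v_0,R)$). What your approach buys is an explicit $U'$ with explicit parameters, which is more informative and self-contained; what the paper's buys is brevity, at the cost of resting on an unproved (though standard, and itself essentially equivalent to your convexity estimate) fact about the topology of $\overline{X}$.
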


\begin{proof}
The definition of the topology of $\overline{X}$ implies the following: if $(x_n)$ and $(y_n)$ are two sequences of $X$ such that $d(x_n,y_n)$ is bounded, then $(x_n)$ converges to a point of $\partial X$ if and only if $(y_n)$ converges to the same point. Reasoning by contradiction thus implies the lemma.
\end{proof}

\begin{definition}
 Let $\eta \in \partial X$, and let $U$ be a neighbourhood of $\eta$ in $\overline{X}$. We set
$$V_U(\eta) = p^{-1}(U \cap  X) \cup (U \cap \partial X) \cup \left\lbrace  \xi \in \partial_{Stab} G | D(\xi) \subset U \right\rbrace.$$
When $U$ runs over the basis $\cO_{\overline{X}}(\eta)$ of neighbourhoods of $\eta$ in $\overline{X}$, the above formula defines a collection of neighbourhoods for $\eta$ in $\overline{EG}$, denoted $\cO_{\overline{EG}}(\eta)$.
\end{definition}
We finally define open neighbourhoods for points in $\partial_{Stab} G$.
\begin{definition}
Let $\xi \in \partial_{Stab} G$, $\cU \subset \cU_\xi$ be a $\xi$-family, and $\varepsilon \in (0,1)$. A neighbourhood $V_{\cU, \varepsilon}(\xi)$ is defined in four steps as follows:
\begin{itemize}
\item Let $W_{\cU, \varepsilon}(\xi)$ be the set of points $\widetilde{x} \in EG$ whose projection $x\in X$ belongs to $D^\varepsilon(\xi)$ and is such that for some (hence every) vertex $v$ of $\sigma_x$, we have $\phi_{v, \sigma_x}(\widetilde{x}) \in U_v.$
\item Let $W_1$ be the set of points of $EG$ whose projection in $X$ belongs to $\mbox{Cone}_{\cU, \varepsilon}(\xi)$.
\item Let  $W_2$ be the set of points of $\partial X$ that belong to $\mbox{Cone}_{\cU, \varepsilon}(\xi)$.
\item Let $W_3$ be the set of points of $\partial_{Stab} G$ whose domain is contained in $\widetilde{\mbox{Cone}}_{\cU, \varepsilon}(\xi)$.
\end{itemize}
Now set
$$V_{\cU, \varepsilon}(\xi) = W_{\cU, \varepsilon}(\xi) \cup W_1 \cup W_2  \cup W_3.$$
This collection of neighbourhoods of $\xi$ in $\overline{EG}$ is denoted $\cO_{\overline{EG}}(\xi)$.
\end{definition}

Note that for $\xi$-families $\cU' \subset  \cU$ and $\varepsilon' < \varepsilon$, we do not necessarily have the inclusion $V_{\cU', \varepsilon'}(\xi) \subset V_{\cU, \varepsilon}(\xi)$ since these two neighbourhoods are defined by looking at the way geodesics leave two (a priori non related) different neighbourhoods of the domain $D(\xi)$. However, the crossing lemma immediately implies the following:
\begin{lem}
 Let $\xi \in \partial_{Stab} G$, $\cU, \cU'$ two $\xi$-families, and $0<\varepsilon' < \varepsilon$. If $\cU'$ is $d_\xi$-nested in $\cU$, then $V_{\cU', \varepsilon'}(\xi) \subset V_{\cU, \varepsilon}(\xi).$ \qed
\label{subneighbourhood}
\end{lem}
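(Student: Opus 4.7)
The plan is to verify the inclusion piece by piece along the decomposition $V_{\cU, \varepsilon}(\xi) = W_{\cU, \varepsilon}(\xi) \cup W_1 \cup W_2 \cup W_3$. The hypothesis provides a filtration $\cU' = \cU^{[0]} \subset \cdots \subset \cU^{[d_\xi]} = \cU$ of $\xi$-families, so in particular $U_v' \subset U_v$ for every vertex $v$ of $D(\xi)$; combined with $D^{\varepsilon'}(\xi) \subset D^\varepsilon(\xi)$, this disposes of the piece $W_{\cU', \varepsilon'}(\xi)$, which is sent directly into $W_{\cU, \varepsilon}(\xi)$.

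For the three ``Cone'' pieces, I would take a point $z \in W_1 \cup W_2 \cup W_3$ of the $(\cU', \varepsilon')$ neighbourhood and set $\zeta = p(z)$ if $z \in EG$, $\zeta = z$ if $z \in \partial X$, or let $\zeta$ range over $D(z)$ if $z \in \partial_{Stab} G$. The hypothesis provides an exit simplex $\sigma' = \sigma_{\xi, \varepsilon'}(\zeta)$ with $\overline{EG_{\sigma'}} \subset U_v'$ for some vertex $v$ of $D(\xi) \cap \sigma'$. Since $\varepsilon, \varepsilon' < 1$ and the distance from every simplex to the boundary of its closed star is at least $1$, one has $D^{\varepsilon'}(\xi) \cup D^\varepsilon(\xi) \subset \mbox{st}(D(\xi))$. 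Consequently, the portion of the geodesic $[v_0, \zeta]$ running between its last moment in $D^{\varepsilon'}(\xi)$ and either the exit simplex $\sigma = \sigma_{\xi, \varepsilon}(\zeta)$ of $D^\varepsilon(\xi)$ (if the geodesic leaves $D^\varepsilon(\xi)$), or the endpoint $\zeta$ itself (in which case $\sigma_{\xi, \varepsilon}(\zeta) = \sigma_\zeta$) is entirely contained in $\mbox{st}(D(\xi)) \setminus D(\xi)$. By the containment lemma~\ref{containment} and the definition of $d_\xi$, this portion meets at most $d_\xi$ open simplices, producing a path of open simplices of length at most $d_\xi$ from $\sigma'$ to $\sigma_{\xi, \varepsilon}(\zeta)$ in $\mbox{st}(D(\xi)) \setminus D(\xi)$. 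The crossing lemma~\ref{passage}, applied with this path and the filtration witnessing the $d_\xi$-nesting, then propagates $\overline{EG_{\sigma'}} \subset U_v'$ along the path into $\overline{EG_{\sigma_{\xi, \varepsilon}(\zeta)}} \subset U_w$ for any vertex $w$ of $D(\xi) \cap \sigma_{\xi, \varepsilon}(\zeta)$ (such a vertex exists because the relevant point of the exit simplex, or $\zeta$ itself, lies at distance $<1$ from $D(\xi)$, hence in the open star of some vertex of $D(\xi)$).

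Depending on whether $\zeta$ lies outside $D^\varepsilon(\xi)$ or inside it, the point $z$ then falls into the corresponding Cone piece of $V_{\cU, \varepsilon}(\xi)$ or into $W_{\cU, \varepsilon}(\xi)$, which finishes the verification. The main subtlety I expect is exactly the ``annular'' case $\zeta \in D^\varepsilon(\xi) \setminus D^{\varepsilon'}(\xi)$, where a point originally sitting in a Cone piece of $V_{\cU', \varepsilon'}(\xi)$ must be rerouted into $W_{\cU, \varepsilon}(\xi)$ (or contribute to $W_3$ through a single $\zeta \in D(z)$ lying in the annulus while the others do not); once one agrees to set $\sigma_{\xi, \varepsilon}(\zeta) = \sigma_\zeta$ when $\zeta \in D^\varepsilon(\xi)$, the crossing lemma handles both subcases by a single argument, and nothing beyond the ingredients already gathered (the containment lemma, the definition of $d_\xi$, and the $d_\xi$-nesting hypothesis) is needed.
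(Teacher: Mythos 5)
Your proof is correct and follows exactly the route the paper intends: the paper leaves this lemma as an immediate consequence of the crossing lemma (it is stated with a \qed and the remark that ``the crossing lemma immediately implies'' it), and your argument — propagating the containment $\overline{EG_{\sigma_{\xi,\varepsilon'}(\zeta)}}\subset U_v'$ along the at most $d_\xi$ simplices met by the geodesic in the annulus $D^{\varepsilon}(\xi)\setminus D^{\varepsilon'}(\xi)$ inside $\mbox{st}(D(\xi))\setminus D(\xi)$ — is precisely the intended verification, including the correct handling of the case where the point lands in $W_{\cU,\varepsilon}(\xi)$ rather than a cone piece.
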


\begin{definition}
 We define a topology on $\overline{EG}$ by taking the topology generated by the elements of $\cO_{\overline{EG}}(x)$, for every $x \in \overline{EG}$. We denote by $\cO_{\overline{EG}}$ the set of elements of $\cO_{\overline{EG}}(x)$ when $x$ runs over $\overline{EG}$. Thus, any an open set in $\overline{EG}$ is a union of finite intersections of elements of $\cO_{\overline{EG}}$.
\end{definition}
We will show in the next section that $\cO_{\overline{EG}}$ is actually a \textit{basis} for the topology of $\overline{EG}$.

\section{The topology of $\overline{EG}$.}
In this section, we study the topology of $\overline{EG}$. More precisely, we prove that $\overline{EG}$ is a compact metrisable space. Recall that by the classical metrization theorem, it is enough to prove that $\overline{EG}$ is a second countable Hausdorff regular space (see below for definitions) which is sequentially compact.
\subsection{Filtration.}
Here we prove that the set of neighbourhoods we just defined is a basis for the topology of $\overline{EG}$. In order to do that, we need the following:\\

\noindent \textbf{Filtration Lemma}. Let $z,z' \in \overline{EG}$ and $U \in \cO_{\overline{EG}}(z)$ an open neighbourhood of $z$. If $z' \in U$, then there exists an open neighbourhood of $z'$, $U' \in \cO_{\overline{EG}}(z')$, such that $U' \subset U$.\\

Since points of $\overline{EG}$ are of three different natures ($EG$, $\partial X$, and $\partial_{Stab} G$), the proof breaks into six distinct cases.

\begin{prop} Let $\widetilde{x},\widetilde{y} \in EG$ and  $U \in \cO_{EG}(\widetilde{x})$ an open neighbourhood of $\widetilde{x}$ in $EG$. If $ \widetilde{y} \in U$, then there exists an open neighbourhood of $\widetilde{y}$ in $EG$, $U' \in \cO_{\overline{EG}}(\widetilde{y})$ such that $U' \subset U$.
\end{prop}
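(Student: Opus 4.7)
The plan is essentially a tautology, unlike the later cases of the filtration lemma. Recall that the definition of $\cO_{\overline{EG}}(\widetilde{x})$ for $\widetilde{x} \in EG$ is the \emph{entire} family of open subsets of $EG$ containing $\widetilde{x}$, not some restricted sub-basis. I would exploit this directly.

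Given $U \in \cO_{\overline{EG}}(\widetilde{x})$ with $\widetilde{y} \in U$, the set $U$ is by definition an open subset of $EG$, and it contains $\widetilde{y}$. So $U$ itself satisfies the two conditions defining membership in $\cO_{\overline{EG}}(\widetilde{y})$. I would therefore simply take $U' := U$, which trivially verifies $U' \subset U$.

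The reason this case is vacuous while the later ones (involving points of $\partial X$ or $\partial_{Stab} G$) require genuine geometric work is that the neighbourhoods $V_U(\eta)$ and $V_{\cU,\varepsilon}(\xi)$ are given only for $U$ in a fixed basis $\cO_{\overline{X}}(\eta)$ and for $\cU \subset \cU_\xi$ a $\xi$-family, so one must build a smaller basic neighbourhood of a different kind lying inside the given one. Here, by contrast, the defining collection at a point of $EG$ is already closed under passing to any open subset containing the point, so there is no refinement to perform and no obstacle to overcome.
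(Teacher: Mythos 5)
Your proposal is correct and coincides with the paper's own proof, which likewise observes that $\cO_{\overline{EG}}(\widetilde{x})$ consists of all open subsets of $EG$ containing $\widetilde{x}$ and simply takes $U' = U$. No further comment is needed.
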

\begin{proof}
By definition of the topology, we can take $U'=U$.
\end{proof}
\begin{prop} Let $\eta, \eta' \in \partial X$ and $U \in \cO_{\overline{X}}(\eta)$ an open neighbourhood of $\eta$ in $X$. If $\eta' \in V_U(\eta)$, then there exists an open neighbourhood $U'$  of $\eta'$ in $\partial X$, such that $V_{U'}(\eta') \subset V_U(\eta)$.
\end{prop}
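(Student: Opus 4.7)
The plan is to observe that $\eta' \in V_U(\eta)$ forces $\eta' \in U \cap \partial X$ by the very definition of $V_U(\eta)$, and then to choose $U'$ to be any basic neighbourhood of $\eta'$ in $\overline{X}$ (from the basis $\cO_{\overline X}(\eta')$ of sets of the form $V_{r,\delta}(\eta')$) which is contained in $U$. Such a $U'$ exists because $U$ is open in $\overline{X}$ and $\cO_{\overline X}(\eta')$ is a basis of neighbourhoods of $\eta'$.

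Once $U' \subset U$ is secured, the inclusion $V_{U'}(\eta') \subset V_U(\eta)$ is then just a bookkeeping check against the three pieces of the definition
\[ V_U(\eta) = p^{-1}(U \cap X) \cup (U \cap \partial X) \cup \left\lbrace \xi \in \partial_{Stab} G \,\middle|\, D(\xi) \subset U \right\rbrace. \]
Namely: $U' \cap X \subset U \cap X$ gives $p^{-1}(U' \cap X) \subset p^{-1}(U \cap X)$; the inclusion $U' \cap \partial X \subset U \cap \partial X$ is obvious; and if $\xi \in \partial_{Stab} G$ satisfies $D(\xi) \subset U'$ then $D(\xi) \subset U$, so the third term is also included. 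These three inclusions combine to give $V_{U'}(\eta') \subset V_U(\eta)$.

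There is no real obstacle here: the argument is a direct unpacking of definitions, made possible by the fact that the three components of $V_U(\eta)$ depend \emph{monotonically} on $U$. The only thing to remark is that the $U'$ so produced indeed lives in $\cO_{\overline{X}}(\eta')$, so that $V_{U'}(\eta')$ is a legitimate basic neighbourhood of $\eta'$ in $\overline{EG}$ as defined at the end of Section~3.5.
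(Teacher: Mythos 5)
Your proof is correct and follows exactly the paper's argument: pick a basic neighbourhood $U'$ of $\eta'$ contained in $U$ and observe that $V_{U'}(\eta') \subset V_U(\eta)$ by monotonicity of the construction $U \mapsto V_U$. The paper leaves the final inclusion as "clear"; you simply spell out the three-component check.
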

\begin{proof} Since $\cO_{\overline{X}}$ is a basis of neighbourhoods for $\overline{X}$, there exists a neighbourhood $U' \in \cO_{\overline{X}}(\eta')$ such that $ U' \subset U$. Now one clearly has $ \eta \in V_{U'}(\eta') \subset V_U(\eta)$.
\end{proof}%\\
\begin{prop} Let $\widetilde{x} \in EG, \eta \in \partial X$ and $U$ an open neighbourhood of $\eta$ in $X$. If $\widetilde{x} \in V_U(\eta)$, then there exists an open neighbourhood $U'$ of $\widetilde{x}$ in $\overline{EG}$, $U' \in \cO_{EG}(\widetilde{x})$, such that $U' \subset V_U(\eta)$.
 \end{prop}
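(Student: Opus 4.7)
The plan is to take $U' := p^{-1}(U \cap X)$ itself as the required open neighbourhood of $\widetilde{x}$. By the definition
$$V_U(\eta) = p^{-1}(U \cap X) \cup (U \cap \partial X) \cup \left\lbrace \xi \in \partial_{Stab} G \mid D(\xi) \subset U \right\rbrace,$$
and since $\widetilde{x}$ lies in $EG$, the hypothesis $\widetilde{x} \in V_U(\eta)$ forces $\widetilde{x} \in p^{-1}(U \cap X)$; moreover the inclusion $p^{-1}(U \cap X) \subset V_U(\eta)$ is tautological from this decomposition. So once we know that $p^{-1}(U \cap X)$ is open in $EG$, there is nothing more to prove.

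The substantive point is therefore continuity of the projection $p \colon EG \to X$. By construction of the complex of spaces $E(\cY)$, on each local piece $\sigma \times EG_\sigma$ the map $p$ is just projection onto the simplex factor, so $p$ is a cellular map from the locally finite CW complex $|E(\cY)|$ to $X$ equipped with its CW topology. Since the identity $X \to X$ from the CW topology to the simplicial metric topology is continuous (the former being finer) and $U$ is an open set for the simplicial metric topology in which $V_U(\eta)$ was defined, the preimage $p^{-1}(U \cap X)$ is open in $EG$. Thus $U' = p^{-1}(U \cap X) \in \cO_{\overline{EG}}(\widetilde{x})$ works.

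No step should present any genuine difficulty: this proposition is essentially a consistency check that the subspace topology $EG$ inherits from $\overline{EG}$ at a point of $EG$ agrees with its original CW topology, which follows directly from continuity of $p$. The only mild subtlety is to keep track of which topology on $X$ (CW versus simplicial metric) is being used where, but this is resolved by the elementary fact that the CW topology is finer.
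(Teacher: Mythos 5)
Your proof is correct and follows essentially the same route as the paper, which simply takes an arbitrary open neighbourhood of $\widetilde{x}$ contained in $p^{-1}(U \cap X)$; you merely make explicit the (true and easy) point that $p^{-1}(U \cap X)$ is itself open because $p$ is continuous for the metric topology on $X$, the CW topology being finer.
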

\begin{proof} It is enough to choose an arbitratry open neighbourhood $U'$ of $\widetilde{x}$ contained in $p^{-1}(U \cap X)$.
\end{proof}%\\
\begin{prop} Let $\xi \in \partial_{Stab} G, \eta \in \partial X $ and $U \in \cO_{\overline{X}}(\eta)$ an open neighbourhood of $\eta$ in $\overline{X}$. If $\xi \in V_U(\eta)$, then there exist $\varepsilon \in (0,1)$ and a $\xi$-family $\cU$ such that $V_{\cU, \varepsilon}(\xi) \subset V_U(\eta)$.
\label{filtrationxieta}
 \end{prop}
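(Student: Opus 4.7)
The plan is as follows. Since $D(\xi)$ is a finite convex subcomplex by Proposition~\ref{finitedomain}, it is compact, and since $\xi\in V_U(\eta)$ it is contained in the open set $U\cap X$; hence there exists $\rho>0$ with $\overline{D^\rho(\xi)}\subset U$. I pick $\varepsilon\in(0,\rho)$, which immediately places $W_{\cU,\varepsilon}(\xi)\subset p^{-1}(U\cap X)\subset V_U(\eta)$ for any $\xi$-family $\cU$. The substantive part of the proof consists in choosing $\cU\subset\cU_\xi$ such that the whole cone $\widetilde{\mathrm{Cone}}_{\cU,\varepsilon}(\xi)$ lies in $U$, which handles $W_1$, $W_2$ and $W_3$ at once.

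The key geometric observation is the stability of basic neighbourhoods of $\eta$ under ray extension from $v_0$: if $y\in V_{r,\delta}(\eta)$ and $x$ lies beyond $y$ on the geodesic ray from $v_0$ through $y$, then $\gamma_x(r)=\gamma_y(r)$ and $d(v_0,x)>r$, so $x\in V_{r,\delta}(\eta)$. Thus rays from $v_0$ through any point of $D(\xi)$ that lies in the conical part of $U$ extend inside $U$; to handle the portion of $D(\xi)$ lying in the ``ball'' part of a basic neighbourhood of $\eta$, I apply the CAT(0) nesting lemma~\ref{CAT(0)nesting} to pass to a smaller basic neighbourhood $U'$ and build in a buffer $d(U'\cap X,\,X\setminus U)>\mathrm{diam}(D(\xi))+2\varepsilon+1$ that absorbs bounded extensions of the relevant rays.

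For any $x\in\widetilde{\mathrm{Cone}}_{\cU,\varepsilon}(\xi)$, either $x\in D^\varepsilon(\xi)\subset U$ and there is nothing to prove, or else $[v_0,x]$ goes through $D^\varepsilon(\xi)$ with exit simplex $\sigma=\sigma_{\xi,\varepsilon}(x)\subset\mathrm{st}(D(\xi))\setminus D(\xi)$, satisfying $\overline{EG_\sigma}\subset U_v$ for some vertex $v\in\sigma\cap D(\xi)$. The geodesic $[x,v]$ exhibits $\sigma$ as meeting $\mathrm{Geod}(x,D(\xi))$, so the geodesic reattachment lemma~\ref{reattachment} (applicable since $\cU\subset\cU_\xi$) shows that $[v_0,x]$ meets $D(\xi)$ at some point $y$; the ray-extension observation then gives $x\in U$. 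The argument for points of the cone lying in $\partial X$ is entirely analogous, using the parametrisation of geodesic rays to $\eta'\in\partial X$ and the fact that such rays are determined by their initial segments.

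The main obstacle I expect is the coordination of the constants: one must refine $\cU$ enough---using Proposition~\ref{xifamilies}, the nesting lemma~\ref{nesting} and the refinement lemma~\ref{refinement}---to guarantee both that $\cU\subset\cU_\xi$, so the reattachment lemma applies, and that the intersection point $y$ produced by reattachment is forced to lie inside the CAT(0) buffer around $D(\xi)$, so the ray extension from $y$ remains in $U$. The required refinement depth depends only on the finite combinatorial data of $D(\xi)$ and of $\mathrm{st}(D(\xi))$, both controlled by Proposition~\ref{finitedomain} and the local finiteness of the relevant star.
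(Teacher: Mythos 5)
Your argument is correct and is essentially the paper's proof: choose $\varepsilon$ with $D^{\varepsilon}(\xi)\subset U$, observe via Corollary \ref{Uxi} (equivalently, the reattachment lemma) that the geodesic from $v_0$ to any point of $\widetilde{\mbox{Cone}}_{\cU,\varepsilon}(\xi)$ passes through $D(\xi)\subset U$, and conclude by the forward-invariance of $V_{r,\delta}(\eta)$ under extending geodesics from $v_0$. The extra buffer you build via Lemma \ref{CAT(0)nesting} is unnecessary, since the basis neighbourhoods $V_{r,\delta}(\eta)$ in $\cO_{\overline{X}}(\eta)$ have no ``ball'' part and are already closed under ray extension, so any $\xi$-family (contained in $\cU_\xi$ per the standing convention) works.
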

\begin{proof} The subcomplex $D(\xi) \subset U$ is finite, hence compact, so choose $\varepsilon \in (0,1)$ such that $D^{\varepsilon}(\xi) \subset U$. Let $\cU$ be any $\xi$-family. For every $x \in \widetilde{\mbox{Cone}}_{\cU, \varepsilon}(\xi)$, the geodesic segment $[v_0,x]$ meets $D(\xi)$ by \ref{Uxi}. As $D(\xi)$ is contained in $U$, the same holds for $x$. It then follows that $V_{\cU_\xi, \varepsilon}(\xi) \subset V_U(\eta)$.
\end{proof}%\\
\begin{prop} Let $\eta \in \partial X, \xi \in \partial_{Stab} G$, $\cU$ a $\xi$-family and $\varepsilon \in (0,1)$. If $\eta \in V_{\cU, \varepsilon}(\xi)$, then there exists an open neighbourhood $U$ of $\eta$ in $\overline{X}$ such that $V_U(\eta) \subset V_{\cU, \varepsilon}(\xi)$.
\label{filtrationetaxi} 
\end{prop}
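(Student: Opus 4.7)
The plan is to take $U$ of the form $V_{r,\delta}(\eta)$ with $r$ so large that $V_{r,\delta}(\eta)$ is disjoint from $\overline{D^\varepsilon(\xi)}$, and then to transport the fibre condition enjoyed by $\sigma_{\xi,\varepsilon}(\eta)$ to every point of $V_{r,\delta}(\eta)$ by applying the star lemma \ref{star} at the single basepoint $x_0 := \gamma_\eta(r)$.

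First I would exploit the hypothesis $\eta \in V_{\cU,\varepsilon}(\xi)$, i.e.\ $\eta \in W_2 \subset \mbox{Cone}_{\cU,\varepsilon}(\xi)$: the ray $[v_0,\eta)$ goes through $D^\varepsilon(\xi)$ and its exit simplex $\sigma_{\xi,\varepsilon}(\eta)$ satisfies $\overline{EG_{\sigma_{\xi,\varepsilon}(\eta)}} \subset U_v$ for every vertex $v$ of $\sigma_{\xi,\varepsilon}(\eta) \cap D(\xi)$. Let $t_1$ be the time at which $\gamma_\eta$ exits $D^\varepsilon(\xi)$, and set $R_0 = \sup_{z \in \overline{D^\varepsilon(\xi)}} d(v_0,z)$, which is finite since $D(\xi)$ is a finite subcomplex by Proposition \ref{finitedomain}. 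I would pick any $r > \max(t_1+1, R_0)$ and then apply the star lemma at $x_0 = \gamma_\eta(r)$ to obtain $\delta_0 > 0$ such that every $y \in B(x_0,\delta_0) \setminus D^\varepsilon(\xi)$ has a geodesic $[v_0,y]$ going through $D^\varepsilon(\xi)$ and satisfying $\sigma_{\xi,\varepsilon}(y) \subset \mbox{st}(\sigma_{\xi,\varepsilon}(\eta))$. Setting $U := V_{r,\delta}(\eta)$ for any $\delta \leq \delta_0$ then gives a neighbourhood of $\eta$ in $\overline{X}$ all of whose points of $X$ lie outside $\overline{D^\varepsilon(\xi)}$ (because $d(v_0,\cdot) > r > R_0$ there).

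The inclusion $V_U(\eta) \subset V_{\cU,\varepsilon}(\xi)$ is then verified piecewise. For $\widetilde{x} \in EG$ with $x = p(\widetilde{x}) \in U \cap X$, the point $\gamma_x(r)$ lies in $B(x_0,\delta) \setminus D^\varepsilon(\xi)$, so the star lemma applies; moreover, by convexity of $z \mapsto d(z,D(\xi))$ on the CAT(0) space $X$, once $[v_0,x]$ has left $D^\varepsilon(\xi)$ it never re-enters, so $\sigma_{\xi,\varepsilon}(x) = \sigma_{\xi,\varepsilon}(\gamma_x(r)) \subset \mbox{st}(\sigma_{\xi,\varepsilon}(\eta))$; since a simplex in the open star of $\sigma_{\xi,\varepsilon}(\eta)$ has $\sigma_{\xi,\varepsilon}(\eta)$ as a face, the embedding $\phi$ pulls the inclusion $\overline{EG_{\sigma_{\xi,\varepsilon}(\eta)}} \subset U_v$ back to $\overline{EG_{\sigma_{\xi,\varepsilon}(x)}} \subset U_v$, showing $x \in \mbox{Cone}_{\cU,\varepsilon}(\xi)$ and $\widetilde{x} \in W_1$. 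The case of a point $\eta' \in U \cap \partial X$ is strictly analogous, using $\gamma_{\eta'}(r)$ in place of $\gamma_x(r)$ and placing $\eta'$ in $W_2$. Finally, for $\xi' \in \partial_{Stab} G$ with $D(\xi') \subset U$, every $x \in D(\xi')$ falls under the previous case, so $D(\xi') \subset \mbox{Cone}_{\cU,\varepsilon}(\xi) \subset \widetilde{\mbox{Cone}}_{\cU,\varepsilon}(\xi)$ and $\xi' \in W_3$.

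The main technical point I expect to require care is the choice of $(r,\delta)$ simultaneously controlling all three species of points of $V_U(\eta)$. This boils down to two things: making $r$ large enough that $V_{r,\delta}(\eta) \cap \overline{D^\varepsilon(\xi)} = \emptyset$, so that the star lemma (which applies only outside $D^\varepsilon(\xi)$) genuinely covers every $\gamma_x(r)$; and using that the open star of $\sigma_{\xi,\varepsilon}(\eta)$ consists of simplices containing $\sigma_{\xi,\varepsilon}(\eta)$ as a face, which is exactly the combinatorial input needed for the $E\cZ$-complex-of-spaces embedding $\phi$ to transfer the fibre condition in the right direction. Everything else is a routine unfolding of the definitions of $V_U(\eta)$, $V_{\cU,\varepsilon}(\xi)$, and $\mbox{Cone}_{\cU,\varepsilon}(\xi)$.
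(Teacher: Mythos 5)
Your proof is correct and follows essentially the same route as the paper: the paper also takes $x=\gamma_\eta(R+1)$ for $R$ large enough that the ball $B(x,\delta)$ sits beyond $\overline{D^\varepsilon(\xi)}$ and sets $U=V_{R+1,\delta}(\eta)$, the only difference being that it invokes the openness of $\mbox{Cone}_{\cU,\varepsilon}(\xi)$ (Corollary \ref{opencones}) where you re-derive that openness directly from the star lemma \ref{star} and then check the three species of points of $V_U(\eta)$ by hand.
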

\begin{proof} Let $\gamma_\eta: [0, \infty) \ra X$ be a parametrisation of the geodesic ray $[v_0, \eta)$. The subcomplex $D(\xi)$ being finite by \ref{finitedomain}, choose $R>0$ such that $D(\xi) \subset B(v_0,R)$, and let $x= \gamma_{\eta}(R+1)$. Since $\eta \in V_{\cU, \varepsilon}(\xi)$, we have $ x \in \mbox{Cone}_{\cU, \varepsilon}(\xi)$, which is open in $X$ by \ref{opencones}. Let $\delta>0$ such that $B(x, \delta) \subset$ Cone$_{\cU, \varepsilon}(\xi)$. Now if we set $U' = V_{R+1 , \delta}(\eta) \in \cO_{\overline{X}}(\eta)$, it follows that $V_{U'}(\eta) \subset V_{\cU, \varepsilon}(\xi)$. 
\end{proof}%\\

\begin{prop} Let $\widetilde{x} \in EG, \xi \in \partial_{Stab} G$, $\cU$ a $\xi$-family and $\varepsilon \in (0,1)$. If $\widetilde{x} \in V_{\cU, \varepsilon}(\xi)$, then there exists a $U \in \cO_{\overline{EG}}(\widetilde{x})$ such that $U \subset V_{ \cU, \varepsilon}(\xi)$. 
\label{filtrationxxi}
\end{prop}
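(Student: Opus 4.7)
The plan is to exploit the decomposition $V_{\cU, \varepsilon}(\xi) = W_{\cU, \varepsilon}(\xi) \cup W_1 \cup W_2 \cup W_3$. Since $\widetilde{x} \in EG$ cannot lie in $W_2 \cup W_3$, it belongs to $W_{\cU, \varepsilon}(\xi) \cup W_1$, and the argument will split into these two cases. In each I will produce an open subset of $EG$ containing $\widetilde{x}$ and contained in $V_{\cU, \varepsilon}(\xi)$.

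If $\widetilde{x} \in W_1$, then $x := p(\widetilde{x}) \in \mbox{Cone}_{\cU, \varepsilon}(\xi)$, which is open in $X$ by Corollary \ref{opencones}. Continuity of the projection $p \colon EG \to X$ then gives $p^{-1}(\mbox{Cone}_{\cU, \varepsilon}(\xi))$ as the desired open neighbourhood of $\widetilde{x}$, automatically contained in $W_1 \subset V_{\cU, \varepsilon}(\xi)$.

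The substantive case is $\widetilde{x} \in W_{\cU, \varepsilon}(\xi)$. Here $x := p(\widetilde{x}) \in D^\varepsilon(\xi) \subset \mbox{st}(D(\xi))$, so $\sigma_x$ contains at least one vertex $v \in V(\xi)$ for which $\phi_{v, \sigma_x}(\widetilde{x}) \in U_v$. By continuity of $\phi_{v, \sigma_x} \colon EG_{\sigma_x} \to \overline{EG_v}$ I will pick an open neighbourhood $N$ of $\widetilde{x}$ in the fibre $EG_{\sigma_x}$ with $\phi_{v, \sigma_x}(N) \subset U_v$, together with an open neighbourhood $O$ of $x$ in $X$ satisfying $O \subset D^\varepsilon(\xi) \cap \mbox{st}(\sigma_x)$, so that $\sigma_y \supseteq \sigma_x$ for every $y \in O$. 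The candidate neighbourhood is then
$$\widetilde{O} := \{[y, t] \in EG \ : \ y \in O,\ t \in EG_{\sigma_y},\ \phi_{\sigma_x, \sigma_y}(t) \in N\}.$$
Compositivity of the gluing maps immediately yields $\phi_{v, \sigma_y}(t) = \phi_{v, \sigma_x}(\phi_{\sigma_x, \sigma_y}(t)) \in U_v$ for every $[y, t] \in \widetilde{O}$, and together with $y \in O \subset D^\varepsilon(\xi)$ this places $[y, t]$ in $W_{\cU, \varepsilon}(\xi)$.

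The main obstacle is verifying that $\widetilde{O}$ is open in the quotient CW topology of $EG$, i.e.\ that its preimage in each $\sigma \times EG_\sigma$ is open. The condition $O \subset \mbox{st}(\sigma_x)$ makes this preimage empty whenever $\sigma_x \not\subseteq \sigma$. When $\sigma_x \subseteq \sigma$, a short calculation using the compositivity relation $\phi_{\sigma_x, \sigma_y} \circ \phi_{\sigma_y, \sigma} = \phi_{\sigma_x, \sigma}$ (as $\sigma_y$ ranges over faces of $\sigma$ containing $\sigma_x$) collapses all of the gluing identifications into the single open product $(\sigma \cap O) \times \phi_{\sigma_x, \sigma}^{-1}(N)$, settling openness.
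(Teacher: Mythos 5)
Your argument is correct and follows the same route as the paper: the paper also reduces to showing $V_{\cU,\varepsilon}(\xi)\cap EG = W_{\cU,\varepsilon}(\xi)\cup W_1$ is open in $EG$, handling $W_1$ via the openness of $\mbox{Cone}_{\cU,\varepsilon}(\xi)$ (which, as in your Case 1, comes from the star lemma) and asserting that $W_{\cU,\varepsilon}(\xi)$ is open because the gluing maps are embeddings. Your Case 2 simply supplies the quotient-topology verification that the paper leaves as ``clear''.
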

\begin{proof} It is enough to prove that $V_{\cU, \varepsilon}(\xi) \cap EG$ is open in $EG$. First, since the maps $\phi_{\sigma, \sigma'}$ are embeddings, it is clear that $W_{\cU, \varepsilon}(\xi)$ is open in $EG$. Let $\widetilde{y} \in V_{\cU, \varepsilon}(\xi) \cap EG$ with $y = p(\widetilde{y}) \notin D^\varepsilon(\xi)$. The star lemma \ref{star} yields a $\delta >0$ such that for every $z \in B(y, \delta) \setminus D^{\varepsilon}(\xi)$, the geodesic segment $[v_0,z]$ goes through $D^{\varepsilon}(\xi)$ and $\sigma_{\xi, \varepsilon}(z) \subset \mbox{st}(\sigma_{\xi, \varepsilon}(y))$. We can further assume that $B(y, \delta) \subset \mbox{st}(\sigma_y)$. It now follows immediately from the construction of $V_{\cU, \varepsilon}(\xi)$ that $p^{-1}\left( B(x, \delta) \right)$ is an open neighbourhood of $x$ contained in $V_{\cU, \varepsilon}(\xi)$, which concludes the proof.
\end{proof}%\\

\begin{prop} Let $\xi, \xi' \in \partial_{Stab} G$, $\cU$ a $\xi$-family and $\varepsilon \in (0,1)$. If $ \xi' \in  V_{\cU, \varepsilon}(\xi)$, then there exists a $\xi'$-family $\cU'$ and $\varepsilon' \in (0,1)$ such that $V_{\cU', \varepsilon}(\xi') \subset V_{\cU, \varepsilon}(\xi)$.
\label{filtrationxixi}
 \end{prop}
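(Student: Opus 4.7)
The starting point is the only way $\xi'$ can lie in $V_{\cU,\varepsilon}(\xi)$: since $\xi' \in \partial_{Stab} G$, the condition $\xi' \in V_{\cU,\varepsilon}(\xi)$ forces $\xi' \in W_3$, that is, $D(\xi') \subset \widetilde{\mathrm{Cone}}_{\cU,\varepsilon}(\xi)$. By Proposition \ref{finitedomain} the domain $D(\xi')$ is finite, and by Corollary \ref{Uxi} for every vertex $v'$ of $D(\xi')$ the geodesic $[v_0,v']$ meets $D(\xi)$ and has an exit simplex $\sigma_{v'} := \sigma_{\xi,\varepsilon}(v')$ for which $\overline{EG_{\sigma_{v'}}} \subset U_w$ holds in $\overline{EG_w}$ for every vertex $w$ of $D(\xi) \cap \sigma_{v'}$.

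The first step is to choose $\varepsilon' \in (0,1)$ small enough that $D^{\varepsilon'}(\xi') \subset \widetilde{\mathrm{Cone}}_{\cU,\varepsilon}(\xi)$, and moreover that for every $x \in D^{\varepsilon'}(\xi')$ the exit simplex $\sigma_{\xi,\varepsilon}(x)$ lies in a fixed finite family $\mathcal{S}$ of simplices, all satisfying the defining condition of $\mathrm{Cone}_{\cU,\varepsilon}(\xi)$. This is produced by applying the star lemma \ref{star} at each of the finitely many vertices $v'$ of $D(\xi')$ that lie outside $D(\xi)$, and, at vertices in $D(\xi) \cap D(\xi')$ (where $\sigma_{\xi,\varepsilon}(v')=\sigma_{v'}$ sits inside $D(\xi)$), noting that for $\varepsilon'$ small the exit simplex of any nearby point is either $\sigma_{v'}$ itself or a simplex in $\mathrm{st}(\sigma_{v'})$ lying in $D(\xi)$.

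The second step is to build the $\xi'$-family $\cU'$. I would first pick, for each vertex $v'$ of $D(\xi')$, a neighbourhood $U_{v'}^{(0)} \subset (U_{\xi'})_{v'}$ of $\xi'$ in $\overline{EG_{v'}}$ small enough that the finitely many simplices $\sigma$ of $\mathrm{st}(v') \setminus D(\xi')$ with $\overline{EG_\sigma} \cap U_{v'}^{(0)} \neq \varnothing$ all lie inside $\widetilde{\mathrm{Cone}}_{\cU,\varepsilon}(\xi)$ and satisfy the exit-simplex condition of the first step; this is possible using the nesting lemma \ref{nesting} (and the convergence property underlying it) together with the fact that $\mathrm{st}(v')$ meets only finitely many $G$-orbits of simplices. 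Proposition \ref{xifamilies} then upgrades the collection $\{U_{v'}^{(0)}\}$ to a genuine $\xi'$-family $\cU^{(0)}$. Finally, let $\cU'$ be any $\xi'$-family that is $(D+m_{\xi'}+d_{\xi'})$-nested (or, even better, $n$-refined in the sense of the refinement lemma \ref{refinement}) inside $\cU^{(0)}$, which exists by iterating Lemma \ref{nesting} and Proposition \ref{xifamilies}.

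The third step checks $V_{\cU',\varepsilon'}(\xi') \subset V_{\cU,\varepsilon}(\xi)$ case by case on $z \in V_{\cU',\varepsilon'}(\xi')$. If $p(z) \in D^{\varepsilon'}(\xi')$, the choice of $\varepsilon'$ directly yields $z \in V_{\cU,\varepsilon}(\xi)$. Otherwise, $[v_0,p(z)]$ leaves $D^{\varepsilon'}(\xi')$ through a simplex in $\mathrm{st}_{\cU'}(D(\xi'))$, and the refinement lemma \ref{refinement} propagates the $\cU^{(0)}$-condition along the path of simplices traversed between this exit simplex and any simplex met by $[v_0,p(z)]$ inside $\widetilde{\mathrm{Cone}}_{\cU,\varepsilon}(\xi)$; combined with the geodesic reattachment lemma \ref{reattachment} this shows $p(z) \in \widetilde{\mathrm{Cone}}_{\cU,\varepsilon}(\xi)$, with an exit simplex from $D^\varepsilon(\xi)$ satisfying the $\cU$-condition, so $z$ lands in the appropriate piece $W_1$, $W_2$ or $W_3$ of $V_{\cU,\varepsilon}(\xi)$. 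The main obstacle will be this propagation: one must show that the $\cU'$-control at the boundary of $D^{\varepsilon'}(\xi')$ translates into $\cU$-control at the boundary of $D^\varepsilon(\xi)$, and doing this uniformly over all four classes of points in $V_{\cU',\varepsilon'}(\xi')$ is exactly where the nesting depth has to be chosen large enough to absorb both the geometry of $D(\xi')$ (through $d_{\xi'}$ and $m_{\xi'}$) and the length of the paths of simplices connecting the two exit regions.
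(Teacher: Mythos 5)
Your overall strategy matches the paper's (shrink $\varepsilon'$ via the star lemma, build a deeply nested/refined $\xi'$-family, then analyse how $[v_0,p(z)]$ exits $D^{\varepsilon'}(\xi')$ relative to $D^\varepsilon(\xi)$), and you correctly isolate where the difficulty lies. But there is a genuine gap exactly at the point you defer to ``nesting depth'': depth alone does not make the propagation work. In the hardest case, where $[v_0,x]$ leaves $D^{\varepsilon'}(\xi')$ while still inside $D^\varepsilon(\xi)$, one must carry the control from $\sigma_{\xi',\varepsilon'}(x)$ along the path of simplices $\sigma_1,\ldots,\sigma_n=\sigma_{\xi,\varepsilon}(x)$ traversed inside $D^\varepsilon(\xi)$, and then convert $\xi'$-family control into the $\cU$-condition at a vertex of $D(\xi)$ lying in $\sigma_{\xi,\varepsilon}(x)$. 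The crossing lemma for $\cU'$ only propagates along simplices that keep meeting $D(\xi')$, and the crossing lemma for $\cU$ cannot be started because at the beginning of this path you have no inclusion $\overline{EG_{\sigma_1}}\subset U_v$ for a vertex $v$ of $D(\xi)$. If some $\sigma_k$ sits in the star of a vertex of $D(\xi)\setminus D(\xi')$ and contains no vertex of $D(\xi)\cap D(\xi')$, the induction has nothing to hold on to, no matter how many times the family has been nested or refined.

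The paper's proof supplies the missing ingredient as an extra constraint on the $\xi'$-family itself: for each vertex $v'\in D(\xi)\cap D(\xi')$ and each vertex $v\in \overline{\mbox{st}}(v')\cap\left(D(\xi)\setminus D(\xi')\right)$, it shrinks $U'_{v'}$ inside a neighbourhood $W_{v,v'}$ of $\xi'$ that is \emph{disjoint} from $\overline{EG_{[v,v']}}$ (and also inside $U_{v'}$, the member of the $\xi$-family $\cU$ at $v'$, which is what makes your first case work as well). This disjointness forces, by an inductive argument, every $\sigma_k$ on the path to lie in $\bigcup_{v'\in V(\xi)\cap V(\xi')}\mbox{st}(v')$ and to avoid $\bigcup_{v\in V(\xi)\setminus V(\xi')}\mbox{st}(v)$; only then does the step-by-step propagation $\overline{EG_{\sigma_k}}\subset U^{[k]}_{v'}$ go through and terminate with $\overline{EG_{\sigma_{\xi,\varepsilon}(x)}}\subset U_{v'}$ at a vertex $v'$ of $D(\xi)$, which is the Cone condition for $x$. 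Your step 2, which only shrinks $U^{(0)}_{v'}$ to control which simplices of $\mbox{st}(v')\setminus D(\xi')$ it meets, does not impose any condition tied to the vertices of $D(\xi)\setminus D(\xi')$, so this confinement is not available in your argument. That is the concrete missing idea; the rest of your outline is consistent with the paper's proof.
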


By \ref{star}, let $\delta \in (0, \varepsilon)$ be such that for all $y \in D^{\delta}(\xi') \setminus D^{\varepsilon}(\xi)$, the geodesic segment $[v_0,y]$ goes through $D^{\varepsilon}(\xi)$ and is such that $\sigma_{\xi, \varepsilon}(y) \subset st\left(\sigma_{\xi, \varepsilon}(x)\right)$, for some $x \in D(\xi')$. We now define a $\xi'$-family using the following lemma.\\

\begin{lem}
There exist nested $\xi'$-family $\cU^{[d_\xi]} \supset \ldots \supset \cU^{[0]}= \cU'$ such that the following holds: Let $x$ be a point of $\mbox{Cone}_{\cU', \delta}(\xi')$ such that the geodesic from $v_0$ to $x$ leaves $D^\delta(\xi')$ at a point which is still inside $D^\varepsilon(\xi)$. Let $\sigma_1 = \sigma_{\xi', \delta}(x), \ldots, \sigma_n = \sigma_{\xi, \varepsilon}(x)$ ($n \leq d_\xi$) be the path of simplices met by the geodesic segment $[v_0,x]$ inside $\overline{D^\varepsilon}(\xi)$ after leaving $D^\delta(\xi')$ (cf Figure $2$). 

\begin{center}
%% Creator: Inkscape 0.48.2, www.inkscape.org
%% PDF/EPS/PS + LaTeX output extension by Johan Engelen, 2010
%% Accompanies image file '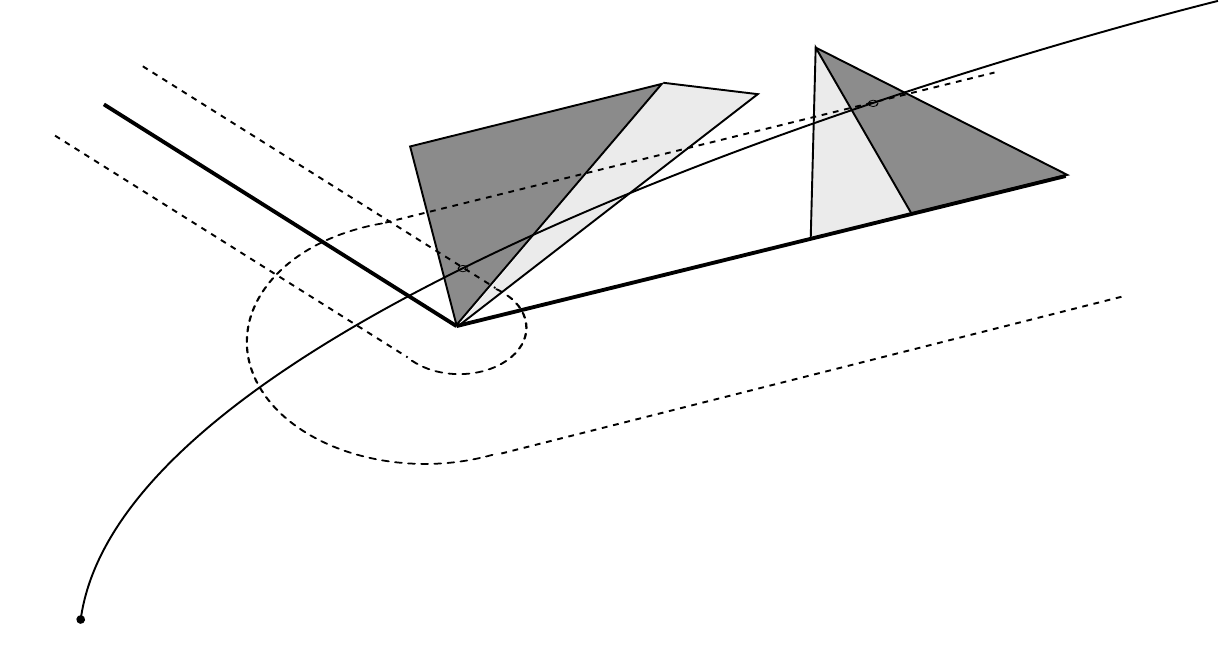' (pdf, eps, ps)
%%
%% To include the image in your LaTeX document, write
%%   \input{<filename>.pdf_tex}
%%  instead of
%%   \includegraphics{<filename>.pdf}
%% To scale the image, write
%%   \def\svgwidth{<desired width>}
%%   \input{<filename>.pdf_tex}
%%  instead of
%%   \includegraphics[width=<desired width>]{<filename>.pdf}
%%
%% Images with a different path to the parent latex file can
%% be accessed with the `import' package (which may need to be
%% installed) using
%%   \usepackage{import}
%% in the preamble, and then including the image with
%%   \import{<path to file>}{<filename>.pdf_tex}
%% Alternatively, one can specify
%%   \graphicspath{{<path to file>/}}
%% 
%% For more information, please see info/svg-inkscape on CTAN:
%%   http://tug.ctan.org/tex-archive/info/svg-inkscape
%%
\begingroup%
  \makeatletter%
  \providecommand\color[2][]{%
    \errmessage{(Inkscape) Color is used for the text in Inkscape, but the package 'color.sty' is not loaded}%
    \renewcommand\color[2][]{}%
  }%
  \providecommand\transparent[1]{%
    \errmessage{(Inkscape) Transparency is used (non-zero) for the text in Inkscape, but the package 'transparent.sty' is not loaded}%
    \renewcommand\transparent[1]{}%
  }%
  \providecommand\rotatebox[2]{#2}%
  \ifx\svgwidth\undefined%
    \setlength{\unitlength}{350.83585038bp}%
    \ifx\svgscale\undefined%
      \relax%
    \else%
      \setlength{\unitlength}{\unitlength * \real{\svgscale}}%
    \fi%
  \else%
    \setlength{\unitlength}{\svgwidth}%
  \fi%
  \global\let\svgwidth\undefined%
  \global\let\svgscale\undefined%
  \makeatother%
  \begin{picture}(1,0.55094899)%
    \put(0,0){\includegraphics[width=\unitlength]{dessin2BW.pdf}}%
    \put(0.6733901,0.22680163){\color[rgb]{0,0,0}\makebox(0,0)[lt]{\begin{minipage}{0.40548294\unitlength}\raggedright $D^\varepsilon(\xi)$\end{minipage}}}%
    \put(0.09523821,0.05405825){\color[rgb]{0,0,0}\makebox(0,0)[lt]{\begin{minipage}{0.33245759\unitlength}\raggedright $v_0$\end{minipage}}}%
    \put(-0.0026995,0.37770555){\color[rgb]{0,0,0}\makebox(0,0)[lt]{\begin{minipage}{0.63993282\unitlength}\raggedright $D^\delta(\xi')$\end{minipage}}}%
    \put(0.40333435,0.43822243){\color[rgb]{0,0,0}\makebox(0,0)[lt]{\begin{minipage}{0.06149503\unitlength}\raggedright $\sigma_{\xi', \delta}(x)$\end{minipage}}}%
    \put(0.72624228,0.45116829){\color[rgb]{0,0,0}\makebox(0,0)[lt]{\begin{minipage}{0.05765158\unitlength}\raggedright $\sigma_{\xi, \varepsilon}(x)$\end{minipage}}}%
  \end{picture}%
\endgroup%

Figure $2$.
\end{center}

We then have the following, for every $1 \leq k \leq n$: 
\begin{itemize}
 \item[$(i)$] The simplex $\sigma_k$ is contained in  $\bigcup_{v' \in V(\xi') \cap V(\xi')}\mbox{st}(v')$ but not in $\bigcup_{v \in V(\xi) \setminus V(\xi')} \mbox{st}(v)$.
 \item[$(ii)$] For every vertex $v'$ of $\sigma_k$ contained in $D(\xi')$, the inclusion $\overline{EG_{\sigma_k}} \subset U_{v'}^{[k]}$ holds in $\overline{EG_{v'}}$.
\end{itemize}
\end{lem}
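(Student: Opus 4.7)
The plan is to construct the nested families by downward induction and then verify (i) and (ii) in turn, with (i) carrying the main geometric content and (ii) following from the crossing lemma \ref{passage}.

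First I would build the chain $\cU^{[d_\xi]} \supset \cU^{[d_\xi-1]} \supset \cdots \supset \cU^{[0]}=\cU'$ as follows. Pick any $\xi'$-family $\cU^{[d_\xi]}$ contained in $\cU_{\xi'}$. Then, for $k=d_\xi-1,\ldots,0$, apply the nesting lemma \ref{nesting} at each vertex $v'\in V(\xi')$ to obtain neighbourhoods of $\xi'$ in $\overline{EG_{v'}}$ nested in $U^{[k+1]}_{v'}$, and then use Proposition \ref{xifamilies} to assemble them into a $\xi'$-family $\cU^{[k]}$ contained in, and nested in, $\cU^{[k+1]}$. This gives the required chain by construction.

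For property (i), the key observation is that each $\sigma_k$ is a simplex met by $[v_0,x]$ inside $\overline{D^\varepsilon}(\xi)$, while $\varepsilon<1$ and the rescaling convention ensures the distance from any simplex to the boundary of its closed star is at least $1$. Hence the point of $\sigma_k$ on the geodesic lies within distance $\varepsilon<1$ of $D(\xi)$, so the open simplex of $D(\xi)$ nearest to it shares a face with $\sigma_k$, forcing $\sigma_k$ to have a vertex in $V(\xi)$. To strengthen this to a vertex in $V(\xi)\cap V(\xi')$ and to exclude vertices in $V(\xi)\setminus V(\xi')$, I would exploit the definition of $\delta$ from the star lemma: the fact that the geodesic leaves $D^\delta(\xi')$ inside $D^\varepsilon(\xi)$ forces the exit simplex $\sigma_1=\sigma_{\xi',\delta}(x)$ to lie simultaneously in $\mathrm{st}(D(\xi'))$ and in $\mathrm{st}(D(\xi))$; the common vertex can be taken in the intersection because otherwise a geodesic connecting such a vertex of $V(\xi)\setminus V(\xi')$ back towards $D(\xi')$ would have to re-enter $D^\delta(\xi')$, contradicting the fact that along $[v_0,x]$ the distance to $D(\xi')$ is monotonically increasing past the exit point. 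This property then propagates along the path because successive $\sigma_k$'s share faces, and a face of a simplex whose vertex set meets $V(\xi)\cap V(\xi')$ inherits that intersection.

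With (i) in hand, (ii) is proved by induction on $k$ via the crossing lemma \ref{passage}. The base case $k=1$ is immediate: the assumption $x\in\mathrm{Cone}_{\cU',\delta}(\xi')$ means precisely that $\sigma_1=\sigma_{\xi',\delta}(x)\subset \mathrm{st}_{\cU^{[0]}}(D(\xi'))$. Property (i) guarantees that the whole path $\sigma_1,\ldots,\sigma_n$ lies in $\mathrm{st}(D(\xi'))\setminus D(\xi')$, so the crossing lemma, applied to $\xi'$ and to the chain of $\xi'$-families $\cU^{[0]}\subset\cU^{[1]}\subset\cdots\subset\cU^{[n]}$, yields $\overline{EG_{\sigma_k}}\subset U^{[k]}_{v'}$ in $\overline{EG_{v'}}$ for every vertex $v'$ of $D(\xi')\cap\sigma_k$, which is exactly (ii).

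The main obstacle is the refined version of (i), namely the exclusion of vertices of $D(\xi)\setminus D(\xi')$; here the geometric interaction between the two domains and the specific way $\delta$ was chosen via the star lemma must be used carefully, since without it the path could a priori wander into the star of a vertex of $D(\xi)$ unrelated to $\xi'$ and obstruct the application of the crossing lemma in $\xi'$.
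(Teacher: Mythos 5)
There is a genuine gap, and it sits exactly where you flagged the ``main obstacle'': the exclusion part of $(i)$. In the paper this exclusion is \emph{not} a geometric fact about the position of the geodesic relative to $D(\xi')$ --- it is engineered into the construction of the top family of the chain. The paper starts by choosing, for each vertex $v'\in V(\xi)\cap V(\xi')$ and each adjacent vertex $v\in V(\xi)\setminus V(\xi')$, a neighbourhood $W_{v,v'}$ of $\xi'$ in $\overline{EG_{v'}}$ that is \emph{disjoint from} $\overline{EG_{[v,v']}}$, sets $W_{v'}=\bigl(\bigcap_v W_{v,v'}\bigr)\cap U_{v'}$ (intersecting also with the ambient $\xi$-family $\cU$, which is needed later in \ref{filtrationxixi}), and only then takes $\cU'$ to be $d_\xi$-nested in $\{W_{v'}\}$. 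The exclusion is then proved by contradiction: if $\sigma_k\subset\mbox{st}([v,v'])$ then $\overline{EG_{\sigma_k}}\subset\overline{EG_{[v,v']}}$, while the nesting forces $\overline{EG_{\sigma_k}}$ (or $\overline{EG_{[v,v']}}$) into $W_{v'}$, which was chosen disjoint from $\overline{EG_{[v,v']}}$. Your construction takes ``any $\xi'$-family $\cU^{[d_\xi]}$ contained in $\cU_{\xi'}$'' and nests downward; with that choice the disjointness is simply not available, and nothing in the rest of your argument recovers it.

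The geometric substitute you propose does not close this gap. That the distance from $\gamma_x(t)$ to $D(\xi')$ increases after the exit point says nothing about which vertices of $D(\xi)$ span simplices touched by the geodesic afterwards: a vertex $v\in V(\xi)\setminus V(\xi')$ can perfectly well be a vertex of $\sigma_1$ without the geodesic ever re-approaching $D(\xi')$. Likewise, the propagation step ``a face of a simplex whose vertex set meets $V(\xi)\cap V(\xi')$ inherits that intersection'' is false in the direction $\sigma_{k+1}\subset\sigma_k$ (a proper face need not contain the distinguished vertex), which is why the paper runs a genuine induction using the nested families rather than a combinatorial inheritance argument. Your treatment of $(ii)$ via the crossing lemma \ref{passage} is essentially the paper's argument and is fine once the chain is built correctly and $(i)$ is established, but as written the proof of $(i)$ --- and hence of the lemma --- is not complete.
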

\begin{proof}
If $v'$ is a vertex of $ D(\xi) \cap D(\xi')$, then for every vertex $ v$ of $\overline{st}(v') \cap \left(D(\xi) \setminus D(\xi')\right)$, choose a neighbourhood $W_{v,v'}$ of $\xi'$ in $\overline{EG_{v'}}$ missing $\overline{EG_{\left[v,v'\right]}}$, and set \\
$$W_{v'} = \left( \bigcap_{v \in \overline{st}(v') \cap \left(V(\xi) \setminus V(\xi')\right)} W_{v,v'} \right) \cap U_{v'}.$$

If $v'$ is a vertex not in $D(\xi)$, set $W_v = \overline{EG_{v'}}$.\\
We now define $\cU'$ to be a $\xi'$-family that is $d_\xi$-nested in the family of $W_{v'}, v' \in D(\xi')$, that is, $\cU'$ is a $\xi'$-family such that there exists a sequence of nested $\xi'$-families $\cU^{[d_\xi]} \supset \ldots \supset \cU^{[0]} = \cU'$ satisfying $W_{v'} \supset U_{v'}^{[d_\xi]} \supset \ldots \supset U_{v'}^{[0]} = U_{v'}'$ for every vertex $v'$ of $D(\xi')$.

We now prove $(i)$ and $(ii)$ by induction on $k$. Since the geodesic segment $[v_0,x]$ leaves $D^\delta(\xi')$ while inside $D^\varepsilon(\xi)$, we have $\sigma_1 = \sigma_{\xi', \delta}(x) \subset \bigcup_{v' \in V(\xi) \cap V(\xi')}\mbox{st}(v').$ To prove $(i)$ for $k=1$, we reason by contradiction. Suppose there exists a vertex $v'$ of $D(\xi) \cap D(\xi')$ and a vertex $v$ of $D(\xi) \setminus D(\xi')$ such that $\sigma_1 \subset \mbox{st}\big([v,v']\big)$, then we have $\overline{EG_{\sigma_1}} \subset \overline{EG_{[v,v']}}$ in $\overline{EG_{v'}}$. But the former set is contained in $U_{v'}$ since $\widetilde{x} \in V_{\cU', \delta}(\xi')$, and the latter is disjoint from $U_{v'}$ by construction of $\cU'$, which is absurd.

Suppose the result has been proved up to rank $k$. If $\sigma_{k+1} \subset \sigma_k$, the result is straightforward, so we suppose that $\sigma_k \subset \sigma_{k+1}$. We prove $(i)$  by contradiction. Suppose there exists a vertex $v'$ of $D(\xi) \cap D(\xi')$ and a vertex $v$ of $D(\xi) \setminus D(\xi')$ such that $\sigma_{k+1} \subset \mbox{st}\big([v,v']\big)$. Then by the induction hypothesis, we have $\overline{EG_{[v,v']}} \cap U_{v'}^{[k]} \neq \varnothing$ in $\overline{EG_{v'}}$, hence $\overline{EG_{[v,v']}} \subset U_{v'}^{[k+1]} \subset W_{v'}$ since $\cU^{[k]}$ is nested in $\cU^{[k+1]}$,and the last inclusion contradicts the definition of $\cU'$. 

We now prove $(ii)$. Let $v_k$ a vertex of $D(\xi)\cap D(\xi')$ contained in $\sigma_{k}$ (hence in $\sigma_{k+1}$). Thus we have $\overline{EG_{\sigma_{k+1}}}   \subset \overline{EG_{\sigma_k}}  \subset U_{v_k}^{[k]} \subset U_{v_k}^{[k+1]}$ in $\overline{EG_{v_k}}$. Now let $v'$ be another vertex of $D(\xi') \cap D(\xi)$ contained in $\sigma_{k+1}$ (if any). We thus have $\overline{EG_{[v_k,v']}} \cap  U_{v_k}^{[k]} \neq \varnothing$ in $\overline{EG_{v_k}}$, so $\overline{EG_{[v_k,v']}} \subset  U_{v_k}^{[k+1]} $ in  $\overline{EG_{v_k}}$. But by \ref{xifamilies}, this implies $\overline{EG_{[v_k,v']}} \subset  U_{v'}^{[k+1]} $, which proves $(ii)$.
\end{proof}

\begin{proof}[Proof of \ref{filtrationxixi}] Let us show now that $V_{\cU', \delta}(\xi') \subset V_{\cU, \varepsilon}(\xi).$ Let $z \in V_{\cU', \delta}(\xi')$ and $x \in \bar{p}(z)$. The geodesic $[v_0,x]$ meets $D^\delta(\xi')$, hence $D^\varepsilon(\xi)$. To prove that $z \in V_{\cU, \varepsilon}(\xi)$, it is now enough to prove that $x \in \widetilde{\mbox{Cone}}_{\cU, \varepsilon}(\xi)$.

If $x \in W_{\cU', \delta}(\xi') \cap D^\varepsilon(\xi)$, it follows from the definition of $\cU'$ that $z \in W_{\cU, \varepsilon}(\xi)$. 

If the geodesic segment $[v_0,x]$ meets $D^\delta(\xi')$ outside $D^\varepsilon(\xi)$, it follows from the definition of $\delta$ that there exists $x' \in D(\xi') \setminus D(\xi)$ such that $\sigma_{\xi, \varepsilon}(x) \subset st\left(\sigma_{\xi, \varepsilon}(x')\right)$. But since $x' \in \widetilde{\mbox{Cone}}_{\cU, \varepsilon}(\xi)$, the same holds for $x$.

Thus the only case left to consider is when the geodesic segment $[v_0,x]$ leaves $D^\delta(\xi')$ while still being inside $D^\varepsilon(\xi)$. But by the previous lemma, we get that for every vertex $v'$ of $\sigma_{\xi, \varepsilon}(x)$ contained in $D(\xi)$, $\overline{EG_{\sigma_{\xi, \varepsilon}(x)}}  \subset U_{v}^{[n]} \subset U_{v}$ in $\overline{EG_{v}}$, which now implies $x \in \widetilde{\mbox{Cone}}_{\cU, \varepsilon}(\xi)$. This concludes the proof.
\end{proof}

\begin{prop} 
$\cO_{\overline{EG}}$ is a basis for the topology of $\overline{EG}$, which makes it a second countable space. For this topology, $EG$ embeds as a dense open subset.
\label{basis}
\end{prop}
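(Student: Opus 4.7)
The plan is to verify the three claims in order, using the filtration lemma just proved (Propositions 4.2--4.7) as the main input. For the basis property, the filtration lemma shows that every $z$ lying in a generator $U$ admits a basic neighborhood in $\cO_{\overline{EG}}(z)$ contained in $U$; to upgrade this to the standard basis criterion on finite intersections, I would first refine each generator $U_i$ containing $z$ to some $U_i' \in \cO_{\overline{EG}}(z)$ with $U_i' \subset U_i$ via the filtration lemma, then use that $\cO_{\overline{EG}}(z)$ is downward directed at every $z$: trivially for $\widetilde{x} \in EG$ where it is closed under intersection; for $\eta \in \partial X$ through the identity $V_U(\eta) \cap V_{U'}(\eta) = V_{U \cap U'}(\eta)$; and for $\xi \in \partial_{Stab} G$ through Lemma~\ref{subneighbourhood}, by choosing $\cU'$ nested in both $\cU_1$ and $\cU_2$ (possible via Proposition~\ref{xifamilies}) and $\varepsilon' < \min(\varepsilon_1, \varepsilon_2)$.

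For second countability I would extract a countable subfamily as follows. Since $G$ is countable, $Y$ finite, and $X$ has countably many simplices, $EG$ is a locally finite CW complex with countably many cells and hence has a countable basis $\cB_{EG}$. The bordification $\overline{X}$ is second countable by \cite{BridsonHaefliger}, and the map $U \mapsto V_U$ turns a countable basis of $\overline{X}$ into a countable family of neighborhoods covering points of $\partial X$. For $\partial_{Stab} G$, the essential observation is that $V_{\cU, \varepsilon}(\xi)$ depends on $\xi$ only through the finite domain $D(\xi)$; since domains have diameter bounded by the acylindricity constant (Proposition~\ref{finitedomain}), they range over a countable set of finite subcomplexes of $X$. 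For each such $D$ and each finite tuple of basic open sets $(W_v)_{v \in V(D)}$ from countable bases of the compact metrizable spaces $\overline{EG_v}$, apply Proposition~\ref{xifamilies} to extract a $\xi$-family inside the $W_v$'s, and pair with rational $\varepsilon \in (0,1)$. This yields a countable family $\cB_{Stab}$; the argument of Proposition~\ref{filtrationxixi} combined with Lemma~\ref{subneighbourhood} then shows that $\cB_{EG} \cup \{V_U\} \cup \cB_{Stab}$ is a countable basis.

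Finally, openness of $EG$ is immediate since $EG \in \cO_{\overline{EG}}(\widetilde{x})$ for every $\widetilde{x} \in EG$; density follows because every basic neighborhood of a boundary point contains a nonempty open subset of $EG$, namely $p^{-1}(U \cap X)$ in $V_U(\eta)$ and $W_{\cU, \varepsilon}(\xi)$ in $V_{\cU, \varepsilon}(\xi)$. Agreement of the subspace topology on $EG$ with its original CW topology is built into the definition of $\cO_{\overline{EG}}(\widetilde{x})$ for $\widetilde{x} \in EG$, which is declared to be the set of open neighborhoods of $\widetilde{x}$ in $EG$.

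The main obstacle will be the second-countability step, in particular the realization that $V_{\cU, \varepsilon}(\xi)$ is independent of the choice of $\xi$ with prescribed $D(\xi)$, after which countability of domains (via the acylindricity constant and Proposition~\ref{finitedomain}) together with the finiteness of $V(D(\xi))$ make the enumeration of a countable basis routine. All the ingredients needed to pass from an arbitrary $V_{\cU, \varepsilon}(\xi)$ to one in the countable list -- Proposition~\ref{xifamilies}, the nesting/refinement machinery of Section~3, and Lemma~\ref{subneighbourhood} -- are already in place.
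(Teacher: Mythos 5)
Your proposal is correct and follows essentially the same route as the paper: the same three-way case analysis feeding off the filtration lemmas for the basis property, the same countable enumeration (countably many cells of $EG$, second countability of $\overline{X}$, countably many finite domains paired with countable bases of the $\overline{EG_v}$ and rational $\varepsilon$) for second countability, and the same openness/density observation. Your explicit remark that $V_{\cU,\varepsilon}(\xi)$ depends on $\xi$ only through $D(\xi)$ merely makes precise what the paper leaves implicit, and your use of $\varepsilon' < \min(\varepsilon_1,\varepsilon_2)$ is if anything slightly more careful than the paper's $\varepsilon = \min(\varepsilon_1,\varepsilon_2)$ given the strict inequality in Lemma~\ref{subneighbourhood}.
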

\begin{proof}
To prove that $\cO_{\overline{EG}}$ is a basis for the topology of $\overline{EG}$, it is enough to show that for every open sets $U_1, U_2 $ of $\overline{EG}$ and every $z \in U_1 \cap U_2$, there exists an open neighbourhood $W \in \cO_{\overline{EG}}$ such that $z \in W \subset U_1 \cap U_2$. 

If $z \in EG$: By the results from the previous paragraph, there exists $V_1,V_2 \in \cO_{EG}(z)$ such that $V_1 \subset U_1$ and $V_2 \subset U_2$. Then take $W$ to be any element of $\cO_{EG}(z) =  \cO_{\overline{EG}}(z)$ contained in $V_1 \cap V_2$.

If $z = \eta \in \partial X$: By the results from the previous paragraph, let $O_1, O_2 \in \cO_{\overline{X}}(\eta)$ such that $V_{O_1}(\eta) \subset U_1$ and $V_{O_2}(\eta) \subset U_2$. Choosing a neighbourhood $W \in \cV_{\overline{X}}(\eta)$ contained in $O_1 \cap O_2$, it follows that $V_W(\eta) \subset U_1 \cap U_2$.

If $z = \xi \in \partial_{Stab} G$: By the results from the previous paragraph, let $V_{\cU_1, \varepsilon_1}(\xi), V_{\cU_2, \varepsilon_2}(\xi)$ such that $V_{\cU_1, \varepsilon_1}(\xi) \subset U_1$ and $V_{\cU_2, \varepsilon_2}(\xi) \subset U_2$. Let $\cU$ be a $\xi$-family which is $d_\xi$-nested in $ \left\lbrace (U_1)_v \cap (U_2)_v, v \in V(\xi)\right\rbrace$, and let $\varepsilon = \mbox{min}(\varepsilon_1, \varepsilon_2)$. It follows from \ref{subneighbourhood} that $V_{\cU, \varepsilon}(\xi) \subset  V_{\cU_1, \varepsilon_1}(\xi) \cap V_{\cU_2, \varepsilon_2}(\xi) \subset U_1 \cap U_2$.\\

To prove that this topology is second countable, we define countable many open sets $(U_n)_{n \geq 0}$ such that for every open set $U$ in $\cO_{\overline{EG}}$ and every $x$ in $U$, there exist an integer $m$ such that $x \in U_m \subset U$.

Since $EG$ is the realisation of a complex of spaces over a simplicial complex with countably many simplices, and with fibres that have a CW-structure with countably many cells, it inheritates a CW-complex structure with countably many cells. Thus its topology is second countable, and we can choose a countable basis of neighbourhoods $(U_n),n \geq 0$ of $EG$.

Since $X$ is a simplicial complex with countably many cells, it is a separable space, hence so is the set $\Lambda$ of points lying on a geodesic from $v_0$ to a point of $\partial X$. Let us denote $Y$ a dense countable subset of $\Lambda$. Now the family of open sets on $V_{r, \varepsilon}(\eta)$ for $\eta \in \partial X$, $\gamma_\eta(r) \in D$ and $\varepsilon \in \bbQ$ is a countable family, yielding a countable family of open neighbourhoods of $\overline{EG}$, denoted $(V_n)_{n \geq 0}$. Note that $(V_n)_{n \geq 0}$ contains a basis of neighbourhoods for every point of $\overline{EG}$ belonging to $\partial X$.

A neighbourhood of a point $\xi$ of $\partial_{Stab} G$ can be thought as defined by choosing a constant $\varepsilon \in (0,1)$, a finite subcomplex of $X$ (the domain of $\xi$), and for every vertex $v$ of that subcomplex an open set of $\overline{EG_v}$. Since domains of points of $\partial_{Stab} G$ are finite by \ref{finite}, there are only countably many such subcomplexes. Furthermore, for every vertex $v$ of $X$, $\overline{EG_v}$ has a countable basis of neighbourhoods. It is now clear that we can define a countable family $(W_n)_{n \geq 0}$ of open neighbourhoods, containing a basis of neighbourhoods of every element of $\partial_{Stab} G$. 

The family consisting of all the $U_n, V_n, W_n$ is now a countable basis of neighbourhoods of $\overline{EG}$.\\

Finally, the subset $EG$, which is open by construction of the topology, is dense in $\overline{EG}$ since every open set in that basis of neighbourhoods meets $EG$ by construction.
\end{proof}

\begin{lem} The topology of $\overline{EG}$ does not depend on the choice of a base point. Moreover, the action of $G$ on $EG$ continuously extends to $\partial G$.
\label{basepoint}
\end{lem}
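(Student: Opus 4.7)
My plan is to establish the base-point independence of the topology first, and then deduce the continuity of the extended $G$-action as a formal consequence. Fix two base points $v_0, v_0' \in X$ and denote by $\cT_{v_0}$ and $\cT_{v_0'}$ the corresponding topologies on $\overline{EG}$. By symmetry it suffices to show that every element of $\cO^{v_0}_{\overline{EG}}(z)$ contains an element of $\cO^{v_0'}_{\overline{EG}}(z)$ for each $z \in \overline{EG}$; I will check this separately for the three kinds of points.

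The cases $z \in EG$ and $z = \eta \in \partial X$ are easy. For $\widetilde{x} \in EG$ the basic neighborhoods are simply open subsets of $EG$ and do not involve the base point at all. For $\eta \in \partial X$ the neighborhoods $V_U(\eta)$ are expressed purely in terms of $U \in \cO_{\overline{X}}(\eta)$, and the topology on $\overline{X}$ is classically independent of the base point (see \cite{BridsonHaefliger}); since the definition of $V_U(\eta)$ depends only on $U$ as a subset of $\overline{X}$, the induced bases of neighborhoods in $\overline{EG}$ are equivalent.

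The main case is $z = \xi \in \partial_{Stab} G$. Given a basic neighborhood $V^{v_0}_{\cU, \varepsilon}(\xi)$, I will produce a sufficiently refined $\xi$-family $\cU'$ and a small $\varepsilon' \in (0, \varepsilon)$ such that $V^{v_0'}_{\cU', \varepsilon'}(\xi) \subset V^{v_0}_{\cU, \varepsilon}(\xi)$. The essential geometric input is that the constants $m_\xi$ (from the finiteness lemma \ref{finite}) and $d_\xi$ (from the containment lemma \ref{containment}), which bound how many simplices of $\mbox{st}(D(\xi)) \setminus D(\xi)$ any geodesic can meet, are uniform in the starting point. Concretely, for any $x$ with $[v_0',x]$ entering $D^{\varepsilon'}(\xi)$ and $\sigma_{\xi, \varepsilon'}^{v_0'}(x) \subset \mbox{st}_{\cU'}(D(\xi))$, the geodesic reattachment lemma \ref{reattachment} forces $[v_0,x]$ to meet $D(\xi)$ as well, provided $\cU'$ is nested deeply enough inside the families controlling $v_0$-reattachment. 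Moreover, the two exit simplices $\sigma_{\xi,\varepsilon'}^{v_0'}(x)$ and $\sigma_{\xi,\varepsilon}^{v_0}(x)$ can then be joined by a path of simplices in $\mbox{st}(D(\xi)) \setminus D(\xi)$ of length bounded by a constant depending only on $m_\xi$ and $d_\xi$; taking $\cU'$ to be nested of that order inside $\cU$ and invoking the crossing lemma \ref{passage} transfers the cone condition from $v_0'$ to $v_0$, which yields the desired inclusion. This transfer of exit-simplex data under a change of base point is the main technical obstacle, since it forces us to track both geodesics simultaneously near $D(\xi)$.

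Finally, the continuous extension of the $G$-action on $\overline{EG}$ follows formally. The action already extends set-theoretically: on $\partial X$ via the isometric action on $X$, and on $\partial_{Stab} G$ by left multiplication as in Definition \ref{defboundary}. Each $g \in G$ sends a basic $v_0$-neighborhood of $z$ equivariantly onto a basic $gv_0$-neighborhood of $gz$; by the base-point independence just established, this is again a neighborhood of $gz$ in $\cT_{v_0}$. Hence every $g$ acts by a homeomorphism of $\overline{EG}$, so the action is continuous.
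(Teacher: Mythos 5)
Your proposal is correct and follows essentially the same route as the paper: the cases of $EG$ and $\partial X$ are dismissed for the same reasons, the $\partial_{Stab} G$ case is handled by combining the uniform finiteness constants with the geodesic reattachment lemma \ref{reattachment} and the crossing lemma \ref{passage} to transfer the exit-simplex condition between the two base points, and continuity of the $G$-action is then deduced from equivariance together with base-point independence (using that the local actions on the $\overline{EG_v}$ are continuous). No substantive differences to report.
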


\begin{proof} Choose $x_0$ and $x_1$ two points of $X$ (note that we do not assume these points to be vertices). Throughout this proof, we will indicate the dependance on the base point by indicating it as an exponent. It is a well know fact that the topology of $\overline{X}$ does not depend on the base point, so it is enough to consider neighbourhoods of points in $\partial_{Stab} G$. 

The number of simplices in a domain $D(\xi)$, $\xi \in \partial_{Stab} G$ is uniformly bounded above by \ref{finitedomain}. Thus, the finiteness lemma \ref{finite} yields an integer $k$ such that for every $x$ of $X$ and every $\xi \in \partial_{Stab} G$, $\mbox{Geod}(D(\xi), x)$ meets at most $k$ simplices inside the open star of $D(\xi)$. Let $\xi \in \partial_{Stab} G$, $\cU_0$ a $\xi$-family for the topology centered at $x_0$ and $\varepsilon > 0$. Now let $\cU_1$ be a $\xi$-family for the topology centered at $x_1$, which is $k$-nested in $\cU_0$. Let $x$ be an element of $\widetilde{\mbox{Cone}}_{\cU_1, \varepsilon}^{v_1}(\xi)$. Then the geodesic reattachment lemma \ref{reattachment} implies that $[x_0, x]$ meets $D(\xi)$. Since $D(\xi)$ is connected, there exists a paths of simplices from $\sigma_{\xi, \varepsilon}^{v_0}(x)$ to $\sigma_{\xi, \varepsilon}^{v_1}(x)$ of length at most $k$ in $ \big( \mbox{Geod}(x, D(\xi)) \cap \mbox{st}(D(\xi)) \big) \setminus D(\xi)$. Since $\cU_1$ is $k$-nested in $\cU_0$, it now follows from the crossing lemma that $x \in \widetilde{\mbox{Cone}}_{\cU_0, \varepsilon}^{v_0}(\xi)$, hence $\widetilde{\mbox{Cone}}_{\cU_1, \varepsilon}^{v_1}(\xi) \subset \widetilde{\mbox{Cone}}_{\cU_0, \varepsilon}^{v_0}(\xi)$. Moreover, since $\cU_1$ is contained in $\cU_0$, we get $V_{\cU_1, \varepsilon}^{v_1}(\xi) \subset V_{\cU_0, \varepsilon}^{v_0}(\xi)$.\\

We extend the $G$-action on $EG$ to $\partial G$ as follows. First note that the action naturally extends to $\partial X$. Indeed, $G$ acts on the CAT(0) space $X$ by isometries, and those isometries naturally extend to homeomorphisms of the visual boundary $\partial X$. Furthermore, we defined in Section 2 a $G$-action on $\partial_{Stab} G$. Thus we have an action of $G$ on $\overline{EG}$, which we now prove to be continuous.

Let $g \in G$. Since $EG$ is open in $\overline{EG}$ and the action of $G$ on $EG$ is continous, it is enough to check the continuity at points of $\partial G$.

Let $\eta \in \partial X$, and let $U$ be a neighbourhood of $g\eta$ in $\overline{X}$. The action of $G$ on $\overline{X}$ being continuous, let $U'$ be a neighbourhood of $\eta$ in $\overline{X}$ such that $g.U' \subset U$. It then follows from the definition of our basis of neighbourhoods that $g. V_{U'}(\eta) \subset V_U(g.\eta)$.

Let $\xi \in \partial_{Stab} G$, $\cU$ a $g\xi$-family and $\varepsilon \in (0,1)$. For every vertex $v$ of $D(\xi)$, the action of $G_v$ on $EG_v$ continuously extends to $\overline{EG_v}$ by the $\cZ$-boundary assumption, so $g^{-1}\cU$ is a collection of open sets for $\xi$. Since the topology does not depend on the basepoint, let $\cU'$ be a $\xi$-family contained in $\xi$ and such that $\widetilde{Cone}_{\cU', \varepsilon}(\xi) \subset g^{-1}. \widetilde{Cone}_{\cU, \varepsilon}(\xi)$. We thus have $g.\widetilde{Cone}_{\cU', \varepsilon}(\xi) \subset  \widetilde{Cone}_{\cU, \varepsilon}(\xi)$, and also 
 $g.W_{\cU', \varepsilon}(\xi) \subset  W_{\cU, \varepsilon}(g.\xi)$, hence $g.V_{\cU', \varepsilon}(\xi) \subset V_{\cU, \varepsilon}(\xi)$.
\end{proof}

\subsection{Induced topologies.}

\begin{prop}
 The topology of $\overline{EG}$ induces the natural topologies on $EG$, $\partial X$ and $\overline{EG_v}$ for every vertex $v$ of $X$.
\label{inducedtopology}
\end{prop}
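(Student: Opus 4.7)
The plan is to verify each of the three induced topologies independently, leveraging the basis property of Proposition~\ref{basis} together with the filtration lemmas and the $\xi$-family construction.

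The case of $EG$ is essentially tautological: by construction, the basis of neighborhoods at a point $\widetilde{x} \in EG$ is precisely the CW neighborhood basis $\cO_{EG}(\widetilde{x})$, and Proposition~\ref{basis} ensures $EG$ is open in $\overline{EG}$, so the subspace topology matches the CW one.

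For $\partial X$, I would compare basic opens pointwise at each $\eta \in \partial X$. In one direction, a natural basic neighborhood $U \cap \partial X$ (with $U \in \cO_{\overline{X}}(\eta)$) arises as the trace $V_U(\eta) \cap \partial X$. In the other direction, for any open $W$ of $\overline{EG}$ containing $\eta$, Proposition~\ref{basis} provides a basic $V \in \cO_{\overline{EG}}$ with $\eta \in V \subset W$. If $V = V_{U'}(\eta)$ we are done; if instead $V = V_{\cU, \varepsilon}(\xi)$ for some $\xi \in \partial_{Stab} G$, Proposition~\ref{filtrationetaxi} nests a natural basic $V_U(\eta)$ inside $V$, again yielding a natural subspace open $U \cap \partial X$ around $\eta$ inside $W$.

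For $\overline{EG_v}$, the inclusion $\iota_v: \overline{EG_v} \hookrightarrow \overline{EG}$ is injective (Proposition~\ref{xiloop} on the $\partial G_v$ part; a standard sub-CW argument on $EG_v$), so it suffices to check it is a homeomorphism onto its image. At $\widetilde{x} \in EG_v$ this is immediate because $EG_v$ sits as a sub-CW complex of the locally finite CW complex $EG$. At $\xi \in \partial G_v$ I will handle continuity and openness onto image in parallel by exploiting the correspondence between open neighborhoods of $\xi$ in $\overline{EG_v}$ and $\xi$-families: given any open $W \subset \overline{EG_v}$ containing $\xi$, Proposition~\ref{xifamilies} extends it to a $\xi$-family $\cU$ with $U_v \subset W$, and for any $\varepsilon \in (0,1)$ the basic set $V_{\cU, \varepsilon}(\iota_v(\xi))$ is an open of $\overline{EG}$ whose trace on $\iota_v(\overline{EG_v})$ lies inside $\iota_v(W)$. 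Conversely, the pullback of any basic $V_{\cU, \varepsilon}(\iota_v(\xi))$ contains $U_v \cap \overline{EG_v}$, an open neighborhood of $\xi$ in $\overline{EG_v}$, using that the projection of $EG_v$ lies at the single vertex $v \in D(\iota_v(\xi)) \subset D^\varepsilon(\iota_v(\xi))$ and the $W_{\cU, \varepsilon}$-component then picks out $U_v \cap EG_v$.

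The main obstacle is the openness-onto-image claim in the $\overline{EG_v}$ case: one must rule out that an extraneous boundary point $\xi' \in \partial G_v \setminus U_v$ is captured by the $W_3$-component of $V_{\cU, \varepsilon}(\iota_v(\xi))$. Concretely, I need to show that if $\iota_v(\xi') \in W_3$, i.e.\ $D(\iota_v(\xi')) \subset \widetilde{\mbox{Cone}}_{\cU, \varepsilon}(\iota_v(\xi))$, then $\xi' \in U_v$. This reduces to an application of the $\xi$-family compatibility (Definition~\ref{defxifamily}) at the vertex $v$: the cone condition at $v$ forces the image in $\overline{EG_v}$ of the relevant local fibres $\overline{EG_\sigma}$ to lie inside $U_v$, and the limit-set property together with the equivariance relations $\phi_{v,e}^{-1}(U_v) = \phi_{v',e}^{-1}(U_{v'})$ across edges of $D(\iota_v(\xi))$ propagate this inclusion to $\xi'$ itself. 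Once this is established, $\iota_v$ is an embedding and the induced topology on $\overline{EG_v}$ coincides with the given one.
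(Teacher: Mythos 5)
Your treatment of the $EG$ and $\partial X$ cases is correct and follows the paper's own route (reduce to basic opens via \ref{basis}, then quote the relevant filtration propositions). The genuine gap is in the $\overline{EG_v}$ case, in the continuity direction. You assert that the pullback of $V_{\cU,\varepsilon}(\iota_v(\xi))$ contains $U_v \cap \overline{EG_v}$, but you only justify this for the $EG_v$-part via the $W_{\cU,\varepsilon}$-component. For the boundary part you would need every $\xi' \in U_v \cap \partial G_v$ to lie in the $W_3$-component, i.e.\ to satisfy $D(\xi') \subset \widetilde{\mbox{Cone}}_{\cU, \varepsilon}(\xi)$. This fails in general: the condition $\xi' \in U_v$ only sees $\xi'$ inside the single fibre $\overline{EG_v}$, whereas $D(\xi')$ may contain simplices far from $v$, and nothing forces the exit simplices of geodesics $[v_0,x]$ for $x$ in those simplices to satisfy the cone condition for $\cU$. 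Nor can you fall back on $\{\xi\} \cup (U_v \cap EG_v)$ being a neighbourhood of $\xi$ in $\overline{EG_v}$, since $\xi$ need not be isolated in $\partial G_v$. This missing step is precisely the paper's Lemma \ref{lemmesansnom}: one must first replace $\cU$ by a $\xi$-family $\cU'$ that is $m$-refined in $\cU$, where $m$ is the uniform bound on the number of simplices in a domain given by \ref{finitedomain}; convexity of $D(\xi')$ then yields a path of at most $m$ simplices from $\mbox{st}(D(\xi)) \setminus D(\xi)$ to any simplex of $D(\xi')$, and the refinement lemma \ref{refinement} propagates the cone condition along it. The correct inclusion is $U_v' \cap \partial G_v \subset V_{\cU,\varepsilon}(\xi)$ for the refined family, not $U_v \cap \partial G_v \subset V_{\cU,\varepsilon}(\xi)$.

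For the openness-onto-image direction you correctly flag the $W_3$-obstacle, but the proposed resolution (propagating $\overline{EG_\sigma} \subset U_v$ to $\xi'$ via Definition \ref{defxifamily} and the limit set property) is too vague to check and would again require nesting hypotheses you have not imposed on $\cU$. More to the point, this entire direction is unnecessary: the paper notes that $\iota_v$ is injective by \ref{xiloop} and, once continuity is established as above, a continuous injection from the compact space $\overline{EG_v}$ into the metrisable space $\overline{EG}$ is automatically a closed map, hence an embedding. Substituting this compactness argument for your direct verification closes the second half of your proof and removes any need to analyse $W_3 \cap \partial G_v$.
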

We first prove that for any open set $U$ in the basis of neighbourhoods $\cO_{\overline{EG}}$ previously defined, $U \cap EG$ is open in $EG$. For $x \in EG$, the result is obvious for elements in $\cO_{\overline{EG}}(x)$ since open sets in $\cO_{\overline{EG}}(x)$ are open sets of $EG$ by definition. For $\eta \in \partial X$ and $U$ a neighbourhood of $ \eta$ in $\overline{X}$, we have $V_U(\eta) \cap EG = p^{-1}(U \cap X)$ which is open in $EG$. For $\xi \in \partial_{Stab} G$, $\varepsilon \in (0,1)$ and $\cU$ a $\xi$-family, it was proven in \ref{filtrationxxi} that $V_{\cU, \varepsilon}(\xi) \cap EG$ is open in $EG$.

We now prove that for any open set $U$ in the basis of neighbourhoods $\cO_{\overline{EG}}$, $U \cap \partial X$ is open in $\partial X$. For an element $\eta \in \partial X$ and $U$ a neighbourhood of $\eta$ in $\overline{X}$, we have $ V_U(\eta) \cap \partial X = U \cap \partial X$, which is open in $\partial X$. Now consider $\xi \in \partial_{Stab} G$ , $\varepsilon \in (0,1)$ and $\cU$ a $\xi$-family. If $V_{\cU ,\varepsilon}(\xi) \cap \partial X$ is empty there is nothing to prove, otherwise let $\eta \in V_{\cU, \varepsilon}(\xi) \cap \partial X$. By \ref{filtrationetaxi}, let $U'$ be a neighbourhood of $\eta$ in $\overline{X}$ such that $\cV_{U'}(\eta) \subset V_{\cU, \varepsilon}(\xi)$. Thus, $\eta \in U' \cap \partial X \subset V_{\cU, \varepsilon}(\xi) \cap \partial X$, and $V_{\cU, \varepsilon}(\xi) \cap \partial X$ is open in $\partial X$.

Before proving the analogous result for $\overline{EG_v}$, with $v$ a vertex of $X$, we need the following lemma.

\begin{lem}
 Let $\xi \in \partial_{Stab} G$, $\cU$ a $\xi$-family and $\varepsilon \in (0,1)$. Then there exists a $\xi$-family $\cU' \subset \cU$ such that $ \bigcup_{v \in V(\xi)} U_v' \cap  \partial G_v \subset V_{\cU, \varepsilon}(\xi)$.
\label{lemmesansnom}
\end{lem}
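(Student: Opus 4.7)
The plan is to refine $\cU$ to a sufficiently small $\xi$-family $\cU' \subset \cU$ and then, for any $\xi' \in U'_v \cap \partial G_v$ with $v \in V(\xi)$ viewed as an element of $\partial_{Stab} G$, to show $D(\xi') \subset \widetilde{\mbox{Cone}}_{\cU, \varepsilon}(\xi)$, which is equivalent to $\xi' \in V_{\cU, \varepsilon}(\xi)$. The three main tools are the uniform bound on the cardinality of domains (Proposition~\ref{finitedomain}), the compatibility of $\xi$-families along edges (Definition~\ref{defxifamily}), and the refinement lemma (Lemma~\ref{refinement}).

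Concretely, let $N$ be a uniform upper bound on the number of simplices in $D(\xi')$ for $\xi' \in \partial_{Stab} G$, which exists by Proposition~\ref{finitedomain}. Using Proposition~\ref{xifamilies} and iterating Lemma~\ref{nesting}, choose $\cU' \subset \cU$ a $\xi$-family that is $N$-refined in $\cU$. Now let $\xi' \in U'_v \cap \partial G_v$, so $v \in D(\xi') \cap D(\xi)$ and $D(\xi')$ is a finite connected subcomplex with at most $N$ simplices. For a simplex $\sigma' \subset D(\xi')$ not contained in $D(\xi)$, pick a path of simplices in $D(\xi')$ from $v$ to $\sigma'$; its portion outside $D(\xi)$ is a path $\sigma_1, \ldots, \sigma_k = \sigma'$ in $X \setminus D(\xi)$ of length $k \leq N$ with $\sigma_1$ sharing a vertex $w$ with $D(\xi)$. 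Since $\sigma_1 \subset D(\xi')$ we have $\xi' \in \partial G_{\sigma_1}$, and propagating $\xi' \in U'_v$ along a chain of edges inside $D(\xi)$ joining $v$ to $w$ via the $\xi$-family compatibility, we obtain $\xi' \in U'_w$ in $\overline{EG_w}$; hence $\overline{EG_{\sigma_1}} \cap U'_w \neq \varnothing$ and $\sigma_1 \in \mbox{st}_{\cU'}(\xi)$. The refinement lemma applied to $\sigma_1, \ldots, \sigma_k$ then gives $\sigma' \subset \widetilde{\mbox{Cone}}_{\cU, \varepsilon}(\xi)$.

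The main obstacle lies in simplices $\sigma' \subset D(\xi)$, and in particular in confirming that $\xi$ itself is in $V_{\cU, \varepsilon}(\xi)$: for $x \in D(\xi)$ one has $\sigma_{\xi, \varepsilon}(x) = \sigma_x \subset D(\xi)$, and membership in $\widetilde{\mbox{Cone}}_{\cU, \varepsilon}(\xi)$ requires $\overline{EG_{\sigma_x}} \subset U_v$ for some vertex $v$ of $\sigma_x$, a condition that is not automatic. This forces a preliminary normalization of $\cU$: exploiting the finiteness of $D(\xi)$ (Proposition~\ref{finitedomain}) and the compactness of each $\overline{EG_\sigma}$ for $\sigma \subset D(\xi)$, one assumes $\cU$ is chosen (as is implicit whenever $V_{\cU, \varepsilon}(\xi)$ genuinely is a neighborhood of $\xi$) so that $\overline{EG_\sigma} \subset U_v$ for every simplex $\sigma \subset D(\xi)$ and every vertex $v$ of $\sigma$. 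Once this normalization is in place, the case $\sigma' \subset D(\xi)$ reduces to the already-established $D(\xi) \subset \widetilde{\mbox{Cone}}_{\cU, \varepsilon}(\xi)$, and the plan closes.
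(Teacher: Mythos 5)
Your first two paragraphs reproduce the paper's argument: take $\cU'$ to be $m$-refined in $\cU$, where $m$ uniformly bounds the number of simplices in a domain (Proposition \ref{finitedomain}); connect any simplex of $D(\xi')$ lying outside $D(\xi)$ to $D(\xi)$ by a path of at most $m$ simplices in $X\setminus D(\xi)$ whose first simplex $\sigma_1$ satisfies $\sigma_1\subset\mbox{st}_{\cU'}(\xi)$ (using $\xi'\in\partial G_{\sigma_1}$ together with the compatibility of $\xi$-families along edges); and conclude with the refinement lemma \ref{refinement}. The only cosmetic difference is that the paper produces this path of simplices from a geodesic inside the convex set $D(\xi')$ meeting $D(\xi)$ in a single point, rather than from an arbitrary simplicial path; both give the required length bound.

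The last paragraph, however, does not work as written. The proposed normalization --- that $\overline{EG_\sigma}\subset U_v$ for \emph{every} simplex $\sigma\subset D(\xi)$ and every vertex $v$ of $\sigma$ --- is untenable: taking $\sigma$ to be the vertex $v$ itself it forces $U_v=\overline{EG_v}$, and already for an edge $e\subset D(\xi)$ it would force $U_v\supset\overline{EG_e}\supset\Lambda G_e$, which is incompatible with the places where $U_v$ must be chosen small (for instance the weak-separation argument, where for $\xi'\neq\xi$ with $v\in D(\xi)\cap D(\xi')$ one needs a $U_v$ avoiding $\xi'$, even though $\xi'$ may lie in $\Lambda G_e$ for an edge $e\subset D(\xi)$ through $v$). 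What you have put your finger on is a looseness in the definition of $\widetilde{\mbox{Cone}}_{\cU,\varepsilon}(\xi)$ at points $x$ with $\sigma_{\xi,\varepsilon}(x)=\sigma_x\subset D(\xi)$: read literally it would give $\xi\notin V_{\cU,\varepsilon}(\xi)$ for any proper $\cU$, so these sets would not be neighbourhoods of $\xi$ at all. The intended reading (and the one the paper's own proof silently uses, since it only treats $x\in D(\xi')\setminus D(\xi)$) is that the exit-simplex condition only constrains simplices of $\mbox{st}(D(\xi))\setminus D(\xi)$, so that $D(\xi)\subset\widetilde{\mbox{Cone}}_{\cU,\varepsilon}(\xi)$ automatically. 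With that reading your first two paragraphs already complete the proof and the normalization should simply be dropped.
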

\begin{proof}
 By \ref{finitedomain}, let $m$ be an integer such that domains of points of $\partial_{Stab} G$ meet at most $m$ simplices, and let $\cU'$ be a $\xi$-family which is $m$-refined in $\cU$. Let $\xi' \in \bigcup_{v \in V(\xi)} U_v^{'} \cap \partial G_v$ and $x \in D(\xi')$. Since $D(\xi')$ is convex by \ref{finitedomain}, let $\gamma$ be a geodesic path in $D(\xi')$ from $x$ to $D(\xi)$  and meeting $D(\xi)$ at a single point. This yields a path of open simplices from $\sigma_1 \subset \mbox{st}(D(\xi)) \setminus D(\xi)$ to $\sigma_x $ of length at most $m$ in $X \setminus D(\xi)$ with $n \leq d_\xi$. Now since $\xi' \in \partial G_{\sigma_1}$, we have $\sigma_1 \subset \mbox{st}_{\cU'}(\xi) $. By definition of $\cU^{'}$, we get  $\sigma_n  \subset \widetilde{\mbox{Cone}}_{\cU, \varepsilon}(\xi) $, hence the result.
\end{proof}

\begin{proof}[proof of \ref{inducedtopology}]
Let $v$ be a vertex of $X$. We now prove that for every open set $U$ in the basis of neighbourhood $\cO_{\overline{EG}}$, $U \cap \overline{EG_v}$ is open in $\overline{EG_v}$. 

We proved already that the topology of $\overline{EG}$ induces the natural topology on $EG$. Now using the filtration lemmas \ref{filtrationxieta} and \ref{filtrationxixi}, it is enough to show, for every $\xi \in \partial G_v$, every $\varepsilon \in (0,1)$ and every $\xi$-family $\cU$, that $V_{\cU, \varepsilon}(\xi) \cap \overline{EG_{v}}$ contains a neighbourhood of $\xi$ in $\overline{EG_v}$. By \ref{lemmesansnom}, let $\cU'$ be a $\xi$-family contained in $\cU$ and such that every element of $U_v' \cap \partial G_v$ belongs to $V_{\cU, \varepsilon}(\xi)$. Then we have $\xi \in U_v' \subset V_{\cU, \varepsilon}(\xi) \cap \overline{EG_v}$, and so $V_{\cU, \varepsilon}(\xi) \cap \overline{EG_v}$ is open in $\overline{EG_v}$. Thus the topology of $\overline{EG}$ induces the natural topology on $\overline{EG_v}$.

Finally, note that the map $\overline{EG_v} \ra \overline{EG}$ is injective by \ref{xiloop}. As $\overline{EG_v}$ is a compact space, that map is an embedding.
\end{proof}

In the exact same way, we can prove the following:

\begin{lem}
 Let $\sigma$ be a closed cell of $X$. Then the quotient map $\sigma \times \overline{EG_\sigma} \ra \overline{EG}$ is continuous. \qed
\label{topologystab2}
\end{lem}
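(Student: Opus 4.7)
Write $q: \sigma \times \overline{EG_\sigma} \to \overline{EG}$ for the map in question. The plan is to verify continuity at each point $(x_0, s_0)$ by exhibiting, for every basic open neighborhood $V \in \cO_{\overline{EG}}$ of $q(x_0, s_0)$, a product neighborhood $N \times W$ of $(x_0, s_0)$ whose image under $q$ lies in $V$. Since $q(x_0, s_0)$ belongs either to $EG$ or to $\partial_{Stab} G$ (never to $\partial X$), the filtration lemmas reduce the task to three subcases: $V$ open in $EG$; $V = V_U(\eta)$ for some $\eta \in \partial X$; and $V = V_{\cU, \varepsilon}(\zeta)$ for some $\zeta \in \partial_{Stab} G$.

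The routine subcases are dispatched as follows. When $s_0 \in EG_\sigma$, the map $q$ restricts to the natural inclusion of the subcomplex $\sigma \times EG_\sigma$ into the CW-complex $|E(\cY)|$, so continuity at such points is automatic. For a basic neighborhood $V_U(\eta)$ with $\eta \in \partial X$, the image $q(x_0, s_0)$ lies in $EG$ (already handled) or in $\partial_{Stab} G$, and in the latter case Proposition \ref{filtrationxieta} supplies a smaller basic neighborhood $V_{\cU, \varepsilon}(q(x_0, s_0)) \subset V_U(\eta)$, reducing to the third subcase.

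The essential case is therefore $s_0 = \xi \in \partial G_\sigma$ and $V = V_{\cU, \varepsilon}(\zeta)$ with $\zeta := q(x_0, \xi)$, whose domain $D(\zeta)$ contains $\sigma$. Applying Lemma \ref{lemmesansnom} I choose a $\zeta$-family $\cU'$ nested in $\cU$ satisfying $\bigcup_{v \in V(\zeta)} U_v' \cap \partial G_v \subset V_{\cU, \varepsilon}(\zeta)$, and I set
$$W := \bigcap_{v \in V(\sigma)} \phi_{v,\sigma}^{-1}(U_v'),$$
an open neighborhood of $\xi$ in $\overline{EG_\sigma}$, together with any open neighborhood $N$ of $x_0$ in $\sigma$ (automatically $N \subset \sigma \subset D^\varepsilon(\zeta)$). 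I then check $q(N \times W) \subset V_{\cU, \varepsilon}(\zeta)$: for $(x', s') \in N \times W$ with $s' \in \partial G_\sigma$, each representative $\phi_{v, \sigma}(s') \in U_v' \cap \partial G_v$ (for $v$ a vertex of $\sigma$) places $q(x', s')$ in $V_{\cU, \varepsilon}(\zeta)$ by the choice of $\cU'$; for $(x', s') \in N \times W$ with $s' \in EG_\sigma$, the point $q(x', s')$ projects to $x' \in D^\varepsilon(\zeta)$ and, using the compatibility $\phi_{v, \sigma} = \phi_{v, \sigma_{x'}} \circ \phi_{\sigma_{x'}, \sigma}$ for a vertex $v$ of $\sigma_{x'} \subset \sigma$, satisfies $\phi_{v, \sigma_{x'}}(\phi_{\sigma_{x'},\sigma}(s')) = \phi_{v,\sigma}(s') \in U_v$, so $q(x', s') \in W_{\cU, \varepsilon}(\zeta)$. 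The only real obstacle is keeping the bookkeeping between gluing maps and $\zeta$-families straight; once Lemma \ref{lemmesansnom} is in hand, the argument is an almost literal transposition of the proof of Proposition \ref{inducedtopology}, with the embedding $\overline{EG_v} \hookrightarrow \overline{EG}$ replaced by the quotient $\sigma \times \overline{EG_\sigma} \to \overline{EG}$.
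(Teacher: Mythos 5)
Your proof is correct and is essentially the argument the paper intends: the paper gives no written proof of Lemma \ref{topologystab2}, stating only that it follows ``in the exact same way'' as Proposition \ref{inducedtopology}, and your write-up is a faithful elaboration of exactly that route (reduction via the filtration lemmas to neighbourhoods $V_{\cU,\varepsilon}(\zeta)$, then Lemma \ref{lemmesansnom} to handle the boundary points and the compatibility of the gluing maps to handle the interior points). The only point worth making explicit is that $\sigma\subset D(\zeta)$ guarantees every vertex of $\sigma_{x'}$ lies in $V(\zeta)$, so the sets $U_v'$ you intersect are all defined; your bookkeeping already uses this implicitly.
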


\subsection{Weak separation}
In this paragraph, we prove the following:

\begin{prop}
 The space $\overline{EG}$ satisfies the $T_0$ condition, that is, for every pair of distinct points, there is an open set of $\overline{EG}$ containing one but not the other.
\label{weakseparation}
\end{prop}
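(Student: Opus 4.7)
The plan is a case analysis on where the two distinct points $z \neq z'$ sit in the decomposition $\overline{EG} = EG \sqcup \partial X \sqcup \partial_{Stab} G$. First I would dispose of the easy cases. If at least one of $z, z'$ lies in $EG$, then since $EG$ is an open subspace of $\overline{EG}$ (Proposition \ref{basis}) carrying its natural (Hausdorff) CW-topology, the conclusion is immediate. For two points of $\partial X$, the bordification $\overline{X}$ is metrizable, so I would pick $U \in \cO_{\overline{X}}(\eta)$ missing $\eta'$ and take $V_U(\eta)$, whose trace on $\partial X$ equals $U \cap \partial X$. For $\eta \in \partial X$ and $\xi \in \partial_{Stab} G$, the point is that $D(\xi)$ is finite (Proposition \ref{finitedomain}), so a small enough basic neighborhood $V_{r, \delta}(\eta)$ is disjoint from $D(\xi)$; the resulting $V_U(\eta)$ then excludes $\xi$, since $\xi \in V_U(\eta)$ iff $D(\xi) \subset U$.

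The main case is two distinct points $\xi, \xi'$ of $\partial_{Stab} G$, which I would split according to whether $D(\xi) \cap D(\xi')$ is empty. If it is nonempty, pick a vertex $v$ in the intersection; the injectivity of $\partial G_v \to \partial G$ provided by Proposition \ref{xiloop} keeps $\xi$ and $\xi'$ distinct inside $\overline{EG_v}$, which is Hausdorff as a Euclidean retract coming from the $E\cZ$-structure. I would then choose a neighborhood of $\xi$ in $\overline{EG_v}$ missing $\xi'$ and lift it to an open set of $\overline{EG}$ using the topological embedding $\overline{EG_v} \hra \overline{EG}$ established in Proposition \ref{inducedtopology}.

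The hard case, and the main obstacle, is when $D(\xi) \cap D(\xi') = \emptyset$: no common vertex is available, and no ambient Hausdorff structure can be invoked, so I must directly produce a $\xi$-family $\cU$ and an $\varepsilon$ such that $V_{\cU, \varepsilon}(\xi)$ avoids $\xi'$. By Proposition \ref{finitedomain} the two domains are finite convex subcomplexes, hence at positive distance, so I would fix $\varepsilon \in (0,1)$ less than this distance, so that $D^\varepsilon(\xi) \cap D(\xi') = \emptyset$. Since $\xi' \in V_{\cU, \varepsilon}(\xi)$ iff $D(\xi') \subset \widetilde{\mbox{Cone}}_{\cU, \varepsilon}(\xi)$, it suffices to expel a single point $x_0 \in D(\xi')$ from $\widetilde{\mbox{Cone}}_{\cU, \varepsilon}(\xi)$. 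If $[v_0, x_0]$ does not even enter $D^\varepsilon(\xi)$ this is automatic, so assume it does, and let $\sigma_0 := \sigma_{\xi, \varepsilon}(x_0)$ be the exit simplex. Then $\sigma_0$ lies outside $D^\varepsilon(\xi)$, so it is not a simplex of $D(\xi)$, and consequently $\xi \notin \partial G_{\sigma_0}$, which means $\xi \notin \overline{EG_{\sigma_0}}$ inside any $\overline{EG_v}$ with $v \in D(\xi) \cap \sigma_0$. I would choose $U_v$ a neighborhood of $\xi$ in $\overline{EG_v}$ disjoint from the closed set $\overline{EG_{\sigma_0}}$, and extend it to a $\xi$-family via Proposition \ref{xifamilies}; the exit-simplex containment $\overline{EG_{\sigma_0}} \subset U_v$ then fails (or fails vacuously if $\sigma_0$ contains no vertex of $D(\xi)$), so $x_0 \notin \widetilde{\mbox{Cone}}_{\cU, \varepsilon}(\xi)$ and $\xi' \notin V_{\cU, \varepsilon}(\xi)$, completing the proof.
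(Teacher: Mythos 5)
Your case decomposition is the same as the paper's, and the easy cases ($EG$ against anything, two points of $\partial X$, $\partial X$ against $\partial_{Stab}G$) are handled identically. Where you diverge is in the two sub-cases for a pair $\xi\neq\xi'$ in $\partial_{Stab}G$, and in the disjoint-domain case your route is genuinely shorter than the paper's: the paper takes the exit simplex from $D(\xi)$ itself and must propagate the non-containment out to the exit simplex from $D^{1/2}(\xi)$ by means of a $d_\xi$-nested family and the crossing lemma \ref{passage}, whereas you work directly with the $\varepsilon$-exit simplex $\sigma_0=\sigma_{\xi,\varepsilon}(x_0)$ that actually occurs in the definition of $\widetilde{\mbox{Cone}}_{\cU,\varepsilon}(\xi)$, note that $\xi\notin\overline{EG_{\sigma_0}}$ because $\sigma_0\not\subset D(\xi)$, and destroy the containment $\overline{EG_{\sigma_0}}\subset U_v$ outright by choosing $U_v$ disjoint from the compact set $\overline{EG_{\sigma_0}}$. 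Since the $\xi$-family axiom makes the containment hold for one vertex of $\sigma_0\cap D(\xi)$ if and only if it holds for all, this does expel $x_0$, hence $\xi'$, and the crossing lemma is never needed. Two small housekeeping points: Proposition \ref{xifamilies} takes as input a neighbourhood at \emph{every} vertex of $D(\xi)$, so set $U_w=\overline{EG_w}$ at the remaining vertices; and the resulting family should be intersected with $\cU_\xi$ to respect the paper's standing convention that all $\xi$-families are contained in $\cU_\xi$.

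The step I would ask you to repair is the case $D(\xi)\cap D(\xi')\neq\varnothing$. You separate the two points inside the Hausdorff space $\overline{EG_v}$ and then lift the separating set to an open set of $\overline{EG}$ via the embedding of Proposition \ref{inducedtopology}. The direction of that proposition you need is that a natural open set of $\overline{EG_v}$ is the trace of an open set of $\overline{EG}$, i.e.\ that the injection is \emph{open onto its image}; but the paper deduces precisely that direction from the fact that a continuous injection of a compact space into a Hausdorff space is an embedding, and Hausdorffness of $\overline{EG}$ is only obtained downstream of the $T_0$ property you are currently proving together with regularity. As written, your argument therefore risks circularity. The repair is exactly what the paper does at this point: having chosen $U_v\ni\xi$ in $\overline{EG_v}$ with $\xi'\notin U_v$, take a $\xi$-family $\cU'$ with $U'_v\subset U_v$ and use the basic open set $V_{\cU',\frac{1}{2}}(\xi)$ of $\overline{EG}$ directly; this set belongs to $\cO_{\overline{EG}}(\xi)$ by construction and excludes $\xi'$, with no appeal to the subspace topology of $\overline{EG_v}$.
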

 Note that this property does not imply that the space is Hausdorff. However, we will prove in the next subsection that $\overline{EG}$ is also \textit{regular}, and it is a common result of point-set topology that a space that is $T_0$ and regular is also Hausdorff. As usual, the proof of \ref{weakseparation} splits in many cases.

\begin{prop}
 Let $\widetilde{x}, \widetilde{y}$ be two distinct elements of $ EG \subset \overline{EG}$. Then $\widetilde{x}$ and $\widetilde{y}$ admit disjoint neighbourhoods.
\end{prop}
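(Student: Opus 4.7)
The plan is to exploit the fact that $EG$ sits inside $\overline{EG}$ as an open subspace with its usual CW topology, so the separation is inherited from the (very nice) topology on $EG$ itself.

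More precisely, by the construction of $EG$ as the realisation of the complex of spaces $E(\cY)$ (which is locally finite as shown in the proof of Theorem 2.4), $EG$ carries the structure of a locally finite CW-complex. In particular, $EG$ is Hausdorff (even normal), so I can choose disjoint open neighbourhoods $U$ and $V$ of $\widetilde{x}$ and $\widetilde{y}$ inside $EG$.

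It remains to check that $U$ and $V$ are open in $\overline{EG}$, not just in $EG$. This is immediate from two facts already established: first, by the definition of $\cO_{\overline{EG}}(\widetilde{x})$ for points $\widetilde{x}$ of $EG$, every open subset of $EG$ containing $\widetilde{x}$ belongs to $\cO_{\overline{EG}}(\widetilde{x})$; second, Proposition \ref{basis} states that $\cO_{\overline{EG}}$ is a basis for the topology of $\overline{EG}$ and that $EG$ embeds as a dense open subset. Hence $U$ and $V$, being open in the subspace $EG$ and $EG$ being open in $\overline{EG}$, are open in $\overline{EG}$ itself.

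There is no real obstacle here: this is the easiest of the cases that need to be verified for the $T_0$ (and ultimately Hausdorff) property of $\overline{EG}$. The genuinely difficult separation statements — separating a point of $EG$ from a point of $\partial G$, or two points of $\partial_{Stab} G$ whose domains are close, or a point of $\partial X$ from a point of $\partial_{Stab} G$ — are where the geometric toolbox developed in Section 3 (the crossing lemma, the geodesic reattachment lemma, and the refinement lemma) will actually have to be brought to bear.
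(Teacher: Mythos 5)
Your argument is correct and is essentially the paper's own proof: open subsets of $EG$ are by definition elements of $\cO_{\overline{EG}}$, hence open in $\overline{EG}$, and the separation then follows from the Hausdorff property of the CW-complex $EG$. Nothing further is needed here.
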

\begin{proof}
 Open sets in $EG$ are open in $\overline{EG}$ by definition. The result thus follows from the fact that $EG$ is a Hausdorff space.
\end{proof}
\begin{prop}
 Let $\eta, \eta'$ be two distinct elements of $\partial X \subset \overline{EG}$. Then $\eta$ and $\eta'$ admit disjoint neighbourhoods.
\end{prop}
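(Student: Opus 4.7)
\begin{proof}
Since $\overline{X}$ is metrizable (hence Hausdorff), the two distinct points $\eta,\eta' \in \partial X$ admit disjoint neighbourhoods in $\overline{X}$. Shrinking them if necessary, we may assume that they belong to the basis $\cO_{\overline{X}}$, so we have $U \in \cO_{\overline{X}}(\eta)$ and $U' \in \cO_{\overline{X}}(\eta')$ with $U \cap U' = \varnothing$. I claim that the associated neighbourhoods $V_U(\eta)$ and $V_{U'}(\eta')$ are disjoint in $\overline{EG}$.

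Recall that
$$V_U(\eta) = p^{-1}(U \cap X) \cup (U \cap \partial X) \cup \left\lbrace \xi \in \partial_{Stab} G \mid D(\xi) \subset U \right\rbrace,$$
and similarly for $V_{U'}(\eta')$. These three pieces lie respectively in the pairwise disjoint subsets $EG$, $\partial X$ and $\partial_{Stab}G$ of $\overline{EG}$, so only intersections between pieces of the same type can be nonempty. The intersection in $EG$ equals $p^{-1}((U\cap U') \cap X) = \varnothing$; the intersection in $\partial X$ equals $(U \cap U') \cap \partial X = \varnothing$; and since each domain $D(\xi)$ is a nonempty (finite, convex) subcomplex of $X$ by \ref{finitedomain}, no such $D(\xi)$ can be contained in $U \cap U' = \varnothing$, so the intersection in $\partial_{Stab} G$ is also empty. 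Hence $V_U(\eta) \cap V_{U'}(\eta') = \varnothing$, which concludes the proof.
\end{proof}

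The only step that requires any thought is the piece lying in $\partial_{Stab}G$, and this is immediate once one knows that domains are nonempty. All remaining cases of the weak separation statement (points of $EG$ versus $\partial X$, of $EG$ versus $\partial_{Stab}G$, and of $\partial X$ versus $\partial_{Stab}G$, as well as the two-point case in $\partial_{Stab}G$) will be handled similarly, by exhibiting a neighbourhood in $\overline{X}$ (or a $\xi$-family together with an $\varepsilon$) whose pullback through $\bar{p}$, respectively through the construction of $V_{\cU,\varepsilon}$, separates the two points; the only genuinely delicate case is the last one, where one must use the finiteness and convexity of domains together with the refinement/crossing lemmas to disentangle two limit points whose domains may be close or even share vertices.
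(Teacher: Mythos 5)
Your proof is correct and follows exactly the paper's argument: pull back disjoint neighbourhoods of $\eta$ and $\eta'$ in the Hausdorff space $\overline{X}$ to the neighbourhoods $V_U(\eta)$ and $V_{U'}(\eta')$. The paper states the disjointness without comment; your piece-by-piece verification (in particular, that a nonempty domain $D(\xi)$ cannot lie in $U \cap U' = \varnothing$) is the right justification.
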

\begin{proof}
 The space $\overline{X}$ is metrisable, hence Hausdorff. Choosing disjoint neighbourhoods $U$ of $\eta$ in $\overline{X}$ (resp. $U'$ of $\eta'$ in $\overline{X}$ ) yield disjoint neighbourhoods $V_U(\eta), V_{U'}(\eta')$.
\end{proof}
\begin{prop}
 Let $\widetilde{x} \in EG$ and $\eta \in \partial X$. Then $\widetilde{x}$ and $\eta$ admit disjoint neighbourhoods.
\end{prop}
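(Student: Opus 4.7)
The plan is to use the metrisability of $\overline{X}$ to separate $x := p(\widetilde{x}) \in X$ from $\eta \in \partial X$ inside $\overline{X}$, and then lift this separation to $\overline{EG}$ via the projection $p$, relying on the fact that $EG$ is an open subset of $\overline{EG}$ whose relative topology agrees with its CW topology (as established in Proposition \ref{inducedtopology}).

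More precisely, since $\overline{X}$ is a metrisable (hence Hausdorff) space, I would first choose disjoint open neighbourhoods $U$ of $\eta$ and $U'$ of $x$ in $\overline{X}$. Then $V_U(\eta) \in \cO_{\overline{EG}}(\eta)$ is by definition an open neighbourhood of $\eta$ in $\overline{EG}$. For $\widetilde{x}$, I would exploit that the projection $p : EG \to X$ is continuous (the fibre bundle structure makes this automatic), so $W := p^{-1}(U' \cap X) \cap EG$ is an open neighbourhood of $\widetilde{x}$ in $EG$; since $EG$ is open in $\overline{EG}$, it is also open in $\overline{EG}$, i.e.\ it lies in $\cO_{\overline{EG}}(\widetilde{x})$.

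The verification that $W \cap V_U(\eta) = \varnothing$ then splits according to the three pieces of $V_U(\eta) = p^{-1}(U \cap X) \cup (U \cap \partial X) \cup \{\xi \in \partial_{Stab} G \mid D(\xi) \subset U\}$. Since $W \subset EG$, it is automatically disjoint from the second and third pieces, which are contained in $\partial G$. For the first piece, $W \cap p^{-1}(U \cap X) = p^{-1}(U' \cap U \cap X) = \varnothing$ since $U \cap U' = \varnothing$. This is the most straightforward of the six separation cases, and no genuine obstacle should arise: unlike the separation cases involving two points of $\partial_{Stab} G$ (where one must engineer compatible $\xi$-families via nesting and refinement), here the problem reduces to Hausdorffness of $\overline{X}$ together with the continuity of $p$.
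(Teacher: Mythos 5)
Your proposal is correct and follows essentially the same route as the paper: separate $p(\widetilde{x})$ from $\eta$ by disjoint open sets in the Hausdorff space $\overline{X}$, then use $p^{-1}(U'\cap X)$ (open in $EG$, hence in $\overline{EG}$) and $V_U(\eta)$ as the disjoint neighbourhoods. The piecewise verification of disjointness you add is implicit in the paper's one-line argument.
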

\begin{proof}
 Let $x = p(\widetilde{x}) \in X$. Since $\overline{X}$ is a Hausdorff space, let $U$ be a neighbourhood of $x$ in $\overline{X}$ and $U'$ be a neighbourhood of $\eta'$ in $\overline{X}$ that are disjoint. Then $p^{-1}(U) $ is a neighbourhood of $\widetilde{x}$ in $\overline{EG}$ and $V_{U'}(\eta)$ is a neighbourhood of $\eta$ in $\overline{EG}$ that is disjoint from $p^{-1}(U)$.
\end{proof}
\begin{prop}
 Let $\xi \in \partial_{Stab} G$ and $\eta \in \partial X$. Then there exists a neighbourhood of $\eta$ in $\overline{EG}$ that does not contain $\xi$.
\end{prop}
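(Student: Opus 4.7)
The plan is to exhibit an explicit neighbourhood $V_U(\eta)$ of $\eta$ whose intersection with $\partial_{Stab}G$ misses $\xi$. Recall that, by construction,
\[
V_U(\eta) = p^{-1}(U \cap X) \cup (U \cap \partial X) \cup \{\xi' \in \partial_{Stab}G : D(\xi') \subset U\},
\]
so $\xi \in V_U(\eta)$ if and only if $D(\xi) \subset U$. Hence it suffices to find a neighbourhood $U$ of $\eta$ in $\overline{X}$ such that $D(\xi) \not\subset U$.

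The key ingredient is Proposition \ref{finitedomain}, which tells us that the domain $D(\xi)$ is a finite subcomplex of $X$, and in particular is bounded with respect to the simplicial metric. I would therefore pick $R > 0$ large enough that $D(\xi) \subset \overline{B}(v_0, R)$, and then take the basic neighbourhood
\[
U := V_{R+1,\delta}(\eta) \in \cO_{\overline{X}}(\eta)
\]
for any fixed $\delta > 0$. By the very definition of $V_{r,\delta}(\eta)$, every point of $U \cap X$ satisfies $d(v_0, x) > R+1$, so $U \cap X$ is disjoint from $\overline{B}(v_0, R) \supset D(\xi)$. In particular $D(\xi)$ is not contained in $U$, and therefore $\xi \notin V_U(\eta)$, as required.

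There is really no obstacle here: the argument is a one-line observation once we know that domains are bounded. The only thing to be careful about is citing Proposition \ref{finitedomain} to guarantee boundedness of $D(\xi)$ (rather than, say, only finite-dimensionality or local finiteness), since it is boundedness that allows a single ball $\overline{B}(v_0,R)$ to contain $D(\xi)$ and hence lets the ``far-out'' neighbourhood $V_{R+1,\delta}(\eta)$ of $\eta$ avoid it entirely.
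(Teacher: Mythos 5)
Your proof is correct and follows essentially the same route as the paper: both arguments use the boundedness of $D(\xi)$ from Proposition \ref{finitedomain} to place it inside a ball $\overline{B}(v_0,R)$, then choose a basic neighbourhood $U$ of $\eta$ in $\overline{X}$ supported far beyond that ball, so that $D(\xi) \not\subset U$ and hence $\xi \notin V_U(\eta)$. No issues.
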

\begin{proof}
 Since $D(\xi)$ is bounded, let $R>0$ such that the $D(\xi)$ is contained in the $R$-ball centered at $v_0$. Now take a neighbourhood $U$ of $\eta$ in $\overline{X}$ that does not meet that $R$-ball. The subset $V_U(\eta)$ is a neighbourhood of $\eta$ in $\overline{EG}$ to which $\xi$ does not belong.
\end{proof}

\begin{prop}
 Let $\widetilde{x} \in EG$ and $\xi \in \partial_{Stab} G$. Then there exists a neighbourhood of $\widetilde{x}$ in $\overline{EG}$ that does not contain $\xi$.
\end{prop}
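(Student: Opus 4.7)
The statement reduces to an essentially tautological observation about the topology that has already been set up. My plan is to recall that, by definition in the previous subsection, the basis $\cO_{\overline{EG}}(\widetilde{x})$ of neighbourhoods at a point $\widetilde{x} \in EG$ was declared to consist precisely of the open subsets of $EG$ that contain $\widetilde{x}$. In particular, every basic neighbourhood of $\widetilde{x}$ in $\overline{EG}$ is entirely contained in $EG$.

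Since $\overline{EG} = EG \cup \partial G$ is a disjoint union of the classifying space and the boundary, and since $\xi \in \partial_{Stab} G \subset \partial G$, the point $\xi$ does not lie in $EG$. Consequently, no element of $\cO_{\overline{EG}}(\widetilde{x})$ can contain $\xi$. So I would simply exhibit any open subset of $EG$ containing $\widetilde{x}$ --- the simplest choice being $EG$ itself, which is open in $\overline{EG}$ by Proposition \ref{basis} --- as the desired neighbourhood of $\widetilde{x}$ that avoids $\xi$.

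There is no real obstacle here: unlike the earlier case of $\eta \in \partial X$ and $\xi \in \partial_{Stab} G$ (where the basic neighbourhoods $V_U(\eta)$ of $\eta$ can contain many points of $\partial_{Stab} G$, forcing one to use the boundedness of $D(\xi)$), the basic neighbourhoods of a point in $EG$ are by construction confined to $EG$ and so automatically miss the entire boundary $\partial G$. The proposition is included only to complete the systematic case analysis needed to establish the $T_0$ property across all pairs of point types.
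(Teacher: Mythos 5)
Your proof is correct and follows exactly the same route as the paper's: both simply observe that the basic neighbourhoods of a point of $EG$ are by definition open subsets of $EG$, hence disjoint from $\partial_{Stab} G$. Nothing further is needed.
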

\begin{proof}
 Choose any neighbourhood of $\widetilde{x}$ in $EG$. This is by definition a neighbourhood of $\widetilde{x}$ in $\overline{EG}$, to which $\xi$ does not belong. 
\end{proof}

\begin{prop}
 Let $\xi, \xi'$ be two different elements of $\partial_{Stab} G$. Then there exists a neighbourhood of $\xi$ in $\overline{EG}$ that does not contain $\xi'$.
\end{prop}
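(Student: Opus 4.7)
The plan is to split the argument into cases according to whether $D(\xi') \subset D(\xi)$ or not, and in each case produce a $\xi$-family $\cU$ and $\varepsilon \in (0,1)$ for which $\xi' \notin V_{\cU, \varepsilon}(\xi)$. Each step will combine the geometry of the domains in $X$, the limit set property, and the Hausdorffness of the local compactifications $\overline{EG_v}$.

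Assume first that $D(\xi') \not\subset D(\xi)$, and pick a vertex $v' \in D(\xi') \setminus D(\xi)$. If $[v_0, v']$ does not meet the finite subcomplex $D(\xi)$ (finite by Proposition \ref{finitedomain}), then the two closed sets are at strictly positive distance and, for $\varepsilon$ small, $[v_0, v']$ stays outside $D^\varepsilon(\xi)$, so $v' \notin \widetilde{\mbox{Cone}}_{\cU, \varepsilon}(\xi)$ for any $\xi$-family $\cU$; hence $\xi' \notin V_{\cU, \varepsilon}(\xi)$. Otherwise, for $\varepsilon$ small enough the exit simplex $\sigma = \sigma_{\xi, \varepsilon}(v')$ lies in the open star of $D(\xi)$ but not in $D(\xi)$ itself, which by definition of the domain means $\xi \notin \partial G_\sigma$. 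The limit set property then implies that $\xi$ does not lie in the closed subspace $\overline{EG_\sigma}$ of $\overline{EG_w}$ for any vertex $w$ of $D(\xi) \cap \bar\sigma$. Using Hausdorffness of $\overline{EG_w}$, I would pick a neighborhood $U_w$ of $\xi$ in $\overline{EG_w}$ disjoint from $\overline{EG_\sigma}$ and, via Proposition \ref{xifamilies}, extend it to a $\xi$-family $\cU$ refining this choice at $w$. Then $\overline{EG_\sigma} \not\subset U_w$, the cone condition fails at $v'$, and $\xi' \notin V_{\cU, \varepsilon}(\xi)$.

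Now assume $D(\xi') \subset D(\xi)$ and pick a vertex $v \in D(\xi')$. By Proposition \ref{xiloop} the natural map $\partial G_v \to \partial G$ is injective, so $\xi$ and $\xi'$ remain distinct in the compact metrizable (hence Hausdorff) space $\overline{EG_v}$, and $\overline{EG_v} \setminus \{\xi'\}$ is an open neighborhood of $\xi$ in $\overline{EG_v}$. Proposition \ref{inducedtopology} tells me that this set arises as the restriction to $\overline{EG_v}$ of an open subset of $\overline{EG}$ containing $\xi$, and by the filtration results preceding Proposition \ref{basis}, such an open subset contains a basic neighborhood $V_{\cU, \varepsilon}(\xi)$. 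Since then $V_{\cU, \varepsilon}(\xi) \cap \overline{EG_v} \subset \overline{EG_v} \setminus \{\xi'\}$ and $\xi' \in \overline{EG_v}$, the conclusion $\xi' \notin V_{\cU, \varepsilon}(\xi)$ follows.

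The main difficulty is exactly this last case $D(\xi') \subset D(\xi)$: no purely geometric argument on $X$ or on the geodesic cone can separate $\xi$ from $\xi'$, since their domains nest, and one must instead import the fibrewise Hausdorff separation inside some common $\overline{EG_v}$ back into $\overline{EG}$ via the identification of topologies of Proposition \ref{inducedtopology}. By contrast, the alternative cases reduce cleanly to a distance estimate against the compact set $D(\xi)$ combined with the limit set property to arrange the cone condition to fail at a chosen exit simplex.
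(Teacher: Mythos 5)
Your first case ($D(\xi') \not\subset D(\xi)$) is sound and is essentially the paper's argument for the case of disjoint domains, extended to cover partially overlapping domains: either $[v_0,v']$ misses $D(\xi)$ and Corollary \ref{Uxi} applies, or you fix $\varepsilon$, identify the exit simplex $\sigma = \sigma_{\xi,\varepsilon}(v') \subset \mbox{st}(D(\xi))\setminus D(\xi)$, observe $\xi \notin \overline{EG_\sigma}$, and choose the $\xi$-family afterwards to miss $\overline{EG_\sigma}$. (The paper instead takes $\sigma$ to be the first simplex after leaving $D(\xi)$ and then needs a $d_\xi$-nested family and the crossing lemma to transfer the obstruction to the exit simplex of $D^{1/2}(\xi)$; your order of quantifiers avoids that, and is legitimate since the exit simplex depends only on $\varepsilon$ and not on the family.)

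The gap is in your second case $D(\xi') \subset D(\xi)$. You invoke Proposition \ref{inducedtopology} to realise $\overline{EG_v}\setminus\{\xi'\}$ as the trace of an open subset of $\overline{EG}$. That is the ``embedding'' direction of \ref{inducedtopology}, and the paper's justification of that direction is the final sentence of its proof: a continuous injection from the compact space $\overline{EG_v}$ is an embedding. This argument requires the target to be Hausdorff, and Hausdorffness of $\overline{EG}$ is only obtained later from the $T_0$ property (of which the present proposition is a constituent part) together with regularity. So your appeal is circular: you are using a consequence of the separation you are trying to prove. What \ref{inducedtopology} establishes independently of Hausdorffness is only the other direction, namely that traces of open sets of $\overline{EG}$ are open in $\overline{EG_v}$, which does not produce the neighbourhood you need. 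The repair is the paper's direct construction: pick a vertex $v$ of $D(\xi')\subset D(\xi)$; since $\partial G_v \ra \partial G$ is injective by \ref{xiloop} and $\overline{EG_v}$ is compact metrisable, choose a neighbourhood $U_v$ of $\xi$ in $\overline{EG_v}$ with $\xi' \notin U_v$, use Proposition \ref{xifamilies} to produce a $\xi$-family $\cU'$ with $U'_v \subset U_v$, and then verify directly from the definition of $V_{\cU',\varepsilon}(\xi)$ that $\xi'$, whose domain contains $v$, cannot belong to it.
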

\begin{proof}
If $D(\xi) \cap D(\xi') \neq \varnothing$, let $v$ be a vertex in that intersection and let $U_v$ be a neighbourhood of $\xi$ in $\overline{EG_v}$ that does not contain $\xi'$. Now we can take a $\xi$-family $\cU'$ small enough so that $U_v' \subset U_v$ and thus $\xi' \notin V_{\cU', \frac{1}{2}}(\xi)$.

If $D(\xi) \cap D(\xi') = \varnothing$, let $x \in D(\xi')$. There are two cases to consider:
\begin{itemize}
 \item If $[v_0,x]$ does not meet $D(\xi)$, then $V_{\cU_\xi, \frac{1}{2}}(\xi)$ does not contain $\xi'$ by \ref{Uxi}. 
 \item Otherwise, $[v_0,x]$ meets $D(\xi)$ and leaves it. Let $\sigma$ be the first simplex touched by $[v_0,x]$ after leaving $D(\xi)$, $v$ a vertex of $\sigma \cap D(\xi)$ and $U_v$ a neighbourhood of $\xi$ in $\overline{EG_v}$ that does not contain $\overline{EG_\sigma}$. Now let $\cU'$ be $\xi$-family such that $U_v' \subset U_v$ and $\cU''$ a $\xi$-family that is $d_\xi$-nested in $\cU'$. It then follows from the crossing lemma that $\xi' \notin V_{\cU'', \frac{1}{2}}(\xi)$. \qedhere
\end{itemize}
\end{proof}

\subsection{Regularity}

In this paragraph, we prove the following:

\begin{prop} 
 The space $\overline{EG}$ is regular, that is, for every open set $U$ in $\overline{EG}$ and every element $x \in U$, there exists another open set $V$ containing $x$ and contained in $U$, and such that every element not in $U$ admits a neighbourhood that does not meet $V$.
\label{regular}
\end{prop}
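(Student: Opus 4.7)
The plan is to reduce via the filtration lemmas of Section~4.1 to the case where $U$ is a basic open neighbourhood in $\cO_{\overline{EG}}(x)$, and then to split into three cases depending on the type of $x \in \overline{EG}$: $x \in EG$, $x = \eta \in \partial X$, or $x = \xi \in \partial_{Stab} G$. In each case I will construct a smaller basic open $V \subset U$ containing $x$, and for every $z \in \overline{EG} \setminus U$ exhibit a basic open neighbourhood of $z$ disjoint from $V$. The verification of this latter disjointness itself requires a case split on the type of $z$, producing nine combinations in total; most of them reduce to one of the geometric lemmas of Section~3, while the case where both $x$ and $z$ lie in $\partial_{Stab} G$ demands careful parameter control.

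For $x \in EG$, local compactness of the locally finite CW-complex $EG$ supplies an open $V$ around $x$ with compact closure in $EG$ sitting inside $U$. Separation from any $z \in EG \setminus U$ is immediate; for $z = \eta \in \partial X$, I would invoke Lemma~\ref{CAT(0)nesting} to produce a neighbourhood $V_{U'}(\eta)$ whose trace on $X$ is disjoint from the bounded set $p(\overline{V}^{EG})$; and for $z = \xi \in \partial_{Stab} G$, I would use that $p(\overline{V}^{EG})$ meets only finitely many simplices (finiteness lemma~\ref{finite}), so the star lemma~\ref{star} together with a sufficiently nested $\xi$-family allows one to arrange that no exit simplex of a geodesic from $v_0$ to a point of $p(V)$ satisfies the required $\cU$-condition, forcing $V_{\cU, \varepsilon}(\xi) \cap V = \varnothing$. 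For $x = \eta \in \partial X$ with $U = V_{U_0}(\eta)$, Lemma~\ref{CAT(0)nesting} applied with a gap exceeding the uniform diameter bound on domains (Proposition~\ref{finitedomain}) furnishes a subneighbourhood $U_0' \subset U_0$, and $V := V_{U_0'}(\eta)$ separates from every $z \notin U$ by a short case analysis: points of $EG$ and of $\partial X$ are elementary, while any $\xi \in \partial_{Stab} G$ with $D(\xi) \not\subset U_0$ has $D^\varepsilon(\xi)$ disjoint from $U_0' \cap X$ for sufficiently small $\varepsilon$, so a cone neighbourhood of $\xi$ with that $\varepsilon$ cannot meet $V$.

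The principal difficulty will be the case $x = \xi \in \partial_{Stab} G$, with $U = V_{\cU, \varepsilon}(\xi)$. The plan is to set $V := V_{\cU', \varepsilon'}(\xi)$, where $\cU'$ is $N$-nested in $\cU$ for $N$ large enough that the crossing lemma~\ref{passage} and the reattachment lemma~\ref{reattachment} both apply freely (taking $N \geq m_\xi + 2d_\xi$ should suffice), and $\varepsilon' < \varepsilon/2$. For $z \in \overline{EG} \setminus U$ one splits again: points of $EG \setminus U$ use openness via Proposition~\ref{filtrationxxi}; points $\eta \in \partial X \setminus U$ are handled by observing that the geodesic ray $[v_0, \eta)$ must eventually leave $\mbox{Cone}_{\cU, \varepsilon}(\xi)$, so a sufficiently far tail neighbourhood in $\overline{X}$ separates them from $V$; and points $\xi' \in \partial_{Stab} G \setminus U$ are treated by a construction mirroring the proof of Proposition~\ref{filtrationxixi}, producing a $\xi'$-family $\cU''$ and $\varepsilon''$ such that $V_{\cU'', \varepsilon''}(\xi') \cap V = \varnothing$ via the crossing lemma applied to paths of simplices linking exit simplices for $\xi$ and for $\xi'$. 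The hardest step will be verifying that the single choice $(\cU', \varepsilon')$ simultaneously achieves separation from every $z \notin U$ of every type; this relies crucially on the uniformity of the constants $m_\xi$ and $d_\xi$ (Proposition~\ref{finitedomain} and its corollary) and on the convergence property, which controls the cofinite behaviour of cosets in the local groups and hence the possible exit simplices that can cluster toward $\xi$.
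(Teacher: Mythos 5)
Your overall architecture coincides with the paper's: a three-way case split on the type of the centre point, with separation from each $z\notin U$ achieved by nested or refined $\xi$-families fed into the crossing, reattachment, refinement and star lemmas, and with acylindricity entering exactly as you say, through Lemma~\ref{CAT(0)nesting} applied with a gap exceeding the uniform diameter bound on domains. Your treatment of the case $x=\xi\in\partial_{Stab}G$ is also in the spirit of the paper (which takes $\cU'$ to be $m$-refined and nested in $\cU$, $m$ the uniform bound on the number of simplices in a domain, and keeps the same $\varepsilon$).

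There is, however, a genuine gap in the case $x=\eta\in\partial X$, $z=\xi\in\partial_{Stab}G$ with $D(\xi)\not\subset U_0$. You argue that $D^\varepsilon(\xi)$ being disjoint from $U_0'\cap X$ forces a cone neighbourhood of $\xi$ to miss $V=V_{U_0'}(\eta)$. This does not follow: $V_{\cU,\varepsilon}(\xi)$ contains the whole of $\mbox{Cone}_{\cU,\varepsilon}(\xi)$, i.e.\ all points $y$ whose geodesic $[v_0,y]$ passes through $D^\varepsilon(\xi)$ and exits through a simplex $\sigma$ with $\overline{EG_\sigma}\subset U_v$, and such $y$ may lie arbitrarily far beyond the domain --- in particular inside $U_0'$ whenever $D(\xi)$ sits between $v_0$ and $U_0'$. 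For a generic $\xi$-family nothing prevents the exit simplices of geodesics heading into $U_0'$ from satisfying the $\cU$-condition. The paper closes this by \emph{choosing} the $\xi$-family to block precisely those directions: it takes the finite subcomplex $K$ spanned by the simplices meeting $\mbox{Geod}(D(\xi),\gamma_\eta(R))$, picks for each vertex $v$ of $D(\xi)$ and each $\sigma\subset(\mbox{st}(v)\cap K)\setminus D(\xi)$ a neighbourhood of $\xi$ disjoint from $\overline{EG_\sigma}$, intersects, refines twice, and then derives a contradiction from the refinement lemma~\ref{refinement} if a point of $U'$ were to lie in the cone. Without some such direction-blocking choice of $\cU$, your separation step fails. (A related, smaller imprecision occurs in your $(\xi,\eta)$ sub-case of the third part, where ``the ray eventually leaves the cone'' is not the right dichotomy; the paper instead splits on whether $[v_0,\eta)$ meets $D(\xi)$ and uses the star lemma to control exit simplices of nearby geodesics.)
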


Since we previously defined a basis of neighbourhoods for $\overline{EG}$, it is enough to prove such a proposition for open sets $U$ in that basis. As usual, the proof of \ref{regular} splits in many cases, depending on the nature of the open sets $U$ and elements of $U$ involved.
\begin{prop}
 Let $\widetilde{x} \in EG$ and $U$ an open neighbourhood of $\widetilde{x}$ in $\overline{EG}$. Then there exists a subneighbourhood $U'$ of $\overline{EG}$ containing $\widetilde{x}$ and such that every element not in $U$ admits a neighbourhood that does not meet $U'$.
\end{prop}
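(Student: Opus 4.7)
The plan is to exploit that $|E(\cY)|$ is a locally finite CW-complex, hence locally compact. I would pick $U'$ to be an open neighbourhood of $\widetilde{x}$ whose closure $K := \overline{U'}^{EG}$ in $EG$ is compact and contained in $U \cap EG$. It then suffices to show that every $z \in \overline{EG} \setminus U$ admits a neighbourhood in $\overline{EG}$ disjoint from $U'$, which I would handle by cases on the type of $z$. If $z \in EG$, I would take the open set $EG \setminus K$, which is a neighbourhood of $z$ in $\overline{EG}$ disjoint from $U'$. If $z = \eta \in \partial X$, I would take the basic neighbourhood $V_{V_{R, \delta}(\eta)}(\eta)$ with $R > \max_{y \in p(K)} d(v_0, y)$ and any $\delta > 0$, since points in $V_{R, \delta}(\eta)$ lie at distance greater than $R$ from $v_0$ and therefore avoid $p(K)$.

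The main obstacle is the remaining case $z = \xi \in \partial_{Stab} G$, where the goal is to construct a $\xi$-family $\cU$ and $\varepsilon \in (0,1)$ with $V_{\cU, \varepsilon}(\xi) \cap U' = \varnothing$. My strategy is to use simultaneously the Hausdorffness of each compactified fibre $\overline{EG_v}$ and the finiteness lemma \ref{finite}. First, $p(K)$ lies in a finite subcomplex of $X$, and for each vertex $v$ of $D(\xi)$ the continuous fibre-projection $p^{-1}(\overline{\mbox{st}}(v)) \to EG_v$, $\widetilde{y} \mapsto \phi_{v, \sigma_{p(\widetilde{y})}}(\widetilde{y})$, sends $K \cap p^{-1}(\overline{\mbox{st}}(v))$ to a compact subset $K_v \subset EG_v$ that is disjoint from $\xi \in \partial G_v$. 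Second, Lemma \ref{finite}(i) yields that $\mbox{Geod}(v_0, p(K)) \cap \mbox{st}(D(\xi))$ meets only finitely many simplices; denote by $S$ those not contained in $D(\xi)$. For each $\sigma \in S$ and each vertex $v \in V(\sigma) \cap V(D(\xi))$, the limit set property gives $\xi \notin \overline{EG_\sigma}$ inside $\overline{EG_v}$, so Hausdorffness of the compact space $\overline{EG_v}$ allows me to pick a neighbourhood of $\xi$ in $\overline{EG_v}$ disjoint from $K_v$ and from the finitely many compact sets $\overline{EG_\sigma}$ for $\sigma \in S$ with $v \in V(\sigma)$. Proposition \ref{xifamilies} then produces a $\xi$-family $\cU = \{U_v\}_{v \in V(D(\xi))}$ with every $U_v$ contained in the corresponding neighbourhood.

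Finally, I would verify $V_{\cU, \varepsilon}(\xi) \cap U' = \varnothing$ for any $\varepsilon \in (0,1)$, using the decomposition $V_{\cU, \varepsilon}(\xi) \cap EG = W_{\cU, \varepsilon}(\xi) \cup W_1$. A point $\widetilde{y} \in W_{\cU, \varepsilon}(\xi) \cap U'$ would have $\phi_{v, \sigma_{p(\widetilde{y})}}(\widetilde{y}) \in U_v \cap K_v$ for some vertex $v \in V(D(\xi)) \cap \sigma_{p(\widetilde{y})}$, a contradiction. A point $\widetilde{y} \in W_1 \cap U'$ would force $p(\widetilde{y}) \in p(K) \cap \mbox{Cone}_{\cU, \varepsilon}(\xi)$, so that $[v_0, p(\widetilde{y})]$ exits $D^\varepsilon(\xi)$ through a simplex $\sigma_{\xi, \varepsilon}(p(\widetilde{y})) \in S$, and the defining inclusion $\overline{EG_{\sigma_{\xi, \varepsilon}(p(\widetilde{y}))}} \subset U_v$ for some vertex $v \in V(D(\xi))$ of this simplex would contradict $U_v \cap \overline{EG_{\sigma_{\xi, \varepsilon}(p(\widetilde{y}))}} = \varnothing$.
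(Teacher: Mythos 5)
Your argument is essentially the paper's: the same regularity/local-finiteness trick to shrink $U$ to a relatively compact $U'$, the same trivial handling of points of $EG$ and of $\partial X$, and for $\xi\in\partial_{Stab}G$ the same key move of using Lemma \ref{finite} to list the finitely many simplices $\sigma$ in $\mbox{Geod}(v_0,p(K))\cap\mbox{st}(D(\xi))$ not contained in $D(\xi)$ and choosing the $\xi$-family to miss the corresponding $\overline{EG_\sigma}$, so that membership in $\mbox{Cone}_{\cU,\varepsilon}(\xi)$ contradicts the choice of exit simplex. The one genuine difference is how you treat points of $K$ projecting near $D(\xi)$: the paper splits on whether $p(\widetilde{x})\in D(\xi)$ and, in the second case, forces $W_{\cU,\varepsilon}(\xi)\cap V=\varnothing$ by taking $\varepsilon\leq\frac13\mbox{dist}(p(V),D(\xi))$, whereas you handle both situations uniformly by also making each $U_v$ avoid the fibrewise shadow $K_v$ of $K$; that is a clean simplification. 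One imprecision to repair: the fibre projection $\widetilde{y}\mapsto\phi_{v,\sigma_{p(\widetilde{y})}}(\widetilde{y})$ is only defined where $v$ is a vertex of $\sigma_{p(\widetilde{y})}$, i.e.\ on $p^{-1}(\mbox{st}(v))$ and not on $p^{-1}(\overline{\mbox{st}}(v))$, and $K\cap p^{-1}(\mbox{st}(v))$ is no longer compact, so $K_v$ is a priori only relatively compact in $EG_v$. This does not break the argument --- enclosing $K$ in a standard compact $\bigcup_\sigma\sigma\times L_\sigma$ over the finite subcomplex spanned by $p(K)$ (as in Section 5.2) shows $K_v\subset L_v$ with $L_v$ compact in $EG_v$, so $\xi$ can still be separated from it in $\overline{EG_v}$ --- but it should be said.
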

\begin{proof}
 The space $EG$ being a CW-complex, its topology is regular, so we can choose a neighbourhood $U'$ of $\widetilde{x}$ in $EG$ whose closure (in $EG$) is contained in $U$. Let us call $V$ that closure, and let $x = p(\widetilde{x})$. Since $EG$ is locally finite, we can further assume that $p(V)$ meets only finitely many simplices and that it is contained in $\mbox{st}(\sigma_x)$. We now show that $V$ is closed in $\overline{EG}$, which implies the proposition.

A point of $EG \setminus V$ clearly admits a neighbourhood in $\overline{EG}$ that does not meet $V$, since open subsets of $EG$ are open in $\overline{EG}$. For a point $\eta \in \partial X$, choosing any neighbourhood of $\eta$ in $\overline{X}$ that does not meet $p(V)$ yields a neighbourhood of $\eta$ in $\overline{EG}$ not meeting $V$. Thus the only case left is that of an element $\xi \in \partial_{Stab} G$. There are two cases to consider:

If $x \in D(\xi)$, then since $p(V)$ meets only finitely many simplices, it is easy to find a $\xi$-family $\cU$ such that $W_{\cU, \frac{1}{2}}(\xi)$ misses $V$, which implies that the whole $V_{\cU, \frac{1}{2}}(\xi)$ misses $V$.

If $x \notin D(\xi)$, then lemma \ref{finite} ensures the existence  of a finite subcomplex $K \subset X$ containing $\mbox{Geod}( v_0, p(V)) \cap \mbox{st}(D(\xi))$. We define a $\xi$-family $\cU$ and a constant $\varepsilon$ as follows. Let $v$ be a vertex of $D(\xi)$. For every $\sigma \subset \left(  \mbox{st}(v) \cap K \right)  \setminus D(\xi)$, let $U_{v, \sigma}$ be a neighbourhood of $\xi$ in $\overline{EG_v}$ which is disjoint from $ \overline{EG_{\sigma}}$. We now set\\
$$ U_v = \underset{\sigma \subset \left(  \mathrm{st}(v) \cap K \right)  \setminus D(\xi)}{\bigcap} U_{v, \sigma}. $$
Let $\cU$ be a $\xi$-family which is contained in $ \left\lbrace U_v, v \in V(\xi) \right\rbrace $, and choose 
$$\varepsilon = \mbox{min} \big(\frac{1}{3} \mbox{dist}(p(V), D(\xi)) ,1 \big),$$
which is positive since $p(V) \subset \mbox{st}(\sigma_x)$. \\
We now show by contradiction that $ V_{\cU, \varepsilon}(\xi) \cap V = \varnothing$. Suppose there exists an element $\widetilde{y}$ in that intersection and let $y = p(\widetilde{y})$. By \ref{Uxi}, $[v_0,y]$ goes through $D(\xi)$. But since $\widetilde{y} \in V$, we have $\sigma_{\xi, \varepsilon}(y) \subset K$, which contradicts the construction of $\cU$.

Thus every element of $\overline{EG} \setminus V$ admits a neighbourhood missing $V$, so $V$ is closed in $\overline{EG}$.
\end{proof}

\begin{prop}
Let $\eta \in \partial X$ and $U$ be an open neighbourhood of $\eta$ in $\overline{X}$. Then there exists an open neighbourhood $U'$ of $\eta$ in $\overline{X}$ such that every element not in $V_U(\eta)$ admits a neighbourhood that does not meet $V_{U'}(\eta)$.
\end{prop}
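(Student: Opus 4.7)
The plan is to exploit the acylindricity of the action. Apply Lemma \ref{CAT(0)nesting} with constant strictly greater than the acylindricity constant $A$ to obtain a subneighbourhood $U'$ of $\eta$ in $\overline{X}$ with $\overline{U'}\subset U$ and $d(U'\cap X,\, X\setminus U)>A$. Since every domain has diameter at most $A$ by Proposition \ref{finitedomain}, for every $\xi\in\partial_{Stab} G$ with $D(\xi)\not\subset U$ the whole domain avoids $U'$: any vertex $v\in D(\xi)\setminus U$ lies at distance greater than $A$ from $U'\cap X$, while every point of $D(\xi)$ stays within distance $A$ of $v$. This geometric lever drives the rest of the argument.

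One then checks that every $z\in\overline{EG}\setminus V_U(\eta)$ admits a neighbourhood missing $V_{U'}(\eta)$, in three cases. If $z=\widetilde x\in EG$ with $p(\widetilde x)\notin U$, any $EG$-neighbourhood of $\widetilde x$ whose projection avoids $\overline{U'}$ works: it misses the $EG$-part of $V_{U'}(\eta)$ by construction and the boundary parts trivially because it lies in $EG$. If $z=\eta'\in\partial X\setminus U$, use metrisability of $\overline{X}$ to pick a neighbourhood $U''$ of $\eta'$ in $\overline{X}$ disjoint from $\overline{U'}$; then $V_{U''}(\eta')\cap V_{U'}(\eta)=\varnothing$ by inspecting the three constituent pieces of each such neighbourhood and noting that no non-empty domain can be contained in both of the disjoint sets $U''$ and $U'$.

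The main case, and the main technical obstacle, is $z=\xi\in\partial_{Stab} G$ with $D(\xi)\not\subset U$. Choose $\varepsilon\in(0,1)$ with $D^\varepsilon(\xi)\cap U'=\varnothing$, so that the piece $W_{\cU,\varepsilon}(\xi)$ automatically misses $V_{U'}(\eta)$; the remaining task is to find a $\xi$-family $\cU$ with $\widetilde{\mbox{Cone}}_{\cU,\varepsilon}(\xi)\cap U'=\varnothing$. Writing $U'=V_{r,\delta}(\eta)$, CAT(0) comparison shows that geodesics $\gamma_x$ with $x\in U'$ $\delta$-track $\gamma_\eta$ on $[0,r]$ and remain in $U'$ beyond $r$, so $\gamma_x$ can enter $D^\varepsilon(\xi)$ only near where $\gamma_\eta$ itself does. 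When $\gamma_\eta$ avoids $D(\xi)$ altogether, shrinking $\varepsilon$ further (as a function of $\xi$) so that $\gamma_\eta$ stays outside $D^\varepsilon(\xi)$ makes the cone vacuously disjoint from $U'$. Otherwise, letting $\sigma_\eta$ be the exit simplex of $\gamma_\eta$ from $D^\varepsilon(\xi)$, the star lemma \ref{star} forces $\sigma_{\xi,\varepsilon}(x)\subset\mbox{st}(\sigma_\eta)$ for $x\in U'$. Since $\sigma_\eta\not\subset D(\xi)$, the limit set property yields $\xi\notin\overline{EG_\sigma}$ in $\overline{EG_v}$ for every simplex $\sigma\in\mbox{st}(\sigma_\eta)\cap\mbox{st}(D(\xi))\setminus D(\xi)$ and every vertex $v\in D(\xi)\cap\sigma$, and iterating the nesting lemma \ref{nesting} (which rests on the convergence property) produces a $\xi$-family $\cU$ small enough that no such $\sigma$ meets $U_v$. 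Hence $\sigma_{\xi,\varepsilon}(x)\notin\mbox{st}_\cU(D(\xi))$ and $x\notin\widetilde{\mbox{Cone}}_{\cU,\varepsilon}(\xi)$, as required. The central difficulty is exactly this last combination: deploying the star lemma to localise exit simplices in the direction of $\eta$, and then taming the potentially infinite family of neighbouring simplices by combining the convergence property (to reduce to finitely many relevant cosets per $G_v$-orbit) with the limit set property (to exclude them individually from a shrunken $\cU$), all in the non-locally finite setting of $X$.
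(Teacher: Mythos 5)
Your overall architecture matches the paper's: the same three-way case split, the same use of Lemma \ref{CAT(0)nesting} together with acylindricity (domains have diameter at most $A$) to push $U'$ far enough inside $U$ that every domain not contained in $U$ misses $U'$, and the same reduction of the hard case to producing $\cU,\varepsilon$ with $\widetilde{\mbox{Cone}}_{\cU,\varepsilon}(\xi)\cap U'=\varnothing$. The two easy cases are fine. The gap is in the hard case, and it is a quantifier problem: $U'=V_{r,\delta}(\eta)$ must be fixed \emph{before} $\xi$ is given, while both tools you deploy only control geodesics within a $\xi$-\emph{dependent} distance of $\gamma_\eta$.

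Concretely: (1) in the branch where $\gamma_\eta$ misses $D(\xi)$, shrinking $\varepsilon$ does not make the cone vacuously disjoint from $U'$. A point $x\in U'$ only satisfies $d(\gamma_x(t),\gamma_\eta(t))\leq\delta$ for $t\leq r$, so $[v_0,x]$ can enter $D^\varepsilon(\xi)$ (indeed meet $D(\xi)$ itself) whenever $d(\gamma_\eta,D(\xi))\leq\delta$; since $\delta$ is fixed and $\xi$ ranges over all of $\partial_{Stab}G\setminus V_U(\eta)$, this regime cannot be excluded by any choice of $\varepsilon$. (2) In the other branch, the star lemma \ref{star} is local: it produces a radius that depends on $\xi$, $\varepsilon$ and the geometry of $D(\xi)$ near the exit point (through the constants $k$, $\tau$, $r$ in its proof), and nothing guarantees that this radius exceeds the fixed $\delta$ of $U'$. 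So the assertion that $\sigma_{\xi,\varepsilon}(x)\subset\mbox{st}(\sigma_\eta)$ for \emph{all} $x\in U'$ is not justified. The paper sidesteps both problems by replacing metric control with combinatorial control: it takes the finite subcomplex $K$ spanned by the simplices meeting $\mbox{Geod}(D(\xi),\gamma_\eta(r))$ (finiteness lemma \ref{finite}), chooses the $\xi$-family disjoint from $\overline{EG_\sigma}$ for the finitely many $\sigma\subset\left(\mbox{st}(v)\cap K\right)\setminus D(\xi)$, and then uses a refined family together with the refinement and crossing lemmas \ref{refinement}, \ref{passage} to propagate the exit-simplex containment from an arbitrary $x\in U'$ back to $\gamma_\eta(r)$ along a path of simplices of bounded \emph{combinatorial} length, which works for every $x\in U'$ no matter how small the star-lemma radius would be. Your proof needs this (or an equivalent) substitute; as written the main case does not go through.
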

\begin{proof}
 By \ref{CAT(0)nesting}, we first choose a neighbourhood $W$ of $\eta$ in $\overline{X}$ contained in $U$ and such that $d(W \cap X, X \setminus U) > A+1$, where $A$ is the acylindricity constant. Since $\overline{X}$ is metrisable, hence regular, we can further assume that $\overline{W} \subset U$. Finally, we can choose $R>0$ and $\delta>0$ such that $U'=V_{R,\delta}(\eta)$ is contained in $W$ and $B(\gamma_\eta(R), \delta)$ is contained in the open star of $\gamma_\eta(R)$ (recall that $\gamma_\eta$ is a parametrization of the geodesic ray $[v_0, \eta)$). We now show that every element not in $V_U(\eta)$ admits a neighbourhood that does not meet $V_{U'}(\eta)$.\\

Let $z \in EG \setminus V_{U}(\eta)$. Then $p(z)$ is not in $U$, hence not in  $\overline{U'}$. Since $U'$ is open in $\overline{X}$, there exist an open set $U''$ of $\overline{X}$ containing $p(z)$ and such that $U'' \subset X \setminus \overline{U'}$. Then $p^{-1}(U'')$ is open in $\overline{EG}$ and $p^{-1}(U'')$ does not meet $V_U(\eta)$.

Let $\eta' \in \partial X \setminus V_{U}(\eta)$. Then $\eta' \notin U \cap \partial X$ hence $\eta' \notin \overline{U'} $. Since $\overline{U'} $ is closed in $\overline{X}$, we choose an open set $U''$ in $\cO_{\overline{X}}(\eta)$ missing $U'$. It is now clear that $V_{U''}(\eta')$ does not meet $V_\cU'( \eta)$.

Let $\xi \in (\partial_{Stab} G) \setminus V_U(\eta)$. To find a neighbourhood of $\xi$ that does not meet $V_{U'}(\eta)$, is enough to find a $\xi$-family $\cU'$ such that $U' \cap \widetilde{\mbox{Cone}}_{\cU', \frac{1}{2}}(\xi) = \varnothing$. We define such a $\xi$-family as follows:\\
 Let $x = \gamma_\eta(R)$. By \ref{finite}, let $K$ be the finite subcomplex of $X$ spanned by simplices meeting $\mbox{Geod}(D(\xi), x)$. Let $v $ be a vertex of $D(\xi)$. For every simplex $\sigma$ contained in $\left(  \mbox{st}(v) \cap K \right)  \setminus D(\xi)$, let $U_{v, \sigma}$ be an open neighbourhood of $\xi$ in $\overline{EG_v}$ disjoint from $\overline{EG_\sigma}$. We then set\\
$$ V_v = \bigcap_{\sigma \subset \left(  \mathrm{st}(v) \cap K \right)  \setminus D(\xi)} U_{v, \sigma}. $$
Now take $\cU$ to be a $\xi$-family contained in $\left\lbrace V_v, v \in V(\xi) \right\rbrace$, and let $\cU'$ be a $\xi$-family that is $2$-refined in $\cU$.

We now show by contradiction that $U' \cap \widetilde{\mbox{Cone}}_{\cU', \frac{1}{2}}(\xi) \neq \varnothing$ . Let $y$ be an element in this intersection. Then $[v_0,y]$ meets $D(\xi)$ (by \ref{Uxi}) and $B(x, \delta) \cap S(v_0,R)$ (by construction of $U'$). 

Since $d(U', X \setminus U) \geq A+1$ and $D(\xi)$ meets $U$, it follows that $\mbox{st}(D(\xi)) \cap U' = \varnothing$. Hence the geodesic segment $[v_0,y]$ enters $D(\xi)$ before meeting $B(x, \delta) \cap S(v_0,R)$.
Let $y'$ be the point of $[v_0,y]$ inside $B(x, \delta) \cap S(v_0,R)$. By construction of $R$ and $\delta$, it follows that $\overline{\sigma_{y'}}$ is a face of $\overline{\sigma_x}$. Now since $x \in \mbox{Cone}_{\cU', \frac{1}{2}}(\xi)$, the refinement lemma \ref{refinement} implies that $\sigma_{y'} \subset \mbox{Cone}_{\cU, \frac{1}{2}}(\xi)$, which contradicts the definition of $\cU$.
\end{proof}

\begin{prop}
Let $\xi \in \partial_{Stab} G$, $\varepsilon \in (0,1)$ and $\cU$ a $\xi$-family. Then there exists a $\xi$-family $\cU'$ such that every element not in $V_{\cU, \varepsilon}(\xi)$ admits a neighbourhood that misses $V_{\cU', \varepsilon}(\xi) $.
 \end{prop}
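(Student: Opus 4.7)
The proof will follow the familiar pattern: choose $\cU'$ nested deep enough inside $\cU$ so that any point $z \notin V_{\cU,\varepsilon}(\xi)$ admits a neighborhood disjoint from $V_{\cU',\varepsilon}(\xi)$, handling the three possible natures of $z$ separately. First, since each $\overline{EG_v}$ is compact metrizable (hence regular), I pass to an intermediate $\xi$-family $\cU^0$ with $\overline{U_v^0} \subset U_v$ for every $v \in V(\xi)$, and then take $\cU'$ to be a $\xi$-family $(m_\xi + d_\xi)$-refined in $\cU^0$. The refinement depth is chosen uniformly so that the crossing, refinement and geodesic reattachment lemmas apply in all the cases below.

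For $z = \widetilde{x} \in EG$ with $x = p(\widetilde{x})$: if $x \in D^\varepsilon(\xi)$, then $\sigma_x$ shares a vertex $v$ with $D(\xi)$ (since $D^\varepsilon(\xi) \subset \overline{\mathrm{st}(D(\xi))}$), and $\widetilde{x} \notin W_{\cU,\varepsilon}(\xi)$ forces $\phi_{v,\sigma_x}(\widetilde{x}) \notin U_v$; the inclusion $\overline{U_v^0} \subset U_v$ and the embedding $\overline{EG_v} \hookrightarrow \overline{EG}$ from Proposition~\ref{inducedtopology} yield a neighborhood of $\widetilde{x}$ in $EG$ whose fiber images miss $\overline{U_v^0}$ and whose projection stays inside $D^\varepsilon(\xi)$ (which is disjoint from $\mathrm{Cone}_{\cU',\varepsilon}(\xi)$), giving the required separation. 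If $x \notin \overline{D^\varepsilon(\xi)}$, the star lemma provides $\delta>0$ such that every $y \in B(x,\delta)$ has either $[v_0,y]$ not entering $D^\varepsilon(\xi)$ or $\sigma_{\xi,\varepsilon}(y) \subset \mathrm{st}(\sigma_{\xi,\varepsilon}(x))$; since $\sigma_{\xi,\varepsilon}(x)$ fails the $\cU$-condition, the crossing lemma applied to the length-$2$ path between $\sigma_{\xi,\varepsilon}(x)$ and $\sigma_{\xi,\varepsilon}(y)$ rules out $y \in \mathrm{Cone}_{\cU',\varepsilon}(\xi)$, so $p^{-1}(B(x,\delta))$ works.

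For $z = \eta \in \partial X$, I mimic the argument for $\eta$-regularity: since $\eta \notin W_2$, choose $R$ large enough that $D^\varepsilon(\xi) \subset B(v_0,R-1)$ and that $\gamma_\eta(R) \notin \mathrm{Cone}_{\cU,\varepsilon}(\xi)$; combine the star-lemma argument above (applied at $\gamma_\eta(R)$) with Lemma~\ref{CAT(0)nesting} to obtain $\delta>0$ such that for $U = V_{R,\delta}(\eta)$ no point of $U \cap X$ lies in $\mathrm{Cone}_{\cU',\varepsilon}(\xi)$ and, by the acylindricity bound on domain diameters together with the geodesic reattachment lemma, no $\xi' \in \partial_{Stab}G$ with $D(\xi') \subset V_U(\eta)$ can satisfy $D(\xi') \subset \widetilde{\mathrm{Cone}}_{\cU',\varepsilon}(\xi)$. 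Then $V_U(\eta)$ is the desired neighborhood.

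For $z = \xi' \in \partial_{Stab}G$, the argument follows the blueprint of Proposition~\ref{filtrationxixi}: fix $x \in D(\xi')$ with $x \notin \widetilde{\mathrm{Cone}}_{\cU,\varepsilon}(\xi)$, use the finiteness lemma to bound the finite subcomplex $K = D(\xi) \cup \mathrm{Geod}(D(\xi), D(\xi')) \cup \mathrm{st}(D(\xi'))$, and build a $\xi'$-family $\cU''$ whose open sets at each vertex of $D(\xi')$ avoid $\overline{EG_\tau}$ for every simplex $\tau \subset K \setminus D(\xi')$ that would create a forbidden inclusion under the crossing lemma; then $(\cU'',\varepsilon'')$ with $\varepsilon''$ small separates $\xi'$ from $V_{\cU',\varepsilon}(\xi)$ via the geodesic reattachment lemma. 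This last case is the main obstacle: unlike Cases (i) and (ii), where the "other" point has trivial or one-dimensional geometric data, here both $\xi$ and $\xi'$ carry nontrivial finite domains whose relative configuration (disjoint, sharing faces, or linked by nontrivial geodesic paths) must be organized using the limit-set and finite-height properties, exactly as in the earlier filtration lemma \ref{filtrationxixi}, to ensure the two neighborhoods separate cleanly.
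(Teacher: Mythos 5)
Your overall architecture matches the paper's: split by the nature of the point outside $V_{\cU,\varepsilon}(\xi)$, and choose $\cU'$ nested and refined deeply enough in $\cU$ that the star, crossing and reattachment lemmas force separation. The $EG$ and $\partial X$ cases are handled essentially as in the paper (the paper disposes of the second $EG$ subcase even more cheaply, by noting that $\overline{\mbox{Cone}_{\cU',\varepsilon}(\xi)} \subset \widetilde{\mbox{Cone}}_{\cU,\varepsilon}(\xi)$ and taking a ball in the complement).

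The genuine gap is in the case $z=\xi'\in\partial_{Stab}G$, which you correctly identify as the main obstacle but then only gesture at. Two things are missing. First, your control complex is built from $\mbox{Geod}(D(\xi),D(\xi'))\cup\mbox{st}(D(\xi'))$, i.e.\ from geodesics \emph{between the two domains}; but the contradiction one must reach concerns a point $x$ lying in both $\widetilde{\mbox{Cone}}_{\cU'',\varepsilon''}(\xi')$ and $\widetilde{\mbox{Cone}}_{\cU',\varepsilon}(\xi)$, and these cones are defined by how geodesics \emph{from the basepoint} $v_0$ traverse the domains. The paper's $K$ is a finite complex containing $\mbox{Geod}(v_0,D(\xi))\cap\mbox{st}(D(\xi'))$, precisely so that the $\xi'$-family forbids $[v_0,x]$ from meeting $D(\xi')$ before $D(\xi)$; without constraining geodesics issued from $v_0$ you cannot establish this ordering, and the crossing-lemma propagation that yields $\xi'\in V_{\cU,\varepsilon}(\xi)$ (the desired contradiction) does not get started. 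Second, when $D(\xi)\cap D(\xi')\neq\varnothing$ the paper needs an extra shrinking of the sets $U''_v$ at common vertices $v$, obtained from the \emph{convergence property}: one arranges that the only $\overline{EG_\sigma}$ inside $\overline{EG_v}$ meeting both $U_v$ and $U''_v$ are those whose boundary contains both $\xi$ and $\xi'$. This is what rules out the degenerate configurations (leaving both domains through the same simplex, or entering $D(\xi')$ first), and it is not supplied by the limit-set or finite-height properties you invoke. Finally, your appeal to the blueprint of Proposition~\ref{filtrationxixi} is misplaced: that lemma treats the configuration $\xi'\in V_{\cU,\varepsilon}(\xi)$, i.e.\ $D(\xi')\subset\widetilde{\mbox{Cone}}_{\cU,\varepsilon}(\xi)$, which is the opposite of the present hypothesis, and its mechanism (following $[v_0,x]$ as it exits $D^{\delta}(\xi')$ while still inside $D^{\varepsilon}(\xi)$) does not transfer.
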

\begin{proof}
 Let $m$ be an integer such that domains of points of $\partial_{Stab} G$ meet at most $m$ simplices. Choose a $\xi$-family $\cU'$ which is $m$-refined and nested in $\cU$. We now show that every element not in $V_{\cU, \varepsilon}(\xi)$ admits a neighbourhood that misses $V_{\cU', \varepsilon}(\xi) $.

Let $\widetilde{x} \in EG \setminus V_{\cU, \varepsilon}(\xi)$, and $x = p(\widetilde{x})$.
\begin{itemize}
 \item If $x \in \overline{D^{\varepsilon}(\xi)}$, then $\widetilde{x} \notin \left\{x\right\} \times U_{\sigma_x}$, hence $\widetilde{x} \notin \left\{x\right\} \times \overline{U_{\sigma_x}'}$. Let $W_x$ be a neighbourhood of $\widetilde{x}$ in $\left\{x\right\} \times \overline{EG_{\sigma_x}}$ that does not meet $\overline{U_{\sigma_x}}$, and $V$ be an open neighbourhood of $x$ in $X$ contained in $\mbox{st}(\sigma_x)$. $V \times W_x$ does not meet $V_{\cU', \varepsilon}(\xi)$.
 \item If $x \notin \overline{D^{\varepsilon}(\xi)}$, let $V$ be an open neighbourhood of $x$ in $X$ contained in $\mbox{st}(\sigma_x)$. Then $p^{-1}(V)$ does not meet $V_{\cU', \varepsilon}(\xi)$ as $\cU'$ is refined in $\cU$.
In particular, the closure of  $\mbox{Cone}_{\cU', \varepsilon}(\xi)$ is contained in $\widetilde{\mbox{Cone}}_{\cU, \varepsilon}(\xi)$, so $x \notin \overline{\mbox{Cone}_{\cU', \varepsilon}(\xi)}$. Now choose a neighbourhood $V$ of $x$ in $X$ contained in \\$X \setminus \bigg( \overline{ \mbox{Cone}_{\cU', \varepsilon}(\xi)} \cup \overline{D^{\varepsilon}(\xi)} \bigg)$, and it follows that $p^{-1}(V)$ is a neighbourhood of $\widetilde{x}$ in $\overline{EG}$ that does not meet $V_{\cU', \varepsilon}(\xi)$.
\end{itemize}
Let $\eta \in  \partial X \setminus V_{\cU, \varepsilon}(\xi).$ We construct a neighbourhood $V$ of $\eta$ in $\overline{X}$ that does not meet $\widetilde{\mbox{Cone}}_{\cU', \varepsilon}(\xi)$. First, since $D(\xi)$ is bounded, let $R > 0$ such that $D(\xi)$ is contained in the $R$-ball centered at $v_0$, and let $x = \gamma_\eta(R+1)$. 
\begin{itemize}
 \item If $[v_0,\eta)$ does not meet $D(\xi)$, let $\delta = \frac{1}{2}\mbox{dist}\bigg( \gamma_\eta\big([0,R+1] \big), D(\xi)\bigg) >0$, and let $V$ be a neighbourhood of $\eta$ in $\overline{X}$ that is contained in $V_{R+1, \delta}(\eta)$. For every $y$ in $V$, $[v_0,y]$ does not meet $D(\xi)$, hence $V \cap \widetilde{\mbox{Cone}}_{\cU', \varepsilon}(\xi) = \varnothing$.
 \item If $[v_0,\gamma)$ goes through $D(\xi)$, then since $x$ does not belong to $ \widetilde{\mbox{Cone}}_{\cU', \varepsilon}(\xi)$, let  $v$ be a vertex of $D(\xi)$ in  $\sigma_{\xi, \varepsilon}(x)$ such that $\overline{EG_{\sigma_{\xi, \varepsilon}}(x)} \nsubseteq U_v$ in $\overline{EG_v}$. Lemma \ref{star} yields a constant $\delta>0$ such that for every $y \in B(x, \delta)$, $[v_0,y]$ goes through $D(\xi)$ and $\sigma_{\xi, \varepsilon}(y) \subset \mbox{st}\big(\sigma_{\xi, \varepsilon}(x)\big)$ inside $D^{\varepsilon}(\xi)$. As $\cU'$ is nested in $\cU$, it then follows that $V_{R+1, \delta}(\eta)$ does not meet $V_{\cU', \varepsilon}(\xi)$. Then $[v_0,y]$ goes through $B(x, \delta)$, so it also goes through $\mbox{st}(\sigma_{\xi, \varepsilon}(x))$. As $\cU'$ is nested in $\cU$ and $\overline{EG_\sigma} \nsubseteq U_v$ in $\overline{EG_v}$, it follows that $\overline{EG_{\sigma_{\xi, \varepsilon}(y)}} \nsubseteq U_{v}'$, hence $y \notin \widetilde{\mbox{Cone}}_{\cU', \varepsilon'}(\xi)$ and $V \cap \widetilde{\mbox{Cone}}_{\cU', \varepsilon'}(\xi) = \varnothing$.
\end{itemize} 
Let $\xi' \in (\partial_{Stab} G) \setminus V_{\cU, \varepsilon}(\xi)$. To find a neighbourhood of $\xi'$ that misses $V_{\cU, \varepsilon}(\xi)$, it is enough to find a $\xi'$-family $\cU''$ such that $\widetilde{\mbox{Cone}}_{\cU^{''}, \varepsilon}(\xi') \cap \widetilde{\mbox{Cone}}_{\cU', \varepsilon}(\xi) = \varnothing$. We define such a $\xi'$-family as follows. By \ref{finite}, let $K$ be a finite subcomplex containing $\mbox{Geod}(v_0, D(\xi) ) \cap \mbox{st}(D(\xi'))$. Let $v $ be a vertex of $D(\xi')$. For every $\sigma \subset \left(  \mbox{st}(v) \cap K \right)  \setminus D(\xi')$, let $U_{v, \sigma}^{''}$ be a neighbourhood of $\xi'$ in $\overline{EG_v}$ which is disjoint from $\overline{EG_{\sigma}}$, and set \\
$$ U_{v}^{''} =  \underset{\sigma \subset \left(  \mbox{st}(v) \cap K \right)  \setminus D(\xi')}{\bigcap} U_{v, \sigma}^{''}. $$
If $v$ is also in $D(\xi)$, we can further assume by the convergence property that the only $EG_\sigma$ inside $EG_v$ meeting both $U_v$ and $U_v''$ contain $\xi$ and $\xi'$. Now let $\cU^{''}$  be a $\xi'$-family which is $m$-refined in $ \left\lbrace U_v^{''}, v \in V(\xi') \right\rbrace $, and let us prove by contradiction that $\widetilde{\mbox{Cone}}_{\cU^{''}, \varepsilon^{'}}(\xi') \cap \widetilde{\mbox{Cone}}_{\cU', \varepsilon}(\xi) = \varnothing$. Let $x$ be in such an intersection. Then, by \ref{Uxi}, the geodesic $[v_0,x]$ meets both $D(\xi)$ and $D(\xi')$.

Note that $[v_0,x]$ cannot meet $D(\xi')$ before $D(\xi)$ or leave both $D(\xi)$ and $D(\xi')$ at the same time, by construction of the $U_v''$. Thus $[v_0,x]$ meets $D(\xi)$ before $D(\xi')$. Let $x'$ be the first point of $D(\xi')$ met by $[v_0,x]$. If $D(\xi) \cap D(\xi') = \varnothing$, it follows from the fact that $\cU'$ is $m$-nested in $\cU$ that $D(\xi') \subset \widetilde{\mbox{Cone}}_{\cU, \varepsilon}(\xi)$, hence $\xi' \in V_{\cU, \varepsilon}(\xi)$, which is absurd. Otherwise, let $\gamma$ be a geodesic path in $D(\xi')$ from $x'$ to a point of $D(\xi)$, such that $\gamma$ meets $D(\xi)$ in exactly one point. Let $\sigma$ be the last simplex touched by $\gamma$ before touching $D(\xi)$. The fact that $\cU'$ is $m$-nested in $ \cU$ implies that $\overline{EG_\sigma} \subset U_v$ for some (hence every) vertex $v$ of $\sigma \cap D(\xi)$, hence $\xi' \in U_v \subset V_{\cU, \varepsilon}(\xi)$, a contradiction. 
\end{proof}

\begin{thm}
The space $\overline{EG}$ is separable and metrisable.
\label{metrisable}
\end{thm}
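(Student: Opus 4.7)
The plan is to apply the Urysohn metrization theorem, which states that a second countable regular Hausdorff space is metrisable. Three of the four ingredients are already established in this section: second countability was proved in Proposition~\ref{basis}, the $T_0$ separation axiom was proved in Proposition~\ref{weakseparation}, and regularity was proved in Proposition~\ref{regular}. So the only real content is to upgrade $T_0$ plus regularity to the Hausdorff property, and then to invoke Urysohn.

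First I would verify Hausdorffness. Take two distinct points $x,y \in \overline{EG}$. By the $T_0$ property, one of them, say $x$, admits an open neighbourhood $U$ not containing $y$. Applying the regularity statement of Proposition~\ref{regular} to the open set $U \ni x$, we obtain an open neighbourhood $V$ of $x$ contained in $U$ such that every point outside $U$ admits a neighbourhood disjoint from $V$. In particular $y$, which lies outside $U$, admits an open neighbourhood $W$ disjoint from $V$; then $V$ and $W$ are disjoint open sets separating $x$ and $y$. Hence $\overline{EG}$ is Hausdorff.

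Next I would simply quote the Urysohn metrization theorem: any second countable regular Hausdorff space is metrisable. Since $\overline{EG}$ satisfies all three hypotheses by the discussion above, it is metrisable. Finally, separability is immediate, as any second countable space is separable (a dense countable subset is obtained by picking one point from each nonempty member of the countable basis constructed in Proposition~\ref{basis}).

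No step here poses a real obstacle: the work has already been done in Propositions~\ref{basis}, \ref{weakseparation}, and \ref{regular}, and the remainder is a direct application of standard point-set topology. The only mild subtlety is the (routine) deduction of Hausdorffness from $T_0$ and regularity, which is why the author formulated regularity in the form given rather than as $T_3$ outright.
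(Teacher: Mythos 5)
Your proposal is correct and follows exactly the paper's own argument: second countability from Proposition~\ref{basis}, the $T_0$ property from Proposition~\ref{weakseparation}, regularity from Proposition~\ref{regular}, the standard upgrade of $T_0$ plus regularity to Hausdorff, and then Urysohn's metrization theorem. Your explicit verification of the Hausdorff step and the remark that second countability gives separability merely spell out details the paper leaves implicit.
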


\begin{proof}
It is secound countable by \ref{basis}, regular by \ref{regular} and satisfies the $T_0$ condition. Thus it is Hausdorff and the result follows from Urysohn's metrization theorem.
\end{proof}

\subsection{Sequential Compactness.}
In this subsection, we prove the following: 

\begin{thm}
The metrisable space $\overline{EG}$ is compact.
\label{compact}
\end{thm}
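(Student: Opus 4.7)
Since $\overline{EG}$ has already been shown to be metrisable (Theorem \ref{metrisable}), it suffices to prove sequential compactness. Given a sequence $(z_n) \subset \overline{EG}$, the plan is to extract a convergent subsequence by studying the geodesics from the basepoint $v_0$ to representatives of the $z_n$. To each $z_n$ I associate a representative $x_n \in \overline{X}$: set $x_n = p(z_n)$ if $z_n \in EG$, $x_n = z_n$ if $z_n \in \partial X$, and let $x_n$ be a vertex of the finite subcomplex $D(z_n)$ if $z_n \in \partial_{Stab} G$. Let $\gamma_n$ denote the geodesic (segment or ray) from $v_0$ to $x_n$.

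The heart of the proof is an iterative pigeonhole argument on the combinatorial trace of $\gamma_n$. Since the $M_\kappa$-complex $X$ has finitely many isometry types of simplices, there is a uniform lower bound $l > 0$ on the distance from each simplex to its link, so every geodesic of length $L$ crosses at most $\lfloor L/l \rfloor +1$ simplices. At step $k$, suppose the first $k-1$ simplices $\sigma_1,\ldots,\sigma_{k-1}$ crossed by $\gamma_n$ have been made constant by earlier extractions, and consider the next simplex $\sigma_{n,k}$. By cocompactness of the $G$-action, write $\sigma_{n,k}=h_n \sigma_k$ for a fixed $\sigma_k$ and some $h_n$ in the stabiliser of $\sigma_{k-1}$. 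Two sub-cases arise: either the cosets $h_n G_{\sigma_k}$ stabilise in $G_{\sigma_{k-1}}/G_{\sigma_k}$ up to a subsequence, in which case $\sigma_{n,k}$ becomes constant and we proceed to step $k+1$; or the cosets are pairwise distinct for infinitely many $n$, and the convergence property $(iii)$ applied to the pair $(G_{\sigma_{k-1}},G_{\sigma_k})$ yields a subsequence along which $h_n\overline{EG_{\sigma_k}}$ uniformly converges to a point $\xi \in \partial G_{\sigma_{k-1}} \subset \partial_{Stab} G$.

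Three outcomes are possible: $(a)$ the procedure halts at some step $k$, producing a candidate limit $\xi\in\partial_{Stab}G$; $(b)$ the procedure runs indefinitely and $d(v_0,x_n)\to\infty$, in which case the $\gamma_n$ sub-converge to a geodesic ray from $v_0$ whose ideal endpoint $\eta\in\partial X$ is the candidate limit; or $(c)$ the procedure runs indefinitely but $d(v_0,x_n)$ stays bounded, which forces the $x_n$ to accumulate inside a fixed closed simplex $\sigma$, and compactness of $\overline{EG_\sigma}$ then provides a convergent subsequence. In each case one must verify that the chosen subsequence of $(z_n)$ does converge to the identified candidate in the topology defined by the basis $\cO_{\overline{EG}}$: for basic neighbourhoods $V_{\cU,\varepsilon}(\xi)$ at $\xi\in\partial_{Stab}G$ this uses the crossing lemma \ref{passage}, the geodesic reattachment lemma \ref{reattachment} and the refinement lemma \ref{refinement}; for neighbourhoods $V_U(\eta)$ at $\eta\in\partial X$ it follows more directly from the CAT(0) geometry.

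The main obstacle is that $X$ is not assumed to be locally finite, so the usual Arzelà--Ascoli argument for proper CAT(0) spaces does not apply: bounded sets in $X$ need not be relatively compact. The convergence property $(iii)$ is precisely the ingredient that compensates for this: whenever the geodesics $\gamma_n$ leave a fixed simplex in infinitely many distinct directions, the corresponding cosets force a limit in $\partial_{Stab} G$. The most delicate verification lies in outcome $(a)$, where one must check that the combinatorial information extracted about $\gamma_n$ is sufficient to guarantee $z_n \in V_{\cU, \varepsilon}(\xi)$ for every $\xi$-family $\cU$ and every $\varepsilon > 0$; this requires carefully unravelling the definitions of $\xi$-families and cones from Section 3 and matching them against the data produced by the pigeonhole procedure.
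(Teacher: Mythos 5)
Your overall strategy coincides with the paper's: reduce to sequential compactness, track the combinatorial trace of the geodesics $[v_0,x_n]$, use cocompactness plus a pigeonhole/diagonal extraction, and invoke the convergence property exactly when some simplex in the trace takes infinitely many values. Your trichotomy $(a)/(b)/(c)$ matches the paper's split into Lemma \ref{compact1} (some $\{\sigma_k^{(n)}\}$ infinite) and the two cases of Lemma \ref{compact2} (all traces eventually constant, with $l_n$ bounded or unbounded). Cases $(b)$ and $(c)$ are handled essentially as in the paper (projective-limit description of $\overline{X}$, respectively continuity of $\sigma\times\overline{EG_\sigma}\to\overline{EG}$ from \ref{topologystab2}).

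The genuine gap is in outcome $(a)$, which you explicitly defer: you must show that once the convergence property produces $\xi\in\partial G_{\sigma_{k-1}}$ with $\overline{EG_{\sigma_k^{(n)}}}\to\xi$ uniformly, the points $\widetilde{x_n}$ themselves enter every $V_{\cU,\varepsilon}(\xi)$. This does not follow from the crossing, reattachment and refinement lemmas alone, because those lemmas control simplices near $D(\xi)$ given that certain containments $\overline{EG_\sigma}\subset U_v$ already hold, whereas here the problem is that the geodesic $[v_0,x_n]$ may continue well past $\sigma_k^{(n)}$ before leaving $D^\varepsilon(\xi)$, and one has no a priori control over the later simplices $\sigma_{k+1}^{(n)},\sigma_{k+2}^{(n)},\ldots$. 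The missing idea is the iteration the paper carries out in Lemma \ref{compact1}: at each stage either the geodesics leave $D^\varepsilon(\xi)$ while inside $\sigma_{k'}^{(n)}$, in which case uniform convergence of $\overline{EG_{\sigma_{k'}^{(n)}}}$ to $\xi$ in $\overline{EG_{v_{k'}}}$ gives $\widetilde{x_n}\in V_{\cU,\varepsilon}(\xi)$ directly, or they stay inside, in which case one re-applies the convergence property to $\sigma_{k'+1}^{(n)}$ and uses the $\xi$-family compatibility of Definition \ref{defxifamily} to transport the uniform convergence from $\overline{EG_{v_{k'}}}$ to $\overline{EG_{v_{k'+1}}}$; the containment lemma \ref{containment} bounds the number of simplices a geodesic can meet inside $\mbox{st}(D(\xi))$ and hence forces this process to terminate, landing in $W_{\cU,\varepsilon}(\xi)$ in the residual case. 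Without this propagation-and-termination argument the candidate $\xi$ is not shown to be a limit, so the proof is incomplete at its central point.
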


First of all, note that since $EG$ is dense in $\overline{EG}$ by \ref{basis}, it is enough to prove that any sequence in $EG$ admits a subsequence converging in $\overline{EG}$. Let $\left(\widetilde{x_n}\right)_{n \geq 0} \in \left(EG\right)^{\bbN}$. For every $n \geq 0$, let $x_n = p(\widetilde{x_n})$. Furthermore, to every $x_n$ we associate the finite sequence $\sigma_{0}^{(n)} = v_0,  \sigma_{1}^{(n)}, \ldots,$ of simplices met by $[v_0,x_n]$. Finally, let $l_n \geq 1$  be the number of simplices of such a sequence.\\

\begin{lem} Suppose that for all $k \geq 0$, $\left\lbrace \sigma_k^{(n)}, n \geq 0\right\rbrace $ is finite.
\begin{itemize}
 \item If $(l_n)$ admits a bounded subsequence, then $(x_n)$ admits a subsequence that converges to a point of $EG \cup \partial_{Stab} G$.
 \item Otherwise, $(x_n)$ admits a subsequence that converges to a point of $\partial X$.
\end{itemize}
\label{compact2}
\end{lem}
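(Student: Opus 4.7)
The plan is a diagonal extraction to fix the simplex sequence of the geodesics $[v_0, x_n]$, followed by a case analysis. By hypothesis each set $\{\sigma_k^{(n)} : n \geq 0\}$ is finite, so a standard diagonal argument extracts a subsequence along which, for every $k$, the simplex $\sigma_k^{(n)}$ eventually stabilises to a fixed simplex $\sigma_k^\infty$ (the stabilised sequence being finite when $l_n$ is bounded, infinite otherwise).

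In Case (i), extract further so that $l_n = L$ and $\sigma_L^{(n)} = \sigma$ are constant; then all $x_n$ lie in the single open simplex $\sigma$, and $\widetilde{x_n}$ lifts uniquely to a pair $(x_n, \widetilde{y_n}) \in \sigma \times EG_\sigma$. Using compactness of $\overline{\sigma}$ and of the Euclidean retract $\overline{EG_\sigma}$ (guaranteed by the $E\cZ$-structure hypothesis), extract so that $x_n \to x \in \overline{\sigma}$ and $\widetilde{y_n} \to \widetilde{y} \in \overline{EG_\sigma}$. Continuity of the quotient map $\sigma \times \overline{EG_\sigma} \to \overline{EG}$ (Lemma~\ref{topologystab2}) then delivers convergence of $\widetilde{x_n}$ to the image of $(x, \widetilde{y})$, which lies in $EG$ when $\widetilde{y} \in EG_\sigma$ and in $\partial_{Stab} G$ when $\widetilde{y} \in \partial G_\sigma$.

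In Case (ii), $l_n \to \infty$ after subsequence, and the diagonal extraction produces an infinite path $v_0 = \sigma_0^\infty, \sigma_1^\infty, \sigma_2^\infty, \ldots$ of distinct simplices. For each $k$ the exit point $y_k^{(n)}$ of $[v_0, x_n]$ from $\sigma_k^\infty$ lies in the compact face $\overline{\sigma_k^\infty} \cap \overline{\sigma_{k+1}^\infty}$, so a further diagonal extraction ensures $y_k^{(n)} \to y_k$ for every $k$. Since each $[v_0, y_k^{(n)}]$ is a CAT(0) geodesic with converging endpoints, its limit $[v_0, y_k]$ is a geodesic passing through $y_1, \ldots, y_{k-1}$ in order, and the concatenation $v_0 \to y_1 \to y_2 \to \ldots$ is therefore a single geodesic in $X$.

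The main obstacle is proving that this limit geodesic has infinite length, so that it defines a genuine ray converging to some $\eta \in \partial X$. I would argue this by contradiction: if the total length were finite, the $y_k$ would converge to some $y_\infty \in X$; by Lemma~\ref{finite}(ii) the segment $[v_0, y_\infty]$ meets only finitely many simplices, and Lemma~\ref{lemmesansnom3} then provides a $\delta > 0$ with $[v_0, y_\infty]^\delta$ contained in the open star of a finite subcomplex. A careful diagonal choice of $n$ places the geodesics $[v_0, x_n]$ uniformly inside this $\delta$-neighbourhood, so the containment lemma~\ref{containment} bounds the number of simplices they can meet, contradicting $l_n \to \infty$. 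Once the ray $[v_0, \eta)$ is in hand, the convergence $\widetilde{x_n} \to \eta$ in $\overline{EG}$ is routine: basic neighbourhoods of $\eta$ have the form $V_U(\eta)$ with $U \in \cO_{\overline{X}}(\eta)$ and $\widetilde{x_n} \in EG$, so it suffices to verify $x_n \to \eta$ in $\overline{X}$, which follows from the uniform convergence $[v_0, y_k^{(n)}] \to [v_0, y_k]$ on fixed prefixes together with $d(v_0, x_n) \to \infty$.
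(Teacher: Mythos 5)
Your argument is correct. Case (i) is essentially the paper's own proof: stabilise the simplex sequence, put everything into a single $\sigma\times\overline{EG_\sigma}$, and use compactness of that set together with the continuity of the quotient map (Lemma \ref{topologystab2}). In case (ii) you take a genuinely different route: the paper retracts the $x_n$ onto the balls $\overline{B}(v_0,r)$ and invokes the projective-limit description of the topology of $\overline{X}$ to get convergence of $(x_n)$ in $\overline{X}$, asserting without further comment that $l_n\to\infty$ forces the limit to lie in $\partial X$; you instead build the limit ray directly by extracting limits of the exit points $y_k^{(n)}$ and then prove that the resulting geodesic has infinite length by the \ref{finite}/\ref{lemmesansnom3}/\ref{containment} contradiction. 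Your version is more constructive and actually supplies the justification that the paper elides (why the limit cannot be an interior point of $X$). One small imprecision: you cannot place the \emph{whole} geodesics $[v_0,x_n]$ inside the $\delta$-neighbourhood of the finite segment $[v_0,y_\infty]$ (their endpoints escape to infinity); what the argument needs, and what your extraction does give, is that the prefixes $[v_0,y_k^{(n)}]$ lie in that neighbourhood while already meeting $k+1$ simplices, which contradicts the containment lemma for $k$ large. With that rephrasing the proof is complete.
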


\begin{proof} Up to a subsequence, we can assume that there exist open simplices $\sigma_0, \sigma_1, \ldots$ such that for all $k  \geq 0, (\sigma_k^{(n)})_{n \geq 0}$ is eventually constant at $\sigma_k$. There are two cases to consider:
\begin{enumerate}
 \item[$(i)$] Up to a subsequence, there exists a constant $m \geq 0$ such that each geodesic $[v_0,x_n]$ meets at most $m$ simplices. This implies that the $x_n$ live in a finite subcomplex. Up to a subsequence, we can now assume that there exists a (closed) simplex $\sigma$ of $X$ such that $x_n$ is in the interior of $\sigma$ for all $n \geq 0$. This in turn implies that $\widetilde{x_n}$ in $\sigma \times \overline{EG_{\sigma}}$ (or more precisely in the image of $\sigma \times \overline{EG_{\sigma}}$ in $\overline{EG}$) for all $n \geq 0$. This space is compact since the canonical map $\sigma \times \overline{EG_{\sigma}} \hookrightarrow \overline{EG}$ is continuous by \ref{topologystab2}, hence we can take a convergent subsequence.
 \item[$(ii)$] Up to a subsequence, we can assume that $l_n \xrightarrow[n \infty]{} \infty$. For $r>0$, let $\pi_r:X \ra \overline{B}(v_0,r)$ the retraction on $\overline{B}(v_0,r)$ along geodesics starting at $v_0$. By assumption, we have that for every $r>0$, the sequence of projections $(\pi_r(x_n))_{n \geq 0}$ lies in a finite subcomplex of $X$. A diagonal argument then shows that, up to a subsequence, we can assume that all the sequences of projections $(\pi_m(x_n))_{n \geq 0}$ converge in $X$ for every $m \geq 0$. As the topology of $\overline{X}$ is the topology of the projective limit
$$ \overline{B}(v_0,1) \xleftarrow{\pi_1} \overline{B}(v_0,2) \xleftarrow{\pi_2} \ldots,$$ 
it then follows that $(x_n)$ converges in $\overline{X}$. As $l_n \ra \infty$, $(x_n)$ converges to a point $\eta$ of $\partial X$. The definition of the topology of $\overline{EG}$ now implies that $(\widetilde{x_n})$ converges to $\eta$ in $\overline{EG}$. \qedhere
\end{enumerate}
\end{proof}

\begin{lem} Suppose that there exists  $k \geq 0$ such that $\left\lbrace \sigma_k^{(n)}, n \geq 0\right\rbrace $ is infinite. Then $(x_n)$ admits a subsequence that converges to a point of $\partial_{Stab} G$.
\label{compact1}
\end{lem}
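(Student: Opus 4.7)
The strategy is to produce a candidate limit $\xi \in \partial_{Stab} G$ via the convergence hypothesis, and then verify $\widetilde{x_n} \to \xi$ in $\overline{EG}$ using the geometric machinery of Section 3 (containment, crossing, refinement).

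I would first choose $k$ minimal with $\{\sigma_k^{(n)} : n \geq 0\}$ infinite and extract so that $\sigma_i^{(n)} = \sigma_i$ is constant for $i < k$ while the $\sigma_k^{(n)}$ are pairwise distinct. Since consecutive simplices of a path of simplices satisfy a face relation and $\sigma_{k-1}$ has only finitely many faces, a further extraction yields $\sigma_{k-1} \subset \overline{\sigma_k^{(n)}}$ for every $n$. By cocompactness of $G \curvearrowright X$, I pass to a subsequence along which every $\sigma_k^{(n)}$ projects to the same simplex $\bar\sigma$ of $Y$; writing $\sigma_k^{(n)} = g_n \sigma_k^{(0)}$ with $g_n \in G$ and $g_0 = e$, the injectivity of $\overline{\sigma_k^{(0)}} \to Y$ on faces forces $g_n^{-1} \sigma_{k-1} = \sigma_{k-1}$, so $g_n \in G_{\sigma_{k-1}}$ and the cosets $g_n G_{\sigma_k^{(0)}}$ in $G_{\sigma_{k-1}}/G_{\sigma_k^{(0)}}$ are pairwise distinct.

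The convergence property (iii) then provides, after one more extraction, a point $\xi \in \overline{EG_{\sigma_{k-1}}}$ such that $g_n \overline{EG_{\sigma_k^{(0)}}} = \overline{EG_{\sigma_k^{(n)}}}$ converges uniformly to $\xi$. Properness of the $G_{\sigma_{k-1}}$-action on $EG_{\sigma_{k-1}}$ rules out $\xi \in EG_{\sigma_{k-1}}$, so $\xi \in \partial G_{\sigma_{k-1}} \subset \partial_{Stab} G$ and $\sigma_{k-1} \subset D(\xi)$. Combining the limit set property (ii) with the finite height property (iv) shows that $\xi$ can lie in $\partial G_{\sigma_k^{(n)}}$ (equivalently, $\sigma_k^{(n)} \subset D(\xi)$) for only finitely many $n$: otherwise $\xi$ would belong to $\Lambda\big(\bigcap_j g_{n_j} G_{\sigma_k^{(0)}} g_{n_j}^{-1}\big)$ which is eventually the limit set of a finite group, hence empty. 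Extracting once more, we may assume $\sigma_k^{(n)} \not\subset D(\xi)$ for all $n$. Finally, since $[v_0, x_n]$ traverses the fixed simplex $\sigma_{k-1} \subset D(\xi)$, it automatically meets every $D^\varepsilon(\xi)$.

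To prove $\widetilde{x_n} \to \xi$, fix a basic neighbourhood $V_{\cU, \varepsilon}(\xi)$, let $D$ be the uniform upper bound on the constants $d_\xi$ from the corollary to Proposition \ref{finitedomain}, and choose a $\xi$-family $\cU'$ that is $D$-refined in $\cU$. The uniform convergence $\overline{EG_{\sigma_k^{(n)}}} \to \xi$ in $\overline{EG_{\sigma_{k-1}}}$ combined with the $\xi$-family property \ref{xifamilies} ensures that for $n$ large, $\overline{EG_{\sigma_k^{(n)}}} \subset U'_v$ for every vertex $v$ of $\sigma_{k-1}$, so $\sigma_k^{(n)} \subset \mbox{st}_{\cU'}(D(\xi))$. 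I would then split into two cases. If $x_n \notin D^\varepsilon(\xi)$, the exit simplex $\sigma_{\xi, \varepsilon}(x_n)$ is joined to $\sigma_k^{(n)}$ by a path of simplices in $\mbox{st}(D(\xi)) \setminus D(\xi)$ of length at most $D$ (by the containment lemma \ref{containment} applied to the portion of $[v_0, x_n]$ inside the closed star of the bounded subcomplex $D(\xi)$), and the refinement lemma \ref{refinement} delivers $\sigma_{\xi, \varepsilon}(x_n) \subset \widetilde{\mbox{Cone}}_{\cU, \varepsilon}(\xi)$, whence $\widetilde{x_n} \in V_{\cU, \varepsilon}(\xi)$. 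If $x_n \in D^\varepsilon(\xi)$, an iterative application of the crossing lemma \ref{passage} along the (bounded-length) path of simplices from $\sigma_k^{(n)}$ to $\sigma_{x_n}$ yields $\overline{EG_{\sigma_{x_n}}} \subset U_v$ for some vertex $v \in \sigma_{x_n} \cap V(\xi)$ (such a vertex exists because $x_n \in D^\varepsilon(\xi)$ with $\varepsilon < 1$ forces $\sigma_{x_n} \subset \overline{\mbox{st}}(D(\xi))$), so $\widetilde{x_n} \in W_{\cU, \varepsilon}(\xi)$. The main obstacle I foresee is the combinatorial bookkeeping required to apply the crossing and refinement lemmas when paths of simplices weave in and out of $D(\xi)$: these lemmas are stated for paths entirely in $\mbox{st}(D(\xi)) \setminus D(\xi)$, so one must either break the geodesic trajectory into pieces lying outside $D(\xi)$ or handle traversals through $D(\xi)$ by direct appeal to the $\xi$-family compatibility conditions, a delicate but routine exercise given the machinery established in Section 3.
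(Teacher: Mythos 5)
Your proposal is correct and follows the same skeleton as the paper's proof: take $k$ minimal, extract so that $\sigma_0^{(n)},\ldots,\sigma_{k-1}^{(n)}$ are constant and the $\sigma_k^{(n)}$ are injective and lie over one simplex of $Y$, invoke the convergence property to produce $\xi \in \partial G_{\sigma_{k-1}}$, and then verify $\widetilde{x_n} \to \xi$ on a basic neighbourhood $V_{\cU,\varepsilon}(\xi)$ by a case split on whether the geodesic $[v_0,x_n]$ exits $D^\varepsilon(\xi)$. Where you genuinely diverge is in the propagation step: the paper applies the convergence property \emph{repeatedly}, rank by rank along the itinerary $\sigma_k^{(n)}, \sigma_{k+1}^{(n)}, \ldots$, extracting a subsequence at each stage and using the containment lemma only to terminate the iteration; you instead apply it \emph{once} at rank $k$ and transport the resulting containment $\overline{EG_{\sigma_k^{(n)}}} \subset U'_v$ to the exit (or terminal) simplex in one shot via the crossing lemma \ref{passage} along a path of simplices of length at most $D$. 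Your route is cleaner and avoids the cascade of extractions, but it places the burden on two geometric facts you should state explicitly: convexity of $D(\xi)$ (Proposition \ref{finitedomain}) forces the segment of $[v_0,x_n]$ between $\sigma_{k-1}$ and the exit point to stay inside $\overline{D^\varepsilon(\xi)} \setminus D(\xi)$, so the itinerary there really is a path of simplices in $\mbox{st}(D(\xi))\setminus D(\xi)$ to which \ref{passage} and the containment bound $d_\xi \le D$ apply. Two small inaccuracies to fix: the tool you need in the first case is the crossing lemma, not the refinement lemma (whose hypothesis runs in the opposite direction, from the exit simplex to the path), and the hypothesis you need on $\cU'$ is that it be $D$-\emph{nested} in $\cU$; being $D$-refined does imply this, but say so. Your preliminary extraction discarding the finitely many $n$ with $\sigma_k^{(n)} \subset D(\xi)$ and your verification that the limit lies in $\partial G_{\sigma_{k-1}}$ rather than $EG_{\sigma_{k-1}}$ are details the paper leaves implicit; including them is a genuine improvement.
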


\begin{proof}
Without loss of generality, we can assume that such a $k$ is minimal. Up to a subsequence, we can assume that there exist open simplices $\sigma_1, \ldots, \sigma_{k-1}$ such that for all $n \geq 0$, $\sigma_0^{(n)} = \sigma_0, \ldots, \sigma_{k-1}^{(n)} = \sigma_{k-1}$, and $\left( \sigma_k^{(n)} \right)_{n \geq 0} $ is injective. By cocompacity of the action, we can furthermore assume (up to a subsequence) that the $\sigma_k^{(n)}$ are above a unique simplex of $Y$. This corresponds to embeddings $ \overline{EG_{\sigma_k^{(n)}}} \hookrightarrow \overline{EG_{\sigma_{k-1}}}$. By the convergence property, we can assume, up to a subsequence, that in $\overline{EG_{\sigma_{k-1}}}$ the sequence of subspaces $\overline{EG_{\sigma_k^{(n)}}}  $ uniformly converges to an element $ \xi \in \partial G_{\sigma_{k-1}}$. Let us prove that $(\widetilde{x_n})_{n \geq 0}$ converges to $\xi$ in $\overline{EG}$.\\

Since $\overline{EG}$ has a countable basis of neighbourhoods, it is enough to prove that for every $\varepsilon \in (0,1)$ and every $\xi$-family $\cU$ there exists a subsequence of $(\widetilde{x_n})$ lying in $V_{\cU, \varepsilon}(\xi)$.
By construction of $\xi$, we have $\sigma_{k-1} \subset D(\xi)$, and there exists a vertex $v_k$ of $D(\xi)$ such that $\sigma_k^{(n)} \subset$ $\mbox{st}(v_k)$ for all $n \geq 0$. Two cases may occur:\\
\begin{itemize}
 \item Up to a subsequence, all the $[v_0,x_n]$ leave $D^{\varepsilon}(\xi)$ inside $\sigma_k^{(n)}$. Since $\overline{EG_{\sigma_k^{(n)}}}$ uniformly converges to $\xi$ in $\overline{EG_{\sigma_{k-1}}}$ and thus in $\overline{EG_{v_k}}$, we can assume, up to a subsequence, that $\overline{EG_{\sigma_k^{(n)}}}  \subset U_{v_k}$ inside $\overline{EG_{\sigma_k}}$. This implies that $\widetilde{x_n} \in V_{\cU, \varepsilon}(\xi),$ which is what we wanted.\\
 \item Up to a subsequence, all the $[v_0,x_n]$ remain inside $D^{\varepsilon}(\xi)$ when inside $\sigma_k^{(n)}$. Up to a subsequence, we can further assume that all the $\sigma_{k+1}^{(n)}, n\geq 0$ are above a unique simplex of $Y$. Thus there exists a vertex $v_{k+1}$ of $D(\xi) \cap \overline{\mbox{st}}(v_k)$ such that $\sigma_{k+1}^{(n)} \subset st(v_{k+1})$ for all $n \geq 0$. 

 In particular we have $\sigma_k^{(n)} \subset $ st$(v_k) \cap $ st$(v_{k+1})$ and thus $\xi \in \partial G_{v_{k+1}}$. Since $\cU$ is a $\xi$-family, the fact that $\overline{EG_{\sigma_k^{(n)}}}$ uniformly converges to $\xi$ in $\overline{EG_{v_k}}$ implies that that $\overline{EG_{\sigma_k^{(n)}}}$ uniformly converges to $\xi$ in $\overline{EG_{v_{k+1}}}$. Note that since the sequence $(\sigma_k^{(n)})_{n \geq 0}$ takes infinitely many values, the finiteness lemma \ref{finite} implies that $(\sigma_{k+1}^{(n)})_{n \geq 0}$ also takes infinitely many values. Up to a subsequence, we can thus assume by the convergence property that $\overline{EG_{\sigma_{k+1}^{(n)}}}$ uniformly converges in $\overline{EG_{v_{k+1}}}$. As $\overline{EG_{\sigma_{k}^{(n)}}}$ uniformly converges to $\xi$ in $\overline{EG_{v_{k+1}}}$ , the same holds for $\overline{EG_{\sigma_{k+1}^{(n)}}}$, and we are back to the previous situation.
\end{itemize}
By iterating this algorithm, two cases may occur: 
\begin{itemize}
\item There is a rank $k' \geq k$ such that, up to a subsequence, all the $[v_0,x_n]$ leave $D^{\varepsilon}(\xi)$ while being inside $\sigma_{k'}^{(n)}$ and the same argument as before shows that we can take a subsequence satisfying $\widetilde{x_n} \in V_{\cU, \varepsilon}( \xi)$.
\item  Up to a subsequence, at every stage $k' \geq k$ all the $[v_0,x_n]$ remain within $D^{\epsilon}(\xi).$ In the latter case, the containment lemma \ref{containment} implies that there exists an integer $m \geq 0$ such that each geodesic segment $[v_0,x_n]$ meets at most $m$ simplices. Up to a subsequence, we can further assume that all the $[v_0,x_n]$ meet exactly $m$ simplices. Thus we can iterate our algorithm up to rank $m$, which yields the existence of a vertex $v_m$ of $D^{\varepsilon}(\xi)$ such that $\sigma_m^{(n)} \subset $ st$(v_m)$ for all $n \geq 0$ and such that  $\overline{EG_{\sigma_m^{(n)}}} $ uniformly converges to $\xi$ in $\overline{EG_{v_{m}}}$. Up to a subsequence, we can furthermore assume that $ \overline{EG_{\sigma_m^{(n)}}} \subset U_m$ in $\overline{EG_{v_{k+1}}}$ for all $n \geq 0$. This in turn implies $\widetilde{x_n} \in W_{\cU, \varepsilon}(\xi)$, hence $\widetilde{x_n} \in V_{\cU, \varepsilon}(\xi)$ and we are done. \qedhere
\end{itemize}
\end{proof}

\begin{proof}[Proof of \ref{compact}]
 This follows immediately from \ref{metrisable}, \ref{compact1} and \ref{compact2}.
\end{proof}

As a direct consequence, we get the following convergence criterion. 

\begin{cor}
Let $(z_n)$ be a sequence in $\overline{EG}$.
\begin{itemize}
\item $z_n$ converges to a point $\eta \in \partial X$ if and only if the sequence of coarse projections $\bar{p}(z_n)$ uniformly converges to $\eta$.
\item Suppose that for $n$ large enough, geodesics from $v_0$ to $\bar{p}(z_n)$ meet the domain of some element $\xi \in \partial_{Stab} G$. For every $n$, choose $x_n \in \bar{p}(z_n)$ and let $\sigma_n$ be the first simplex touched by the geodesic $[v_0, x_n]$ after leaving $D(\xi)$. If the sequence of subsets $\partial G_{\sigma_n}$ uniformly converges to $\xi$ in $\partial G_v$, for some vertex $v \in D(\xi)$, then $(z_n)$ converges to $\xi$. \qedhere
\end{itemize}
Furthermore, the convergence can be made uniform on a subset $K \subset \overline{EG}$ if the aforementionned conditions are realised uniformly on $K$. \qed
\label{convergencecriterion}
\end{cor}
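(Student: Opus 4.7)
The plan is a direct verification, case by case, that $(z_n)$ eventually lies in every basic open neighborhood of the prescribed limit. Recall that a basic neighborhood $V_U(\eta)$ of $\eta \in \partial X$ is built purely from the coarse projection $\bar p$ to $\overline{X}$, while a basic neighborhood $V_{\cU, \varepsilon}(\xi)$ of $\xi \in \partial_{Stab}G$ is defined from the combinatorics of how geodesics from $v_0$ leave the tubular neighborhood $D^\varepsilon(\xi)$ of the finite convex domain $D(\xi)$. The two items of the corollary correspond exactly to these two constructions.

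For the first item, the forward implication is immediate from the definition of $V_U(\eta)$: if $z_n \in V_U(\eta)$ for large $n$, then for each of the three possible types of point ($EG$, $\partial X$, $\partial_{Stab}G$) inspection of the definition gives $\bar p(z_n) \subset U$, hence uniform convergence of the projections. The converse is equally direct: given $U \in \cO_{\overline X}(\eta)$, the hypothesis $\bar p(z_n) \subset U$ eventually places $z_n$ in $p^{-1}(U \cap X)$, in $U \cap \partial X$, or in $\{\xi' \in \partial_{Stab}G \mid D(\xi') \subset U\}$ respectively; in all cases $z_n \in V_U(\eta)$.

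For the second item, fix a basic neighborhood $V_{\cU, \varepsilon}(\xi)$. The key step is to choose, via Proposition \ref{xifamilies} and Lemma \ref{subneighbourhood}, an auxiliary $\xi$-family $\cU' \subset \cU$ that is $d_\xi$-nested in $\cU$. The uniform convergence of $\partial G_{\sigma_n}$ to $\xi$ in $\partial G_v$ then yields, for $n$ large, the inclusion $\overline{EG_{\sigma_n}} \subset U'_v$. Since $\sigma_n$ is the first simplex met after leaving $D(\xi)$ and the containment lemma \ref{containment} bounds by $d_\xi$ the number of simplices any geodesic can cross inside $\mbox{st}(D(\xi))$, the segment $[v_0, x_n]$ gives a path of simplices of length at most $d_\xi$ in $\mbox{st}(D(\xi)) \setminus D(\xi)$ from $\sigma_n$ to the exit simplex $\sigma_{\xi, \varepsilon}(x_n)$. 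Applying the crossing lemma \ref{passage} along this path yields $\overline{EG_{\sigma_{\xi, \varepsilon}(x_n)}} \subset U_v$, that is, $x_n \in \widetilde{\mbox{Cone}}_{\cU, \varepsilon}(\xi)$.

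It remains to promote this information on $x_n$ to membership $z_n \in V_{\cU, \varepsilon}(\xi)$. If $z_n \in EG \cup \partial X$ with $x_n \notin D^\varepsilon(\xi)$, the preceding step suffices by definition of $W_1$ and $W_2$. If $z_n = \xi_n \in \partial_{Stab}G$, one must additionally check $D(\xi_n) \subset \widetilde{\mbox{Cone}}_{\cU, \varepsilon}(\xi)$; since $D(\xi_n)$ is finite, convex, and of uniformly bounded size by Proposition \ref{finitedomain}, the refinement lemma \ref{refinement} propagates the inclusion from the single entry point $x_n$ to the whole domain, provided $\cU'$ was also taken to be $m$-refined in $\cU$ for $m$ this uniform bound. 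The main technical obstacle is precisely to organize the simultaneous choice of the nesting and refinement depths of $\cU'$ so that a single $\cU'$ controls all three types of $z_n$, including the borderline case where $x_n$ remains inside $D^\varepsilon(\xi)$; but each ingredient is a direct instance of the geometric toolbox of Section 3. Finally, the uniformity statement on a subset $K \subset \overline{EG}$ follows because the choice of $\cU'$ depends only on $\cU$, $\varepsilon$ and $\xi$, not on $n$ or the individual $z_n$.
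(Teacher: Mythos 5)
The paper offers no written proof of this corollary (it is asserted as a ``direct consequence'' of \ref{compact1}, \ref{compact2} and the definition of the neighbourhoods), and your case-by-case unwinding of $V_U(\eta)$ and $V_{\cU,\varepsilon}(\xi)$ via the crossing and refinement lemmas is exactly the verification the paper leaves implicit; the approach and the toolbox are the same as in the surrounding arguments (\ref{compact1}, \ref{filtrationxixi}, \ref{lemmesansnom}). One small imprecision: uniform convergence of $\partial G_{\sigma_n}$ to $\xi$ gives you $\partial G_{\sigma_n}\subset U'_v$ eventually, hence only $\overline{EG_{\sigma_n}}\cap U'_v\neq\varnothing$ rather than the inclusion $\overline{EG_{\sigma_n}}\subset U'_v$ you assert --- but nonempty intersection is precisely the hypothesis $\sigma_n\subset\mbox{st}_{\cU'}(D(\xi))$ that the crossing lemma \ref{passage} requires, so the argument goes through unchanged.
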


\section{The properties of $\partial G$.}
In this section we prove the following: 

\begin{thm}
$(\overline{EG}, \partial G)$ is an $E\cZ$-structure in the sense of Farrell-Lafont.
\label{EZstructure}
\end{thm}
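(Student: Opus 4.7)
The plan is to verify the remaining conditions for $(\overline{EG}, \partial G)$ to be an $E\cZ$-structure. Compactness, metrisability, openness and density of $EG$, and continuity of the extended $G$-action have already been established in Theorems \ref{compact}, \ref{metrisable}, Proposition \ref{basis} and Lemma \ref{basepoint}. It remains to show (a) that $\overline{EG}$ is contractible and locally contractible with finite covering dimension, (b) that $\partial G$ is a $\cZ$-set in $\overline{EG}$, and (c) that compact sets fade at infinity.

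For (a) and (b), the unifying idea is to construct a strong deformation $H:\overline{EG}\times[0,1]\to\overline{EG}$ with $H_0=\mbox{id}$ and $H_t(\overline{EG})\subset EG$ for all $t>0$. I would build $H$ in two steps. First, since each $(\overline{EG_\sigma},\partial G_\sigma)$ is a $\cZ$-structure, fix $G_\sigma$-equivariant fibrewise homotopies displacing $\overline{EG_\sigma}$ into $EG_\sigma$ by arbitrarily small amounts, and combine them compatibly across the gluing data of the complex of spaces. Second, use the CAT(0) geodesic retraction of $X$ towards $v_0$ to push points of $\partial X$ into $X$ and to shrink the cones $\mbox{Cone}_{\cU,\varepsilon}(\xi)$ onto compact regions of $EG$. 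Continuity of $H$ across $\partial_{Stab} G$ is the delicate point; this is where the convergence property, via the convergence criterion in Corollary \ref{convergencecriterion}, is indispensable, as it provides the uniform control on nearby fibres needed to match the fibrewise homotopies with the displacement in $X$. Once $H$ is constructed, contractibility of $\overline{EG}$ follows by composing with a contraction of $EG$, and the $\cZ$-set property follows since for any open set $U\subset\overline{EG}$ the restriction of $H$ to $U$ exhibits the inclusion $U\setminus\partial G\hookrightarrow U$ as a homotopy equivalence.

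Local contractibility is handled case-by-case, using the basis for the topology built in Section 3.5: CW structure at points of $EG$; the contractible neighbourhoods of Lemma \ref{contractibleneighbourhoods} at points of $\partial X$; and at $\xi\in\partial_{Stab} G$ one uses the refinement lemma \ref{refinement} to produce sufficiently small subneighbourhoods of $V_{\cU,\varepsilon}(\xi)$ that deformation retract, via $H$, onto a contractible subspace of some $\overline{EG_v}$, $v\in V(\xi)$. Finite covering dimension follows from the observation that $\overline{EG}$ is covered by the compact subsets $\sigma\times\overline{EG_\sigma}$ (finitely many up to the $G$-action), each of finite and uniformly bounded dimension, together with the finite dimension of $\partial X$ and the standard sum theorem for covering dimension in compact metrisable spaces.

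Finally, the fading property (c) at points of $\partial X$ reduces to the analogous standard statement for the visual compactification $\overline{X}$, while at $\xi\in\partial_{Stab} G$ it relies crucially on acylindricity. Given a compact $K\subset EG$ with compact projection $p(K)\subset X$, only $G$-translates of $K$ whose projection enters $\mbox{st}(D(\xi))$ can contribute, and acylindricity together with the uniform bound on $\mbox{diam}(D(\xi))$ from Proposition \ref{finitedomain} forces the set of such translates to be under geometric control; refining $\cU$ via Lemma \ref{refinement} then produces a subneighbourhood with the required fading property. The main technical obstacle I expect is the continuity of $H$ at points of $\partial_{Stab} G$: one must synchronise the fibrewise $\cZ$-set homotopies across all simplices of $D(\xi)$ \emph{and} their neighbouring simplices in a $G$-equivariant way, using the crossing lemma \ref{passage} to guarantee consistency, and ultimately show that small neighbourhoods of $\xi$ get mapped into small neighbourhoods after the deformation.
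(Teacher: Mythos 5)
Your reduction of the theorem to (a) contractibility/local contractibility/finite dimension, (b) the $\cZ$-set property, and (c) fading of compact sets is the right decomposition, and your treatment of finite dimension and of fading at points of $\partial X$ matches the paper. But the core of your argument for (a) and (b) --- the construction of a global instantaneous deformation $H\colon\overline{EG}\times[0,1]\to\overline{EG}$ with $H_t(\overline{EG})\subset EG$ for $t>0$ --- is a genuine gap, not merely a technical point to be filled in. The fibrewise homotopies provided by the $\cZ$-structures $(\overline{EG_\sigma},\partial G_\sigma)$ are neither $G_\sigma$-equivariant in general nor compatible with the gluing embeddings $\phi_{\sigma,\sigma'}$: a homotopy of $\overline{EG_\sigma}$ has no reason to preserve the subspaces $\phi_{\sigma,\sigma'}(\overline{EG_{\sigma'}})$, and there are infinitely many simplices of $X$ over which these would have to be synchronised equivariantly. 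You identify continuity at $\partial_{Stab}G$ as ``the delicate point'' but offer no mechanism for it; this is precisely where such a construction would fail or require an argument at least as long as the one it replaces.

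The paper sidesteps the global homotopy entirely by invoking the Bestvina--Mess criterion (Lemma \ref{BestvinaMess}): it suffices to produce, for each boundary point and each basic neighbourhood $\widetilde U$, a subneighbourhood $\widetilde V$ with $\widetilde V\setminus\partial G\hookrightarrow\widetilde U\setminus\partial G$ null-homotopic. These purely local statements are verified in Propositions \ref{Zset1} and \ref{Zset2} using the complex-of-spaces machinery: the projection of $V\setminus\partial G$ to its image in $X$ is a homotopy equivalence by Proposition \ref{homotopytype} (contractible fibres), and the image in $X$ is null-homotopic via the geodesic retraction onto $D(\xi)$ or onto a ball, controlled by the refinement and crossing lemmas. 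This both yields the $\cZ$-set property and, via the same criterion, the Euclidean-retract (hence contractibility and local contractibility) conclusion, so no global deformation is ever needed. I would recommend you abandon the deformation $H$ and adopt the local null-homotopy route; your remaining ingredients (finite dimension, fading, and the already-established compactness, metrisability and continuity of the action) then assemble exactly as in the paper.
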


\subsection{The $\cZ$-set property}

Here we prove the following:

\begin{prop}
 $\partial G$ is a $\cZ$-set in $\overline{EG}$.
\label{Zset}
\end{prop}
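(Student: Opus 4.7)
The proposition has two parts: that $\partial G$ is closed in $\overline{EG}$, and that for every open $U \subset \overline{EG}$ the inclusion $U \setminus \partial G \hookrightarrow U$ is a homotopy equivalence. Closedness is immediate from Proposition \ref{basis}: every basis neighbourhood of a point of $EG$ is contained in $EG$, so $EG$ is open and $\partial G$ is closed. For the homotopy equivalence clause, my plan is to construct, for every point $x \in \partial G$ and every basis neighbourhood $U$ of $x$ in $\cO_{\overline{EG}}$, a smaller basis neighbourhood $V \subset U$ and a continuous deformation $h\colon V \times [0,1] \to U$ with $h_0 = \mathrm{id}_V$ and $h_t(V) \subset U \cap EG$ for all $t > 0$. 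Together with the fact that $\overline{EG}$ is a finite-dimensional compact metrisable space by \ref{metrisable} and \ref{compact}, such a local ``instant pushing'' is enough to conclude that $U \cap EG \hookrightarrow U$ is a homotopy equivalence for every open $U$, by the standard criterion for $\cZ$-sets used by Bestvina \cite{BestvinaZBoundaries}.

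For $\eta \in \partial X$, I would use the basis of neighbourhoods $V_{U_{r,\delta}(\eta)}(\eta)$ given by Lemma \ref{contractibleneighbourhoods}. Points in $V_{U_{r,\delta}(\eta)}(\eta) \cap \partial X$ get pushed along their geodesic rays from $v_0$ into the ball $B(\gamma_\eta(r), \delta) \subset X$, while stabiliser-boundary points $\xi \in V_{U_{r,\delta}(\eta)}(\eta) \cap \partial_{Stab} G$ (whose domains $D(\xi)$ sit inside $U_{r,\delta}(\eta)$ by definition of $V_U(\eta)$) are pushed into $EG_v$, where $v$ is a vertex of $D(\xi)$, via the $\cZ$-set pushing homotopy of $\partial G_v$ in $\overline{EG_v}$ provided by the $E\cZ$-structure hypothesis. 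For $\xi \in \partial_{Stab} G$ I would use the basis $V_{\cU, \varepsilon}(\xi)$, fix a vertex $v \in V(\xi)$, and define the deformation in two steps: a ``cone'' point $\widetilde{x}$ with $p(\widetilde{x}) \in \mathrm{Cone}_{\cU, \varepsilon}(\xi)$ is first pushed backward along the geodesic $[v_0, p(\widetilde{x})]$ until its coarse projection meets $D(\xi)$, which is possible by the geodesic reattachment lemma \ref{reattachment} and Corollary \ref{Uxi}; once in the fibre $\overline{EG_v}$, apply the $\cZ$-set pushing homotopy of $\partial G_v$ in $\overline{EG_v}$. Points of $W_2 \cup W_3$ are handled analogously by first moving their coarse projections to $D(\xi)$. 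Choosing $\cU$ sufficiently refined in $\cU$ via the crossing lemma \ref{passage} and refinement lemma \ref{refinement} will ensure that the deformation stays inside the original $V_{\cU, \varepsilon}(\xi)$.

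The main obstacle will be verifying the continuity of the deformation at $t = 0$ and at points where the exit-simplex behaviour of nearby geodesics changes: the star lemma \ref{star} controls the local variation of exit simplices, and the convergence criterion \ref{convergencecriterion} characterises convergence in $\overline{EG}$ in terms of the coarse projections $\bar{p}$ and the fibre data, which together should allow a partition-of-unity gluing of the geodesic push with the fibre $\cZ$-set homotopy. A further technical point is that to promote the local deformation to the global $\cZ$-set property, one needs to know that $\overline{EG}$ is an ANR; since $\overline{EG}$ is already metrisable and of finite covering dimension, this reduces to verifying local contractibility at each point of $\partial G$, which is supplied by the same local deformations $h$ we constructed.
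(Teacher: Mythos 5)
Your overall strategy differs from the paper's, and it contains a gap that I do not think can be repaired in the form you propose. The paper does \emph{not} construct any instant-pushing deformation $h\colon V\times[0,1]\to U$ with $h_t(V)\subset U\cap EG$ for $t>0$. Instead it invokes the Bestvina--Mess criterion (Lemma \ref{BestvinaMess}): it suffices to show that for each $z\in\partial G$ and each basis neighbourhood $\widetilde U$ there is a subneighbourhood $\widetilde V$ with $\widetilde V\setminus\partial G\hookrightarrow\widetilde U\setminus\partial G$ \emph{null-homotopic}. This is verified (Propositions \ref{Zset1} and \ref{Zset2}) by sandwiching the punctured neighbourhoods between subsets $C'$ of $EG$ that are realisations of complexes of spaces with contractible fibres over subcomplexes $X'$ of $X$, so that the projection $C'\to X'$ is a homotopy equivalence by \ref{homotopytype}; the actual contraction is then performed \emph{downstairs} in $X$, where $X'$ retracts along geodesics onto the contractible set $D(\xi)$ (or onto a ball near $\gamma_\eta(r)$). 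No homotopy in $EG$ itself is ever written down, and no ANR argument is needed.

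The gap in your proposal is the step ``push $\widetilde x$ backward along the geodesic $[v_0,p(\widetilde x)]$.'' The projection $p\colon EG\to X$ is not a fibration, and the geodesic retraction of $X$ toward $v_0$ (or toward $D(\xi)$) admits no canonical continuous lift to $EG$: a geodesic traverses a path of simplices that alternately pass from a simplex to a face and from a face to a coface, and the gluing maps $\phi_{\sigma,\sigma'}\colon EG_{\sigma'}\hookrightarrow EG_\sigma$ only embed the fibre of a coface into the fibre of a face, never the reverse. A point of $EG_\sigma$ not lying in $\mathrm{Im}(\phi_{\sigma,\sigma'})$ simply has no image over the interior of $\sigma'$, so ``transporting the fibre coordinate along the geodesic'' is undefined at exactly the places where you need it. The subsequent ``partition-of-unity gluing'' of this (undefined) geodesic push with the fibre $\cZ$-set homotopies of the $\overline{EG_v}$ is where the construction would collapse; note also that the hypothesis only gives you that $\partial G_v$ is a $\cZ$-set in $\overline{EG_v}$ in the homotopy-equivalence sense, not an explicit equivariant instant-pushing homotopy compatible with the maps $\phi_{\sigma,\sigma'}$ for varying $v$. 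To salvage your outline you would essentially have to replace the explicit deformation by a null-homotopy statement verified through the projection $p$ --- which is precisely the route the paper takes.
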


Proving this property is generally technical. However, Bestvina and Mess proved in \cite{BestvinaMessBoundaryHyperbolic} a useful lemma ensuring that a given set is a $\cZ$-set in a bigger set, which we now recall.
\begin{lem}[Bestvina-Mess \cite{BestvinaMessBoundaryHyperbolic}]  Let $(\widetilde{X}, Z)$ be a pair of finite-dimensional metrisable compact spaces with $Z$ nowhere dense in $\widetilde{X}$, and such that $X= \widetilde{X} \setminus Z$ is contractible and locally contractible, with the following condition holding:
\begin{itemize}
 \item [(*)] For every $z \in Z$ and every neighbourhood $\widetilde{U}$ of $z$ in $\widetilde{X}$, there exists a neighbourhood $\widetilde{V}$ contained in $\widetilde{U}$ and such that 
$$ \widetilde{V} \setminus Z \hookrightarrow \widetilde{U} \setminus Z$$
is null-homotopic.
\end{itemize}
Then $\widetilde{X}$ is an Euclidian retract and $Z$ is a $\cZ$-set in $\widetilde{X}$.
\label{BestvinaMess}
\end{lem}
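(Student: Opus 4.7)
My plan is to reduce the lemma to the construction of a family of near-identity deformations of $\widetilde{X}$ into $X$. The central technical statement I would prove is: for every open cover $\cU$ of $\widetilde{X}$, there exists a continuous map $r\colon \widetilde{X} \to X$ and a homotopy $H\colon \widetilde{X} \times [0,1] \to \widetilde{X}$ from $\mathrm{id}_{\widetilde{X}}$ to $r$ whose tracks are $\cU$-small (each track $\{H(x,t):t\in[0,1]\}$ is contained in some element of $\cU$). Once this is available, all four conclusions of the lemma follow cheaply.

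The construction of $(r,H)$ proceeds by induction on the skeleta of the nerve of a finite open cover refining $\cU$. First, use compactness of $Z$ to cover a neighbourhood of $Z$ by finitely many pairs $(\widetilde{V}_i, \widetilde{U}_i)$ supplied by condition $(*)$, each subordinated to $\cU$, and complete this to a finite open cover $\cW$ of $\widetilde{X}$ by adding open sets contained in $X$; after refining, one can ensure the order of $\cW$ is bounded by $\dim \widetilde{X}+1$ by a classical theorem of Ostrand. Using a partition of unity subordinate to $\cW$, build the canonical map $\widetilde{X} \to N(\cW)$ to the geometric nerve, and then construct $r$ as the composition of this map with a map $N(\cW) \to X$ built skeleton by skeleton: on vertices, pick any point in $\widetilde{V}_i \cap X$ (nonempty since $Z$ is nowhere dense); on higher simplices, extend using the null-homotopies from $(*)$ combined with local contractibility of $X$. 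The homotopy $H$ is produced in parallel, keeping tracks inside the chosen $\widetilde{U}_i$'s, hence $\cU$-small. Finite-dimensionality of $\widetilde{X}$ caps the dimension of the nerve and thus ensures that the induction terminates after finitely many steps.

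Given the approximation statement, the four conclusions are obtained as follows. \emph{Local contractibility at $z\in Z$:} applying the approximation with $\cU$ of arbitrarily small mesh yields a map $r$ sending a small neighbourhood $W$ of $z$ into $X$, and combining $H|_W$ with a contraction of $r(W)$ in $X$ (available since $X$ is locally contractible) contracts $W$ inside any prescribed neighbourhood of $z$. \emph{Contractibility of $\widetilde{X}$:} compose any $r$ with a contraction of the contractible space $X$ and concatenate with $H$. \emph{$Z$ is a $\cZ$-set:} given an open $U \subset \widetilde{X}$, choose $\cU$ so that every member meeting $U$ lies in $U$; the restrictions $r|_U$ and $H|_U$ then witness that $U \setminus Z \hookrightarrow U$ is a homotopy equivalence. \emph{Euclidean retract:} a compact, metrisable, contractible, and locally contractible space of finite covering dimension is an Euclidean retract by the very definition adopted in the paper.

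The main obstacle is the skeletal induction inside the construction of $(r,H)$: extending a partial map and partial homotopy across the next skeleton of the nerve requires extending a continuous map from the boundary of a simplex into $X$ to the entire simplex, while simultaneously keeping its image in $X$ and the homotopy tracks $\cU$-small. This is precisely where condition $(*)$ (producing the null-homotopies needed for the extension step over simplices touching $Z$) and the local contractibility of $X$ (handling the extension over simplices sitting inside $X$) must be balanced, and the finite covering dimension of $\widetilde{X}$ is what guarantees that this extension process terminates in a bounded number of steps and yields a globally continuous $r$ avoiding $Z$.
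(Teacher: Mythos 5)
The paper gives no proof of this lemma: it is quoted verbatim from Bestvina--Mess \cite{BestvinaMessBoundaryHyperbolic} and used as a black box, so there is no internal argument to compare against. Your strategy --- approximating $\mathrm{id}_{\widetilde{X}}$ by maps $r:\widetilde{X}\to X$ with $\cU$-small homotopy tracks, built by extending over the skeleta of the nerve of a finite cover, and then deducing local contractibility, contractibility, the $\cZ$-set property and the ANR/AR characterisation for finite-dimensional compacta --- is exactly the route of the original proof, and it is the right one.

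That said, the step you yourself flag as ``the main obstacle'' is where the actual content lives, and it is not resolved by a single pair $(\widetilde{V}_i,\widetilde{U}_i)$ per point. The boundary of a $k$-simplex of the nerve maps into a \emph{union} of several sets of the form $\widetilde{V}_i\setminus Z$, not into a single one, so condition $(*)$ as stated cannot be applied directly to kill it. The standard fix is to iterate $(*)$ to produce, for each point of $Z$, a nested chain $\widetilde{V}^{(0)}\subset\widetilde{V}^{(1)}\subset\cdots\subset\widetilde{V}^{(n+1)}$ with each inclusion of complements null-homotopic in the next, where $n=\dim\widetilde{X}$ bounds the dimension of the nerve, together with star-refinements guaranteeing that a union of overlapping level-$j$ sets sits inside a single level-$(j+1)$ set; only then does the skeletal extension go through, and the same chains are needed to build $H$ (e.g.\ by running the extension argument on $\widetilde{X}\times[0,1]$ with $\mathrm{id}$ on one end and $r$ on the other). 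Two smaller points: the contraction of $r(W)$ near $z\in Z$ comes from $(*)$ itself (the inclusion $\widetilde{V}\setminus Z\hookrightarrow\widetilde{U}\setminus Z$ being null-homotopic means $\widetilde{V}\setminus Z$ contracts inside $\widetilde{U}\setminus Z$), not from local contractibility of $X$, which is a condition at points of $X$ only; and for the $\cZ$-set property in the form used in this paper one needs $r|_U\circ i\simeq \mathrm{id}_{U\setminus Z}$ \emph{within} $U\setminus Z$, which $\cU$-smallness of $H$ does not give since $H$ may pass through $Z$ --- one must either arrange the homotopy to leave $Z$ instantly or first establish that $U$ and $U\setminus Z$ are ANRs and invoke Whitehead's theorem. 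With these repairs the argument is complete, but as written these are genuine gaps rather than routine omissions.
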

We now use this lemma to prove that the boundary $\partial G$ is a $\cZ$-boundary in $\overline{EG}$. 
\begin{lem} 
 $\overline{EG}$ and $\partial G$ are finite-dimensional.
\label{finitedimensional}
\end{lem}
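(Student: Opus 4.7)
The plan is to show that $\overline{EG}$ has finite covering dimension; since $\partial G$ is a closed subspace of the metrisable space $\overline{EG}$ by Theorem \ref{metrisable}, this will immediately give the same bound for $\partial G$. My approach is to exhibit $\overline{EG}$ as a countable union of closed subsets of uniformly bounded finite dimension, and then to apply the countable closed sum theorem for covering dimension in metrisable spaces.

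First I would fix $N_0 := \max_{\sigma \in S(Y)} \dim \overline{EG_\sigma}$, which is finite because $Y$ has finitely many simplices and each $\overline{EG_\sigma}$ is a Euclidean retract of finite covering dimension by the $E\cZ$-structure assumption. Every simplex $\sigma$ of $X$ lies above a simplex of $Y$, and the stabiliser $G_\sigma$ is conjugate in $G$ to a local group of $G(\cY)$; consequently $\overline{EG_\sigma}$ has dimension at most $N_0$ for every simplex $\sigma$ of $X$. Set $N := \dim Y + N_0$. For each closed simplex $\sigma$ of $X$, Lemma \ref{topologystab2} yields a continuous map $\sigma \times \overline{EG_\sigma} \to \overline{EG}$; its image $A_\sigma$ is the continuous image of a compact set in the Hausdorff space $\overline{EG}$, hence closed, with $\dim A_\sigma \leq \dim \sigma + \dim \overline{EG_\sigma} \leq N$. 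Since $G$ is countable (as it acts properly and cocompactly on the CW-complex $EG$, which has countably many cells) and $Y$ is finite, $X$ has countably many simplices, so $\bigcup_{\sigma \in S(X)} A_\sigma$ is a countable union of closed sets of dimension at most $N$ exhausting $EG \cup \partial_{Stab} G$.

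It then remains to prove that $\partial X$, which is closed in $\overline{EG}$, has finite covering dimension. I would exploit the fact that $X$ is a simplicial complex of topological dimension at most $\dim Y$ and analyse the local picture near a point $\eta \in \partial X$: a neighbourhood $V_{r,\delta}(\eta)$ is, under the retraction along geodesic rays from $v_0$, essentially a cone over the cross-section $B(\gamma_\eta(r),\delta) \cap S(v_0,r) \subset X$, whose covering dimension is at most $\dim Y$. Combined with the preceding paragraph, this displays $\overline{EG}$ as a countable union of closed sets of uniformly bounded finite covering dimension, and the countable closed sum theorem yields $\dim \overline{EG} < \infty$. The main obstacle lies in this last step: because the $G$-action on $X$ is only acylindrical and not proper, $X$ is in general not locally finite, so the standard dimension estimates for visual boundaries of proper CAT(0) spaces need adaptation — one must control geodesic rays from $v_0$ using the containment lemma \ref{containment} and the finiteness lemma \ref{finite}, even though spheres $S(v_0,r)$ themselves may meet infinitely many simplices.
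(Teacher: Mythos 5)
Your reduction of $EG\cup\partial_{Stab}G$ to a countable union of closed sets of uniformly bounded dimension is sound in spirit and matches what the paper does (the paper splits this part slightly differently, covering $EG$ by its closed cells and $\partial_{Stab}G$ by the closed subsets $\partial G_v$, using \ref{inducedtopology}, and then applies the countable and classical union theorems; your sets $A_\sigma$ accomplish the same thing, modulo the small point that the map of Lemma \ref{topologystab2} collapses $\sigma\times\{\xi\}$ for $\xi\in\partial G_\sigma$, so $A_\sigma$ is best treated as the union of the embedded subcomplex $\sigma\times EG_\sigma$ and the closed set $\partial G_\sigma$ rather than via a dimension bound for continuous images, which fails in general).

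The genuine gap is the finite-dimensionality of $\partial X$, which is precisely the step you defer. The cone-over-cross-section picture does not yield a proof: since the action is only acylindrical, $X$ is not locally finite, the spheres $S(v_0,r)$ need not be compact or finite-dimensional in any way controlled by $\dim Y$ pointwise, and the visual boundary is not simply an inverse limit of cross-sections in a way that bounds covering dimension; the containment and finiteness lemmas \ref{containment} and \ref{finite} control geodesics issuing from $v_0$ toward a \emph{fixed finite subcomplex}, not the global structure of spheres, so they do not rescue the argument. You have correctly identified the obstacle but not overcome it, and as written the proof is incomplete. The paper closes this gap by invoking an external theorem: a CAT(0) space of finite geometric dimension has finite-dimensional visual boundary (Caprace \cite{CapraceInfinityCAT(0)}, building on work of Kleiner), applied to $X$, whose geometric dimension is bounded by $\dim Y$. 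With that citation in place of your heuristic, your argument goes through and is essentially the paper's.
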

\begin{proof}
We have 
$$ \partial G = \bigg( \bigcup_{v \in V(X)} \partial G_v \bigg) \cup \partial X.$$
Each vertex stabiliser boundary is a $\cZ$-boundary in the sense of Bestvina, hence finite-dimensinal, and they are closed subspaces of $\partial G$ by \ref{inducedtopology}. As the action of $G$ on $X$ is cocompact, their dimension is uniformly bounded above, so the countable union theorem implies that $\bigcup_{v \in V(X)} \partial G_v$ is finite-dimensional. Furthermore, $X$ is a CAT(0) space of finite geometric dimension, so its boundary has finite dimension by a result of Caprace \cite{CapraceInfinityCAT(0)}. Thus, the classical union theorem implies that $\partial G$ is finite-dimensional. Now $\overline{EG} = EG \cup \partial G$. $EG$ is a CW-complex that can be decomposed as the countable union of its closed cells, all of which having a dimension bounded above by $\mbox{dim}(X). \mbox{ sup}_\sigma (\mbox{dim }EG_\sigma)$. It follows from the countable union theorem in covering dimension theory that $EG$ is finite dimensional, and the same holds for $\overline{EG}$ by the classical union theorem.
\end{proof}

We now turn to the proof of the $\cZ$-set property, using the lemma of Bestvina-Mess recalled above. As usual, the proof splits in two cases, depending on the nature of the point of $\partial G$ that we consider.
\begin{prop}
Let $\eta \in \partial X$ and $U$ be a neighbourhood of $\eta$ in $\overline{X}$. Then there exists a subneighbourhood $U' \subset U$ of $\eta$ in $\overline{X}$ such that the inclusion
$$V_{U'}(\eta) \setminus \partial G \hra V_U(\eta) \setminus \partial G$$
is null-homotopic.
\label{Zset1}
\end{prop}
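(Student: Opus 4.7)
The plan is to choose $U'$ so that $U'\cap X$ is contractible and $p^{-1}(U'\cap X)$ is itself contractible, which gives the null-homotopy of the inclusion for free.

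By Lemma \ref{contractibleneighbourhoods}, the sets $U_{r,\delta}(\eta) = V_{r,\delta}(\eta)\cup B(\gamma_\eta(r),\delta)$ form a basis of neighbourhoods of $\eta$ in $\overline{X}$ such that both $U_{r,\delta}(\eta)$ and $U_{r,\delta}(\eta)\cap X = U_{r,\delta}(\eta)\setminus\partial X$ are contractible, via the strong deformation retraction along geodesic rays from $v_0$ onto the open ball $B(\gamma_\eta(r),\delta)$. First I would apply Lemma \ref{CAT(0)nesting} to pick an intermediate neighbourhood $U_0\subset U$ of $\eta$ with $\overline{U_0}\subset U$, and then take $U' = U_{r,\delta}(\eta)\subset U_0$ for appropriate $r,\delta>0$.

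From the definition of $V_U(\eta)$, one directly has $V_U(\eta)\setminus\partial G = p^{-1}(U\cap X)$, and similarly $V_{U'}(\eta)\setminus\partial G = p^{-1}(U'\cap X)$, so the task reduces to showing that the inclusion $p^{-1}(U'\cap X)\hookrightarrow p^{-1}(U\cap X)$ is null-homotopic; it suffices to prove the stronger statement that $p^{-1}(U'\cap X)$ is contractible. My strategy is to adapt the argument of Proposition \ref{homotopytype} to the open base $U'\cap X$: the restriction of the complex of spaces $E(\cY)$ over $U'\cap X$ is still a locally finite complex of spaces with contractible fibres, so the projection $p\colon p^{-1}(U'\cap X)\to U'\cap X$ should induce isomorphisms on all homotopy groups. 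Since $p^{-1}(U'\cap X)$ is an open subset of the locally finite CW-complex $EG$, it has the homotopy type of a CW-complex, and Whitehead's theorem then yields the contractibility.

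The main obstacle is that $U'\cap X$ is not itself a subcomplex of $X$, so Lemma \ref{quotientequivalence} does not apply directly to the base. I would circumvent this exactly as in the proof of Proposition \ref{homotopytype}: given a continuous $f\colon S^n\to p^{-1}(U'\cap X)$, its composition $p\circ f$ has compact image in $U'\cap X$, and since $U'\cap X$ is contractible this map extends to a null-homotopy whose image is compact, hence contained in a finite subcomplex $K\subset X$ by Dowker's theorem. The crucial point is that by the preliminary nesting $\overline{U_0}\subset U$, and shrinking $U'$ further if necessary so that the distance from $U'\cap X$ to $X\setminus U$ exceeds the maximal simplex diameter, $K$ can be arranged to lie inside $U\cap X$; Lemma \ref{quotientequivalence} applied to $K$ then produces a lift of the null-homotopy inside $p^{-1}(K)\subset p^{-1}(U\cap X)$, giving the null-homotopy of $f$. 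Assembling such lifts compatibly along a CW-exhaustion of $p^{-1}(U'\cap X)$ provides the global null-homotopy of the inclusion.
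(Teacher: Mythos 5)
Your setup — the buffer neighbourhood from Lemma \ref{CAT(0)nesting}, the contractible $U'\cap X$ from Lemma \ref{contractibleneighbourhoods}, and the identification $V_U(\eta)\setminus\partial G=p^{-1}(U\cap X)$ — is exactly the paper's, but the final step has a genuine gap. What your sphere-by-sphere argument actually establishes is that every map $S^n\to p^{-1}(U'\cap X)$ becomes null-homotopic in $p^{-1}(U\cap X)$: the finite subcomplex $K$ spanning the image of the null-homotopy of $p\circ f$ is only guaranteed to lie in $U\cap X$, not in $U'\cap X$, so you have not shown that $p^{-1}(U'\cap X)$ is contractible (the reduction you announce at the outset), only that the inclusion is trivial on all homotopy groups. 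The concluding sentence, ``assembling such lifts compatibly along a CW-exhaustion,'' is where the proof breaks: a map that is null-homotopic on every finite subcomplex of a non-compact CW source need not be null-homotopic (there is a $\lim^1$-type obstruction; this is the phenomenon behind phantom maps), and in the skeleton-by-skeleton extension of a would-be null-homotopy the obstruction over an $(n+1)$-cell is a class in $\pi_{n+1}$ of the target which is not controlled by the vanishing of the induced maps on homotopy groups.

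The repair is to make the inclusion factor, up to homotopy, through a single contractible space, which is what the paper does. Your buffer guarantees that the subcomplex $\mbox{Span}(U'\cap X)$ is contained in $U$, so one has the chain $V_{U'}(\eta)\setminus\partial G\subset p^{-1}\big(\mbox{Span}(U'\cap X)\big)\subset V_U(\eta)\setminus\partial G$. The middle space is the realisation of a locally finite complex of spaces with contractible fibres over the genuine subcomplex $\mbox{Span}(U'\cap X)$, so by the argument of Proposition \ref{homotopytype} (via Lemma \ref{quotientequivalence}) the projection $p^{-1}\big(\mbox{Span}(U'\cap X)\big)\to\mbox{Span}(U'\cap X)$ is a homotopy equivalence; and the composite $V_{U'}(\eta)\setminus\partial G\to\mbox{Span}(U'\cap X)$ factors through the contractible set $U'\cap X$. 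Hence the inclusion into the middle space, and a fortiori into $V_U(\eta)\setminus\partial G$, is null-homotopic as a single map. You have all the ingredients for this; what is missing is precisely the interposition of the subcomplex $\mbox{Span}(U'\cap X)$, which lets the homotopy equivalence be applied globally rather than one compact piece at a time.
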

\begin{proof}
The lemma \ref{CAT(0)nesting} yields a neighbourhood $U'$ of $\eta$ in $\overline{X}$ such that $d(U' \cap X, X \setminus U) > 1$. In particular, $\mbox{Span}(U'\setminus \partial X) \subset U$, and $p^{-1}(\mbox{Span}(U'\setminus \partial X))$ can be seen as the realisation of a complex of spaces over $\mbox{Span}(U'\setminus \partial X)$ the fibres of which are contractible. Thus \ref{homotopytype} implies that the projection $p^{-1}(\mbox{Span}(U'\setminus \partial X)) \ra \mbox{Span}(U'\setminus \partial X)$ is a homotopy equivalence. Now \ref{contractibleneighbourhoods} yields another neighbourhood $U'' \subset U'$ of $\eta$ in $\overline{X}$ such that $U'' \setminus \partial X$ is contractible. We thus have the following commutative diagram:
\[ \xymatrix{
    V_{U}(\eta) \setminus  \partial G  & ~~ p^{-1}(\mbox{Span} (U' \setminus \partial X)) \ar[d]^\simeq \ar@{_{(}->}[l] & V_{U''}(\eta) \setminus \partial G  \ar[d] \ar@{_{(}->}[l]  \\
    & \mbox{Span}( U' \setminus \partial X)  & U'' \setminus \partial X\ar@{_{(}->}[l]^0  .
  } \]
Now since $U''\setminus \partial X$ is contractible, the inclusion $V_U'(\eta) \setminus \partial G \hra V_U(\eta) \setminus \partial G$ is null-homotopic.
\end{proof}

\begin{prop}
Let $\xi \in \partial_{Stab} G$, $\varepsilon \in (0,1)$ and $\cU$ a $\xi$-family. Then there exists a $\xi$-family $\cU'$ such that $V_{\cU', \varepsilon}(\xi)$ is a subneighbourhood of $V_{\cU, \varepsilon}(\xi)$ and such that the inclusion 
$$ V_{\cU', \varepsilon}(\xi) \setminus \partial G \hra V_{\cU, \varepsilon}(\xi) \setminus \partial G$$
is null-homotopic.
\label{Zset2}
\end{prop}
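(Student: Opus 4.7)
The strategy combines the local $\cZ$-set property of each $\partial G_v \subset \overline{EG_v}$ with a geodesic retraction argument, in the spirit of Proposition \ref{Zset1}. Observe first that $V_{\cU, \varepsilon}(\xi) \setminus \partial G$ equals $W_{\cU, \varepsilon}(\xi) \cup W_1$, i.e.\ just the $EG$-parts of the neighbourhood, consisting of points over $D^\varepsilon(\xi)$ and over $\mbox{Cone}_{\cU, \varepsilon}(\xi)$ respectively. The goal is to build a null-homotopy of $V_{\cU', \varepsilon}(\xi) \setminus \partial G$ inside $V_{\cU, \varepsilon}(\xi) \setminus \partial G$ in three successive deformations: first contract the cone part onto the part above $D^\varepsilon(\xi)$, then contract that onto a single fibre $\overline{EG_{v^*}}$ for some vertex $v^* \in D(\xi)$, then apply the local $\cZ$-set null-homotopy in that fibre.

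\textbf{Choice of $\cU'$.} Since each $(\overline{EG_v}, \partial G_v)$ is an $E\cZ$-structure, $\partial G_v$ is a $\cZ$-set in $\overline{EG_v}$, and for each vertex $v$ of $D(\xi)$ we can pick a subneighbourhood $U_v^\sharp \subset U_v$ of $\xi$ in $\overline{EG_v}$ such that $U_v^\sharp \setminus \partial G_v \hookrightarrow U_v \setminus \partial G_v$ is null-homotopic. Using Proposition \ref{xifamilies}, assemble the $U_v^\sharp$'s into a $\xi$-family $\cU^\sharp \subset \cU$. Then take $\cU'$ to be a $\xi$-family which is simultaneously sufficiently nested and refined in $\cU^\sharp$, the specific depth being dictated by the number of simplices a geodesic from $v_0$ to a point of $V_{\cU', \varepsilon}(\xi) \setminus \partial G$ can meet inside $\mbox{st}(D(\xi))$ (controlled by the constant $d_\xi$ of Corollary after \ref{finitedomain}). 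The crossing lemma \ref{passage} and refinement lemma \ref{refinement} ensure that with such a choice the geodesic deformation below stays inside $V_{\cU^\sharp, \varepsilon}(\xi)$.

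\textbf{The deformation.} For the first stage, for $\widetilde{x} \in W_1$ with $x \notin D^\varepsilon(\xi)$, Corollary \ref{Uxi} says $[v_0, x]$ meets $D(\xi)$; slide $x$ along the tail of this geodesic back to $\overline{D^\varepsilon(\xi)}$ and lift continuously to $EG$ via the gluing maps $\phi_{\sigma, \sigma'}$ of the complex-of-spaces structure. The nesting of $\cU'$ in $\cU^\sharp$ guarantees that as $y$ traverses the geodesic, the fibre condition from $\cU'$ is sent into the fibre condition of $\cU^\sharp$ at every time, so the homotopy stays in $V_{\cU^\sharp, \varepsilon}(\xi) \setminus \partial G \subset V_{\cU, \varepsilon}(\xi) \setminus \partial G$. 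After this, we have deformed into a subset of $W_{\cU^\sharp, \varepsilon}(\xi) \setminus \partial G$, namely the $EG$-part sitting above $\overline{D^\varepsilon(\xi)}$. For the second stage, since $D(\xi)$ is convex (Proposition \ref{finitedomain}), retract $\overline{D^\varepsilon(\xi)}$ onto a single vertex $v^*$ of $D(\xi)$ via CAT(0) geodesic flow and lift this through the complex-of-spaces; we land in the fibre $U_{v^*}^\sharp \setminus \partial G_{v^*}$ embedded in $\overline{EG}$ via Proposition \ref{inducedtopology}. For the third stage, the chosen null-homotopy of $U_{v^*}^\sharp \setminus \partial G_{v^*}$ inside $U_{v^*} \setminus \partial G_{v^*}$ contracts it to a point, all of this happening inside $V_{\cU, \varepsilon}(\xi) \setminus \partial G$.

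\textbf{Main obstacle.} The delicate point is verifying that each lifting step yields a continuous homotopy whose image stays inside $V_{\cU, \varepsilon}(\xi) \setminus \partial G$ throughout. This is where the nesting/refinement hierarchy of $\xi$-families is crucial: as a point slides along its geodesic toward $D(\xi)$, the exit simplex $\sigma_{\xi, \varepsilon}$ can change several times, so one must chain together instances of the crossing lemma to move the fibre condition from $\cU'$ up through $\cU^\sharp$ and ultimately into $\cU$. Precisely quantifying the required depth of nesting, and combining the three stages of the deformation into a single continuous homotopy (rather than a mere concatenation), is where most of the technical work will lie.
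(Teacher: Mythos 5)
Your overall architecture (contract the cone part back towards $D(\xi)$ along geodesics, then finish with the local structure of the fibres) matches the spirit of the paper's argument, but the central device you rely on --- ``lift continuously to $EG$ via the gluing maps $\phi_{\sigma,\sigma'}$'' --- does not work, and this is a genuine gap rather than a detail. The projection $p:EG\ra X$ is not a fibration: the gluing maps of the complex of spaces only embed the fibre of a simplex into the fibres of its \emph{faces}. A geodesic sliding a point $x$ back towards $D^\varepsilon(\xi)$ typically crosses from an open simplex $\sigma_1$ through a common face $\tau$ into an adjacent open simplex $\sigma_2$; a fibre point $s\in EG_{\sigma_1}$ can be pushed to $\phi_{\tau,\sigma_1}(s)\in EG_\tau$, but to continue into $\sigma_2$ you would need a preimage of $\phi_{\tau,\sigma_1}(s)$ under $\phi_{\tau,\sigma_2}$, which in general does not exist (the image of $\phi_{\tau,\sigma_2}$ is a proper subspace of $EG_\tau$) and is in any case not canonical. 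The same obstruction defeats your second stage, where you retract $\overline{D^\varepsilon(\xi)}$ onto a single vertex $v^*$ and again ``lift through the complex of spaces.'' So the three-stage deformation cannot be realised as an actual homotopy in $EG$.

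The paper circumvents exactly this problem by never lifting a deformation. It sandwiches the neighbourhoods between sets $C'$ that are realisations of complexes of spaces with \emph{contractible} fibres over a subcomplex $X'$ of $X$ (the fibre over a vertex $v$ of $D(\xi)$ being $U_v'\cap EG_v$ for a contractible $U_v'$, contractible because $\partial G_v$ is a $\cZ$-set), and invokes Proposition \ref{homotopytype} to conclude that the projection $C'\ra X'$ is a homotopy \emph{equivalence} --- an abstract statement furnishing a homotopy inverse, which is much weaker than path lifting but suffices. The geodesic contraction onto the contractible convex set $D(\xi)$ is then performed purely in the base, and the null-homotopy is transported back to the total space through a commutative diagram of homotopy equivalences. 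If you want to repair your proof, you should replace each ``lift'' by this sandwiching-plus-homotopy-equivalence argument; your choice of $\cU^\sharp$ via the local $\cZ$-set property and your bookkeeping with the crossing and refinement lemmas can then be retained essentially as written.
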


\begin{lem} 
There exist $\xi$-families $\cU', \cU''$, a subcomplex $X'$ of $X$ with $\widetilde{\mbox{Cone}}_{\cU'', \varepsilon}(\xi) \subset X' \subset \widetilde{\mbox{Cone}}_{\cU, \varepsilon}(\xi)$, and a subset $C'$ of $EG$ with $V_{\cU'', \varepsilon}(\xi) \setminus  \partial G \subset C' \subset V_{\cU, \varepsilon}(\xi) \setminus  \partial G$, such that $p(C') \subset X'$ and the projection map $C' \ra X'$ is a homotopy equivalence. 
\end{lem}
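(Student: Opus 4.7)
My plan is to build $C'$ as the realisation of a complex of spaces over a well-chosen subcomplex $X' \subset X$, whose fibres are contractible, so that $p \colon C' \to X'$ becomes a homotopy equivalence by the argument of Proposition~\ref{homotopytype}; then the inclusion $V_{\cU'',\varepsilon}(\xi)\setminus \partial G \hookrightarrow V_{\cU,\varepsilon}(\xi)\setminus \partial G$ will factor through a homotopy equivalence to $X'$, which will itself be contractible (being star-shaped around $D(\xi)$).

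First I would choose local contractible neighbourhoods. Since the domain $D(\xi)$ contains only finitely many simplices by \ref{finitedomain} and since each $\overline{EG_\sigma}$, $\sigma \subset D(\xi)$, is a Euclidean retract hence locally contractible, I can pick for every vertex $v$ of $D(\xi)$ an open neighbourhood $U^{\sharp}_{v}$ of $\xi$ in $\overline{EG_v}$ with $U^{\sharp}_v \subset U_v$ such that for every simplex $\sigma \subset D(\xi)$ containing $v$, the preimage $\phi_{v,\sigma}^{-1}(U^{\sharp}_v)$ is a contractible neighbourhood of $\xi$ in $\overline{EG_\sigma}$. Applying Proposition~\ref{xifamilies} then yields a $\xi$-family $\cU'$ with $U'_v \subset U^{\sharp}_v$; since the operation in \ref{xifamilies} only shrinks within each ambient $\overline{EG_v}$, an iterated refinement (made possible by the finiteness of the list of contractibility conditions on $D(\xi)$) ensures that $U'_\sigma := \phi_{v,\sigma}^{-1}(U'_v)$ remains contractible for every $\sigma \subset D(\xi)$. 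Finally I choose $\cU''$ to be a $\xi$-family $(m_\xi + d_\xi)$-nested in $\cU'$, so that the crossing lemma \ref{passage} and the geodesic reattachment lemma \ref{reattachment} apply.

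Next I let $X'$ be the subcomplex of $X$ spanned by the open simplices contained in $\widetilde{\mbox{Cone}}_{\cU', \varepsilon}(\xi)$; this is a genuine subcomplex since $\widetilde{\mbox{Cone}}_{\cU', \varepsilon}(\xi)$ is open by \ref{opencones}, and the argument of \ref{subneighbourhood} gives $\widetilde{\mbox{Cone}}_{\cU'', \varepsilon}(\xi) \subset X' \subset \widetilde{\mbox{Cone}}_{\cU, \varepsilon}(\xi)$. I then define $C' \subset EG$ to be the realisation of the complex of spaces over $X'$ whose fibres are
\[
(C')_\sigma = \begin{cases}
\phi_{v,\sigma}^{-1}(U'_v) \setminus \partial G_\sigma & \text{if } \sigma \cap D(\xi) \neq \varnothing,\ v \in \sigma \cap D(\xi),\\
EG_\sigma & \text{if } \sigma \cap D(\xi) = \varnothing,
\end{cases}
\]
with gluing maps induced by those of $EG$; the $\xi$-family property makes the first case independent of the choice of $v$, and face-inclusion compatibility is then automatic. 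The required containments $V_{\cU'', \varepsilon}(\xi) \setminus \partial G \subset C' \subset V_{\cU, \varepsilon}(\xi) \setminus \partial G$ follow from $\cU'' \subset \cU' \subset \cU$ together with the crossing lemma.

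To conclude I check that the fibres of $C'$ are contractible: $EG_\sigma$ by the $E\cZ$-complex of spaces assumption, and for $\sigma$ with $\sigma \cap D(\xi) \neq \varnothing$, the $\cZ$-set property inside $\overline{EG_\sigma}$ gives a homotopy equivalence $(C')_\sigma \simeq \phi_{v,\sigma}^{-1}(U'_v)$, the latter being contractible by the first step. Since $C'$ is locally finite (inherited from $EG \to X$) and $X$ has finitely many isometry types of simplices, the argument of Proposition~\ref{homotopytype} applies verbatim and shows that $p \colon C' \to X'$ is a homotopy equivalence. The main obstacle in this plan is the first step: arranging that the finitely many local restrictions $\phi_{v,\sigma}^{-1}(U'_v)$ (for $\sigma \subset D(\xi)$) are simultaneously contractible while respecting the $\xi$-family condition. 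This will rest on the ANR property of each $\overline{EG_\sigma}$, the finiteness of $D(\xi)$, and the fact that Proposition~\ref{xifamilies} only shrinks neighbourhoods within a fixed ambient $\overline{EG_v}$, so that contractibility conditions imposed at one step are not destroyed later.
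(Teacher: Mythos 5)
Your overall architecture (realise $C'$ as a complex of spaces over a subcomplex $X'$ with contractible fibres and invoke the argument of Proposition~\ref{homotopytype}) is the same as the paper's, but the way you produce the contractible fibres has a genuine gap. You need every set $\phi_{v,\sigma}^{-1}(U'_v)$, for $\sigma\subset D(\xi)$, to be contractible \emph{while} $\cU'$ is a $\xi$-family, and neither of your two steps delivers this. First, local contractibility of $\overline{EG_\sigma}$ gives, for each individual $\sigma$, small contractible neighbourhoods of $\xi$ in $\overline{EG_\sigma}$; but to get a \emph{single} $U^{\sharp}_v\subset\overline{EG_v}$ whose preimages in all the $\overline{EG_\sigma}$ containing $v$ are simultaneously contractible you would have to intersect the corresponding ambient open sets, and an intersection (or any open subset) of a contractible open set need not be contractible. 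Second, and more seriously, the construction of Proposition~\ref{xifamilies} builds the $\xi$-family by a top-down induction that repeatedly intersects open sets and re-expresses unions of images as traces of new open sets; there is no control whatsoever on the homotopy type of the resulting $U'_v$ or of $\phi_{v,\sigma}^{-1}(U'_v)$. Your closing remark that ``\ref{xifamilies} only shrinks neighbourhoods within a fixed ambient $\overline{EG_v}$, so that contractibility conditions imposed at one step are not destroyed later'' is exactly where the argument fails: shrinking an open set can and typically does destroy contractibility.

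The paper avoids this problem by decoupling the contractible fibres from the $\xi$-family. Over each vertex $v$ of $D(\xi)$ it chooses \emph{one} contractible open set $U'_v\ni\xi$ (using only local contractibility of $\overline{EG_v}$, with no compatibility condition across vertices and no preimages taken over higher simplices of $D(\xi)$), sets $C'_v=U'_v\cap EG_v$ (contractible by the $\cZ$-set property), and takes the \emph{full} classifying spaces $EG_{\sigma'}$ (or $EG_e$) as fibres over all other simplices of a carefully re-triangulated $X'=\mbox{Span}(\mbox{Cone}_{\cU',\varepsilon}(\xi))\cup\big(\overline{D^\varepsilon(\xi)}\cap\widetilde{\mbox{Cone}}_{\cU,\varepsilon}(\xi)\big)$; these are contractible by hypothesis. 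The sandwich $V_{\cU'',\varepsilon}(\xi)\setminus\partial G\subset C'$ is then arranged \emph{a posteriori} by taking $\cU''$ small enough, rather than by making $C'$ match a prescribed family. If you want to salvage your version, you must either prove the simultaneous-contractibility statement directly (which amounts to a nontrivial refinement of \ref{xifamilies}) or restructure the fibres as the paper does.
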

\begin{proof}
Let $\cU'$ be a $\xi$-family that is $2$-refined in $\cU$ and $d_\xi$-nested in $\cU$. It follows from the refinement lemma \ref{refinement} that $\mbox{Span}\bigg(\mbox{Cone}_{\cU', \varepsilon}(\xi) \bigg) \subset \widetilde{\mbox{Cone}}_{\cU, \varepsilon}(\xi).$ By \ref{subneighbourhood}, we have $V_{\cU', \varepsilon}(\xi)  \subset V_{\cU, \varepsilon}(\xi)$. Let 
$$X' = \mbox{Span}( \mbox{Cone}_{\cU', \varepsilon}(\xi) ) \cup \bigg(\overline{D^{\varepsilon}(\xi)} \cap \widetilde{\mbox{Cone}}_{\cU, \varepsilon}(\xi) \bigg).$$
Note that it is possible to give $\overline{D^{\varepsilon}(\xi)} \cap \widetilde{\mbox{Cone}}_{\cU, \varepsilon}(\xi)$ a simplicial structure from that of $X$ such that a vertex of $\overline{D^{\varepsilon}(\xi)} \cap \widetilde{\mbox{Cone}}_{\cU, \varepsilon}(\xi)$ for that structure either is a vertex of $D(\xi)$ or belongs to an edge in $X$ between a vertex of $D(\xi)$ and a vertex of $X \setminus D(\xi)$. Furthermore, we can give $\mbox{Span}( \mbox{Cone}_{\cU', \varepsilon}(\xi) )$ a simplicial structure that is finer that that of $X$, whose vertices are the vertices of $\mbox{Span}(\mbox{Cone}_{\cU', \varepsilon})$ and vertices of $\overline{D^{\varepsilon}(\xi)} \cap \widetilde{\mbox{Cone}}_{\cU, \varepsilon}(\xi)$ (for its given simplicial structure), that is compatible with that of $\overline{D^{\varepsilon}(\xi)}$, and which turns $X'$ into a simplicial complex such that an open simplex is completely contained either in $\overline{D^{\varepsilon}(\xi)}$ or in $X \setminus D^{\varepsilon}(\xi)$ (see Figure $3$). Thus $X'$ is endowed with a simplicial structure.

\begin{center}
%% Creator: Inkscape 0.48.2, www.inkscape.org
%% PDF/EPS/PS + LaTeX output extension by Johan Engelen, 2010
%% Accompanies image file '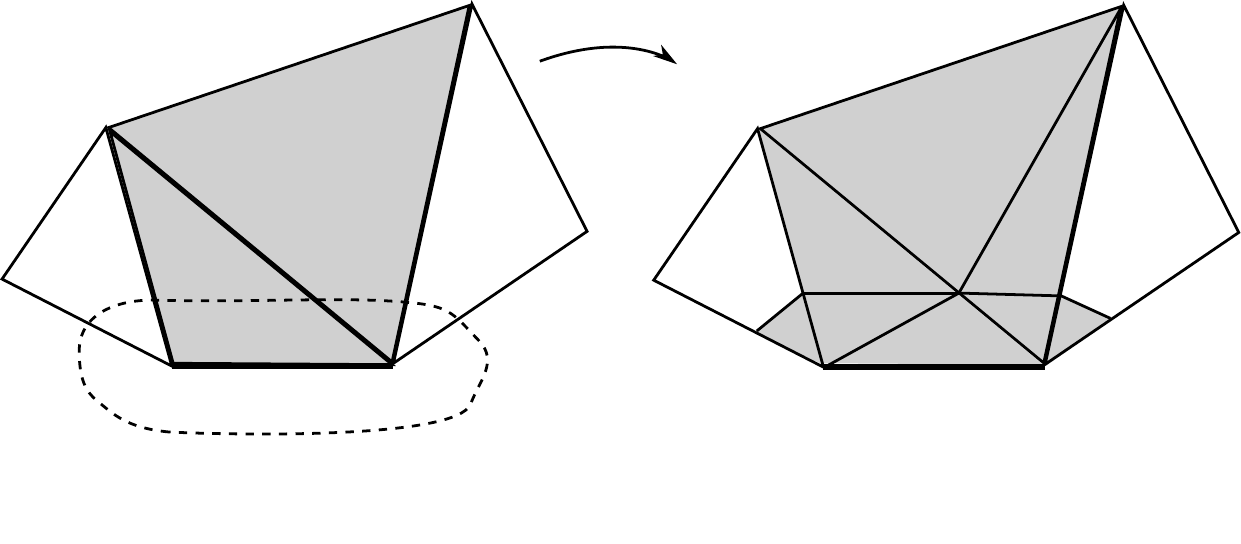' (pdf, eps, ps)
%%
%% To include the image in your LaTeX document, write
%%   \input{<filename>.pdf_tex}
%%  instead of
%%   \includegraphics{<filename>.pdf}
%% To scale the image, write
%%   \def\svgwidth{<desired width>}
%%   \input{<filename>.pdf_tex}
%%  instead of
%%   \includegraphics[width=<desired width>]{<filename>.pdf}
%%
%% Images with a different path to the parent latex file can
%% be accessed with the `import' package (which may need to be
%% installed) using
%%   \usepackage{import}
%% in the preamble, and then including the image with
%%   \import{<path to file>}{<filename>.pdf_tex}
%% Alternatively, one can specify
%%   \graphicspath{{<path to file>/}}
%% 
%% For more information, please see info/svg-inkscape on CTAN:
%%   http://tug.ctan.org/tex-archive/info/svg-inkscape
%%
\begingroup%
  \makeatletter%
  \providecommand\color[2][]{%
    \errmessage{(Inkscape) Color is used for the text in Inkscape, but the package 'color.sty' is not loaded}%
    \renewcommand\color[2][]{}%
  }%
  \providecommand\transparent[1]{%
    \errmessage{(Inkscape) Transparency is used (non-zero) for the text in Inkscape, but the package 'transparent.sty' is not loaded}%
    \renewcommand\transparent[1]{}%
  }%
  \providecommand\rotatebox[2]{#2}%
  \ifx\svgwidth\undefined%
    \setlength{\unitlength}{357.275bp}%
    \ifx\svgscale\undefined%
      \relax%
    \else%
      \setlength{\unitlength}{\unitlength * \real{\svgscale}}%
    \fi%
  \else%
    \setlength{\unitlength}{\svgwidth}%
  \fi%
  \global\let\svgwidth\undefined%
  \global\let\svgscale\undefined%
  \makeatother%
  \begin{picture}(1,0.43799657)%
    \put(0,0){\includegraphics[width=\unitlength]{dessin3BW.pdf}}%
    \put(0.09211234,0.08409978){\color[rgb]{0,0,0}\makebox(0,0)[lt]{\begin{minipage}{0.15720237\unitlength}\raggedright $D^{\varepsilon}(\xi)$\end{minipage}}}%
    \put(-0.04586386,0.5014212){\color[rgb]{0,0,0}\makebox(0,0)[lt]{\begin{minipage}{0.28839283\unitlength}\raggedright $ $\end{minipage}}}%
    \put(0.01752225,0.41635863){\color[rgb]{0,0,0}\makebox(0,0)[lt]{\begin{minipage}{0.31069277\unitlength}\raggedright $\mbox{Span}(\mbox{Cone}_{\cU', \varepsilon}(\xi))$\end{minipage}}}%
    \put(0.62931473,0.40415928){\color[rgb]{0,0,0}\makebox(0,0)[lt]{\begin{minipage}{0.19565477\unitlength}\raggedright $X'$\end{minipage}}}%
  \end{picture}%
\endgroup%

Figure $3$.
\end{center}

\noindent We now define a contratible open subset $C'_\sigma$ of $EG_\sigma$ for every open simplex $\sigma$ of $X'$. This will allow us to define the following subset of $EG$:
$$C' =  \bigcup_{\sigma \in S(X')} \sigma \times C_\sigma'.$$
Note that although $C'$ is not naturally the realisation of a complex of spaces in the sense of the first section, it is nonetheless possible to endow it with one, so as to use \ref{homotopytype} to show that the natural projection $C' \ra X'$ is a homotopy equivalence. 

We first define these spaces $C'_\sigma$ for vertices of $X'$. Let $v$ be such a vertex.
\begin{itemize}
\item If $v$ is a vertex of $D(\xi)$, the compactification $\overline{EG_v}$ is locally contractibe so we can choose a contractible open set $U_v'$ of $\overline{EG_v}$ contained in $\overline{U_v}$ and containing $\xi$, and set $C'_v = U_v' \cap EG_v$. As $\partial G_v$ is a $\cZ$-boundary, $C'_v$ is a contractible open subset. 
 \item If $v$ does not belong to $D^\varepsilon(\xi)$, set $C_v' = \overline{EG_v}$.
 \item If $v$ is a vertex of $\overline{D^{\varepsilon}(\xi)} \setminus D(\xi)$ (for the chosen simplicial structure of $\overline{D^{\varepsilon}(\xi)} \subset X'$), then either $v$ belongs to $\mbox{Span}(\mbox{Cone}_{\cU', \varepsilon}(\xi))$, in which case we set $C_v'=EG_v$, or it does not, in which case $v$ belongs to a unique edge $e$ (for the simplicial structure of $X$) between a vertex $v'$ of $D(\xi)$ and a vertex of $X \setminus D(\xi)$. In that case, $\overline{EG_e}$ is contained in $U_{v'}$ since $\cU'$ is nested in $\cU$ and we set $C_v'= EG_e$.
\end{itemize}
We now define the subsets $C_\sigma'$ for simplices $\sigma \subset X'$. Let $\sigma$ be such a simplex, and let $\sigma'$ be the unique open simplex of $X$ such that $\sigma \subset \sigma'$ as subsets of $X$. We set $C_\sigma' = EG_{\sigma'}$. 

We define the space $C' =  \bigcup_{\sigma \in S(X')} \sigma \times C'_\sigma$. As explained above, the projection $C' \ra X'$ is a homotopy equivalence. Furthermore, we can choose a $\xi$-family $\cU''$ small enough so that the subset  $V_{\cU'', \varepsilon}(\xi) \setminus \partial G$ is contained in $C'$.  
\end{proof}

\begin{proof}[Proof of \ref{Zset2}] We apply the previous lemma twice to get the following commutative diagram:
\small \[ \xymatrix{
    V_{\cU, \varepsilon}(\xi) \setminus  \partial G  & ~C' \ar[d]^\simeq \ar@{_{(}->}[l] & ~V_{\cU'', \varepsilon}(\xi) \setminus \partial G  \ar[d] \ar@{_{(}->}[l] & ~C^{(3)}  \ar[d]^\simeq \ar@{_{(}->}[l] \\
    & ~X'  & ~\widetilde{\mbox{Cone}}_{\cU'', \varepsilon}(\xi)  \ar@{_{(}->}[l]  &  ~X^{(3)} \ar@{_{(}->}[l]^{~~~~~0} .
  } \]
\normalsize Since $X^{(3)}$ retracts by strong deformation (along geodesics starting at $v_0$) inside $\widetilde{\mbox{Cone}}_{\cU', \varepsilon}(\xi) $ on the contractible subcomplex  $D(\xi)$ (relatively to $D(\xi)$), the inclusion $X^{(3)} \hra \widetilde{\mbox{Cone}}_{\cU', \varepsilon}(\xi)$ is nullhomotopic, hence $ C^{(3)} \hra V_{\cU, \varepsilon}(\xi) \setminus \partial G$ is null-homotopic. As there exists a $\xi$-family $\cU^{(4)}$ such that $V_{\cU^{(4)}, \varepsilon}(\xi) \setminus \partial G \hra C^{(3)}$, this concludes the proof.
\end{proof}

\begin{proof}[Proof of \ref{Zset}:]
Thus, \ref{compact} and \ref{finitedimensional} together with \ref{Zset1} and \ref{Zset2} yield the desired result.
\end{proof}

\subsection{Compact sets fade at infinity}
Here we prove the following: 

\begin{prop}
Compacts subsets of $EG$ \textit{fade at infinity} in $\overline{EG}$, that is, for every $x \in \partial G$, every neighbourhood $U$ of $x$ in $\overline{EG}$ and every compact $K \subset EG$, there exists a subneighbourhood $V \subset U$ of $x$ such that any $G$-translate of $K$ meeting $V$ is contained in $U$.
\label{fade}
\end{prop}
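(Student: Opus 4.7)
The argument separates according to whether $x \in \partial X$ or $x \in \partial_{Stab} G$, the first case being purely geometric and the second combining the geometry of $X$ with the fade-at-infinity property of each local $E\cZ$-structure $(\overline{EG_v}, \partial G_v)$.

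\textbf{Case $x = \eta \in \partial X$.} It suffices to treat a basis neighbourhood $U = V_W(\eta)$ with $W \in \cO_{\overline{X}}(\eta)$. Let $d = \mbox{diam}(p(K))$, finite since $K$ is compact. Lemma~\ref{CAT(0)nesting} yields a subneighbourhood $W' \subset W$ with $\mbox{dist}(W' \cap X, X \setminus W) > d$, and I set $V = V_{W'}(\eta)$. If a translate $gK$ meets $V$ then, since $gK \subset EG$, the intersection lies in $p^{-1}(W' \cap X)$; as $G$ acts on $X$ by isometries, $\mbox{diam}(p(gK)) = d$, forcing $p(gK) \subset W \cap X$ and thus $gK \subset V_W(\eta) = U$.

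\textbf{Case $x = \xi \in \partial_{Stab} G$.} Fix $U = V_{\cU, \varepsilon}(\xi)$. The plan is to build $V = V_{\cU'', \varepsilon'}(\xi)$ with $\varepsilon' < \varepsilon$ and $\cU'' \subset \cU$ a $\xi$-family refined enough to control two kinds of witness points. Let $N$ be a uniform upper bound on the number of simplices met by $p(gK)$, $g \in G$, which is finite since $G$ acts isometrically and $X$ has finitely many isometry types of simplices. For the \emph{cone direction}, I first choose a $\xi$-family $\cU_1 \subset \cU$ that is $N$-refined in $\cU$ (refinement lemma~\ref{refinement}) and $\varepsilon'$ small enough so that, combined with the star lemma~\ref{star} and the crossing lemma~\ref{passage}, the following holds: if $\widetilde{y} \in gK \cap V_{\cU_1, \varepsilon'}(\xi)$ has $p(\widetilde{y})$ in the cone region, then every $\widetilde{z} \in gK$ whose projection lies outside $D^{\varepsilon}(\xi)$ automatically satisfies $p(\widetilde{z}) \in \mbox{Cone}_{\cU, \varepsilon}(\xi)$ with exit simplex nested inside $\cU$.

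For the \emph{fibre direction}, the domain $D(\xi)$ is finite by~\ref{finitedomain}, and for each of its vertices $v$ the intersection of $K$ with the fibres over $\overline{\mbox{st}}(v)$ projects to a compact subset $K_v \subset EG_v$. The fade-at-infinity property of the $E\cZ$-structure $(\overline{EG_v}, \partial G_v)$ produces a neighbourhood $U_v''$ of $\xi$ in $\overline{EG_v}$ with $U_v'' \subset (U_1)_v$ such that any $G_v$-translate of $K_v$ meeting $U_v''$ lies in $(U_1)_v$. The finite-height hypothesis~\ref{finiteheightproperty} together with acylindricity ensures that, among $G$-translates rather than $G_v$-translates, only finitely many orbits of $K$ interact nontrivially with the fibre at $v$, so this fibre-level shrinking remains effective for the full group $G$. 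Proposition~\ref{xifamilies} extracts from $\{U_v''\}$ a $\xi$-family $\cU'' \subset \cU_1$. Setting $V = V_{\cU'', \varepsilon'}(\xi)$, a case analysis on whether a witness point of $gK \cap V$ lies in $W_{\cU'', \varepsilon'}(\xi)$ or in the cone portion then yields $gK \subset U$.

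\textbf{Main obstacle.} The genuinely delicate step is the fibre direction: the shrinking of the $U_v$ via fade-at-infinity in each $\overline{EG_v}$ must be both compatible with the $\xi$-family condition along edges of $D(\xi)$ and strong enough to absorb $G$-translates, not only $G_v$-translates. Here the finite-height and acylindricity hypotheses are essential, as they bound the number of cosets of $G_v$ in $G$ carrying a piece of $K$ into a given neighbourhood of $\xi$ in the fibre over $v$; without them one could not reduce the global action of $G$ to a finite combinatorial amount of local-group data. The cone direction, by contrast, is more mechanical once the geometric toolbox (star, crossing, refinement) is in place.
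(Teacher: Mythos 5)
Your first case and your overall two-part decomposition (boundary points of $X$ treated metrically via \ref{CAT(0)nesting}; points of $\partial_{Stab}G$ treated by combining cone control with fibre control) match the paper's strategy. The cone direction is also essentially right: the paper likewise bounds the number of simplices met by $p(gK)$ and invokes the refinement lemma \ref{refinement}. But the fibre direction contains a genuine gap. Knowing that $gK$ meets a shrunken neighbourhood $U_w''$ in the fibre over \emph{one} vertex $w$ of $D(\xi)$ does not, after a single application of fade-at-infinity plus the $\xi$-family compatibility, give $gK\cap EG_{w'}\subset U_{w'}$ for the \emph{other} vertices $w'$ of $D(\xi)$. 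The compatibility condition of Definition \ref{defxifamily} only transfers membership of a single point of an edge fibre; to propagate containment of $gK$ from $w$ to an adjacent vertex $w'$ you must first upgrade ``$gK\cap EG_w$ meets $U_w$'' to ``$gK\cap EG_w$ is contained in $U_w$'' (one application of local fade-at-infinity), deduce that $gK\cap EG_{w'}$ \emph{meets} the corresponding set over $w'$, and then apply fade-at-infinity again. Each step along a chain of adjacent vertices costs one application, so the paper (Lemma \ref{lemmesansnom2}) builds a tower of $N+1$ successively shrunken $\xi$-families, where $N$ bounds the length of vertex chains in $p(K)$, and runs an induction $(H_k)$ along such chains. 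Your proposal produces only one layer of shrinking per vertex and then asserts the conclusion; as written the argument would fail for a translate $gK$ touching the prescribed neighbourhood only over a single vertex of a large domain.

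A secondary problem is your reduction from $G$-translates to local-group data. The relevant finiteness is not a consequence of the finite-height or acylindricity hypotheses: it is the elementary fact that, $p(K)$ and $D(\xi)$ being finite subcomplexes, only finitely many $g\in G$ satisfy $g\cdot p(K)\cap D(\xi)\neq\varnothing$ up to left multiplication by vertex stabilisers of $D(\xi)$. Consequently the compacta to which the local fade-at-infinity property must be applied in $\overline{EG_v}$ are the finitely many sets $g_\lambda K\cap EG_v$ for these coset representatives $g_\lambda$, not the projection to $EG_v$ of $K\cap p^{-1}(\overline{\mbox{st}}(v))$ as you propose; the latter can be empty even when some translate $gK$ meets $EG_v$, so your local input is applied to the wrong compact sets. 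Finally, the paper first replaces $K$ by a standard compact over the flag complex spanned by $p(K)$ so that adjacency of vertices in $p(gK)$ forces $gK$ to meet the corresponding edge fibre; some such device is needed to make the vertex-to-vertex propagation work and is absent from your sketch.
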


 As usual, we split the proof in two parts, depending on the nature of the points considered.

\begin{prop}
 Let $\eta \in \partial X$. For every neighbourhood $U$ of $\eta$ in $\overline{X}$ and every compact subset $K \subset EG$, there exists a neighbourhood $U'$ of $\eta$ contained in $U$ and such that any $G$-translate of $K$ meeting $V_{U'}(\eta)$ is contained in $V_U(\eta)$.
\label{fade1}
\end{prop}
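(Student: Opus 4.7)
The plan is to exploit the fact that $K$, being compact in $EG$, projects to a bounded subset of $X$, together with the distance-gap nesting available in $\overline{X}$ from Lemma \ref{CAT(0)nesting}. Since $p: EG \to X$ is continuous, the set $p(K)$ is compact, hence bounded in the simplicial metric of $X$; let $d = \mathrm{diam}(p(K)) < \infty$. Because $G$ acts on $X$ by simplicial isometries, every $G$-translate $gK$ also satisfies $\mathrm{diam}(p(gK)) = d$.

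Next, I apply Lemma \ref{CAT(0)nesting} with the constant $k = d$ to produce a neighbourhood $U'$ of $\eta$ in $\overline{X}$ with $U' \subset U$ and $d(U' \cap X,\, X \setminus U) > d$. I claim that this $U'$ does the job. Indeed, suppose $g \in G$ is such that $gK$ meets $V_{U'}(\eta)$. Since $gK \subset EG$ and $V_{U'}(\eta) \cap EG = p^{-1}(U' \cap X)$, there exists $z \in gK$ with $p(z) \in U' \cap X$. For any other $z' \in gK$, we have $p(z'), p(z) \in p(gK)$, so $d(p(z), p(z')) \leq d$. By the choice of $U'$, this forces $p(z') \in U$, and therefore $z' \in p^{-1}(U \cap X) \subset V_U(\eta)$. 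Hence $gK \subset V_U(\eta)$, as required.

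I do not anticipate a serious obstacle here: the main conceptual input is simply that the $G$-action on $X$ is isometric (so diameters of projections are translation-invariant) and that the topology of $\overline{X}$ admits nested neighbourhoods with an arbitrarily large metric gap, which is exactly the content of Lemma \ref{CAT(0)nesting}. The only point requiring care is the observation that a translate $gK \subset EG$ can only meet $V_{U'}(\eta)$ through the $EG$-part of that neighbourhood, which reduces the question to a purely metric statement on $X$.
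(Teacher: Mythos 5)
Your proof is correct and follows essentially the same route as the paper: choose $U'$ via Lemma \ref{CAT(0)nesting} with gap exceeding $\mathrm{diam}(p(K))$, and use that $G$ acts by isometries so all translates $gK$ have projections of the same diameter. The paper's own proof is exactly this argument, with your added (and harmless) explicit remark that $gK\subset EG$ can only meet $V_{U'}(\eta)$ through $p^{-1}(U'\cap X)$.
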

\begin{proof}
By \ref{CAT(0)nesting}, let $U'$ be a neighbourhood of $\eta$ in $\overline{X}$ which is contained in $U$ and such that 
$$d(U', X \setminus U) > \mbox{diam}(p(K)).$$
Let $g \in G$ such that $ gK$ meets $ V_{U'}(\eta) $. Since $G$ acts on $X$ by isometries, we have
$$\mbox{diam}\left( p(g.K) \right)  = \mbox{diam}\left( g.p(K)  \right) = \mbox{diam}\left( p(K) \right),$$
which implies that $gK \subset V_U(\eta)$.
\end{proof}
The proof for points of $\partial_{Stab} G$ is slightly more technical. We start by defining a class of compact sets of $EG$ which are easy to handle.
\begin{definition}
Let $F$ be a finite subcomplex of $X$, together with a collection $(K_\sigma)_{\sigma \in S(F)}$ of non empty compact subsets of $EG_\sigma$ for every simplex $\sigma$ of $F$. Suppose that for every simplex $\sigma$ of $F$ and every face $\sigma'$ of $\sigma$, we have $\phi_{\sigma', \sigma}(K_{\sigma}) \subset K_{\sigma'}.$
Then the set 
$$\bigcup_{\sigma \in \cS(F)} \sigma \times K_\sigma.$$
is called a  \textit{standard compact subset of $EG$ over $F$}. Every compact subset of $EG$ obtained in such a way is called a standard compact of $EG$.
\end{definition}
Note that the projection in $X$ of any compact subset of $EG$ meets finitely many simplices of $X$, so every compact subset of $EG$ may be seen as a subset of a standard compact subset of $EG$.

\begin{definition}
Let $\xi \in \partial_{Stab} G$ and $\cU$ a $\xi$-family. We define $W_{\cU}(\xi)$ as the set of elements $\widetilde{x}$ of $EG$ whose projection $x \in X$ belongs to the domain of $\xi$ and is such that for some (hence any) vertex $v$ of $\sigma_x \cap D(\xi)$ we have 
$$\phi_{v, \sigma_x}(\widetilde{x}) \in U_v.$$
\end{definition}

Before proving that compact sets fade near points of $\partial_{Stab} G$, we prove the following lemma.

\begin{lem}
 Let $\xi \in \partial_{Stab} G$, $\varepsilon \in ~(0,1)$ and $\cU$ a $\xi$-family. Let $K$ be a compact subspace of $EG$. Then there exists a $\xi$-family $\cU'$ contained in $\cU$ such that for every element $g \in G$, the following holds:

If $gK$ meets $W_{\cU'}(\xi)$, then $gK \cap p^{-1}(D(\xi))$ is contained in $W_{\cU}(\xi).$
\label{lemmesansnom2}
\end{lem}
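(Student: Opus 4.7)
The strategy combines the fade-at-infinity property of each local $E\cZ$-structure $(\overline{EG_v}, \partial G_v)$ with the $\xi$-family compatibility of neighbourhood systems.

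Since $K$ is compact and $EG$ is locally finite, $p(K)$ meets only finitely many simplices of $X$, so I may assume that $K$ is a standard compact over a finite subcomplex $F \subset X$, given by compatible fibrewise compacts $K_\sigma \subset EG_{\sigma}$ for $\sigma \in F$. For each vertex $v \in V(\xi)$ and each vertex $w$ of $F$ in the $G$-orbit of $v$, I fix an element $g_{v,w} \in G$ with $g_{v,w}\, w = v$, and form the compact
$$ L_v \;=\; \bigcup_w \; g_{v,w} \Big( \bigcup_{\sigma \in F,\, w \in V(\sigma)} \phi_{w, \sigma}(K_\sigma) \Big) \;\subset\; EG_v, $$
a finite union over the finitely many admissible $w$.

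Applying the fade-at-infinity property of $(\overline{EG_v}, \partial G_v)$ to the data $(L_v, \xi, U_v)$ yields a subneighbourhood $V_v \subset U_v$ of $\xi$ in $\overline{EG_v}$ such that every $G_v$-translate of $L_v$ meeting $V_v$ is contained in $U_v$. Using Proposition~\ref{xifamilies}, I choose a $\xi$-family $\cU' = (U_v')_{v \in V(\xi)}$ contained in $\cU$ with $U_v' \subset V_v$ for every $v$. If needed, I further nest $\cU'$ multiply inside $\cU$ so that the $\xi$-family compatibility of $\cU$ will transfer containments across adjacent simplices of $D(\xi)$.

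To verify the conclusion, assume $g \in G$ and $\widetilde{y} \in gK \cap W_{\cU'}(\xi)$. Writing $\widetilde{y} = g\widetilde{x}_0$ with $p(\widetilde{x}_0)$ in the open simplex $\sigma_0 \in F$, and taking any $v_0 \in V(g\sigma_0) \subset V(\xi)$, the image $\phi_{v_0, g\sigma_0}(gK_{\sigma_0})$ is contained in the $G_{v_0}$-translate $g g_{v_0,w_0}^{-1} L_{v_0}$ (for $w_0 = g^{-1}v_0 \in V(F)$, the element $g g_{v_0,w_0}^{-1}$ lies in $G_{v_0}$) and meets $U_{v_0}' \subset V_{v_0}$. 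The fade property confines this translate to $U_{v_0}$, so $\phi_{v_0, g\sigma}(gK_\sigma) \subset U_{v_0}$ for every $\sigma \in F$ with $w_0 \in V(\sigma)$ and $g\sigma \subset D(\xi)$; the $\xi$-family compatibility of $\cU$ then yields $gK_\sigma \subset W_\cU(\xi)$ for all such $\sigma$.

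The main obstacle is to handle simplices $\sigma \in F$ with $g\sigma \subset D(\xi)$ but $w_0 \notin V(\sigma)$: such pieces contribute to $gK \cap p^{-1}(D(\xi))$ without being reached by the initial witness. These are handled by iterating along a path of edges in the convex subcomplex $D(\xi)$ (Proposition~\ref{finitedomain}) connecting $v_0$ to any vertex $v$ of $g\sigma$: the multiply-nested $\xi$-family structure of $\cU'$ transfers the control across each intermediate edge, producing a witness in $U_v'$ at each stage and allowing the fade property to be reapplied at $v$ to conclude $gK_\sigma \subset W_\cU(\xi)$.
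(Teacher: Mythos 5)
Your overall strategy is the same as the paper's: enlarge $K$ to a standard compact, translate its fibrewise pieces into each $\overline{EG_v}$, $v\in V(\xi)$, to obtain finitely many model compacts there, apply the fade-at-infinity property of the local $E\cZ$-structures, and then propagate the containment from the witness vertex $v_0$ to the other vertices of $D(\xi)$ met by $gK$ using the $\xi$-family compatibility across edges. Your packaging of the model compacts as $L_v=\bigcup_w g_{v,w}(\cdots)$ is a perfectly good (arguably cleaner) substitute for the paper's finite family $(g_\lambda)_{\lambda\in\Lambda}$ of translates meeting $D(\xi)$.

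There are, however, two genuine gaps in the propagation step. First, a single application of fade-at-infinity is not enough. After you confine $gK\cap EG_{v_0}$ to $U_{v_0}$ and transfer across an edge $[v_0,v_1]$, the compatibility of $\cU$ only places the witness in $U_{v_1}$ --- not in the smaller set $V_{v_1}$ that the fade property at $v_1$ requires as input. ``Further nesting $\cU'$ multiply inside $\cU$'' in the sense of Definition~\ref{defnested} does not fix this: that notion of nesting is tied to the convergence property for simplices \emph{outside} $D(\xi)$ and says nothing about translates of $L_v$. What is needed is the paper's tower $\cU^{[0]}\subset\cdots\subset\cU^{[N]}$ in which each level is obtained from the next by a fresh application of fade-at-infinity to the compacts $L_v$, with $N$ bounding the combinatorial diameter of $p(L)$; then the induction gives containment in $U^{[k+1]}_{w}$ after $k$ edges, and the fade property can be invoked at every stage. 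Second, your chain is ``a path of edges in the convex subcomplex $D(\xi)$'', but the transfer mechanism needs $gK$ to actually meet the fibre $\overline{EG_{[w_k,w_{k+1}]}}$ over each edge of the chain --- otherwise there is nothing to push across. An arbitrary path in $D(\xi)$ gives no such guarantee. The paper handles this by first enlarging $K$ to a standard compact $L$ over the \emph{flag} complex $\mathrm{Span}(p(K))$ and taking the chain among adjacent vertices whose fibres $gL$ meets: flagness then forces $gL$ to meet the edge fibre whenever it meets the two adjacent vertex fibres. Without both of these ingredients your induction stalls after the first edge.
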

\begin{proof}
Let $L$ be a standard compact subset of $EG$ over the (finite) flag complex defined by $\mbox{Span } p(K)$. By choosing the $L_\sigma$ big enough, we can assume that $L$ contains $K$. Let $N \geq 0$ be such that any two vertices of $L$ can be joined by a sequence of at most $N$ adjacent vertices. 

Since $D(\xi)$ and $p(L)$ meet finitely many vertices of $X$, there are only finitely many elements of $G$ such that $g.p(L)$ meets $D(\xi)$ up to left multiplication by an element of $G_v, v \in V(\xi)$. Let $(g_\lambda)_{\lambda \in \Lambda}$ be such a finite family. For every vertex $v$ of $V(\xi)$, $\left\lbrace g_\lambda L \cap EG_v , \lambda \in \Lambda \right\rbrace $ is a finite (possibly empty) collection of compact subsets of $EG_v$. Since $\partial G_v$ is a Bestvina boundary for $G_v$, compact subsets fade at infinity in $\overline{EG_v}$, so there exists a subneighbourhood $U_v'$ of $U_v$ such that any $G_v$-translate of one of these $g_\lambda L$ meeting $U_v'$ is contained in $U_v$. Repeating this procedure $N+1$ times, we get a sequence of $\xi$-families denoted 
$$ \left\lbrace U_v, v  \in V(\xi)\right\rbrace \supset \cU^{[N]} \supset \cU^{[N-1]} \supset \ldots \supset \cU^{[0]}.$$ 
Let $g \in G$ such that $gK$ meets $ W_{\cU'}(\xi)$, and let $w$ be a vertex of $D(\xi)$ such that $gK$, hence $gL$, meets $U_{w}^{[0]}$. In order to prove the lemma, it is enough to show by induction on $k = 0, \ldots, N$ the following: \\

$(H_k):$ For every chain of adjacent vertices $w_0=w, w_1, \ldots, w_k$ of $D(\xi)$ such that $gL$ meets $EG_{w_0}, \ldots, EG_{w_k}$, then $gL \cap EG_{w_k} \subset U_w^{[k+1]}$.\\

Since $gL$ meets $D(\xi)$, let $\lambda \in \Lambda$ such that $gL=g_\lambda L$ pointwise. The result is true for $k=0$  by definition of $\cU^{[0]}$ and $\cU^{[1]}$. Suppose we have proven it up to rank $k$, and let $w_0=w, w_1,\ldots, w_{k+1}$ a chain of vertices of $D(\xi)$ such that $gL$ meets $EG_{w_0}, \ldots, EG_{w_k}$. By induction hypothesis, we already have $gL \cap EG_{w_k} \subset U_{w_k}^{[k+1]}$. Since $p(L)$ is a flag complex, it follows from the fact that $gL$ meets $EG_{w_k}$ and $EG_{w_{k+1}}$ that $gL$ also meets $EG_{[w_k, w_{k+1}]}$. In particular, since $gL \cap EG_{w_{k}} \subset U_{w_k}^{[k+1]}$, it follows from the properties of $\xi$-families that $gL \cap EG_{w_{k+1}}$ meets $U_{w_{k+1}}^{[k+1]}$. This in turn implies that $gL \cap EG_{w_{k+1}} \subset U_{v_{k+1}}^{[k+2]}$, which concludes the induction.
\end{proof}
\begin{prop}
 Let $\xi \in \partial_{Stab} G$, $\varepsilon \in (0,1)$ and $\cU$ a $\xi$-family. Let $K$ be a connected compact subset of $EG$. Then there exists a $\xi$-family $\cU'$ contained in $\cU$ and such that every $G$-translate of $K$ meeting $V_{\cU', \varepsilon}(\xi)$ is contained in $V_{\cU, \varepsilon}(\xi).$
\label{fade2}
\end{prop}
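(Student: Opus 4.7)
The plan is to combine Lemma~\ref{lemmesansnom2}, which controls the fibres of $gK$ over $D(\xi)$, with the crossing, nesting, and refinement lemmas~\ref{passage}, \ref{nesting}, \ref{refinement}, which propagate cone conditions across nearby simplices, to build a $\xi$-family $\cU'$ small enough for both mechanisms to apply to every translate $gK$ simultaneously.

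The key preliminary observation is that, since $K$ is connected and compact, $p(gK)$ has diameter $\mbox{diam}(p(K))$ independent of $g$. Combined with the uniform upper bound on $\mbox{diam}(D(\xi))$ from Proposition~\ref{finitedomain}, cocompactness of the action, and the containment lemma~\ref{containment}, this yields an integer $N = N(K,\xi)$, independent of $g \in G$, which bounds simultaneously the number of simplices of $\mbox{st}(D(\xi))$ touched by $p(gK)$ and the length of any path of simplices in $X \setminus D(\xi)$ needed to connect two such simplices, or to reach an exit simplex $\sigma_{\xi,\varepsilon}(y)$ of a geodesic $[v_0,y]$ for some $y \in p(gK)$. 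With $N$ in hand, I would apply Lemma~\ref{lemmesansnom2} to $(K,\cU)$ to get a $\xi$-family $\cU^{(1)} \subset \cU$, and then take $\cU'$ to be a $\xi$-family inside $\cU^{(1)}$ that is simultaneously $N$-nested and $N$-refined in $\cU^{(1)}$, shrunk further by a second application of Lemma~\ref{lemmesansnom2} to $(K,\cU^{(1)})$ to ensure that $gK \cap W_{\cU'}(\xi) \neq \varnothing$ forces $gK \cap p^{-1}(D(\xi)) \subset W_{\cU^{(1)}}(\xi) \subset W_\cU(\xi)$.

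For the verification, I would pick $\widetilde{x}_0 \in gK \cap V_{\cU',\varepsilon}(\xi)$ and distinguish cases. If $p(\widetilde{x}_0) \in \mbox{Cone}_{\cU',\varepsilon}(\xi)$, the exit simplex $\sigma_{\xi,\varepsilon}(p(\widetilde{x}_0))$ already lies in $\mbox{st}_{\cU'}(\xi)$, and the refinement lemma~\ref{refinement} propagates the cone condition for $\cU^{(1)}$ (hence for $\cU$) to every simplex on a path of length at most $N$, covering every exit simplex relevant to $p(gK)$ as well as every simplex of $\mbox{st}(D(\xi)) \setminus D(\xi)$ touched by $p(gK)$. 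If $\widetilde{x}_0 \in W_{\cU',\varepsilon}(\xi)$ with $p(\widetilde{x}_0) \in D^\varepsilon(\xi) \setminus D(\xi)$, the nesting lemma~\ref{nesting} upgrades the fibre condition $\phi_{v,\sigma_{p(\widetilde{x}_0)}}(\widetilde{x}_0) \in U_v'$ into the cone condition $\overline{EG_{\sigma_{p(\widetilde{x}_0)}}} \subset U_v^{(1)}$, which furnishes the starting simplex for the refinement lemma. In the remaining case $p(\widetilde{x}_0) \in D(\xi)$, Lemma~\ref{lemmesansnom2} directly yields the $D(\xi)$-fibre condition, while the compatibility of fibres in the standard compact $L \supset K$ used in the proof of that lemma provides a cone condition on any simplex of $\mbox{st}(D(\xi)) \setminus D(\xi)$ touched by $p(gK)$; the crossing lemma~\ref{passage} then propagates this condition across paths of length at most $N$.

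The main obstacle is the gap between simplices that $p(gK)$ \emph{touches} and the exit simplices $\sigma_{\xi,\varepsilon}(y)$ of geodesics $[v_0,y]$ for $y \in p(gK)$, since the latter need not be touched by $p(gK)$. The refinement lemma bridges this gap precisely when the nesting of $\cU'$ inside $\cU^{(1)}$ is large enough, which is what makes the uniform integer $N = N(K,\xi)$ indispensable.
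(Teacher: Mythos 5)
Your proposal is correct and follows essentially the same route as the paper's proof: a uniform bound on the length of paths of simplices coming from the connectedness of $K$ and the simplicial action, Lemma~\ref{lemmesansnom2} to control the fibres of $gK$ over $D(\xi)$, and the refinement/crossing lemmas to propagate the cone condition from the point of $gK$ lying in $V_{\cU',\varepsilon}(\xi)$ to every other point of $gK$. The only differences are cosmetic: the order in which the successive $\xi$-families are shrunk and the organisation of the case analysis (by the location of $p(\widetilde{x}_0)$ rather than by whether $gK$ meets $D(\xi)$).
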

\begin{proof}
Let $k$ be the number of simplices met by $p(K)$, and let $\cU'$ be a $\xi$-family that is $k$-refined in $\cU$. Applying the previous proposition to $V_{\cU', \varepsilon}(\xi)$ yields a $\xi$-family $\cU''$. Finally, let $\cU'''$ be a $\xi$-family that is $k$-refined in $\cU''$.

Suppose that $gK$ meets $ V_{\cU''', \varepsilon}(\xi)$, and let $\widetilde{x_0} \in gK \cap V_{\cU''', \varepsilon}(\xi)$. Let $\widetilde{x} \in gK$, and let us prove that $\widetilde{x} \in V_{\cU, \varepsilon}(\xi)$. Since $p(K)$ is connected, let $\gamma$ be a path from $x_0=p(\widetilde{x_0})$ to $x=p(\widetilde{x})$ in $p(gK)$. This yields a path of open simplices $\sigma_1, \ldots, \sigma_n$, with $n \leq k$. If $gK$ does not meet $D(\xi)$, the refinement lemma \ref{refinement} implies that $\sigma_n \subset \widetilde{\mbox{Cone}}_{\cU, \varepsilon}(\xi)$, and $\widetilde{x} \in V_{\cU, \varepsilon}(\xi)$.

Otherwise, let $n_0$ (resp. $n_1$)  be such that $\sigma_{n_0}$ (resp. $\sigma_{n_1}$) is the first (resp. the last) simplex contained in $D(\xi)$. If $x_0$ is not in $D(\xi)$, we can apply the refinement lemma \ref{refinement} to the path $\sigma_1, \ldots, \sigma_{n_0-1}$, which implies $\sigma_{n_0-1} \subset \mbox{st}_{\cU''}(D(\xi))$. In particular, we see that $gK$ meets $W_{\cU''}(\xi)$, which is also true if $x_0$ is in $D(\xi)$. Now by definition of $\cU''$, we have that $gK \cap p^{-1}(D(\xi)) \subset W_{\cU' }(\xi)$. If $\gamma$ goes out of $D(\xi)$ after $\sigma_{n_1}$, then $\sigma_{n_1+1} \subset \mbox{st}_{\cU'}(\xi)$, and we can apply the refinement lemma \ref{refinement} to the path of simplices  $\sigma_{n_1+1}, \ldots, \sigma_{n}$. In any case, we get in the end $\widetilde{x} \in  V_{\cU, \varepsilon}(\xi)$, which concludes the proof.
\end{proof}

\begin{proof}[Proof of \ref{fade}:]
This follows from \ref{fade1} and \ref{fade2}
\end{proof}

\begin{proof}[Proof of \ref{EZstructure}:] 
This follows from \ref{compact}, \ref{basepoint}, \ref{Zset}, and \ref{fade}.
\end{proof}

\subsection{Proof of the main theorem.}
We are now ready to conclude the proof the main theorem. 

\begin{lem}
 Let $X, Y$ and $G$ as in the statement of the main theorem. Then for every simplex $\sigma$ of $Y$, the embedding $\overline{EG_\sigma} \hra \overline{EG}$ realises an equivariant homeomorphism from $\partial G_\sigma$ to $\Lambda G_\sigma \subset \Lambda G$. Moreover, for every pair $H_1, H_2$ of subgroups in the family $\cF = \left\lbrace \bigcap_{i=1}^{n} g_i G_{\sigma_i} g_i^{-1} | ~ g_1, \ldots, g_n \in G, ~ \sigma_1, \ldots, \sigma_n \in \mbox{S}(Y), ~ n\in \bbN \right\rbrace,$ we have $\Lambda H_1 \cap \Lambda H_2 = \Lambda(H_1 \cap H_2) \subset  \partial G.$
\label{hereditary1}
\end{lem}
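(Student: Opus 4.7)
The plan is to treat the two statements in turn.

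For the homeomorphism statement, I would begin by observing that the map $\overline{EG_\sigma} \to \overline{EG}$ is continuous by combining \ref{inducedtopology} with the embedding $\overline{EG_\sigma} \hookrightarrow \overline{EG_v}$ coming from the $E\cZ$-complex of spaces structure at a vertex $v$ of $\sigma$; it is $G_\sigma$-equivariant by construction and injective by the commutativity of the diagram of embeddings together with \ref{xiloop}. Since $\overline{EG_\sigma}$ is compact and $\overline{EG}$ is Hausdorff by \ref{metrisable}, the map is a topological embedding, and it remains to show that the image of $\partial G_\sigma$ is exactly $\Lambda G_\sigma$. The inclusion $\partial G_\sigma \subset \Lambda G_\sigma$ is immediate: picking $x_0 \in EG_\sigma$, any $\xi \in \partial G_\sigma$ is the limit of a sequence $(g_n x_0)$ with $g_n \in G_\sigma$, and this sequence still converges to $\xi$ in $\overline{EG}$ by continuity. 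For the reverse, \ref{fade} guarantees that compact subsets of $EG$ fade at infinity, so $\Lambda G_\sigma$ does not depend on the chosen base point; choosing $x_0 \in EG_\sigma$, I extract from $(g_n x_0)$ a convergent subsequence in the compact space $\overline{EG_\sigma}$, and observe that its limit must lie in $\partial G_\sigma$ (since the sequence cannot accumulate in $EG_\sigma$ by properness of the $G_\sigma$-action). Uniqueness of limits in $\overline{EG}$ identifies this limit with the given $\xi \in \Lambda G_\sigma$.

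For the intersection formula, the inclusion $\Lambda(H_1 \cap H_2) \subset \Lambda H_1 \cap \Lambda H_2$ is immediate from monotonicity of limit sets, so I focus on the reverse inclusion, and we may assume both $H_i$ are infinite since otherwise both sides are empty. Each $H_i \in \cF$ is, by definition, an intersection of pointwise stabilisers of simplices of $X$, so $H_i$ fixes pointwise a nonempty convex subcomplex $X^{H_i} \subset X$, which is bounded (and therefore finite) by acylindricity. Since $H_i$ fixes a vertex and a group of CAT(0)-isometries fixing a point preserves spheres around that point, every $H_i$-orbit in $X$ is bounded; hence $\Lambda H_i \cap \partial X = \varnothing$, and $\Lambda H_1 \cap \Lambda H_2 \subset \partial_{Stab} G$. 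Moreover, for every $v \in V(X^{H_i})$, the inclusion $H_i \subset G_v$ together with the fact that orbits in $EG_v$ stay in the closed subspace $\overline{EG_v} \subset \overline{EG}$ forces $\Lambda H_i \subset \partial G_v$ (which, by the first part of the lemma, coincides with $\Lambda G_v$). Thus any $\xi \in \Lambda H_1 \cap \Lambda H_2$ satisfies $V(X^{H_1}) \cup V(X^{H_2}) \subset V(D(\xi))$, and by the convexity of $D(\xi)$ from \ref{finitedomain}, $X^{H_1} \cup X^{H_2} \subset D(\xi)$.

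The crux is then to produce a subgroup $H_\xi$ in $\cF$ that is contained in $H_1 \cap H_2$ and has $\xi$ in its limit set. I set $H_\xi := \bigcap_{v \in V(D(\xi))} G_v$; this belongs to $\cF$ because each $G_v$ is a conjugate of a vertex local group of $G(\cY)$. Since $D(\xi)$ is a finite connected subcomplex by \ref{finitedomain}, I choose a walk in $D(\xi)$ visiting every vertex, and because $\xi \in \partial G_\tau$ for every simplex $\tau \subset D(\xi)$ by definition of the domain, this walk upgrades to a genuine $\xi$-path. Lemma \ref{xipath1} then yields that $H_\xi$ is infinite and that $\xi \in \Lambda H_\xi$ inside $\partial G_{v_0}$ for any $v_0 \in V(D(\xi))$; applying the first part of the present lemma, this transfers to $\xi \in \Lambda H_\xi$ inside $\partial G$. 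Finally, since $G$ acts on $X$ without inversion and $V(X^{H_i}) \subset V(D(\xi))$, every element of $H_\xi$ fixes all vertices of each simplex appearing in the definition of $H_i$, hence fixes that simplex pointwise, so $H_\xi \subset H_i$ for $i = 1, 2$. Consequently $H_\xi \subset H_1 \cap H_2$ and $\xi \in \Lambda H_\xi \subset \Lambda(H_1 \cap H_2)$.

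The main obstacle I anticipate is the correct synthesis of the $\xi$-path machinery of \ref{xipath1} with the local limit set property to obtain a single global subgroup $H_\xi$ fixing the entire domain $D(\xi)$; once this is in hand, the remainder of the argument is a straightforward package of convexity of fixed-point sets, base-point independence of limit sets (via the fading property of \ref{fade}), and the without-inversion property of the $G$-action on $X$.
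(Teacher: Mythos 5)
Your proof is correct and follows essentially the same route as the paper's (which is far terser): for the first claim, the closedness of $\overline{EG_\sigma}$ in the compact Hausdorff space $\overline{EG}$, and for the reverse inclusion of the limit-set formula, Lemma \ref{xipath1} applied to a $\xi$-path running through the whole domain $D(\xi)$ to manufacture an infinite subgroup of $H_1 \cap H_2$ containing $\xi$ in its limit set. The only blemish is the parenthetical ``bounded (and therefore finite)'' for $X^{H_i}$ --- boundedness does not imply finiteness in a non-locally-finite complex --- but since you never actually use finiteness of $X^{H_i}$, nothing breaks.
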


\begin{proof}
The equivariant embedding $\overline{EG_\sigma} \hra  \overline{EG}$ induces an equivariant embedding $\partial G_\sigma \hra \Lambda G_\sigma \subset \partial G$. But since $\overline{EG_\sigma}$ is a closed subspace of $\overline{EG}$ by \ref{inducedtopology}, and which is stable under the action of $G_\sigma$, the reverse inclusion $\Lambda G_\sigma \subset \partial G_\sigma$ follows.

Now let $\sigma_1, \ldots, \sigma_n$ be simplices of $X$. The inclusion 
$$\Lambda( \bigcap_{1 \leq i \leq n} G_{\sigma_i}) \subset \bigcap_{1\leq i \leq n} \Lambda G_{\sigma_i} $$
is clear, and the reverse inclusion follows directly from \ref{xipath1}.
\end{proof}

\begin{lem}
 Let $X$ and $G$ be as in the satement of the main theorem. Then for every simplex $\sigma$ of $X$, the embedding $\overline{EG_\sigma} \hra \overline{EG}$ satisfies the convergence property.
\label{hereditary2}
\end{lem}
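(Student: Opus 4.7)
Plan. We lift the local convergence property (iii) to the embedding $\overline{EG_\sigma} \hookrightarrow \overline{EG}$ by combining the compactness of $\overline{EG}$ (Theorem \ref{compact}), the fade-at-infinity property (Proposition \ref{fade}), and the convergence criterion (Corollary \ref{convergencecriterion}). Fix $\widetilde{x} \in EG_\sigma$ and a sequence $(g_n)$ in $G$ with injective projection in $G/G_\sigma$. By compactness of $\overline{EG}$, after passing to a subsequence, $g_n \widetilde{x} \to \xi \in \overline{EG}$; properness of the $G$-action on $EG$ (with finite point stabilisers) together with the injectivity of the cosets $g_n G_\sigma$ forces $\xi \in \partial G$. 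To establish uniform Hausdorff convergence of $g_n \overline{EG_\sigma}$ to $\{\xi\}$ in $\overline{EG}$ it suffices, by compactness of $\overline{EG_\sigma}$ and of $\overline{EG}$, to prove that for every sequence $\widetilde{y}_n \in \overline{EG_\sigma}$ with $\widetilde{y}_n \to \widetilde{y} \in \overline{EG_\sigma}$, one has $g_n \widetilde{y}_n \to \xi$ in $\overline{EG}$. The argument then splits according to whether $\xi \in \partial X$ or $\xi \in \partial_{Stab} G$.

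In the case $\xi \in \partial X$, the convergence criterion gives $g_n p(\widetilde{x}) \to \xi$ in $\overline{X}$. By Proposition \ref{finitedomain} and acylindricity, the coarse projection $\bar p(\widetilde{z})$ of any $\widetilde{z} \in \overline{EG_\sigma}$ lies within a uniformly bounded neighbourhood of $\sigma$ in $X$, so $\bar p(g_n \widetilde{y}_n)$ is at bounded Hausdorff distance from $g_n p(\widetilde{x})$. Because the CAT(0) boundary topology on $\overline{X}$ is insensitive to bounded perturbations (this reduces to the standard asymptoticity of CAT(0) rays with bounded Gromov product), $\bar p(g_n \widetilde{y}_n)$ converges uniformly to $\xi$ in $\overline{X}$, and the convergence criterion yields $g_n \widetilde{y}_n \to \xi$ uniformly.

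The case $\xi \in \partial_{Stab} G$ is the main case. If $\widetilde{y} \in EG_\sigma$, the $\widetilde{y}_n$ eventually lie in a compact neighbourhood $K \subset EG_\sigma \subset EG$; applying the fade-at-infinity property (Proposition \ref{fade}) to the compact set $K \cup \{\widetilde{x}\}$ and using $g_n \widetilde{x} \to \xi$ yields that $g_n K \to \xi$ uniformly, whence $g_n \widetilde{y}_n \to \xi$. If instead $\widetilde{y} \in \partial G_\sigma$, we verify the convergence criterion directly: for any choice of $x_n \in \bar p(g_n \widetilde{y}_n)$, the uniform diameter bound on domains (Proposition \ref{finitedomain}) places $x_n$ within bounded distance of $g_n p(\widetilde{x})$, and the star lemma \ref{star} together with the reattachment lemma \ref{reattachment} control the exit simplex of $[v_0, x_n]$ from $D(\xi)$ in terms of the exit simplex of $[v_0, g_n p(\widetilde{x})]$, whose boundary already converges uniformly to $\xi$ by the convergence criterion applied to $g_n \widetilde{x} \to \xi$.

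The main obstacle is the last subcase ($\widetilde{y} \in \partial G_\sigma$): the fade-at-infinity property only applies to compact subsets of $EG$, so handling $\widetilde{y}_n$ escaping $EG_\sigma$ cannot rely on it and demands purely geometric tracking near $D(\xi)$. Here the uniform diameter bound on domains, acylindricity, and the star/reattachment lemmas must be combined to compare the exit simplex for a perturbed point $x_n \in \bar p(g_n \widetilde{y}_n)$ with that of the reference point $g_n p(\widetilde{x})$, and to transfer the uniform convergence of boundaries of exit simplices from one to the other.
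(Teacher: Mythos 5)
Your proposal starts the same way the paper does (fix $\widetilde{x} \in EG_\sigma$, extract a subsequence with $g_n\widetilde{x} \to \xi$, rule out $\xi \in EG$ by properness), but where the paper finishes in one line by reducing to the sequential compactness lemmas \ref{compact1} and \ref{compact2}, you attempt to re-derive the uniform convergence by hand, and the argument is not complete. The subcase you yourself flag as ``the main obstacle'' --- $\xi \in \partial_{Stab} G$ with $\widetilde{y}_n$ escaping into $\partial G_\sigma$ --- is precisely where the content of the lemma lives, and you only name tools there without carrying out the argument. Moreover, the tool you lean on, the star lemma \ref{star}, is the wrong one for this job: it is a local statement whose radius $\delta$ depends on the basepoint $x$ and comes with no uniformity in $n$, so it cannot control a perturbation of $g_n p(\widetilde{x})$ by a fixed distance (the uniform diameter bound on domains from \ref{finitedomain}) uniformly over the sequence. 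The uniform control one actually needs comes from paths of simplices of uniformly bounded length inside $\mbox{st}(D(\xi)) \setminus D(\xi)$ handled by the crossing and refinement lemmas \ref{passage} and \ref{refinement}, together with repeated applications of the local convergence property along the chain of simplices crossed by $[v_0, g_n p(\widetilde{x})]$.

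The clean fix is the paper's: observe that the proofs of \ref{compact1} and \ref{compact2} do not merely produce a limit for the point sequence $g_n\widetilde{x}$; in \ref{compact1} the convergence property is applied to the injective sequences of simplices $\sigma_k^{(n)}$ in those chains, and what is actually established is that the \emph{subspaces} $\overline{EG_{\sigma_k^{(n)}}}$ converge uniformly to $\xi$ --- in particular $g_n\overline{EG_\sigma}$ does (take $p(\widetilde{x})$ interior to $\sigma$, so $g_n\sigma$ is the last simplex of each chain), and similarly in the $\partial X$ case of \ref{compact2} the bounded-perturbation argument via \ref{CAT(0)nesting} upgrades pointwise to uniform convergence. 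Your reduction to sequences $\widetilde{y}_n \to \widetilde{y}$ and your treatment of the cases $\xi \in \partial X$ and $\widetilde{y} \in EG_\sigma$ (via \ref{fade}) are sound, but as written the proof has a genuine gap in the one case that matters most.
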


\begin{proof}
 Let $(g_nG_\sigma)$ be a sequence of distinct $G$-cosets. This yields an injective sequence of simplices $(g_n \sigma)$ of $X$. Let $\widetilde{x}$ be any point of $EG_\sigma$. By compacity of $\overline{EG}$, we can assume up to a subsequence that $g_n\widetilde{x}$ converges to an element $l \in \overline{EG}$. But it follows immediately from \ref{compact1} and \ref{compact2} that $l \in \partial G$ and that $g_n \overline{EG_\sigma}$ uniformly converges to $l$.
\end{proof}

\begin{lem}
 Let $X$ and $G$ be as in the statement of the main theorem. Then for every simplex $\sigma$ of $X$, the group $G_\sigma$ is of finite height in $G$. 
\label{hereditary3}
\end{lem}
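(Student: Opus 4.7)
Fix a simplex $\sigma$ of $X$. Given distinct cosets $g_1 G_\sigma, \ldots, g_n G_\sigma$ in $G/G_\sigma$ with $H := \bigcap_{i=1}^n g_i G_\sigma g_i^{-1}$ infinite, observe that the simplices $g_i \sigma$ are pairwise distinct (since the cosets are) and all pointwise stabilised by $H$, hence contained in the fixed-point set $\mbox{Fix}(H) \subset X$. My plan is to bound the number of simplices of $\mbox{Fix}(H)$ by a constant depending only on $X$, $Y$, $A$ and the complex of groups; this will cap $n$ and give the required finite height. The subcomplex $\mbox{Fix}(H)$ is convex in $X$ (the fixed set of a group of isometries of a CAT(0) space is convex), and, since $H$ is infinite, has diameter strictly less than $A$ by the acylindricity hypothesis $(i)$.

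The heart of the argument mirrors the proof of Proposition \ref{finitedomain} and consists in showing that $\mbox{Fix}(H)$ is locally finite with uniformly bounded branching. Given $\tau \in \mbox{Fix}(H)$ projecting to $\bar\tau \in Y$, and a simplex $\bar\tau' \supset \bar\tau$ in $Y$, the lifts $\tau' \supset \tau$ of $\bar\tau'$ in $X$ are parametrised by cosets in $G_\tau / G_{\tau'_0}$ for a chosen lift $\tau'_0$, and those belonging to $\mbox{Fix}(H)$ correspond to cosets $g G_{\tau'_0}$ with $H \subset g G_{\tau'_0} g^{-1}$. Any finite family of such distinct cosets has an intersection of conjugates containing the infinite group $H$, so the local finite height hypothesis $(iv)$ bounds their number by the height of $G_{\bar\tau'}$ in $G_{\bar\tau}$. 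Summing over the finitely many $\bar\tau' \supset \bar\tau$ in $Y$, and using that every face of a simplex of $\mbox{Fix}(H)$ automatically lies in $\mbox{Fix}(H)$ (stabilisers grow under taking faces), the closed star of any simplex of $\mbox{Fix}(H)$ has uniformly bounded size.

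Finally, $\mbox{Fix}(H)$ is a connected, locally finite subcomplex of the $M_\kappa$-complex $X$ with uniformly bounded vertex degree and metric diameter at most $A$. Since $X$ has finitely many isometry types of simplices, edge lengths are bounded below by some $l>0$, so the combinatorial diameter of $\mbox{Fix}(H)$ in its vertex-adjacency graph is at most $A/l$; combined with uniformly bounded local degree, this yields a uniform upper bound on the vertex count of $\mbox{Fix}(H)$, and hence on its simplex count. In particular $n$ is bounded, which gives the desired finite height.

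The main subtlety is precisely this last step, extracting a uniform global bound from a local-plus-diameter bound, which requires the $M_\kappa$-geometry of $X$ (uniform lower bound on edge lengths) to convert metric diameter into combinatorial diameter. Once this is in hand, the proof reuses the local-to-global machinery of Proposition \ref{finitedomain}, with distinct cosets of $G_\sigma$ in $G$ now playing the role previously played by distinct simplices in the star of a fixed simplex.
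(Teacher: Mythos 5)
Your proof is essentially correct but takes a genuinely different route from the paper's. The paper disposes of this lemma in three lines by reducing it to results already established for boundary points: since $H=\bigcap_i g_iG_\sigma g_i^{-1}$ is infinite, its limit set in $\partial G_v$ (for $v$ a vertex of $\sigma$) is nonempty by properness and compactness, and Lemma \ref{stabiliserfix} places every simplex pointwise fixed by $H$ --- in particular each $g_i\sigma$ --- inside the domain $D(\xi)$ of any $\xi\in\Lambda H$; the uniform bound on the size of domains from Proposition \ref{finitedomain} then caps $n$. You instead bound the fixed-point set $\mbox{Fix}(H)$ directly, re-running the convexity/local-finiteness/acylindricity scheme of \ref{finitedomain} without passing through the boundary at all. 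This buys something real: your argument uses only acylindricity and the finite height hypothesis $(iv)$, whereas the paper's route also leans on the limit set property (through \ref{stabiliserfix} and the finiteness part of \ref{finitedomain}). Your local-finiteness step --- parametrising the lifts of $\bar\tau'$ containing $\tau$ by $G_\tau/G_{\tau_0'}$ and applying $(iv)$ to the distinct cosets whose conjugates all contain the infinite group $H$ --- is correct and is essentially the mechanism the paper uses inside the proof of \ref{finitedomain}.

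The one step you should repair is the passage from ``convex, locally finite, metric diameter less than $A$'' to a uniform bound on the number of simplices. Your justification --- edge lengths are bounded below by $l$, hence the vertex-adjacency diameter is at most $A/l$ --- does not follow: the CAT(0) geodesic between two vertices of $\mbox{Fix}(H)$ cuts through higher-dimensional simplices rather than travelling along edges, so a metric diameter bound gives a \emph{lower} bound on the length of edge paths, not an upper bound on how many edges are needed. The correct bridge is that, by convexity, the geodesic between any two points of $\mbox{Fix}(H)$ stays in $\mbox{Fix}(H)$ and, since $X$ has finitely many isometry types of simplices, a geodesic of length at most $A$ crosses a uniformly bounded number of simplices (Bridson's taut-string estimates; compare \ref{containment} and \ref{finite}); consecutive simplices crossed are incident, which bounds the gallery-diameter of $\mbox{Fix}(H)$ and, combined with your uniform local bound, its total simplex count. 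To be fair, the paper elides exactly the same point (``locally finite and bounded, hence finite'' in the proof of \ref{finitedomain}), but the specific inference you wrote down is the one step of your proof that is not valid as stated.
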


\begin{proof}
 Let $g_1G_\sigma, \ldots, g_nG_\sigma$ be distinct $G$-cosets such that $g_1G_\sigma g_1^{-1} \cap \ldots \cap g_n G_\sigma g_n^{-1}$ is infinite. Thus the simplices $g_1 \sigma, \ldots, g_n\sigma$ of $X$ are distinct and such that their stabilisers have an infinite intersection. But as there is a uniform bound on the number of simplices contained in the domain of an element of $\partial_{Stab} G$ by \ref{finitedomain}, lemma \ref{stabiliserfix} implies that there is a uniform bound on the number of simplices whose stabilisers have an infinite intersection, hence the result.
\end{proof}

\begin{proof}[Proof of \ref{maintheorem1}:]
This follows from \ref{EZstructure}, \ref{hereditary1}, \ref{hereditary2} and \ref{hereditary3}
\end{proof}

\subsection{Boundaries in the sense of Carlsson-Pedersen.}
So far we have been concerned with the notion of an $E\cZ$-structure in the sense of Farrell-Lafont. We now turn to a slightly stronger notion of boundary, which also has stronger implications for the Novikov conjecture.
\begin{definition}
 Let $G$ be a group endowed with an $E\cZ$-structure in the sense of Farrell-Lafont $(\overline{EG}, \partial G)$. We say that $(\overline{EG}, \partial G)$ is an $E\cZ$-\textit{structure in the sense of Carlsson-Pedersen} if in addition we have: \\
For every finite group $H$ of $G$, the fixed point set $\overline{EG}^H$ is nonempty and admits $EG^H$ as a dense subset.
\end{definition}
The importance of such finer structures comes from the following implication.
\begin{thm}[\cite{CarlssonPedersenEZBoundaries} ]
 If $G$ admits an $E\cZ$-structure in the sense of Carlsson-Pedersen, then $G$ satisfies the integral Novikov conjecture.
\end{thm}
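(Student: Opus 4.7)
The plan is to adapt the descent-type argument of Carlsson and Pedersen, since the statement is theirs and not reproved here. First I would introduce the continuously controlled algebraic $K$- or $L$-theory spectrum associated to $\overline{EG}$ with control at the boundary $\partial G$, using the $\cZ$-set property of $\partial G$ in $\overline{EG}$ to ensure that germ neighbourhoods of $\partial G$ behave homotopically like deleted neighbourhoods. The equivariant assembly map $H^G_*(EG;\bbL) \to L_*(\bbZ G)$ fits into a comparison diagram with this controlled $K$/$L$-theory, and the goal is to split the assembly map using the compactification data.

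Next I would exploit the fact that $\overline{EG}$ is a Euclidean retract, in particular contractible: the controlled theory over the compactification collapses, so the controlled theory over $EG$ measures precisely the discrepancy between the source and target of the assembly map. The continuous extension of the $G$-action to all of $\overline{EG}$ is essential here — without equivariance of the compactification, one cannot even set up the controlled equivariant spectrum, much less promote the descent to the equivariant setting where isotropy is nontrivial.

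The main obstacle, and the reason the Carlsson-Pedersen strengthening of the Farrell-Lafont definition is required, is to handle the contribution of finite isotropy. One must identify the equivariant controlled spectrum with $H^G_*(EG;\bbL)$. This identification proceeds by a fixed-point computation (via a Mayer-Vietoris or equivariant spectral sequence argument), and at a finite subgroup $H \leq G$ it reduces to comparing $\overline{EG}^H$ with a completion of $EG^H$. The density assumption $\overline{EG^H}=\overline{EG}^H$ in the Carlsson-Pedersen definition is exactly what makes this local computation give the correct homological contribution. Without this density, one would retain a splitting of a \emph{rationalised} assembly map (giving the rational Novikov conjecture) but the integral statement would fail at torsion; overcoming this is the crux of the proof.
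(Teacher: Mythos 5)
This statement is not proved in the paper at all: it is quoted from Carlsson--Pedersen \cite{CarlssonPedersenEZBoundaries} and used as a black box, so there is no internal proof to compare your attempt against. Judged on its own terms, your outline does track the actual Carlsson--Pedersen descent strategy at the level of headlines: continuously controlled $K$- and $L$-theory with control at the boundary, the $\cZ$-set property relating germ neighbourhoods of $\partial G$ to deleted neighbourhoods, the contractibility of the compactification forcing the controlled theory over $\overline{EG}$ to collapse so that the theory over $EG$ measures the deviation of the assembly map from being split, and the fixed-set density condition entering precisely to handle finite isotropy in the integral (rather than rational) statement.

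As a proof, however, it has a structural gap: every substantive step is asserted rather than executed. You do not construct the controlled categories, you do not prove the flasqueness or vanishing over the compact contractible total space, you do not identify the fixed points of the controlled spectrum with $H^G_*(EG;\bbL)$, and you do not exhibit the splitting of the assembly map; the "fixed-point computation via a Mayer--Vietoris or spectral sequence argument" is named but not performed. More pointedly, you omit the axiom that compact sets fade at infinity, which is not decoration: it is what guarantees that the $G$-action on the boundary-controlled category is by controlled (hence admissible) functors, and without it the equivariant controlled spectrum you invoke is not even defined. If you intend this as a citation of Carlsson--Pedersen, as the paper does, say so explicitly; if you intend it as a proof, each of these steps must be supplied.
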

In our context, we will need an additional assumption on these $E\cZ$-structures. As explained below, this is by no mean a restrictive assumption.
\begin{definition} 
We say that an $E\cZ$-structure in the sense of Carlsson-Pedersen $(\overline{EG}, \partial G)$  is \textit{strong} if in addition we have the following:
\begin{center}For every finite group $H$ of $G$, $(\partial G)^H$ is either empty or a $\cZ$-set in $\overline{EG}^H$.\end{center}
\end{definition}

Without any assumption of a strong $E\cZ$-structure, it is still possible to prove the following partial result.
\begin{lem}
 Let $H \subset G$ be a finite subgroup. Then the closure of $EG^H$ in $\overline{EG}$ is exactly $\overline{EG}^H$.
\label{CPdense}
\end{lem}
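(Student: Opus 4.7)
Plan:

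The inclusion $\overline{EG^H} \subseteq \overline{EG}^H$ will be immediate: the $G$-action extends continuously to $\overline{EG}$ by Lemma \ref{basepoint}, so each $\mathrm{Fix}(h)$ is closed in the Hausdorff space $\overline{EG}$ (\ref{metrisable}), whence $\overline{EG}^H = \bigcap_{h\in H}\mathrm{Fix}(h)$ is closed and contains $\overline{EG^H}$. For the reverse inclusion I will show that every $z \in \overline{EG}^H$ lies in the closure of $EG^H$, splitting on whether $z$ lies in $EG$, $\partial X$, or $\partial_{Stab} G$.

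The case $z \in EG$ is trivial. For $z = \eta \in (\partial X)^H$, the Bruhat--Tits fixed point theorem applied to the finite group $H$ acting on the CAT(0) complex $X$ provides a nonempty fixed subcomplex $X^H$ (nonemptiness was already observed in the construction of $EG$). Using base-point independence of the topology (\ref{basepoint}), I may choose $v_0 \in X^H$. By uniqueness of geodesic rays in CAT(0) spaces, the ray $[v_0,\eta)$ is then pointwise fixed by $H$. For each $t>0$, the simplex $\sigma_t$ containing $\gamma_\eta(t)$ in its interior satisfies $H \subseteq G_{\sigma_t}$ because $H$ acts without inversion, so the classifying-space property of $EG_{\sigma_t}$ yields a nonempty $EG_{\sigma_t}^H$. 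Picking $\tilde{x}_t \in EG^H$ with $p(\tilde{x}_t) = \gamma_\eta(t)$, the convergence criterion \ref{convergencecriterion} gives $\tilde{x}_t \to \eta$ in $\overline{EG}$ as $t \to \infty$.

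For $z = \xi \in (\partial_{Stab} G)^H$, the domain $D(\xi)$ is $H$-invariant (since $D(h\xi) = hD(\xi)$), finite and convex by \ref{finitedomain}. Bruhat--Tits applied to the bounded CAT(0) complex $D(\xi)$ yields $D(\xi)^H \neq \varnothing$, and since $H$ acts without inversion $D(\xi)^H$ is a subcomplex, hence contains a vertex $v$. Then $H \subseteq G_v$ fixes $\xi \in (\overline{EG_v})^H$. Under the standing assumption that each local $E\cZ$-structure is in the sense of Carlsson--Pedersen, $EG_v^H$ is dense in $(\overline{EG_v})^H$, so one can find $(\tilde{x}_n) \subseteq EG_v^H$ converging to $\xi$ in $\overline{EG_v}$. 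By \ref{inducedtopology}, the natural topology on $\overline{EG_v}$ coincides with the subspace topology induced from $\overline{EG}$, so $\tilde{x}_n \to \xi$ in $\overline{EG}$ as well, placing $\xi$ in the closure of $EG^H$.

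The argument is essentially formal once Bruhat--Tits produces fixed points in the relevant convex subcomplexes; the only step that could be delicate is the passage from local to global density in the $\partial_{Stab} G$ case, but this is handled cleanly by \ref{inducedtopology} identifying the two topologies on $\overline{EG_v}$. No additional analysis of neighbourhood bases $V_{\cU,\varepsilon}(\xi)$ is needed, since convergence in $\overline{EG_v}$ automatically implies convergence in $\overline{EG}$.
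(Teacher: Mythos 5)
Your proof is correct and follows essentially the same route as the paper: the Bruhat--Tits fixed point theorem plus the no-inversion hypothesis produce an $H$-fixed vertex in $D(\xi)$ (resp.\ an $H$-fixed geodesic ray to $\eta$ issuing from $X^H$), after which local density of $EG_v^H$ in $\overline{EG_v}^H$ (resp.\ nonemptiness of $EG_\sigma^H$) transfers to $\overline{EG}$ via the induced-topology statement \ref{inducedtopology} or, equivalently, the definition of the neighbourhood bases. The only cosmetic differences are your explicit treatment of the easy inclusion $\overline{EG^H}\subseteq \overline{EG}^H$ and your use of the convergence criterion in place of the paper's direct neighbourhood argument.
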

\begin{proof}
As $EG$ is a classifying space for proper actions of $G$, $EG^H$ is nonempty. We now prove that it is dense in $\overline{EG}^H$.

Let $\xi \in \partial_{Stab} G \cap \overline{EG}^H$. The domain $D(\xi)$ is thus stable under the action of $H$. As $D(\xi)$ is a finite convex subcomplex of $X$, the fixed point theorem for CAT(0) spaces implies that there is a point of $D(\xi)$ fixed by $H$. Since the action is without inversion, we can further assume that $H$ fixes a vertex $v$ of $D(\xi)$. Moreover, $EG_v^H$ is dense in $\overline{EG_v^H}$. Thus, by definition of a basis of neighbourhoods at $\xi$, any neighbourhood of $\xi$ in $\overline{EG}$ meets $EG^H$. 

Now let $\eta \in \partial X \cap \overline{EG}^H$. Let $\gamma$ be a geodesic from a point of $X^H$ to $\eta$. Then $\gamma$ is fixed pointwise by $H$. Let $U$ be a neighbourhood of $\eta$ in $\overline{X}$. Since the path $\gamma$ eventually meets $U$, let $\sigma$ be a simplex of $X$ contained in $U$ and met by $\gamma$. Thus $\sigma$ is fixed pointwise by $H$. Now since $EG_\sigma ^H$ is nonempty by assumption, it follows that $EG ^H$ meets $V_U(\eta)$, and the result follows.
\end{proof}

However, the previous reasoning does not show the contractibility of $\overline{EG}^H$. We now reformulate our main theorem in the setting of $E\cZ$-structures in the sense of Carlsson-Pedersen.

\begin{definition}
  An $E\cZ$-complex of spaces \textit{in the sense of Carlsson-Pedersen} (compatible with the complex of groups $G(\cY)$) is a complex of spaces over a fundamental domain for the action satisfying the axioms of a compatible $E\cZ$-complex of spaces, with strong $E\cZ$-structures in the sense of Carlsson-Pedersen instead of $E\cZ$-structures in the sense of Farrell-Lafont.
\end{definition}

\begin{thm}
The combination theorem for boundaries of groups \ref{maintheorem1} remains true if ones replaces ``$E\cZ$-complexes of spaces'' with ``$E\cZ$-complexes of spaces in the sense of Carlsson-Pedersen''.
\end{thm}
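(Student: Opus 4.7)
The strategy is to run the proof of Theorem \ref{maintheorem1} verbatim and then verify the additional fixed-point conditions required by Carlsson-Pedersen (and in fact by its strong form). Since a strong $E\cZ$-structure in the sense of Carlsson-Pedersen is in particular an $E\cZ$-structure in the sense of Farrell-Lafont, all the hypotheses of Theorem \ref{maintheorem1} are satisfied and $(\overline{EG}, \partial G)$ is already an $E\cZ$-structure in the sense of Farrell-Lafont. It remains to analyse the fixed-point set $\overline{EG}^H$ for every finite subgroup $H \le G$.

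The Carlsson-Pedersen density condition $\overline{EG^H} = \overline{EG}^H$ is precisely Lemma \ref{CPdense}. Its proof invokes only the density of $EG_v^H$ in $\overline{EG_v}^H$ for each vertex $v$, which is provided by the Carlsson-Pedersen part of the stronger hypothesis on local groups; in particular $\overline{EG}^H$ is nonempty. This already upgrades the conclusion of Theorem \ref{maintheorem1} to an $E\cZ$-structure in the sense of Carlsson-Pedersen, which is the statement of the theorem.

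To obtain in addition that this structure is strong, which is the natural form required to iterate the construction, I would apply the Bestvina-Mess criterion \ref{BestvinaMess} to the pair $(\overline{EG}^H, (\partial G)^H)$. The space $\overline{EG}^H$ is compact, metrisable and finite-dimensional as a closed subspace of $\overline{EG}$; $(\partial G)^H$ is nowhere dense by the previous paragraph; $EG^H$ is contractible because $EG$ is a classifying space for proper actions, and it is locally contractible as a CW-subcomplex of $EG$. The only nontrivial step is the null-homotopy condition, which I would establish by redoing the proofs of Propositions \ref{Zset1} and \ref{Zset2} while tracking $H$-equivariance at every stage. For $\eta \in (\partial X)^H$, the set $X^H$ is a nonempty convex subcomplex of the CAT(0) space $X$ containing a geodesic ray towards $\eta$, hence provides a cofinal family of $H$-invariant neighbourhoods to which the argument of \ref{Zset1} applies on the $H$-fixed subcomplex. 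For $\xi \in (\partial_{Stab} G)^H$, the finite $H$-stable convex subcomplex $D(\xi)$ contains a vertex $v$ fixed by $H$ (CAT(0) fixed-point theorem), and the strong local hypothesis at $v$ yields $H$-invariant contractible neighbourhoods of $\xi$ in $\overline{EG_v}^H$ at any desired level of nesting, which extend to an $H$-invariant $\xi$-family thanks to the equivariance of the local embeddings.

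The main obstacle is the equivariance bookkeeping inside Proposition \ref{Zset2}: the refined cones $\mbox{Span}(\mbox{Cone}_{\cU', \varepsilon}(\xi))$, the auxiliary subcomplex $X'$ with its adapted simplicial structure, and the contractible fibres $C'_\sigma$ all have to be chosen $H$-stably. This is feasible because the $H$-action respects the $M_\kappa$-structure on $X$ and the cocycle structure of the $E\cZ$-complex of spaces, so one can always symmetrise a chosen $\xi$-family under the finite group $H$; combined with the strong Carlsson-Pedersen property of the local boundaries, this produces $H$-equivariant versions of the cascade of homotopy equivalences used in \ref{Zset2}, from which the desired null-homotopy on fixed-point sets follows.
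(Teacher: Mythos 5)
Your proposal is correct and follows essentially the same route as the paper: Lemma \ref{CPdense} for the density of $EG^H$ in $\overline{EG}^H$, then the Bestvina--Mess criterion \ref{BestvinaMess} applied to the pair $(\overline{EG}^H,(\partial G)^H)$, re-running the arguments of \ref{Zset1} and \ref{Zset2} over the convex CAT(0) subcomplex $X^H$ with fibres $EG_\sigma^H$, the CAT(0) fixed-point theorem to locate an $H$-fixed vertex of $D(\xi)$, and the strong local $\cZ$-set hypothesis. The one caveat is that the contractibility of $\overline{EG}^H$, which you present as an optional upgrade to a ``strong'' structure after declaring the theorem already proved via \ref{CPdense}, is exactly what the paper treats as the heart of the Carlsson--Pedersen condition, so that step should not be regarded as dispensable.
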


\begin{proof}
 The only thing to prove is that $(\overline{EG}, \partial G)$ is an $E\cZ$-structure in the sense of Carlsson-Pedersen. We already know that it is an $E\cZ$-structure in the sense of Farrell-Lafont by Theorem 0.1 in the case of $E\cZ$-structures in the sense of Farrell-Lafont. Let $H$ be a finite subgroup of $G$. To prove that $\overline{EG}^H$ is contractible, we want to apply the lemma \ref{BestvinaMess} of Bestvina-Mess to the pair $(\overline{EG}^H, \overline{EG}^H \setminus EG^H)$. 

In order to do this, first notice that $EG^H$ is nothing but the complex of spaces over $X^H$ with fibres the subcomplexes $EG_\sigma^H$ of $EG_\sigma$. Thus, it is possible to apply the exact same reasoning with $X^H$ in place of $X$ and the $EG_\sigma^H$ in place of the $EG_\sigma$. As $X^H$ is a convex, hence contractible sucomplex of $X$, this is enough to recover the fact that $EG^H$ is contractible. 

Now, notice that, because of \ref{CPdense},  $\overline{EG}^H$ is obtained from $EG^H$ by the same procedure as before, compatifying every $EG_\sigma^H$ (for $\sigma$ a simplex fixed under $H$) by $\overline{EG_\sigma}^H$ and adding the visual boundary of the CAT(0) subcomplex $X^H$, $\partial (X^H) = (\partial X)^H$. We now briefly indicate why this is enough to prove the $\cZ$-set property for $(\overline{EG}^H, \overline{EG}^H \setminus EG^H)$. The only properties that were required are the fact that $X$ is a CAT(0) space, the convergence properties of the embeddings between the various classifying spaces, and the fact that $\partial G_\sigma$ is a $\cZ$-set in $\overline{EG_\sigma}$. But since $X^H$ is convex in a CAT(0) space, it is itself CAT(0). Moreover, the convergence properties of the embeddings are clearly still satisfied for simplices that are fixed under $H$. Finally, by assumption,  $(\partial G_\sigma)^H$ is a $\cZ$-set in $\overline{EG_\sigma}^H$. Thus, the same reasoning as in \ref{Zset1} and \ref{Zset2} shows that the lemma \ref{BestvinaMess} of Bestvina-Mess applies, thus implying that $(\overline{EG}^H, \overline{EG}^H \setminus EG^H)$ is a $\cZ$-compactification, and we are done. \end{proof}

\section{A high-dimensional combination theorem for hyperbolic groups.}  
In this section, we apply our construction of boundaries to get a generalisation of a combination theorem of Bestvina-Feighn to complexes of groups of arbitrary dimension. 

This will be done by constructing an $E\cZ$-structure for $G$ and proving that $G$ is a uniform convergence group on its boundary. Note that this proof has the advantage of yielding a construction of the Gromov boundary of $G$.

In the following, $G(\cY)$ will be a complex of groups over a simplicial complex $Y$ satisfying the conditions of \ref{maintheorem2}. We will denote by $G$ the fundamental group of $G(\cY)$ and by $X$ a universal covering.

\subsection{A few facts about hyperbolic groups and quasiconvex subgroups.}

We start by recalling here a few elementary facts about hyperbolic groups. There is an extensive litterature about such groups, and we refer the reader to \cite{CoornaertDelzantPapadopoulos}, \cite{GromovHyperbolicGroups} for more details.

\begin{lem}
\begin{itemize}
\item Let $H_1 \leq H_2 \leq H$ be three hyperbolic groups. If $H_1$ is quasiconvex in $H_2$, and $H_2$ is quasiconvex in $H$, then $H_1$ is quasiconvex in $H$. If both $H_1$ and $H_2$ are quasiconvex in $H$, then $H_1$ is quasiconvex in $H_2$. 
\item (Gromov \cite{GromovAsymptotic}) Let $H$ be a hyperbolic group, and $H_1,H_2$ two quasiconvex subgroups. Then $H_1\cap H_2$ is quasiconvex in $H$, and $\Lambda(H_1 \cap H_2) = \Lambda H_1 \cap \Lambda H_2$. \qedhere
\end{itemize}
\label{lemmaquasiconvex}
\end{lem}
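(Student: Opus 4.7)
The statement collects classical facts about quasiconvexity in hyperbolic groups, so the plan is to sketch standard Cayley graph arguments rather than introduce new machinery. I fix word metrics on $H, H_2, H_1$ coming from finite generating sets, and use repeatedly the Morse/stability lemma for quasi-geodesics in $\delta$-hyperbolic spaces, together with the observation that a quasiconvex subgroup $K$ of a hyperbolic group $\Gamma$ is undistorted in $\Gamma$, so that the inclusion $(K, d_K)\hookrightarrow(\Gamma, d_\Gamma)$ is a $(\lambda,\mu)$-quasi-isometric embedding for some constants depending only on the quasiconvexity constant and $\delta$.

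For the first transitivity statement, I take $h, h'\in H_1$ and an $H$-geodesic $\gamma$ between them. Since $H_2$ is quasiconvex in $H$, $\gamma$ lies in a uniform neighbourhood of $H_2$; projecting its vertices to nearby points of $H_2$ produces an $H$-quasi-geodesic contained in $H_2$. By the undistortion of $H_2$ this is also an $H_2$-quasi-geodesic $\gamma'$ from $h$ to $h'$, and by Morse in $H_2$, $\gamma'$ stays within bounded $H_2$-distance of an $H_2$-geodesic between $h$ and $h'$. By quasiconvexity of $H_1$ in $H_2$, the latter lies in a neighbourhood of $H_1$; composing the estimates gives a uniform bound on the $H$-distance from $\gamma$ to $H_1$, which is quasiconvexity of $H_1$ in $H$. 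Conversely, if $H_1,H_2$ are both quasiconvex in $H$, an $H_2$-geodesic $\eta$ between $h,h'\in H_1$ is an $H$-quasi-geodesic (again by undistortion of $H_2$), hence stays close in $H$ to an $H$-geodesic between $h,h'$, which itself stays close to $H_1$; transferring back through the quasi-isometric embedding yields quasiconvexity of $H_1$ inside $H_2$.

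For the second statement, the inclusion $\Lambda(H_1\cap H_2)\subset \Lambda H_1\cap \Lambda H_2$ is immediate. The substantive content is the \emph{double fellow-travelling lemma}: there exists a constant $R$ (depending on $\delta$ and the quasiconvexity constants) such that any vertex $x\in H$ lying within distance $C$ of both $H_1$ and $H_2$ in fact lies within distance $R(C)$ of $H_1\cap H_2$. This is the standard Short/Gromov argument: translate $x$ by an element of $H_1$ to land in a bounded neighbourhood of $1$, use that the resulting element must then also be close to $H_2$, and bound the number of such coset representatives by cocompactness; I would quote it from \cite{GromovAsymptotic} or Short's exposition. Granted this, quasiconvexity of $H_1\cap H_2$ follows because any vertex on an $H$-geodesic between two elements of $H_1\cap H_2$ is uniformly close to both $H_1$ and $H_2$. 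For the limit set equality, a point $\xi\in\Lambda H_1\cap\Lambda H_2$ admits a geodesic ray $r$ from $1$ to $\xi$; viewing $r$ as a limit of geodesic segments with endpoints in $H_i$ shows that $r$ lies in a bounded neighbourhood of $H_i$ for $i=1,2$, so every vertex on $r$ is uniformly close to $H_1\cap H_2$, whence $\xi\in\Lambda(H_1\cap H_2)$.

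The only nontrivial step is the double fellow-travelling lemma; everything else is a packaging of Morse lemma and undistortion. Since the paper presents the lemma as recalled background attributed to Gromov, I would keep the proof short, carry out the Morse-based arguments of the first bullet in detail, and simply invoke \cite{GromovAsymptotic} (or a modern reference such as Short's or Arzhantseva's notes) for the double fellow-travelling statement used in the second bullet.
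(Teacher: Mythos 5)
Your proposal is correct. Note, however, that the paper offers no proof of this lemma at all: it is stated as recalled classical background, with the second bullet attributed to \cite{GromovAsymptotic}, so there is no argument in the paper to compare yours against. Your sketch is the standard one — undistortion of quasiconvex subgroups plus the Morse lemma for both directions of the first bullet, and Short's double fellow-travelling lemma (a point uniformly close to two quasiconvex subgroups is uniformly close to their intersection) for the second — and you correctly isolate that last lemma as the only substantive input, outsourcing it to the literature exactly as the paper implicitly does.
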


\begin{cor}
Let $\Gamma$ be a finite connected graph contained in the $1$-skeleton of $X$, and $ \Gamma' \subset  \Gamma$ a connected subgraph. Then $\cap_{v \in \Gamma} G_v$ is hyperbolic and quasiconvex in $\cap_{v \in \Gamma'} G_v$. 
\label{quasiconvexgraph}
\end{cor}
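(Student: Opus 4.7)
My plan is to prove by induction on $|V(\Gamma)|$ the following strengthened statement: for every finite connected graph $\Gamma \subset X^{(1)}$ and every vertex $v_0 \in V(\Gamma)$, the subgroup $H_\Gamma := \bigcap_{v \in V(\Gamma)} G_v$ is hyperbolic and quasiconvex in $G_{v_0}$. The corollary will then follow immediately: fixing any $v_0 \in V(\Gamma')$, both $H_\Gamma$ and $H_{\Gamma'}$ are quasiconvex in $G_{v_0}$ with $H_\Gamma \leq H_{\Gamma'} \leq G_{v_0}$, so the downward-transitivity clause of Lemma \ref{lemmaquasiconvex} gives that $H_\Gamma$ is quasiconvex in $H_{\Gamma'}$, and hyperbolicity is automatic.

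The base case $|V(\Gamma)|=1$ is trivial since $H_\Gamma = G_{v_0}$ is hyperbolic by hypothesis. For the inductive step I will pick a spanning tree $T$ of $\Gamma$ containing $v_0$ and choose a leaf $w \neq v_0$ of $T$ (which exists since a finite tree with at least two vertices has at least two leaves); let $v_1$ be its unique $T$-neighbour and $e := [w,v_1]$. Setting $\Gamma_0 := \Gamma \setminus \{w\}$, the spanning tree $T \setminus \{w\}$ shows $\Gamma_0$ is still connected, and it contains both $v_0$ and $v_1$. The key algebraic identity is
$$ H_\Gamma \;=\; H_{\Gamma_0} \cap G_w \;=\; H_{\Gamma_0} \cap G_{v_1} \cap G_w \;=\; H_{\Gamma_0} \cap G_e, $$
using that $v_1 \in V(\Gamma_0)$ and $G_{v_1} \cap G_w = G_e$ inside $G$.

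The main conceptual point is then to apply Lemma \ref{lemmaquasiconvex} inside the correct ambient hyperbolic group at each stage. Applying the induction hypothesis to $\Gamma_0$ with basepoints $v_0$ and $v_1$ yields that $H_{\Gamma_0}$ is hyperbolic and quasiconvex in both $G_{v_0}$ and $G_{v_1}$; the standing quasiconvexity hypothesis on local maps makes $G_e$ quasiconvex in $G_{v_1}$. The intersection clause of Lemma \ref{lemmaquasiconvex}, applied inside the hyperbolic group $G_{v_1}$, then shows $H_\Gamma = H_{\Gamma_0} \cap G_e$ is quasiconvex in $G_{v_1}$. The downward-transitivity clause, applied to $H_\Gamma \leq H_{\Gamma_0} \leq G_{v_1}$, upgrades this to quasiconvexity of $H_\Gamma$ in $H_{\Gamma_0}$. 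Finally, ordinary transitivity applied to $H_\Gamma \leq H_{\Gamma_0} \leq G_{v_0}$ closes the induction, and hyperbolicity of $H_\Gamma$ follows from being quasiconvex in the hyperbolic group $G_{v_0}$. The only real subtlety will be juggling between the two ambient groups $G_{v_0}$ and $G_{v_1}$ to apply Lemma \ref{lemmaquasiconvex} in the correct order; everything else is bookkeeping on the tree $T$.
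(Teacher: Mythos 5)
Your argument is correct and is exactly the induction the paper has in mind: the paper's proof of this corollary is the one-line remark that it ``follows from an easy induction on the number of vertices of $\Gamma$, together with Lemma \ref{lemmaquasiconvex}'', and your leaf-deletion induction (using $G_w \cap G_{v_1} = G_e$ and the three clauses of that lemma in the ambient groups $G_{v_1}$ and $G_{v_0}$) is a correct and complete way to carry it out.
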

\begin{proof}
 This follows from an easy induction on the number of vertices of $\Gamma$, together with lemma \ref{lemmaquasiconvex}.
\end{proof}
Recall that in the case of a hyperbolic group $H$, there is a very explicit example of classifying space for proper actions, namely the Rips complex. Moreover, there is a natural notion of boundary, namely the Gromov boundary of $H$ (see \cite{CoornaertDelzantPapadopoulos}). 
\begin{thm}[ \cite{BestvinaMessBoundaryHyperbolic}, \cite{MeintrupSchickEGhyperbolic} ]
 Let $H$ be a finitely generated hyperbolic group, $H'$ a finitely generated subgroup, and $S$ a finite generating set of $H$ that contains a finite generating set of $H'$. For $n >> 0$, the Rips complex $P_n(H)$ is contractible and there is a topology on $P_n(H) \cup  \partial H$ such that $(P_n(H) \cup  \partial H, \partial H)$ is an $E\cZ$-structure for $H$. Furthermore, if $H'$ is quasiconvex in $H$, the equivariant embedding $P_n(H') \hra P_n(H)$ naturally extends to an equivariant embedding $P_n(H') \cup \partial H' \hra P_n(H) \cup \partial H$. \qed
\end{thm}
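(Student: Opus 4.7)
The theorem comprises three assertions: contractibility of $P_n(H)$, the $E\cZ$-structure on $P_n(H) \cup \partial H$, and the extension property for quasiconvex subgroups. My plan is to treat each in turn, reproducing the Rips radial filling argument for the first, following the strategy of Bestvina-Mess (extended by Meintrup-Schick to allow torsion) for the second, and using standard quasi-isometric invariance of the Gromov boundary for the third.

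For the contractibility of $P_n(H)$, the plan is to carry out Rips' original radial filling argument. Fix a basepoint $x_0$ and let $\delta$ be a hyperbolicity constant for the Cayley graph of $H$. For $n$ sufficiently large compared to $\delta$, given a simplex $\{g_0, \ldots, g_k\}$ of $P_n(H)$, I replace each $g_i$ by a point one step closer to $x_0$ along a fixed geodesic $[x_0, g_i]$. Thinness of geodesic triangles with tip $x_0$ implies that any two such replacements remain within distance at most $n$ of each other, so that the new vertices together with the old ones still span a simplex of $P_n(H)$. Iterating this ``pull toward $x_0$'' operation finitely many times contracts the simplex to $x_0$, and a standard combinatorial argument assembles these local contractions into a global contraction of the complex.

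For the $E\cZ$-structure, the plan is to topologize $P_n(H) \cup \partial H$ by declaring as a basis of neighborhoods of $\xi \in \partial H$ the ``shadow neighborhoods''
\[ \mathcal{O}_R(\xi) = \{z \in P_n(H) \cup \partial H : (z|\xi)_{x_0} > R\}, \]
and then verify the Farrell-Lafont axioms in turn. Compactness is immediate from the compactness of the Gromov bordification of the Cayley graph. The $\cZ$-set property is established via the Bestvina-Mess lemma \ref{BestvinaMess} quoted in the paper: each shadow neighborhood deformation retracts onto a ball of $P_n(H)$ via geodesic projection toward $x_0$, and such balls are contractible by the first assertion, so small shadows null-homotope inside larger ones. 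Compact sets fade at infinity by a thin-triangles estimate: if $gK$ meets $\mathcal{O}_R(\xi)$, then every point of $gK$ has Gromov product with $\xi$ at least $R - \mathrm{diam}(K) - 2\delta$. Continuous extension of the $H$-action to $\partial H$ is a classical consequence of hyperbolicity since $H$ acts by isometries on itself; and the fixed-set conditions for finite subgroups come from the fact that a finite $F \leq H$ has bounded orbit in the Cayley graph, hence fixes a nonempty contractible subcomplex of $P_n(H)$ obtained by applying the same Rips argument equivariantly. The main obstacle is the $\cZ$-set condition, since it requires reconciling the combinatorial topology of shadows in $P_n(H)$ with the coarse geometry of $\partial H$; this is the technical heart of \cite{BestvinaMessBoundaryHyperbolic} and the point that requires the most care.

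For the third assertion, the hypothesis that $S$ contains a generating set of $H'$ together with quasiconvexity of $H'$ ensures that the inclusion $H' \hookrightarrow H$ is a quasi-isometric embedding of Cayley graphs, which induces an equivariant simplicial embedding $P_n(H') \hookrightarrow P_n(H)$. Quasi-isometric embeddings of hyperbolic spaces extend continuously to Gromov boundaries, and the image of the extension is the limit set $\Lambda H'$ by Bowditch's characterization of quasiconvex subgroups cited in remark (i) after definition \ref{limitsetproperty}. Continuity of the combined extension $P_n(H') \cup \partial H' \hookrightarrow P_n(H) \cup \partial H$ is then immediate from the fact that Gromov products on $H'$ agree with those on $H$ up to an additive constant, so shadows of $\xi \in \partial H'$ map into shadows of its image in $\partial H$.
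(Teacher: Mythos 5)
The paper offers no proof of this statement: it is quoted as a known theorem of Bestvina--Mess and Meintrup--Schick and closed with a \qed, so there is no internal argument to compare yours against. Your sketch follows the standard route of those references (radial filling for contractibility, shadow neighbourhoods and the Bestvina--Mess criterion for the $\cZ$-set property, fixed-point sets of finite subgroups for the Meintrup--Schick refinement, and boundary extension of quasi-isometric embeddings for the quasiconvex case), and at the level of strategy it is the right reconstruction.

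One step, as literally written, does not work: in the contractibility argument you move \emph{every} vertex of a simplex one unit closer to $x_0$ and claim thinness keeps all pairwise distances at most $n$. Even in the tree-approximation picture this only gives $d(g_i',g_j')\leq d(g_i,g_j)+C\delta$ for a universal constant $C$, since the comparison tripod controls distances only up to an additive error in $\delta$; so the simplex condition can fail after a single step, and the error does not obviously stay bounded under iteration. The standard repair (see Bridson--Haefliger III.$\Gamma$.3.23) is to work with a finite subcomplex, move only a vertex $w$ at \emph{maximal} distance from $x_0$, and move it by a definite amount $t\approx n/2$ along $[x_0,w]$: the maximality of $|w|$ gives $(x_0\mid v)_w\geq \tfrac12 d(v,w)$ for every neighbour $v$, whence $d(w',v)\leq n$ for $n\gg\delta$, and $\{w,w'\}$ joined to the old simplices provides the homotopy; one then iterates over vertices and inducts on $\max_i d(x_0,g_i)$. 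The rest of your outline (the shadow topology, fading of compacta via the $2\delta$-estimate on Gromov products, and the limit-set identification for quasiconvex $H'$) is consistent with the cited sources, with the usual caveat that the $\cZ$-set verification is where the real technical work of \cite{BestvinaMessBoundaryHyperbolic} lives, as you acknowledge.
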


\subsection{Construction of an $E\cZ$-complex of space compatible with $G(\cY)$.}
We now define an $E\cZ$-complex of spaces over $Y$ as follows:
\begin{itemize}
 \item  We define inductively sets of generators for the local groups of the complex of groups $G(\cY)$ induced over $Y$ in the following way: Start with simplices $\sigma$ of $Y$ of maximal dimension, and choose for each of them a finite symmetric set of generators for $G_\sigma$. Suppose we have defined a set of generators for local groups over simplices of dimension at most $k$. If $\sigma$ is a simplex of dimension $k-1$, choose a finite set of generators which contains all the generators of local groups of simplices strictly containing $\sigma$. This allows us to define for every simplex $\sigma$ of $Y$ a set of generator such that $S_\sigma \subset S_{\sigma'}$ whenever $\sigma' \subset \sigma$.
 \item Let $n \geq 1$ be an integer. Define $D_\sigma$ as the Rips complex $P_n(G_\sigma)$ associated to the set of generators $S_\sigma$. Moreover, if $\sigma \subset \sigma'$, let $\phi_{\sigma, \sigma'}$ be the equivariant embedding $P_n(G_{\sigma'}) \hra P_n(G_\sigma)$.
 \item Since there are only finitely many hyperbolic groups involved, choose $n \geq 0$ such that all the previously defined Rips complexes are contractible.
\end{itemize}
It follows from the above discussion that 
\begin{prop}
 The complex of spaces $D(\cY)$ is compatible with the complex of groups $G(\cY)$. \qed
\end{prop}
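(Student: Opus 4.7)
The plan is to verify, in turn, each of the axioms of Definition~\ref{EZcomplexofspaces}. Since $Y$ is a finite complex, there are only finitely many local groups, so we may choose once and for all an integer $n$ large enough that every Rips complex $P_n(G_\sigma)$ is contractible and, in fact, a classifying space for proper actions of $G_\sigma$. By the results of Bestvina--Mess \cite{BestvinaMessBoundaryHyperbolic} and Meintrup--Schick \cite{MeintrupSchickEGhyperbolic}, $\overline{EG_\sigma} = P_n(G_\sigma) \cup \partial G_\sigma$ (with $\partial G_\sigma$ the Gromov boundary) then carries a topology making $(\overline{EG_\sigma}, \partial G_\sigma)$ an $E\cZ$-structure in the sense of Farrell--Lafont.

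Next I would construct the gluing maps. For an edge $a$ of $\cY$, the local map $\psi_a\colon G_{t(a)} \to G_{i(a)}$ is a quasiconvex embedding by hypothesis, and the inductive construction of generating sets yields $\psi_a(S_{t(a)}) \subset S_{i(a)}$. Hence $\psi_a$ does not increase word length, so any subset of $G_{t(a)}$ of $S_{t(a)}$-diameter at most $n$ maps to a subset of $G_{i(a)}$ of $S_{i(a)}$-diameter at most $n$. The vertex map $g \mapsto \psi_a(g)$ therefore extends uniquely to a simplicial map $\phi_a\colon P_n(G_{t(a)}) \to P_n(G_{i(a)})$, which is $G_{t(a)}$-equivariant by the definition of the action on a Rips complex: $\phi_a(g\cdot g') = \psi_a(gg') = \psi_a(g)\psi_a(g') = \psi_a(g)\cdot \phi_a(g')$. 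Because $\psi_a(G_{t(a)})$ is quasiconvex in $G_{i(a)}$, the map $\phi_a$ is an embedding, and the quoted theorem of Bestvina--Mess asserts that it extends canonically and equivariantly to an embedding $\overline{EG_{t(a)}} \hookrightarrow \overline{EG_{i(a)}}$ restricting to a homeomorphism from $\partial G_{t(a)}$ onto its image.

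Finally I would verify the cocycle condition $g_{a,b} \circ \phi_{ab} = \phi_a \circ \phi_b$ for every pair of composable edges $(a,b)$. On the group level, the defining relation of a complex of groups is precisely the identity $\psi_a \circ \psi_b = \mathrm{Ad}(g_{a,b}) \circ \psi_{ab}$. Since each $\phi_c$ is the unique simplicial extension of $\psi_c$ on vertices, and the $G_{i(a)}$-action on $P_n(G_{i(a)})$ is by left multiplication on vertices extended simplicially, this identity lifts verbatim to the Rips complexes, and then passes to their $E\cZ$-compactifications by continuity using the density of the vertex sets.

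No single step here is deep; the proposition is essentially a bookkeeping verification that the natural data built from Rips complexes satisfies the axioms of Definition~\ref{EZcomplexofspaces}. The only subtlety is to be careful with edge orientations and the direction of the various inclusions, and in particular to ensure that the inductive choice of generating sets really is compatible with each local map $\psi_a$ in the precise sense $\psi_a(S_{t(a)}) \subset S_{i(a)}$. This is exactly what the construction of the $S_\sigma$'s (proceeding from higher- to lower-dimensional simplices) is designed to achieve.
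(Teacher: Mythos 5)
Your construction is the same as the paper's: the statement is left there as an immediate consequence of the preceding discussion (nested generating sets $S_\sigma \subset S_{\sigma'}$ for $\sigma' \subset \sigma$, Rips complexes $P_n(G_\sigma)$ with $n$ chosen uniformly large, and the induced simplicial inclusions as gluing maps), and most of your verification of the axioms is sound. However, your treatment of the cocycle condition contains a genuine error. You assert that the relation $\psi_a \circ \psi_b = \mathrm{Ad}(g_{a,b}) \circ \psi_{ab}$ ``lifts verbatim'' to $g_{a,b} \circ \phi_{ab} = \phi_a \circ \phi_b$. Evaluate both sides on a vertex $g'$ of the relevant Rips complex: one side is $g_{a,b}\,\psi_{ab}(g')$, because the twisting element acts on a Rips complex by \emph{left translation}, whereas the other is $\psi_a(\psi_b(g')) = g_{a,b}\,\psi_{ab}(g')\,g_{a,b}^{-1}$, a \emph{conjugate}. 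These agree for all $g'$ only when $g_{a,b}=1$. So with the naive vertex maps $\phi_c = \psi_c$ the cocycle identity does not follow from the complex-of-groups relation in general; for a nontrivial cocycle one would have to build a correcting twist into the maps $\phi_a$ themselves.

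The reason the proposition is nevertheless true in context is that Theorem \ref{maintheorem2} assumes $G(\cY)$ is a \emph{simple} complex of groups, so all $g_{a,b}$ are trivial, $\psi_{ab} = \psi_a \circ \psi_b$, and the required identity reduces to $\phi_{ab} = \phi_a \circ \phi_b$, which holds on vertices and hence everywhere (a simplicial map is determined by its vertex map), and then extends to the compactifications by continuity. You should invoke the simplicity hypothesis explicitly rather than the general $\mathrm{Ad}(g_{a,b})$ relation. Two smaller inaccuracies: $\phi_a$ is an embedding of Rips complexes simply because it is an injective simplicial map --- quasiconvexity is what you need for the \emph{boundary} extension to be an embedding onto the limit set, via the quoted Bestvina--Mess/Meintrup--Schick statement; and it is density of $P_n(G_{t(a)})$ in $\overline{EG_{t(a)}}$, not density of the vertex set, that lets you propagate the identity to the compactifications.
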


\begin{lem}
 The $E\cZ$-complex of spaces $D(\cY)$ satisfies the limit set property.
\end{lem}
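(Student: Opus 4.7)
The plan is to reduce the two clauses of Definition~\ref{limitsetproperty} to the classical results on quasiconvex subgroups of hyperbolic groups recorded in Lemma~\ref{lemmaquasiconvex} and in the Remarks following Definition~\ref{limitsetproperty}. No genuinely new input is needed beyond the standing hypotheses of Theorem~\ref{maintheorem2} that each local map $\psi_a$ is a quasiconvex embedding.

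For the first clause, I fix $\sigma \subset \sigma'$ in $Y$ and write $\phi_{\sigma,\sigma'} : G_{\sigma'} \hra G_\sigma$ as a composition of local maps $\psi_a$ along a path in the first barycentric subdivision of $Y$ from $\sigma'$ down to $\sigma$. Each $\psi_a$ is a quasiconvex embedding by hypothesis, and the composition property of quasiconvex embeddings between hyperbolic groups (first part of Lemma~\ref{lemmaquasiconvex}) therefore shows that $\phi_{\sigma,\sigma'}(G_{\sigma'})$ is a quasiconvex subgroup of $G_\sigma$. In our construction the fibres $D_\sigma, D_{\sigma'}$ are the Rips complexes $P_n(G_\sigma), P_n(G_{\sigma'})$, and $\phi_{\sigma,\sigma'}$ is the natural equivariant Rips embedding, which by Bestvina--Mess and Meintrup--Schick extends equivariantly and continuously to the Gromov boundaries. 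By Bowditch's theorem (first Remark after Definition~\ref{limitsetproperty}), this extension realises an equivariant homeomorphism from $\partial G_{\sigma'}$ onto the limit set $\Lambda G_{\sigma'} \subset \partial G_\sigma$, which is the first clause.

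For the second clause, I fix $\sigma \in Y$ and $H_1, H_2 \in \cF_\sigma$, each written as $\bigcap_i g_i G_{\sigma_i} g_i^{-1}$ with $g_i \in G_\sigma$ and $\sigma_i \subset \mbox{st}(\sigma)$. Every such $\sigma_i$ has $\sigma$ as a face, so by the first paragraph each $G_{\sigma_i}$ is quasiconvex in $G_\sigma$; conjugation preserves quasiconvexity, so each factor $g_i G_{\sigma_i} g_i^{-1}$ is also quasiconvex in $G_\sigma$. Gromov's theorem (second part of Lemma~\ref{lemmaquasiconvex}) then provides both that a finite intersection of quasiconvex subgroups of a hyperbolic group is quasiconvex and that limit sets intersect correctly. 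Iterating on the number of factors shows that every element of $\cF_\sigma$ is quasiconvex in $G_\sigma$, and a final application of Gromov's theorem to $H_1$ and $H_2$ yields $\Lambda H_1 \cap \Lambda H_2 = \Lambda(H_1 \cap H_2)$ inside $\partial G_\sigma$.

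I do not foresee any real obstacle; this is essentially bookkeeping. The only point worth a brief check is that the identification $\partial G_{\sigma'} \cong \Lambda G_{\sigma'}$ that appears in the statement must be induced by the specific Rips-complex embedding used to build $D(\cY)$, rather than by some unrelated abstract equivariant homeomorphism. But this compatibility is built into the construction of the previous subsection, so it is automatic.
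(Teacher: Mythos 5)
Your proof is correct, and for the first clause it is exactly the paper's argument: the hypothesis that the local maps are quasiconvex embeddings gives $G_{\sigma'}$ quasiconvex in $G_\sigma$ (note that every nested pair $\sigma\subset\sigma'$ already corresponds to a single edge of the barycentric subdivision, so your composition of local maps along a path is harmless but unnecessary), and Bowditch's theorem identifies $\partial G_{\sigma'}$ with $\Lambda G_{\sigma'}\subset\partial G_\sigma$. For the second clause you diverge slightly from the paper: you observe that each $\sigma_i\subset\mbox{st}(\sigma)$ has $\sigma$ as a face, so $G_{\sigma_i}$ is quasiconvex in $G_\sigma$, that conjugation by $g_i\in G_\sigma$ preserves quasiconvexity, and then you iterate Gromov's theorem (second part of Lemma~\ref{lemmaquasiconvex}) to conclude that every element of $\cF_\sigma$ is quasiconvex and that limit sets intersect correctly. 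The paper instead passes through the CAT(0) geometry of $X$: it rewrites an intersection of conjugated simplex stabilisers as $\bigcap_{v\in\Gamma}G_v$ for a connected graph $\Gamma$ in the convex hull of the translates, and invokes Corollary~\ref{quasiconvexgraph}. Your route is more direct and purely group-theoretic, at the cost of not exhibiting these intersections as stabilisers of connected subcomplexes (a description the paper's version makes available); both arguments rest on the same two facts from Lemma~\ref{lemmaquasiconvex}, and your closing remark that the homeomorphism $\partial G_{\sigma'}\cong\Lambda G_{\sigma'}$ must be the one induced by the Rips-complex embedding is the right compatibility point to flag.
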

\begin{proof}
For every pair of simplices $\sigma \subset \sigma'$ of $Y$, $G_{\sigma'}$ is a quasiconvex subgroup of $G_\sigma$, so the map $\phi_{\sigma, \sigma'}: \partial G_{\sigma'} \ra \partial G_\sigma$ realises a $G_{\sigma'}$-equivariant homeomorphism $\partial G_{\sigma'} \ra \Lambda G_{\sigma'} \subset \partial G_\sigma$ by a result of Bowditch \cite{BowditchConvergenceGroups}.

For every simplex $\sigma$ of $Y$, the family $\cF_\sigma = \left\lbrace  \bigcap_{i=1}^{n} g_i G_{\sigma_i} g_i^{-1} | ~ g_0, \ldots, g_n \in G_\sigma, \sigma_1, \ldots,  \sigma_n \in \mbox{st}(\sigma), n \in \bbN \right\rbrace,$ is contained in the family of quasiconvex subgroups of $G_\sigma$. Indeed, let $g_0, \ldots, g_n$ be elements of $G$. Then, as $X$ is CAT(0), $ \bigcap_{0 \leq i \leq n} g_i G_\sigma g_i^{-1} = \bigcap_{v \in \Gamma} g_i G_v g_i^{-1},$
where $\Gamma$ is a graph containing all the vertices of the simplices $g_0 \sigma, \ldots, g_n\sigma$ and contained in the convex hull of the $g_0 \sigma, \ldots, g_n \sigma$. For such subgroups, the equality $ \Lambda H_1 \cap \Lambda H_2 = \Lambda(H_1 \cap H_2)$ holds by \ref{lemmaquasiconvex}.
\end{proof}

\begin{lem}
 The $E\cZ$-complex of spaces $D(\cY)$ satisfies the convergence property.
\end{lem}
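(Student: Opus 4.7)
The plan is to exploit hyperbolicity of $G_\sigma$ and quasiconvexity of $G_{\sigma'}$ directly inside the Rips complexes. Fix a hyperbolicity constant $\delta$ for the Cayley graph of $G_\sigma$ with respect to $S_\sigma$, a quasiconvexity constant $Q$ for $G_{\sigma'} \subset G_\sigma$, and take the identity as basepoint $x_0$, viewed simultaneously as a vertex of $P_n(G_{\sigma'})$ and of $P_n(G_\sigma)$. Let $(g_k)$ be a sequence in $G_\sigma$ whose image in $G_\sigma/G_{\sigma'}$ is injective. By compactness of $\overline{EG_\sigma}$ I can, after extraction, assume $g_k x_0$ converges to some $\xi \in \overline{EG_\sigma}$. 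The limit $\xi$ must lie in $\partial G_\sigma$: properness of the $G_\sigma$-action on $P_n(G_\sigma)$ implies that only finitely many $g_k$ translate $x_0$ into any bounded set, so if $g_k x_0$ stayed bounded a subsequence would have $g_k$ eventually constant, contradicting the injectivity of $(g_k G_{\sigma'})$.

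I then aim to prove that for any sequence $(z_k)$ with $z_k \in g_k \overline{EG_{\sigma'}}$, one has $z_k \to \xi$, with a rate depending only on $k$. For each $k$, let $p_k$ be a closest point of the coset $g_k G_{\sigma'}$ to $x_0$ in the Cayley graph of $G_\sigma$. The same properness argument as above gives $d(x_0,p_k) \to \infty$, since only finitely many distinct cosets of $G_{\sigma'}$ can meet a given ball around $x_0$. The set $g_k G_{\sigma'}$ is $Q$-quasiconvex in a $\delta$-hyperbolic graph, and $g_k x_0$ lies in it, so any geodesic $[x_0,g_k x_0]$ passes within a uniform constant of $p_k$ by the standard projection lemma for quasiconvex subsets. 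Since $(g_k x_0 \mid \xi)_{x_0} \to +\infty$, this forces $p_k$ to lie at bounded distance from a geodesic ray $[x_0,\xi)$, and therefore $p_k \to \xi$ in $\overline{EG_\sigma}$.

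Applying the projection property once more, for any vertex $w_k \in g_k G_{\sigma'}$ one has $(w_k \mid p_k)_{x_0} \geq d(x_0,p_k) - O(1)$, and combined with $(p_k \mid \xi)_{x_0} \to \infty$ this gives $(w_k \mid \xi)_{x_0} \to \infty$ uniformly in the choice of $w_k \in g_k G_{\sigma'}$. Every point of $g_k P_n(G_{\sigma'})$ lies within bounded simplicial distance of some vertex in $g_k G_{\sigma'}$, so uniform convergence transfers from the vertex set to the entire Rips complex $g_k P_n(G_{\sigma'})$. For boundary contributions $g_k \partial G_{\sigma'}$, which embed into $\partial G_\sigma$ via the limit set property already proved, each point is an accumulation point of vertices of $g_k G_{\sigma'}$ in $\overline{EG_\sigma}$, so the uniform Gromov-product estimate passes to the limit and yields uniform convergence of the whole $g_k \overline{EG_{\sigma'}}$ to $\xi$.

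The main obstacle is making the convergence \emph{uniform} across the boundary pieces $g_k \partial G_{\sigma'}$: closest-point projections are only defined on the Cayley graph, so one has to verify that the Gromov-product estimates survive the limiting procedure used to identify $g_k \partial G_{\sigma'}$ with $\Lambda(g_k G_{\sigma'}) \subset \partial G_\sigma$. This is standard in hyperbolic geometry once everything is phrased in terms of Gromov products based at $x_0$, and it is precisely where the previously established limit set property is used in an essential way.
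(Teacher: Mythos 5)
Your argument is correct, but it is worth knowing that the paper does not prove this lemma at all: it simply cites Proposition 1.8 of Dahmani's combination paper, which is precisely the statement that translates of a quasiconvex subgroup's compactified Rips complex subconverge uniformly to a boundary point. What you have written is, in effect, a self-contained proof of that cited result: extract so that $g_k x_0 \to \xi \in \partial G_\sigma$ (properness rules out a bounded subsequence), take nearest-point projections $p_k$ of $x_0$ to the quasiconvex cosets $g_k G_{\sigma'}$, use the projection lemma to get $\langle w, p_k \rangle_{x_0} \geq d(x_0,p_k) - O(1)$ uniformly over $w \in g_k G_{\sigma'}$, and combine with $d(x_0,p_k) \to \infty$ and $\langle g_k x_0, \xi\rangle_{x_0} \to \infty$ via the $\delta$-hyperbolic inequality to push the Gromov product $\langle \cdot, \xi\rangle_{x_0}$ to infinity uniformly on the cosets, hence on the whole translated Rips complexes and, by semicontinuity of the Gromov product, on their limit sets. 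One phrasing is slightly loose: you say $p_k$ lies at bounded distance from the ray $[x_0,\xi)$, which as stated needs $d(x_0,p_k) \lesssim \langle g_k x_0,\xi\rangle_{x_0}$; but the conclusion $p_k \to \xi$ that you actually use follows directly from $\langle p_k, g_k x_0\rangle_{x_0} \geq d(x_0,p_k) - O(1) \to \infty$ together with $\langle g_k x_0,\xi\rangle_{x_0} \to \infty$, so this is a cosmetic rather than a substantive issue. The trade-off is the obvious one: the paper's citation is shorter, while your version makes the argument self-contained and makes explicit exactly where quasiconvexity of the local maps and the already-established limit set property are used.
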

\begin{proof}
 This is proposition 1.8 of \cite{DahmaniCombination}.
\end{proof}

\begin{lem}
  The $E\cZ$-complex of spaces $D(\cY)$ satisfies the finite height property.
\end{lem}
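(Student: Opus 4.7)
The plan is to observe that the statement is an immediate consequence of the hypotheses of Theorem \ref{maintheorem2} together with a result already recalled in Definition \ref{finiteheightproperty}. Fix simplices $\sigma \subset \sigma'$ of $Y$. By assumption in Theorem \ref{maintheorem2}, the local group $G_\sigma$ is hyperbolic, and the local map $\psi_{\sigma',\sigma} \colon G_{\sigma'} \hookrightarrow G_\sigma$ is a quasiconvex embedding, so that $G_{\sigma'}$ sits inside $G_\sigma$ as a quasiconvex subgroup of a hyperbolic group.

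The Gitik--Mitra--Rips--Sageev theorem \cite{SageevWidth}, which was quoted verbatim just before Definition \ref{finiteheightproperty}, asserts precisely that a quasiconvex subgroup of a hyperbolic group has finite height. Applied to $G_{\sigma'} \leq G_\sigma$, this gives the conclusion.

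The only minor point to confirm is that ``the local map'' used in the statement of Theorem \ref{maintheorem2} really is the embedding $G_{\sigma'} \hookrightarrow G_\sigma$ appearing in the definition of height; this is true by the construction of $D(\cY)$ carried out in the preceding subsection, where the gluing maps $\phi_{\sigma,\sigma'} \colon P_n(G_{\sigma'}) \hookrightarrow P_n(G_\sigma)$ are induced by exactly these local maps. There is no serious obstacle: the lemma is essentially a one-line citation, and the work of the section lies in the previously verified limit set and convergence properties rather than here.
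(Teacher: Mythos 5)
Your proof is correct and follows exactly the paper's argument: the paper's own proof is the one-line observation that a quasiconvex subgroup of a hyperbolic group has finite height by the result of \cite{SageevWidth}, applied to the quasiconvex embeddings $G_{\sigma'} \leq G_\sigma$ guaranteed by the hypotheses of Theorem \ref{maintheorem2}. Your additional remark identifying the local maps with the embeddings used in the definition of height is a reasonable clarification but not needed.
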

\begin{proof}
 A quasiconvex subgroup of a hyperbolic group has finite height by a result of \cite{SageevWidth}.
\end{proof}

The main theorem \ref{maintheorem1} now implies the following:

\begin{cor}
 The fundamental group of $G(\cY)$ admits a classifying space for proper actions and a strong boundary in the sense of Carlsson-Pedersen. \qed
\end{cor}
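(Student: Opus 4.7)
The plan is to invoke the Carlsson–Pedersen version of the combination theorem stated just above, using the three preceding lemmas to verify the local conditions and the hyperbolicity hypothesis of \ref{maintheorem2} to supply acylindricity. The only piece of work that is not already done is to upgrade the Farrell–Lafont $E\cZ$-structures on the local Rips complexes to \emph{strong} $E\cZ$-structures in the sense of Carlsson–Pedersen; once this is in hand, the combination machinery built in Sections 2--5 applies verbatim in the Carlsson–Pedersen setting, as noted after the definition of an $E\cZ$-complex of spaces in the sense of Carlsson–Pedersen.

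First, I would collect the hypotheses. The three preceding lemmas verify that the $E\cZ$-complex of spaces $D(\cY)$ built from the Rips complexes $P_n(G_\sigma)$ satisfies the limit set property (\ref{limitsetproperty}), the convergence property (via Dahmani's Proposition 1.8) and the finite height property (\ref{finiteheightproperty}, from Gitik–Mitra–Rips–Sageev). The global acylindricity of the $G$-action on $X$ is one of the standing hypotheses of \ref{maintheorem2}. Hence every hypothesis of \ref{maintheorem1} is in place, and it already yields an $E\cZ$-structure $(\overline{EG},\partial G)$ in the sense of Farrell–Lafont.

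Next, I would argue that each local piece $(P_n(G_\sigma)\cup\partial G_\sigma,\partial G_\sigma)$ is actually a strong $E\cZ$-structure in the sense of Carlsson–Pedersen. Let $\sigma$ be a simplex of $Y$ and $F\leq G_\sigma$ a finite subgroup. Because $G_\sigma$ is hyperbolic, $F$ fixes a point of $P_n(G_\sigma)$ for $n$ large enough, and $P_n(G_\sigma)^F$ is a nonempty contractible subcomplex (this is the standard fact underlying the Rips complex as a model for $\underline{E}G_\sigma$). The fixed-point set $(\partial G_\sigma)^F$ is finite if $F$ has a global fixed point on the boundary and is otherwise the Gromov boundary of the quasiconvex centraliser/commensurator data controlling $F$; in either case, since the $G_\sigma$-equivariant compactification $P_n(G_\sigma)\cup\partial G_\sigma$ restricts on the $F$-fixed loci to a $\cZ$-compactification of $P_n(G_\sigma)^F$ by $(\partial G_\sigma)^F$ (this is the Carlsson–Pedersen/Meintrup–Schick strengthening of Bestvina–Mess, proved by applying \ref{BestvinaMess} to the pair $(P_n(G_\sigma)^F,(\partial G_\sigma)^F)$ using that $P_n(G_\sigma)^F$ is convex-like in a hyperbolic space and that compact sets fade at infinity restricts to $F$-fixed loci). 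Thus each local boundary is strong in the Carlsson–Pedersen sense.

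Finally, I would plug this into the Carlsson–Pedersen version of the combination theorem. The main obstacle, and the only one requiring care, is the second step: checking that the $F$-fixed compactification on each Rips complex really is a $\cZ$-compactification and is compatible with the gluing maps $\phi_{\sigma,\sigma'}$. Compatibility is automatic because the $\phi_{\sigma,\sigma'}$ are inclusions of quasiconvex subgroups realised at the level of Rips complexes, so they commute with taking $F$-fixed points for any $F\leq G_{\sigma'}$, and extend continuously to the Gromov boundaries. With these ingredients in place, the theorem at the end of the previous subsection applies and yields the claimed classifying space and strong Carlsson–Pedersen boundary for $G$.
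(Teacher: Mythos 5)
Your proposal is correct and follows essentially the same route as the paper: the three preceding lemmas supply the limit set, convergence and finite height properties, acylindricity is a standing hypothesis, and the (Carlsson--Pedersen version of the) combination theorem then applies; the paper treats this as immediate and marks the corollary \qed. The only point you elaborate that the paper leaves implicit is that the local Rips-complex compactifications $(P_n(G_\sigma)\cup\partial G_\sigma,\partial G_\sigma)$ are \emph{strong} $E\cZ$-structures in the sense of Carlsson--Pedersen --- your description of $(\partial G_\sigma)^F$ is somewhat loose, but the conclusion is exactly what the cited result of Meintrup--Schick provides, which is how the paper handles it as well.
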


Note that this corollary does not use the hyperbolicity of $X$. 

\subsection{Background on convergence groups and hyperbolicity.}

\begin{definition}[convergence group] 
 A group $\Gamma$ acting on a compact metrisable space $M$ with more than two points is called a \textit{convergence group} if, for every sequence $(\gamma_n)$ of elements of $\Gamma$, there exists two points $\xi_+$ and $\xi_-$ in $M$ an a subsequence $(\gamma_{\varphi(n)})$, such that for any compact subspace $K \subset M \setminus \left\lbrace \xi_- \right\rbrace$, the sequence $(\gamma_{\varphi(n)})$ of translates uniformly converges to $\xi_+$. 
\end{definition}

\begin{definition}[conical limit point]
 Let $\Gamma$ be a convergence group on a compact metrisable space $M$. A point $\zeta$ in $M$ is called a \textit{conical limit point} if there exists a sequence $(\gamma_n)$ of elements of $\Gamma$ and two points $\xi_- \neq \xi_+$ in $M$, such that $\gamma_n \zeta \ra \xi_-$ and $\gamma_n \zeta' \ra \xi_+$ for every $\zeta' \neq \zeta$ in $M$. The group $\Gamma$ is called a \textit{uniform convergence group} on $M$ if $M$ consists only of conical limit points.
\end{definition}

\begin{thm}[Bowditch \cite{BowditchTopologicalCharacterization}] Let $\Gamma$ be a uniform convergence group on a compact metrisable space $M$ with more than two elements. Then $\Gamma$ is hyperbolic and $M$ is $\Gamma$-equivariantly homeomorphic to the Gromov boundary of $\Gamma$. \qed
\label{Bowditch}
\end{thm}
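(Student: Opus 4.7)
The plan is to follow Bowditch's own approach via the dynamics on the space of distinct triples. Let $\Theta^3(M)$ denote the space of unordered triples of pairwise distinct points of $M$, endowed with the subspace topology inherited from the quotient of $M^3$. First I would verify that the convergence hypothesis translates to a properly discontinuous action of $\Gamma$ on $\Theta^3(M)$: if $(\gamma_n)$ were a sequence of distinct elements preserving a compact $K \subset \Theta^3(M)$, one could extract a subsequence with attracting point $\xi_+$ and repelling point $\xi_-$, and the convergence property would force the images of three-point configurations in $K$ to degenerate to pairs of points $\{\xi_+,\xi_-\}$, contradicting compactness in $\Theta^3(M)$.

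Next I would use the uniform convergence hypothesis to upgrade this to a cocompact action. The standard argument is to fix a reference triple $\tau_0 = (a,b,c)$ and, given an arbitrary triple $\tau = (\zeta,\eta_1,\eta_2)$, to use the fact that $\zeta$ is a conical limit point to produce $\gamma_n \in \Gamma$ with $\gamma_n\zeta \to \xi_-$ and $\gamma_n\zeta' \to \xi_+$ for all $\zeta' \neq \zeta$. Tracking where this sequence sends $\tau$ shows that every $\Gamma$-orbit in $\Theta^3(M)$ accumulates on the compact region of non-degenerate triples bounded away from the diagonal, from which one extracts a compact fundamental domain.

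The core step is to construct a $\Gamma$-invariant, locally finite, connected graph $K$ whose vertex set is a $\Gamma$-orbit in $\Theta^3(M)$ (or a slight enlargement), with the edge relation chosen so that $K$ is Gromov hyperbolic. One takes two vertices to be joined when the corresponding triples are "close" in a suitable combinatorial sense (for instance, sharing two coordinates, or lying within bounded dynamical distance). Proper cocompactness on $\Theta^3(M)$ ensures that $\Gamma$ acts geometrically on $K$, so that hyperbolicity of $K$ will imply hyperbolicity of $\Gamma$ by the \v{S}varc--Milnor lemma. Once $\Gamma$ is known to be hyperbolic, one identifies $M$ with $\partial \Gamma$ by sending a sequence of vertices of $K$ escaping to infinity to its limit point in $M$ (well-defined by the convergence dynamics) and checking equivariance, continuity, and bijectivity; injectivity comes from the distinct attracting/repelling behavior of distinct points in $M$, while surjectivity comes from conicality.

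The hard part, by a wide margin, is verifying the four-point (or thin-triangles) condition for the graph $K$ purely from the topological and dynamical axioms of a uniform convergence action. This is where Bowditch's paper does its real work, using a careful combinatorial analysis of how quadruples of points degenerate under sequences in $\Gamma$ and extracting uniform bounds from the compactness of $M$; all the topological input is consumed here, and it is this step that one cannot really shortcut.
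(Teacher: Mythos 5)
The paper does not prove this statement at all: it is quoted as an external result of Bowditch (and Tukia) and used as a black box, which is why it carries a \qed with no argument. So there is no in-paper proof to measure your attempt against; the only question is whether your proposal stands on its own as a proof, and it does not. What you have written is an accurate high-level road map of Bowditch's strategy --- proper discontinuity of the action on the space of distinct triples from the convergence axiom, cocompactness on triples from conicality, construction of a $\Gamma$-invariant hyperbolic quasi-model, and the identification of $M$ with $\partial\Gamma$ --- but you explicitly concede that the decisive step, namely proving that the combinatorial object you build is Gromov hyperbolic using only the topological and dynamical hypotheses, is ``where Bowditch's paper does its real work'' and that you cannot shortcut it. That step is the theorem; everything before it is setup. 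A proof that defers its central verification to the reference being proved is a citation, not a proof.

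Two smaller points. First, your cocompactness argument is looser than it should be: the passage from ``every orbit accumulates on a region bounded away from the diagonal'' to ``there is a compact fundamental domain'' is exactly where uniform conicality must be used quantitatively, and as stated it is an assertion rather than an argument. Second, your description of the quasi-model (a locally finite graph on a single orbit of triples with edges given by ``closeness'') is only an approximation of what Bowditch actually does; his construction goes through systems of annuli and a crossratio-type quasimetric, and the local finiteness and connectivity of a naive orbit graph are themselves nontrivial to establish. Since the paper uses this theorem only as an imported tool (in Corollary \ref{uniformconvergencegroup}), the appropriate move here is the one the paper makes: cite Bowditch and Tukia rather than attempt to reprove the characterisation.
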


\subsection{A combination theorem.}

We now prove that $G$ is a hyperbolic group, by proving that it is a uniform convergence group on its boundary $\partial G$.\\

So far, the topology on $\overline{EG}$ and $\partial G$ was defined by choosing a specific, although arbitrary, base point. In forthcoming proofs, we will choose neighbourhoods centered at points which are relevant to the geometry of the problem.

\begin{definition}
Let $\delta \geq 0$ such that the space $X$ is $\delta$-hyperbolic. We denote by $\langle.,.\rangle$ the Gromov product on $X$ and an extension to $\overline{X}$. For $\eta \in \partial X$, $k \geq 0$ and $x_0 \in X$ a basepoint, let 
$$W_k(\eta) = \left\{x \in \overline{X} \mbox{ such that } \langle x, \eta \rangle_{x_0} \geq k \right\} .$$
For $k \geq 0$, the family subsets $(W_k(\eta))$ form a basis of (not necessarily open) neighbourhoods of $\eta$ in $\overline{X}$.
\end{definition}

Recall that we chose a constant $D>0$ bigger than every $d_\xi, \xi \in \partial_{Stab} G$.

\begin{lem} Let $(g_n)$ be an injective sequence of elements of $G$, and suppose there exist vertices $v_0$ and $v_1$ of $X$ such that $g_n v_0=v_1$ for infinitely many $n$. Then there exist $\xi_+, \xi_- \in \partial G$ and a subsequence $(g_{\varphi(n)})$ such that for every compact subset $K$ of $\partial G \setminus \left\{\xi_-\right\}$, the sequence of translates $g_{\varphi(n)}K$ uniformly converges to $\xi_+$.
\label{convergencegroup1}
\end{lem}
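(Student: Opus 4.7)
The plan is to apply the hyperbolic-group convergence dynamics of the local group $G_{v_0}$ on its Bestvina--Mess compactification $\overline{EG_{v_0}}$, and then to propagate those dynamics to the whole of $\overline{EG}$ using the convergence property of the $E\cZ$-complex of spaces together with the acylindricity hypothesis.

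First I would restrict to the given infinite subsequence so that $g_n v_0 = v_1$ for every $n$. Fix any $h \in G$ with $h v_0 = v_1$ and write $g_n = h k_n$ with $k_n := h^{-1} g_n \in G_{v_0}$. Since $(g_n)$ is injective and $h$ is fixed, $(k_n)$ takes infinitely many distinct values, so after a further extraction I may assume $(k_n)$ is injective. By the hypothesis of Theorem~\ref{maintheorem2}, $G_{v_0}$ is hyperbolic, and the Bestvina--Mess compactification makes it act as a convergence group on $\overline{EG_{v_0}}$. Extracting once more, there exist $\zeta_-, \zeta_+ \in \partial G_{v_0}$ such that $k_n K' \to \zeta_+$ uniformly for every compact $K' \subset \overline{EG_{v_0}} \setminus \{\zeta_-\}$. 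I then set $\xi_- := \zeta_-$ and $\xi_+ := h \cdot \zeta_+ \in \partial G_{v_1}$, both viewed as points of $\partial G$ via the embedding $\overline{EG_v} \hookrightarrow \overline{EG}$ of Proposition~\ref{inducedtopology}. Since left multiplication by $h$ is a homeomorphism of $\overline{EG}$ by Lemma~\ref{basepoint} and $g_n = h k_n$, it suffices to prove that $k_n K \to \zeta_+$ uniformly for every compact $K \subset \overline{EG} \setminus \{\zeta_-\}$.

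The heart of the argument is extending the convergence from $\overline{EG_{v_0}}$ to all of $\overline{EG}$; by Lemma~\ref{basepoint} I may work with $v_0$ as basepoint. Let $z \in \overline{EG} \setminus \overline{EG_{v_0}}$. The coarse projection $\bar{p}(z)$ does not reduce to $\{v_0\}$, so the geodesic from $v_0$ to $\bar{p}(z)$ leaves $v_0$ through a first simplex $\sigma^{(1)} \supsetneq \{v_0\}$ and then traverses a sequence of simplices $\sigma^{(1)}, \sigma^{(2)}, \ldots$. The $k_n$ fix $v_0$ and permute these simplices. If, up to subsequence, the cosets $(k_n G_{\sigma^{(1)}})$ take infinitely many values, the convergence property (condition $(iii)$ of Theorem~\ref{maintheorem1}) applied to the pair $v_0 \subset \sigma^{(1)}$ yields a further subsequence along which $k_n \overline{EG_{\sigma^{(1)}}}$ uniformly converges to a single point of $\overline{EG_{v_0}}$; testing with an interior point of $EG_{\sigma^{(1)}}$ and using the hyperbolic-group dynamics forces this point to be $\zeta_+$. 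The convergence criterion of Corollary~\ref{convergencecriterion} applied with $\xi = \zeta_+$ (noting $v_0 \in D(\zeta_+)$) then gives $k_n z \to \zeta_+$. If instead $k_n \in G_{\sigma^{(1)}}$ eventually, I repeat the argument with $\sigma^{(2)}$, then $\sigma^{(3)}$, etc. This recursion must terminate: if $k_n$ pointwise fixed the subcomplex $\sigma^{(1)} \cup \cdots \cup \sigma^{(j)}$ for arbitrarily large $j$, the diameter of this subcomplex would eventually exceed the acylindricity constant $A$, forcing its pointwise stabilizer to be finite, which contradicts the injectivity of $(k_n)$.

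The main obstacle will be to upgrade the pointwise statement obtained in the previous paragraph to genuine uniform convergence on an arbitrary compact $K \subset \overline{EG} \setminus \{\zeta_-\}$. The integer $j$ at which the recursion terminates is a priori $z$-dependent, so the delicate point is to make $j$ uniform on $K$: this is where compactness of $K$, the finiteness lemma~\ref{finite} (which controls how many simplices geodesics emanating from $v_0$ can meet in a given region), and the inherent uniformity of condition $(iii)$ of Theorem~\ref{maintheorem1} combine to ensure that for every neighborhood of $\zeta_+$ a single index $n$ eventually works for all elements of $K$ at once. Paired with the uniform dynamics on $\overline{EG_{v_0}}$ coming from Bestvina--Mess, this yields the desired uniform convergence $g_{\varphi(n)} K \to \xi_+$.
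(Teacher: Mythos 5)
Your reduction to the local north--south dynamics of $G_{v_0}$ on $\overline{EG_{v_0}}$, followed by propagation via the convergence property and Corollary~\ref{convergencecriterion}, matches the paper's overall strategy, but the propagation step has a genuine gap, and it is exactly the step you flag as ``the main obstacle''. Your recursion follows the geodesic from $v_0$ to $\bar{p}(z)$ separately for each $z$, extracting a new subsequence each time the cosets $k_n G_{\sigma^{(i)}}$ take infinitely many values. Already at the first stage this branches over the simplices $\sigma^{(1)} \subset \mathrm{st}(v_0)$, of which there may be infinitely many since $X$ is in general not locally finite; one cannot extract a subsequence for each of infinitely many simplices and pass to a common refinement, and condition $(iii)$ only produces convergence along a subsequence, so its ``inherent uniformity'' does not rescue you. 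Likewise the finiteness lemma~\ref{finite} controls geodesics to a \emph{fixed} finite subcomplex, not to all of $\partial G$, so it does not bound your recursion depth $j$ uniformly over $K$ either.

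The paper closes this gap by organising the recursion around the finite domain $D(\xi_-)$ rather than around the target points $z$: the only simplices $\sigma$ containing $v_0$ for which a subsequence extraction (or the replacement $g_n = g_n' g$) is needed are those with $\xi_- \in \partial G_\sigma$, i.e.\ the simplices of $D(\xi_-)$, and there are finitely many of these by Proposition~\ref{finitedomain}. Iterating over them produces, after finitely many extractions, a subcomplex $F \subset D(\xi_-) \cap D(\xi_+)$ fixed pointwise by the whole subsequence, with $g_n\partial G_\sigma \to \xi_+$ uniformly for each $\sigma \subset \mathrm{st}(F)\setminus F$. Uniformity over a compact set avoiding $\xi_-$ is then obtained by taking $K = \partial G \setminus V_{\cU',\varepsilon}(\xi_-)$ and observing that for $\widetilde{x} \in K$ the exit simplex $\sigma$ of $[v_0,x]$ from $F$ satisfies $\partial G_\sigma \cap U'_v = \varnothing$, so that $\partial G_\sigma$ lies in the \emph{single} compact set $\partial G_v \setminus U'_v$ for one of the finitely many vertices $v$ of $F$, to which the local dynamics apply uniformly; the convergence criterion then finishes the argument. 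Your sketch never engages with the neighbourhoods $V_{\cU,\varepsilon}(\xi_-)$ or with the domain $D(\xi_-)$, and without them the uniform convergence on $K$ cannot be recovered.
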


\begin{proof} It is enough to prove the result when $g_n v_0 = v_0$ for infinitely many $n$. Since $G_{v_0}$ is hyperbolic, we can assume that there exists a subsequence of $(g_n)$, that we still denote $(g_n)$, and elements $\xi_+, \xi_- \in \partial G_{v_0}$ such that for every compact subset $K$ of $\partial G_{v_0} \setminus \left\{\xi_-\right\}$, the sequence of translates $g_nK$ uniformly converges to $\xi_+$. Throughout this proof, we choose $v_0$ as the basepoint.

For every vertex $v$ of $D(\xi_-)$, choose $U_v$ to be a neighbourhood of $\xi_-$ in $\partial G_{v_0}$. Choose a $\xi_-$-family $\cU'$ which is nested in $\left\{U_v, v \in V(\xi_-) \right\}$, and choose $\varepsilon \in (0,1)$. Let $K = \partial G \setminus V_{\cU', \varepsilon}(\xi_-)$.

Let $\sigma$ be a simplex of $X$ containing $v_0$. 

If $\sigma$ is not contained in $D(\xi_-)$, then the convergence property implies that, up to a subsequence, we can assume that the sequence of $g_n \partial G_\sigma$ uniformly converges to $\xi_+$ in $\partial G_{v_0}$. 

If $\sigma $ is contained in $D(\xi_-)$, then the subset $\partial G_\sigma \subset G_{v_0}$ consists of at least two points among which $\xi_-$. Since for any other point $\alpha$ of $\partial G_\sigma$ we have that $g_n\alpha$ tends to $\xi_+$, the convergence property implies that one of the following situation happens:
\begin{itemize}
\item $g_n G_\sigma$ only takes finitely many values of cosets, in which case we can find a subsequence $(g_n)$ such that $g_n\partial G_\sigma$ is constant and contains $\xi_+$. This means that we can write $g_n = g_n'.g$ where $g$ is in the stabiliser of $v_0$ and $g_n'$ in a sequence in the stabiliser of $\sigma$. 
Up to replacing $g_n$ by $g_n'$, we can assume that $g_n$ fixes $\sigma$.
\item $g_n G_\sigma$ takes infinitely many values of cosets, in which case we can find a subsequence $(g_n)$ such that $g_n \partial G_\sigma$ uniformly converges to $\xi_+$.
\end{itemize}
 
As domains are finite subcomplexes of $X$ by \ref{finitedomain}, we can iterate this procedure a finite number of times so as to obtain a subsequence $(g_n)$ and a subcomplex $F \subset D(\xi_-) \cap D(\xi_+)$ such that
\begin{itemize}
\item $F$ is fixed pointwise under all the $g_n$, 
\item for every simplex $\sigma$ in $(st(F) \setminus F)$ and every vertex $v$ of $\sigma \cap F$, we have that $g_n \partial G_\sigma$ uniformly converges to $\xi_+$ in $\partial G_{v}$.
\end{itemize}

\begin{center}
%% Creator: Inkscape 0.48.2, www.inkscape.org
%% PDF/EPS/PS + LaTeX output extension by Johan Engelen, 2010
%% Accompanies image file '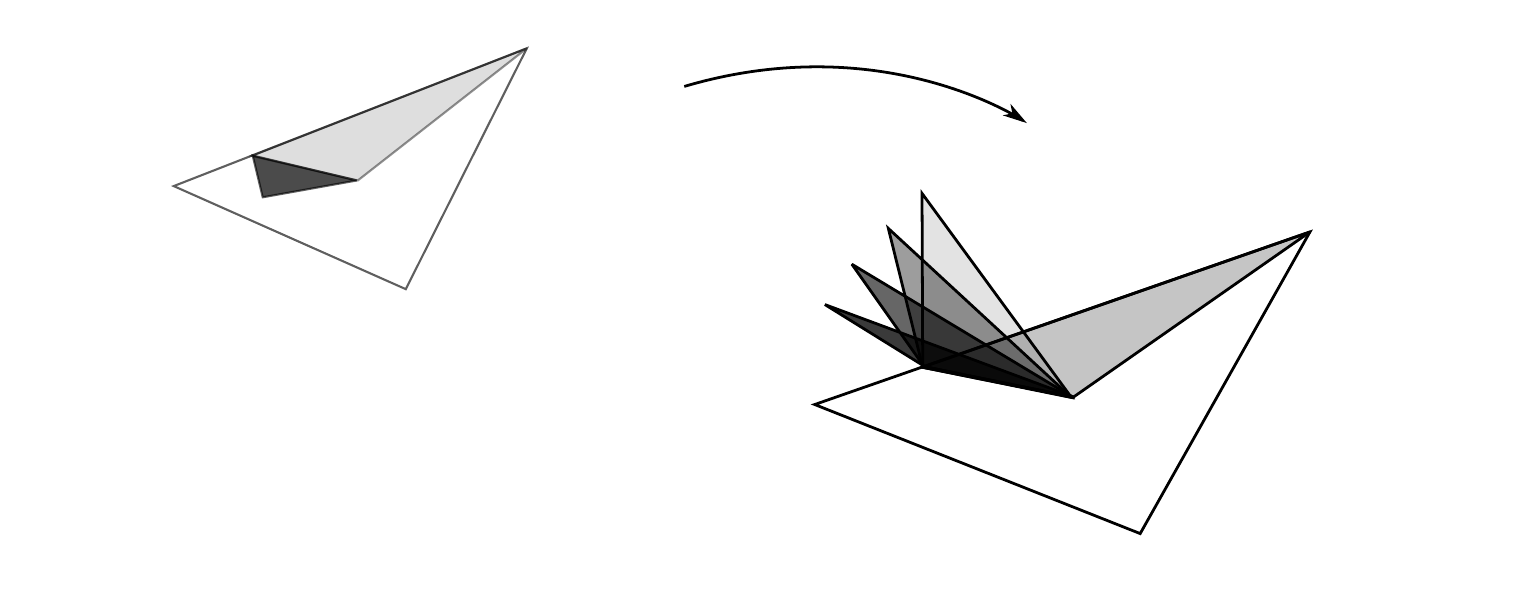' (pdf, eps, ps)
%%
%% To include the image in your LaTeX document, write
%%   \input{<filename>.pdf_tex}
%%  instead of
%%   \includegraphics{<filename>.pdf}
%% To scale the image, write
%%   \def\svgwidth{<desired width>}
%%   \input{<filename>.pdf_tex}
%%  instead of
%%   \includegraphics[width=<desired width>]{<filename>.pdf}
%%
%% Images with a different path to the parent latex file can
%% be accessed with the `import' package (which may need to be
%% installed) using
%%   \usepackage{import}
%% in the preamble, and then including the image with
%%   \import{<path to file>}{<filename>.pdf_tex}
%% Alternatively, one can specify
%%   \graphicspath{{<path to file>/}}
%% 
%% For more information, please see info/svg-inkscape on CTAN:
%%   http://tug.ctan.org/tex-archive/info/svg-inkscape
%%
\begingroup%
  \makeatletter%
  \providecommand\color[2][]{%
    \errmessage{(Inkscape) Color is used for the text in Inkscape, but the package 'color.sty' is not loaded}%
    \renewcommand\color[2][]{}%
  }%
  \providecommand\transparent[1]{%
    \errmessage{(Inkscape) Transparency is used (non-zero) for the text in Inkscape, but the package 'transparent.sty' is not loaded}%
    \renewcommand\transparent[1]{}%
  }%
  \providecommand\rotatebox[2]{#2}%
  \ifx\svgwidth\undefined%
    \setlength{\unitlength}{435.5073456bp}%
    \ifx\svgscale\undefined%
      \relax%
    \else%
      \setlength{\unitlength}{\unitlength * \real{\svgscale}}%
    \fi%
  \else%
    \setlength{\unitlength}{\svgwidth}%
  \fi%
  \global\let\svgwidth\undefined%
  \global\let\svgscale\undefined%
  \makeatother%
  \begin{picture}(1,0.39590479)%
    \put(0,0){\includegraphics[width=\unitlength]{dessin4BW.pdf}}%
    \put(0.3314003,0.39988401){\color[rgb]{0,0,0}\makebox(0,0)[lt]{\begin{minipage}{0.16663652\unitlength}\raggedright $v_0$\end{minipage}}}%
    \put(0.92666652,-0.32415746){\color[rgb]{0,0,0}\makebox(0,0)[lt]{\begin{minipage}{0.06242787\unitlength}\raggedright $v_1$\end{minipage}}}%
    \put(0.52935136,0.38518521){\color[rgb]{0,0,0}\makebox(0,0)[lt]{\begin{minipage}{0.17582123\unitlength}\raggedright $g_n$\end{minipage}}}%
    \put(0.7235065,0.19447672){\color[rgb]{0,0,0}\makebox(0,0)[lt]{\begin{minipage}{0.16532442\unitlength}\raggedright $g_n F$\end{minipage}}}%
    \put(0.2148412,0.26861975){\color[rgb]{0,0,0}\makebox(0,0)[lt]{\begin{minipage}{0.04739878\unitlength}\raggedright $\sigma$\end{minipage}}}%
    \put(0.50431897,0.23751029){\color[rgb]{0,0,0}\makebox(0,0)[lt]{\begin{minipage}{0.17057281\unitlength}\raggedright $g_n\sigma$\end{minipage}}}%
    \put(0.60399039,0.06895676){\color[rgb]{0,0,0}\makebox(0,0)[lt]{\begin{minipage}{0.0984074\unitlength}\raggedright $D(\xi_+)$\end{minipage}}}%
    \put(0.15699923,0.23054744){\color[rgb]{0,0,0}\makebox(0,0)[lt]{\begin{minipage}{0.0944711\unitlength}\raggedright $D(\xi_+)$\end{minipage}}}%
    \put(0.23534509,0.32006363){\color[rgb]{0,0,0}\makebox(0,0)[lt]{\begin{minipage}{0.0879106\unitlength}\raggedright $F$\end{minipage}}}%
    \put(-0.00260824,0.33019123){\color[rgb]{0,0,0}\makebox(0,0)[lt]{\begin{minipage}{0.05978176\unitlength}\raggedright $    $\end{minipage}}}%
    \put(0.84413496,0.28236275){\color[rgb]{0,0,0}\makebox(0,0)[lt]{\begin{minipage}{0.16124771\unitlength}\raggedright $v_0$\end{minipage}}}%
  \end{picture}%
\endgroup%

Figure $4$.
\end{center}

We now prove that, up to a subsequence, the sequence of translates $g_nK$ uniformly converges to $\xi_+$. Because of the definition of neighbourhoods of points of $\partial_{Stab} G$, we need to treat different cases.

Let $\sigma$ be a simplex of $F$ containing $v_0$, so that $G_\sigma \subset G_{v_0}$. By definition of $\xi_+$ and $\xi_-$, we already have that the sequence $g_n (\partial G_{v_0} \setminus U'_{v_0})$ uniformly converges to $\xi_+$ in $G_{v_1}$. Now let $v$ be another vertex of $\sigma$. We thus have that $g_n (\partial G_\sigma \setminus U_v')$ uniformly converges to $\xi_+$ in $\partial G_{v}$. This implies that there exists a subsequence, still denoted $(g_n)$, such that $g_n( \partial G_v \setminus U_v')$ uniformly converges to $\xi_+$. Since $F$ is finite, an easy induction shows that there exists a subsequence, still denoted $(g_n)$, such that $g_n(\partial G_v \setminus U_v')$ uniformly converges to $\xi_+$ in $\partial G_{gv}$ for every vertex $v$ of $F$.

Let $\widetilde{x} \in K$, and $x \in \bar{p}(\widetilde{x}) \setminus F$. Let $\sigma$ be the first simplex touched by $[v_0, x]$ after leaving $g^{-1}F$. It follows from the previous discussion that the sequence of simplices $(g_n \sigma)$ is such that for some (hence any) vertex $v$ of $\sigma \cap F$, the sequence of $(\partial G_{g_n \sigma})$ uniformly converges to $\xi_+$ in $\partial G_{v}$. It follows from the convegence criterion \ref{convergencecriterion} that the sequence $(g_n\widetilde{x})$ converges to $\xi_+$.  Since $\widetilde{x} \notin V_{\cU, \varepsilon}(\xi_-)$, we have $\partial G_{\sigma} \not\subset U_v$ for some (hence any) vertex $v$ of $F$. Since $\cU'$ is nested in $\left\{U_w, w \in V(\xi_-) \right\}$, it follows that 
$$  \partial G_{\sigma} \cap U_v' = \varnothing.$$
We already have that for every vertex $v$ of $F$, the sequence of $g_n.(\partial G_v \setminus U_v)$ uniformly converges to $\xi_+$ by the above discussion. As $F$ is a finite subcomplex of $X$, the convergence criterion \ref{convergencecriterion} now shows that the sequence $(g_n.K)$ uniformly converges to $\xi_+$.
\end{proof}

\begin{lem} Let $(g_n)$ be an injective sequence of elements of $G$. Suppose that for some (hence any) vertex $v$ the sequence $(g_nv)$ is bounded, but there does not exist vertices $v_0$ and $v_1$ of $X$ such that $g_n v_0=v_1$ for infinitely many $n$. Then there exist $\xi_+, \xi_- \in \partial G$ and a subsequence $(g_{\varphi(n)})$ such that for every compact subset $K$ of $\partial G \setminus \left\{\xi_-\right\}$, the sequence of translates $g_{\varphi(n)}K$ uniformly converges to $\xi_+$.
\label{convergencegroup2}
\end{lem}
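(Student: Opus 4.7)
The strategy runs parallel to the proof of Lemma~\ref{convergencegroup1}, but the reduction to a setting where the convergence property of $D(\cY)$ can be applied is more delicate. The hypothesis describes a ``rotation-type'' behaviour near $v$ that is only possible because $X$ is allowed to be non-locally-finite (if $X$ were locally finite near $v$, the assumption would be vacuous by a direct pigeonhole argument, and one could reduce at once to Lemma~\ref{convergencegroup1}).

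\textbf{Step 1 (Extracting an injective coset sequence).} Since each $g_n$ is an isometry, $(g_nw)$ is bounded for every vertex $w$ of $X$. Combined with the assumption that no pair of vertices is exchanged by infinitely many $g_n$, this forces $(g_n v)$ to take infinitely many pairwise distinct values in a bounded subset of $X$. Cocompactness of the $G$-action implies that such a bounded set contains only finitely many $G$-orbits of vertices and edges, so after passing to a subsequence we may assume that all $g_nv$ lie in the open star of a single vertex $w$, and that all edges $[w, g_n v]$ have the same $G$-orbit-type. The cosets $g_n G_{[w,v]}$ in $G_w$ are then pairwise distinct (any equality would force $g_n v = g_m v$, contradicting the hypothesis). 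Acylindricity enters here to guarantee that this reduction is possible: otherwise the $g_n$ would eventually agree pointwise on a set of diameter at least $A$, which would yield infinitely many $g_n$ in a fixed coset of a finite pointwise stabiliser and bring us back to Lemma~\ref{convergencegroup1}.

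\textbf{Step 2 (Constructing $\xi_+$ and $\xi_-$).} Applying the convergence property of the $E\cZ$-complex of spaces $D(\cY)$ to the injective sequence of cosets extracted in Step~1, one obtains a subsequence along which $g_n\overline{EG_{[w,v]}}$ uniformly converges in $\overline{EG_w}$ to a point $\xi_+\in \partial G_w \subset \partial G$. Applying the same argument to the inverse sequence $(g_n^{-1})$ produces the repelling point $\xi_-\in\partial G$. Following the bootstrap used in the proof of Lemma~\ref{convergencegroup1}, one iterates this reduction over the finite subcomplex $F$ of vertices that are eventually permuted only among themselves by the subsequence, so as to arrange that, for every simplex $\sigma'\in \mathrm{st}(F)\setminus F$ and every vertex $v'\in \sigma'\cap F$, the sequence $g_n\partial G_{\sigma'}$ uniformly converges to $\xi_+$ in $\partial G_{v'}$.

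\textbf{Step 3 (Uniform convergence on compacta of $\partial G\setminus\{\xi_-\}$).} With this data in hand, the argument concludes exactly as in the last portion of the proof of Lemma~\ref{convergencegroup1}: fix a $\xi_-$-family $\cU'$ nested in a given $\xi_-$-family together with $\varepsilon\in(0,1)$, set $K = \partial G\setminus V_{\cU',\varepsilon}(\xi_-)$, and invoke the convergence criterion (Corollary~\ref{convergencecriterion}) to promote the pointwise convergence $g_n\partial G_{\sigma'}\to\xi_+$ of Step~2 into uniform convergence of $g_nK$ to $\xi_+$. The main obstacle is Step~1: extracting, from the bounded but spreading sequence $(g_nv)$, a genuine injective sequence of cosets in a local group to which the convergence property can be applied. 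This is precisely the place where acylindricity (and not merely cocompactness) of the $G$-action on the possibly non-locally-finite complex $X$ is essential.
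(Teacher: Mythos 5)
Your overall three-step outline (produce $\xi_\pm$, then upgrade to uniform convergence away from $\xi_-$) matches the shape of the paper's argument, but the substance of each step has genuine problems. In Step 1 the reduction is not justified: the fact that $(g_nv)$ takes infinitely many values in a bounded set does not imply, even after passing to a subsequence, that all the $g_nv$ lie in the open star of a single vertex $w$; and even granting this, $g_nG_{[w,v]}$ is not a coset in $G_w$, because $g_n\notin G_w$. The convergence property of $D(\cY)$, as defined, applies to injective sequences in $G_\sigma/G_{\sigma'}$ with $g_n\in G_\sigma$, so it cannot be invoked the way you do. The paper sidesteps all of this: it uses the already-established compactness of $\overline{EG}$ (Theorem \ref{compact}) to extract $\xi_\pm$ as limits of $g_n^{\pm1}\widetilde{x_0}$, and observes that boundedness of $(g_nv_0)$ forces $\xi_\pm\in\partial_{Stab}G$. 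Your appeal to acylindricity in Step 1 is also misplaced; acylindricity is consumed earlier, in proving that domains are finite and uniformly bounded, and plays no explicit role in this lemma's proof.

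The more serious gap is in Steps 2--3. You propose to iterate the reduction over the finite subcomplex $F$ of vertices ``eventually permuted only among themselves'' and then to ``conclude exactly as in the last portion of the proof of Lemma \ref{convergencegroup1}''. But the bootstrap in Lemma \ref{convergencegroup1} rests on the existence of a nonempty subcomplex $F$ fixed pointwise by all the $g_n$, and the hypothesis of the present lemma precisely forbids this: no simplex is sent to a fixed simplex by infinitely many $g_n$, so the analogue of $F$ is empty and that argument does not transfer. The actual content of the paper's proof, which your proposal omits, is the geometric claim that for every $z\in\partial G$ the sets $\mathrm{Geod}(g_nv_0,g_nD(z))$ (resp.\ the rays $[g_nv_0,g_n\eta)$) eventually miss $D(\xi_+)$ --- this is exactly where the ``no vertex goes to a fixed vertex'' hypothesis is used, via the finiteness lemma and the absence of inversions --- followed by a crossing-lemma/nesting argument on exit simplices to convert this first into pointwise convergence $g_nz\to\xi_+$ and then into uniform convergence on compacta of $\partial G\setminus\{\xi_-\}$. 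Without that claim and the exit-simplex analysis, the proof is not there.
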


\begin{proof} Choose a vertex $v_0$ and an element $\widetilde{x_0}$ in $EG_{v_0}$. As $\partial G$ is compact by \ref{compact} and $(g_nv_0)$ is bounded, we can choose a subsequence, still denoted $(g_n)$, and elements $\xi_+, \xi_- \in \partial_{Stab} G$ such that $g_n \widetilde{x_0}\ra \xi_+$ and $g_n^{-1}\widetilde{x_0} \ra \xi_-$. \\

\textit{Claim 1:} \begin{itemize}
\item For every $\eta \in \partial X$, the geodesic ray $[g_nv_0, g_n\eta)$ does not meet $D(\xi_+)$ for $n$ large enough.
\item For every $\xi \in \partial_{Stab} G$, the subset $\mbox{Geod}(g_nv_0, g_nD(\xi))$ does not meet $D(\xi_+)$ for $n$ large enough.\\
\end{itemize} 
\noindent Let $z \in \partial G$. If $z \in \partial X$, we denote by $D(z)$ the singleton $\left\{z\right\}$. By contradiction, suppose that there exists an infinite number of $n$ for which there exists $y_n \in D(\xi_+)$ and $x_n \in \mbox{Geod}(v_0, D(z))$ such that $g_n x_n = y_n$. As $(y_n)$ is bounded by \ref{finitedomain}, the assumption on $(g_n)$ implies that $(x_n)$ is bounded too. Since $x_n$ lies on $\mbox{Geod}(v_0, D(z))$ for every $n$, the containment lemma \ref{containment} and the finiteness lemma \ref{finite} imply that, up to a subsequence, we can assume that $x_n$ always lies in the same simplex $\sigma$ of $X$. Furthermore, since $D(\xi_+)$ is finite by \ref{finitedomain}, we can assume, up to a subsequence, that $y_n$ lies in a simplex $\sigma'$ of $X$ for every $n$. As the action of $G$ on $X$ is without inversion, this implies that $g_n \sigma = \sigma'$ for every $n$, which was exluded by assumption.\\

\textit{Claim 2:} For every $\xi$ in $\partial G$, the sequence $g_n \xi $ converges to $\xi_+$. \\

Here we choose the basepoint to be a vertex $v_0$ of $D(\xi_+)$. Let $\cU$ be a $\xi_+$-family, $\cU'$ a $\xi_+$-family that is $D$-nested in $\cU$ and $\varepsilon > 0$ (recall that $D$ is a constant such that a geodesic segment contained in the open star of the domain of any element of $\partial_{Stab}$ meets at most $D$ simplices). We split the proof of the claim in two cases. 

Let $\eta \in \partial X$.  As $[g_nv_0,v_0] \cup [v_0, g_n\eta)$ is not a geodesic for $n$ large enough because of the above claim, there is a path of simplices of length at most $D$ from the exit simplex $\sigma_{\xi_+, \varepsilon}(g_nv_0)$ to the exit simplex $\sigma_{\xi_+, \varepsilon}(g_n\eta)$ in $\mbox{st}(D(\xi_+)) \setminus D(\xi_+)$. As $\overline{EG_{\sigma_{\xi_+, \varepsilon}(g_nv_0)}} \subset U_{v}'$ for $n$ large enough and for some (hence every) vertex $v$ of $D(\xi_+) \cap \sigma_{\xi_+, \varepsilon}(g_nv_0)$, it follows from the fact that $\cU'$ is $D$-nested in $\cU$ that $\overline{EG_{\sigma_{\xi_+, \varepsilon}(g_n\eta)}} \subset U_v$ for $n$ large enough and for some (hence every) vertex $v$ of $D(\xi_+) \cap \sigma_{\xi_+, \varepsilon}(g_n\eta)$. It thus follows that $(g_n \eta)$ converges to $\xi_+$.

Let $\xi \in \partial_{Stab} G$.  As $\mbox{Geod}(g_nv_0, g_nD(\xi))$ does not meet $D(\xi_+)$ for $n$ large enough by the above claim, it follows that for $n$ large enough the paths $[g_nv_0,v] \cup [v, g_nx]$ are not geodesic for every $x \in D(\xi)$. Thus for $n$ large enough, there is a path of simplices of length at most $D$ from the exit simplex $\sigma_{\xi_+, \varepsilon}(g_nv_0)$ to the exit simplex $\sigma_{\xi_+, \varepsilon}(g_nx)$ in $\mbox{st}(D(\xi_+)) \setminus D(\xi_+)$. As $\overline{EG_{\sigma_{\xi_+, \varepsilon}(g_nv_0)}} \subset U_v'$ for $n$ large enough and for some (hence every) vertex $v$ of $D(\xi_+) \cap \sigma_{\xi_+, \varepsilon}(g_nv_0)$, it follows from the fact that $\cU'$ is $D$-nested in $\cU$ that, for $n$ large enough and for every $x \in D(\xi)$, $\overline{EG_{\sigma_{\xi_+, \varepsilon}(g_nx)}} \subset U_v$ for some (hence every) vertex $v$ of $D(\xi_+) \cap \sigma_{\xi_+, \varepsilon}(g_nx)$. It thus follows that $(g_n \xi)$ converges to $\xi_+$.

In the same way, we prove that for every $\xi \in \partial G$, the sequence $g_n^{-1}\xi$ converges to $\xi_-$.
To conclude the proof of the lemma, it remains to show that this convergence can be made uniform away from $\xi_-$:\\

\textit{Claim 3:} For every $\xi \neq \xi_-$ in $\partial G$, there is a subsequence $(g_n)$ and a neighbourhood $U$ of $\xi$ in $\partial G$ such that the sequence of $g_nU$ uniformly converges to $\xi_+$.\\

\noindent Once again, we split the proof in two cases.

Let $\xi \in \partial_{Stab} G$. We already have that $g_n\xi \ra \xi_+$ by the second claim. In order to find a $\xi$-family $\cU$ and a constant $\varepsilon$ such $g_nV_{\cU, \varepsilon}(\xi)$ uniformly converges to $\xi_+$, it is enough, using the same reasoning as in Claim $2$, to find a $\xi$-family $\cU$ and a constant $\varepsilon$ such that for every $x$ in $\mbox{Cone}_{\cU, \varepsilon}(\xi)$, the geodesic from $g_nv_0$ to $g_nx$ does not meet $D(\xi_+)$.\\
\noindent As $\xi \neq \xi_-$, we choose a $\xi$-family $\cU$, a $\xi_-$-family $\cU'$ and constants $\varepsilon, \varepsilon' \in (0,1)$ such that the neighbourhoods $V_{\cU, \varepsilon}(\xi)$ and $V_{\cU', \varepsilon'}(\xi_-)$ are disjoint. Up to a subsequence, we have by the first claim that $g_n D(\xi)$ does not meet $D(\xi_+)$. It now follows from the definition of $\cU$ and the fact that $g_n^{-1}\xi_+ \ra \xi_-$ that $\mbox{Cone}_{\cU, \varepsilon}(\xi)$ does not meet the sets $g_n^{-1}D(\xi_+)$, hence the sets $g_n\mbox{Cone}_{\cU, \varepsilon}(\xi)$ do not meet $D(\xi_+)$. Now this implies that for every $x$ in $\mbox{Cone}_{\cU, \varepsilon}(\xi)$, the geodesic from $g_nv_0$ to $g_nx$ does not meet $D(\xi_+)$: indeed, this geodesic must meet $g_nD(\xi)$ since the geodesic from $v_0$ to a point of $\mbox{Cone}_{\cU, \varepsilon}(\xi)$ must meet $D(\xi)$, and we already proved that a geodesic segment from $g_nv_0$ to a point of $g_nD(\xi)$ does not meet $D(\xi_+)$. Now the same proof as in the second claim shows that $g_n V_{\cU, \varepsilon}(\xi)$ uniformly converges to $\xi_+$.

Let $\eta \in \partial X$. We already know that $g_n\eta \ra \xi_+$ by the second claim. In order to find a neighbourhood $U$ of $\eta$ in $\overline{X}$ such that such $g_nV_{U}(\eta)$ uniformly converges to $\xi_+$, it is enough, using the same reasoning as in Claim $2$, to find a neighbourhood $U$ of $\eta$ in $\overline{X}$ such that for every $x$ in $U$, the geodesic from $g_nv_0$ to $g_nx$ does not meet $D(\xi_+)$.\\
\noindent First, notice that the distance from the geodesic rays $[g_nv_0, g_n\eta)$ to $D(\xi_+)$ is uniformly bounded below: indeed, if this was not the case, the same reasoning as in the first claim would imply the existence of simplices $\sigma, \sigma'$ of $X$ such that $g_n \sigma \cap \sigma' \neq \varnothing$. This in turn would imply that, up to a subsequence, there exist subsimplices $\tau \subset \sigma$ and $\tau' \subset \sigma'$ such that $g_n \tau = \tau'$, which was excluded. Thus, let $\varepsilon>0$ be such a uniform bound. Let also 
$$M = \underset{x \in D(\xi_+), n \geq 0}{\mbox{sup}} d(g_nv_0, x).$$

Now consider the neighbourhood $V_{M, \varepsilon}(\eta)$ of $\eta$ in $\overline{X}$. Let $x \in X$ be in that neighbourhood, and let $\gamma$ be a parametrisation of the geodesic from $v_0$ to $x$. Suppose by contradiction that the geodesic from $g_nv_0$ to $g_nx$ does meet $D(\xi_+)$. Then, by definition of $M$, the geodesic segment $g_n\gamma\big( [0,M] \big)$ meets $D(\xi_+)$. But as this geodesic segment is in the open $\varepsilon$-neighbourhood of $[g_nv_0, g_n\eta)$, we get our contradiction from the definition of $\varepsilon$.\\
\noindent Thus for every $x$ in $V_{M, \varepsilon}(\eta)$, the geodesic from $g_nv_0$ to $g_nx$ does not meet $D(\xi_+)$, and we are done.
\end{proof}

\begin{lem} Let $(g_n)$ be an injective sequence of elements of $G$, and suppose that for some (hence every) vertex $v_0$ of $X$, $d(v_0, g_n v_0) \ra \infty$. Since $(\overline{EG}, \partial G)$ is an $E\cZ$-structure for $G$ by \ref{EZstructure}, we can assume up to a subsequence that there exist $\xi_+, \xi_- \in \partial G$ such that for every compact subset $K \subset EG$, we have $g_n K \ra \xi_+$ and $g_n^{-1} K \ra \xi_-$. Then there exists a subsequence $(g_{\varphi(n)})$ such that for every compact subset $K$ of $\partial G \setminus \left\{\xi_-\right\}$, the sequence of translates $g_{\varphi(n)}K$ uniformly converges to $\xi_+$. 
\label{convergencegroup3}
\end{lem}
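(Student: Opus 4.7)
My strategy is to propagate the pointwise convergence of $g_n$ on compact subsets of $EG$ (given by the hypothesis) to uniform convergence on compact subsets of $\partial G \setminus \{\xi_-\}$, leveraging the $\delta$-hyperbolicity of $X$. First, I apply the convergence criterion \ref{convergencecriterion} to the convergence $g_n \widetilde{x_0} \to \xi_+$ (with $\widetilde{x_0}$ lying over $v_0$) to extract a geometric dichotomy for $\xi_+$: either $\xi_+ \in \partial X$ and $g_n v_0 \to \xi_+$ in $\overline X$, or $\xi_+ \in \partial_{Stab} G$ and for $n$ large the geodesic $[v_0, g_n v_0]$ exits $D(\xi_+)$ at a first simplex $\sigma_n^+$ with $\partial G_{\sigma_n^+} \to \xi_+$ uniformly in $\partial G_v$ for some $v \in V(\xi_+)$. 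The analogous dichotomy holds for $g_n^{-1}$ and $\xi_-$. Since $d(v_0, g_n v_0) \to \infty$ and the domains $D(\xi_\pm)$ are finite by \ref{finitedomain}, both $g_n v_0$ and $g_n^{-1} v_0$ eventually leave any fixed ball, so the exit simplices are genuinely defined.

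The central technical tool is a fellow-travelling argument in the $\delta$-hyperbolic space $X$. The identity $\langle g_n y, g_n v_0 \rangle_{v_0} = \langle y, v_0 \rangle_{g_n^{-1} v_0}$, combined with the coarse convergence $g_n^{-1} v_0 \to \xi_-$ expressed by the dichotomy, implies that for any $y \in X$ staying in a region bounded away from the coarse image of $\xi_-$ in the topology of $\overline{EG}$, one has $\langle g_n y, g_n v_0 \rangle_{v_0} \to \infty$ uniformly in $y$. By $\delta$-hyperbolicity, this forces $[v_0, g_n y]$ to $\delta$-fellow-travel with $[v_0, g_n v_0]$ on an initial segment whose length tends to infinity. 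Combined with the dichotomy for $\xi_+$, this forces $g_n y \to \xi_+$ in $\overline X$ when $\xi_+ \in \partial X$, or, when $\xi_+ \in \partial_{Stab} G$, the exit simplex of $[v_0, g_n y]$ from $D(\xi_+)$ becomes close enough to $\sigma_n^+$ that its $\partial G$-boundary converges to $\xi_+$ via the convergence property of the $E\cZ$-complex of spaces.

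Finally, I pass to uniform convergence on a compact $K \subset \partial G \setminus \{\xi_-\}$ as follows. Since $K$ is separated from $\xi_-$ in the metrizable compact space $\overline{EG}$, it lies outside some basic neighborhood of $\xi_-$ (of the form $V_{U^-}(\xi_-)$ or $V_{\cU^-, \varepsilon^-}(\xi_-)$), which translates by construction into a uniform ``Gromov-distance'' bound from $\bar p(K)$ to $\bar p(\xi_-)$. For each $z \in K$, I choose a basic neighborhood $V_z \in \cO_{\overline{EG}}(z)$ (of form $V_U(z)$ or $V_{\cU, \varepsilon}(z)$) small enough that $\bar p(V_z)$ still stays away from $\bar p(\xi_-)$ in this uniform sense; the fellow-travelling argument of the previous paragraph then gives $g_n V_z \subset V$ for any prescribed neighborhood $V$ of $\xi_+$ and $n$ large. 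A finite subcover of $K$ yields the required uniform convergence.

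The main obstacle, I expect, is the case $\xi_+ \in \partial_{Stab} G$: the convergence criterion demands uniform control of exit simplices of $[v_0, g_n y]$ from $D(\xi_+)$ for all $y \in \bar p(V_z)$ simultaneously, not merely of the endpoints $g_n y$. Bridging $\delta$-fellow-travelling in $X$ to the nested inclusions $\partial G_{\sigma_n^+(g_n y)} \subset U_v$ required by the convergence criterion will require combining the hyperbolic geometry with the convergence property of the $E\cZ$-complex of spaces and a crossing/refinement-style nesting argument (cf.~\ref{passage}, \ref{refinement}), so that all relevant exit simplices simultaneously lie inside $\mbox{st}_{\cU}(D(\xi_+))$ for a prescribed $\xi_+$-family $\cU$.
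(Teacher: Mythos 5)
Your plan follows essentially the same route as the paper's proof: the identity $\langle g_n x, g_n v_0\rangle_{v_0} = \langle x, v_0\rangle_{g_n^{-1}v_0}$ together with $g_n^{-1}v_0 \to \xi_-$ and $d(v_0,g_n^{-1}v_0)\to\infty$ gives the uniform Gromov-product lower bound for points away from $\xi_-$ (the paper's Claim 1), and $\delta$-hyperbolicity plus the refinement lemma \ref{refinement} then converts fellow-travelling with $[v_0,g_nv_0]$ into containment in $\mbox{Cone}_{\cU_+,\varepsilon}(\xi_+)$ (Claim 2), the paper working directly with $K=\partial G\setminus V_{\cU,\varepsilon}(\xi_-)$ rather than via your finite subcover. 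The obstacle you flag at the end is real but is resolved exactly as you anticipate (every geodesic from the exit point $x_n$ of $[v_0,g_nv_0]$ to a point of $g_n\big(\overline{X}\setminus\widetilde{\mbox{Cone}}_{\cU',\varepsilon}(\xi_-)\big)$ passes within $\delta$ of $[x_n,g_nv_0]$ outside $D^\delta(\xi_+)$, producing a short path of simplices to which the crossing/refinement machinery applies); be aware, though, that in the case $\xi_-\in\partial_{Stab}G$ the uniform bound on $\langle g_n^{-1}v_0, x\rangle_{v_0}$ that you assert is itself nontrivial and is proved in the paper by a contradiction argument again invoking \ref{refinement}.
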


\begin{proof} 
If $\xi_- \in \partial X$, let $U$ be a neighbourhood of $\xi_-$ in $\partial X$ and $K = \partial G \setminus V_{U}(\xi_-)$. Since $X$ has finitely many isometry types of simplices, it follows from \ref{CAT(0)nesting} that we can choose a subneighbourhood $U'$ of $U$ containing $\xi_-$ and such that any path from $U' \cap X$ to $X \setminus U$ meets at least $D$ simplices.

If $\xi_- \in \partial_{Stab} G$, let $\cU$ be a $\xi_-$-family, and $\varepsilon \in (0,1)$, and let $K = \partial G \setminus V_{\cU, \varepsilon}(\xi_-)$. We also choose another $\xi_-$-family $\cU'$ which is $D$-nested and $D$-refined in $\cU$.\\
We want to prove that $(g_nK)$ uniformly converges to $\xi_+$.\\

\textit{Claim  1:} For every $k$, the following holds:
\begin{itemize}
\item If $\xi_- \in \partial X$, we have $g_n \big( \overline{X} \setminus U' \big)  \subset W_k(g_nv_0)$ for $n$ large enough.
\item $\xi_- \in \partial_{Stab} G$, we have $g_n\big( \overline{X} \setminus \widetilde{\mbox{Cone}}_{\cU', \varepsilon}(\xi_-) \big) \subset W_k(g_nv_0)$ for $n$ large enough.
\end{itemize}

\noindent We split the proof in two cases.

Suppose that $\xi_- \in \partial X$. First notice that since $g_n^{-1}v_0 \ra \xi_-$, there exists a constant $C$ such that for every $n \geq 0$ and every $x \notin U$, we have $\langle g_n^{-1}v_0,x\rangle_{v_0} ~\leq C$. Since we also have $d(g_n^{-1}v_0, v_0) \ra \infty$, the claim follows.

Suppose now that $\xi_- \in \partial_{Stab} G$. We start by proving by contradiction that there exists a constant $C$ such that for every $n \geq 0$ and every $x \notin \widetilde{\mbox{Cone}}_{\cU', \varepsilon}(\xi_-)$, we have $\langle g_n^{-1}v_0,x\rangle_{v_0} ~ \leq C$.\\
The containment lemma \ref{containment} yields a constant $m$ such that a path of length at most $\delta$ meets at most $m$ simplices, where $\delta$ is the hyperbolicity constant of $X$. Let $\cU''$ be a $\xi_+$-family that is $m$-nested in $\cU_+$. For every $n$, let $x_n$ be the point of $X$ met by the geodesic $[v_0, g_n^{-1}v_0] $ after leaving $D(\xi_-)$. Since we are reasoning by contradiction, then, up to a subsequence, there exist elements $y_n \notin \widetilde{\mbox{Cone}}_{\cU', \varepsilon}(\xi_-)$ such that $\langle g_n^{-1} v_0, y_n \rangle_{v_0} \ra \infty$. Now the hyperbolicity of $X$ implies that, for $n$ big enough, every geodesic segment $[x_n, y_n]$ meets the $\delta$- neighbourhood $[x_n, g_n^{-1}v_0]^\delta \setminus D^{\delta}(\xi_-)$. As $g_n^{-1}\xi_0 \ra \xi_-$, we have $g_n^{-1}v_0 \in \mbox{Cone}_{\cU'', \varepsilon}(\xi_-)$ for $n$ large enough, and the refinement lemma \ref{refinement} now implies that $y_n \in \mbox{Cone}_{\cU', \varepsilon}(\xi_+)$ for $n$ large enough, a contradiction.\\
Now the same reasoning as before shows that for every $k \geq 0$, there exists $N$ such that for every $n \geq N$ and every $x \notin \widetilde{\mbox{Cone}}_{\cU', \varepsilon}(\xi_-)$, $\langle v_0,x\rangle_{g_n^{-1}v_0} ~ \geq k$, hence $\langle g_nv_0,g_nx\rangle_{v_0} ~ \geq k$.\\

\textit{Claim 2:} $g_nK$ uniformly converges to $\xi_+$. \\

\noindent Once again, we split the proof in two cases.

Suppose that $\xi_+ \in \partial X$. Then, as $g_nv_0 \ra \xi_+$, it follows from the first claim that for every $k$, we have $g_n \big( \overline{X} \setminus U' \big)  \subset W_k(\eta)$ for $n$ large enough. By definition of $U'$, this implies that $g_n \bar{p}(K)  \subset W_k(\eta)$ for $n$ large enough. From the definition of the topology of $\overline{EG}$, it follows that $g_nK$ uniformly converges to $\xi_+$.

Suppose now that $\xi_+ \in \partial_{Stab} G$. Let $\cU_+$ be a $\xi_+$-family and $\varepsilon \in (0,1)$. Since $X$ is $\delta$-hyperbolic, let $m$ be a constant such that a path of length at most $\delta$ meets at most $m$ simplices, and let $\cU_+'$ be a $\xi_+$-family that is $m$-nested in $\cU_+$. For every $n$, let $x_n$ be the unique point met by $[v_0, g_nv_0]$ after leaving $D(\xi_+)$. As for every $k$ there exists $N$ such that $g_n\big( \overline{X} \setminus \widetilde{\mbox{Cone}}_{\cU', \varepsilon}(\xi_-) \big) \subset W_k(g_nv_0)$ for $n \geq N$, the hyperbolicty of $X$ implies that, for $n$ big enough, every geodesic segment from $x_n$ to a point of  $g_n\big( \overline{X} \setminus \widetilde{\mbox{Cone}}_{\cU', \varepsilon}(\xi_-) \big)$ meets the $\delta$-neighbourhood $[x_n, g_nv_0]^\delta \setminus D^{\delta}(\xi_+)$. As $g_n\xi_0 \ra \xi_+$, we have $g_nv_0 \in \mbox{Cone}_{\cU_+', \varepsilon}(\xi_+)$ for $n$ large enough, and the refinement lemma \ref{refinement} now implies that $g_n\big( \overline{X} \setminus \widetilde{\mbox{Cone}}_{\cU', \varepsilon}(\xi_-) \big) \subset \mbox{Cone}_{\cU_+, \varepsilon}(\xi_+)$ for $n$ large enough. But since $\cU'$ is $D$-nested and $D$-refined in $\cU$, this implies that $g_n\bar{p}(K) \subset \mbox{Cone}_{\cU_+, \varepsilon}(\xi_+)$ for $n$ large enough. Thus, $g_nK$ uniformly converges to $\xi_+$.
\end{proof}
 
\begin{cor} The group $G$ is a convergence group on $\partial G$. 
\label{convergencegroup}
\end{cor}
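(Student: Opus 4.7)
The plan is that this corollary follows directly by assembling the three preceding lemmas \ref{convergencegroup1}, \ref{convergencegroup2}, \ref{convergencegroup3}, which between them exhaust the possible behaviors of an injective sequence in $G$ acting on the vertex set of $X$. No new geometric input is needed; the task is simply to observe that the trichotomy is exhaustive and that the three lemmas glue together to yield the defining property of a convergence group.

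First I would fix a base vertex $v_0 \in V(X)$ and record the elementary observation that, since $G$ acts on $X$ by isometries, the boundedness of an orbit sequence $(g_n v)$ does not depend on the vertex $v$ chosen (by the triangle inequality applied to $d(v, v_0)$). This justifies the ``for some (hence any)'' phrasing used in Lemmas \ref{convergencegroup2} and \ref{convergencegroup3}. Given now an arbitrary injective sequence $(g_n)$ in $G$, I would dispatch the argument by the following trichotomy.

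\textit{Case (a).} There exist vertices $v_0', v_1' \in V(X)$ and an infinite set of indices along which $g_n v_0' = v_1'$. Extracting this subsequence, Lemma \ref{convergencegroup1} produces points $\xi_+, \xi_- \in \partial G$ and a further subsequence realising the required uniform convergence.

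\textit{Case (b).} No such pair of vertices exists, but the orbit $(g_n v_0)$ is bounded in $X$. This is precisely the hypothesis of Lemma \ref{convergencegroup2}, which supplies the conclusion.

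\textit{Case (c).} The orbit $(g_n v_0)$ is unbounded. Passing to a subsequence we may assume $d(v_0, g_n v_0) \to \infty$, and Lemma \ref{convergencegroup3} then furnishes the required $\xi_+, \xi_-$ and subsequence (using the $E\cZ$-structure established in \ref{EZstructure} to extract a subsequence along which $g_n \widetilde{x}$ and $g_n^{-1} \widetilde{x}$ converge).

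In each case we obtain $\xi_+, \xi_- \in \partial G$ and a subsequence $(g_{\varphi(n)})$ such that $g_{\varphi(n)} K$ uniformly converges to $\xi_+$ for every compact $K \subset \partial G \setminus \{\xi_-\}$. Together with the compactness and metrisability of $\partial G$ established in \ref{compact} and \ref{metrisable}, this is exactly the definition of a convergence group, modulo the side condition that $\partial G$ contain at least three points; this last point is immediate whenever any local group $G_\sigma$ is infinite, as then $\partial G_\sigma$ embeds in $\partial G$ by \ref{inducedtopology} and is already infinite. There is no substantive obstacle, the only mild subtlety being to check in Case (b) that the failure of Case (a) is exactly what Lemma \ref{convergencegroup2} needs as hypothesis; all the genuine work has been carried out in the three preceding lemmas.
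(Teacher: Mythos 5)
Your proposal is correct and follows exactly the paper's own argument: the proof there is simply the observation that Lemmas \ref{convergencegroup1}, \ref{convergencegroup2} and \ref{convergencegroup3} cover the three exhaustive cases for an injective sequence (a vertex mapped to a fixed vertex infinitely often, bounded orbit without such a pair, unbounded orbit). Your additional remarks on the vertex-independence of boundedness and on $\partial G$ having more than two points are harmless elaborations of the same route.
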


\begin{proof}
This follows from \ref{convergencegroup1}, \ref{convergencegroup2} and \ref{convergencegroup3}.
\end{proof}

To prove that $G$ is hyperbolic, it remains to show that every point of $\partial G$ is conical.

\begin{lem} Every point of $\partial G$ is a conical limit point for $\partial G$.
\label{conical}
\end{lem}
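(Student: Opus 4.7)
The plan is to handle $\zeta\in\partial_{Stab} G$ and $\zeta\in\partial X$ separately. In either case, once an appropriate sequence $(g_n)\subset G$ is constructed, the convergence group property \ref{convergencegroup}, applied to $(g_n)$ and $(g_n^{-1})$, delivers an attractor/repeller pair $(\eta_+,\eta_-)\in\partial G$, and pairing with the specific way in which $g_n$ moves the basepoint $v_0$ identifies $\eta_-=\zeta$ and forces $g_n\eta\to\eta_+$ for every $\eta\in\partial G\setminus\{\zeta\}$. The remaining work is to locate the subsequential limit $\xi_-:=\lim g_n\zeta\in\overline{EG}$ and check that $\xi_-\ne\eta_+=:\xi_+$.

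For $\zeta\in\partial_{Stab} G$, I would reduce to conicality inside a single local boundary. Pick a vertex $v\in D(\zeta)$: then $G_v$ is hyperbolic and $\zeta\in\partial G_v$, so by Bowditch's characterisation \ref{Bowditch} applied to the hyperbolic group $G_v$ on its Gromov boundary, $\zeta$ is conical in $\partial G_v$. This furnishes a sequence $(g_n)\subset G_v$ and distinct $\xi_+,\xi_-\in\partial G_v$ with $g_n\zeta\to\xi_-$ and $g_n\eta\to\xi_+$ for $\eta\in\partial G_v\setminus\{\zeta\}$. Reading $(g_n)$ as a sequence in $G$ and applying \ref{convergencegroup} produces a pair $(\eta_+,\eta_-)\in\partial G$. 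Since $\partial G_v\hookrightarrow\partial G$ is a topological embedding by \ref{inducedtopology} and \ref{hereditary1}, the two dynamical descriptions restrict compatibly to $\partial G_v$: if $\eta_-\neq\zeta$, choosing $\eta\in\partial G_v\setminus\{\zeta,\eta_-\}$ forces $\xi_+=\eta_+$, and then specialising to $\eta=\zeta$ would give $\xi_-=\xi_+$, contradicting their choice. Hence $\eta_-=\zeta$ and $\eta_+=\xi_+$, so $(g_n)$ witnesses $\zeta$ as conical in $\partial G$ with associated pair $(\xi_+,\xi_-)$ inherited from $\partial G_v$.

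For $\zeta\in\partial X$, I would build $(g_n)$ from the $X$-geometry using cocompactness. Let $\gamma:[0,\infty)\to X$ be the geodesic ray from $v_0$ to $\zeta$; fix a compact fundamental domain $F\ni v_0$ for the action of $G$ on $X$ and pick $g_n\in G$ with $g_n\gamma(n)\in F$, so that $d(g_n^{-1}v_0,\gamma(n))$ is uniformly bounded by some $C>0$ and $g_n^{-1}v_0\to\zeta$ in $\overline{X}$. Passing to a subsequence using compactness of $\overline{EG}$ (\ref{compact}), we obtain $g_nv_0\to\xi_+$ and $g_n\zeta\to\xi_-$ in $\overline{EG}$, and \ref{convergencegroup} as above identifies $\eta_-=\zeta$, $\eta_+=\xi_+$, and $g_n\eta\to\xi_+$ for $\eta\neq\zeta$. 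It remains to check $\xi_+\neq\xi_-$. The geodesic $g_n\gamma$ runs from $g_nv_0$ to $g_n\zeta$ and passes within distance $C$ of $v_0$ at parameter $n$, so the Gromov product $\langle g_nv_0,g_n\zeta\rangle_{v_0}$ in $X$ is uniformly bounded by $C+\delta$, and after a further extraction the sequences $(g_nv_0)$ and $(g_n\zeta)$ converge in $\overline{X}$ to distinct points $\xi_{X,-}\neq\xi_{X,+}\in\partial X$, the two ideal endpoints of a limiting bi-infinite geodesic $\tilde\gamma$ passing near $v_0$. When both $\xi_\pm\in\partial X$, distinctness of $\xi_\pm$ reduces to $\xi_{X,-}\ne\xi_{X,+}$; when they differ in type it is automatic; when both lie in $\partial_{Stab} G$ with $\xi_+=\xi_-=\xi$, the convergence criterion \ref{convergencecriterion} places $D(\xi)$ on both rays $[v_0,\xi_{X,-})$ and $[v_0,\xi_{X,+})$, and one uses convexity of $D(\xi)$ from \ref{finitedomain} together with the fact that $\tilde\gamma$ meets any convex subcomplex in a single connected subsegment to derive a contradiction.

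The main obstacle is precisely the last subcase: ruling out $\xi_+=\xi_-\in\partial_{Stab} G$. The Gromov-product bound in $\overline{X}$ only controls the visual topology on $\partial X$, not the $\partial_{Stab} G$-topology, so one must supplement it with the convergence criterion \ref{convergencecriterion} and the geometry of the convex domain $D(\xi)$. A delicate sub-subcase is when $v_0$ itself lies in $D(\xi)$, since then both arms of $\tilde\gamma$ emerge from inside the convex set and the single-segment argument must be refined to an analysis of how the two exit simplices of $\tilde\gamma$ from $D(\xi)$ relate, through the convergence property, to the common limit in $\partial G_w$ for $w\in V(\xi)$. The first case, by contrast, is largely formal: the topological embedding $\partial G_v\hookrightarrow\partial G$ reduces conicality of $\zeta$ to the already-known uniform convergence group property of the hyperbolic local group $G_v$.
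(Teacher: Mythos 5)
Your first case ($\zeta\in\partial_{Stab}G$) is essentially the paper's argument, with the matching of the attractor/repeller pair of $(g_n)$ in $\partial G$ against its restriction to the embedded copy of $\partial G_v$ spelled out a little more explicitly; that part is fine. The genuine gap is in the second case, exactly where you flag ``the main obstacle'': ruling out $\xi_+=\xi_-=\xi\in\partial_{Stab}G$. The convexity argument you sketch does not close. What convexity of $D(\xi)$ buys you is this: for large $n$ both $[v_0,g_nv_0]$ and $[v_0,g_n\zeta)$ meet $D(\xi)$, and (after a change of basepoint, as in \ref{compact1}/\ref{basepoint}) so do the two arms of the ray $g_n\gamma$ on either side of $w_n:=g_n\gamma(n)$; convexity then forces $w_n\in D(\xi)$. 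But $w_n$ stays in a compact set near $v_0$ and $D(\xi)$ is just some fixed finite subcomplex, so $w_n\in D(\xi)$ is not by itself a contradiction --- you have imposed no constraint that $D(\xi)$ is required to violate. (Two secondary problems: the ``limiting bi-infinite geodesic $\tilde\gamma$ with endpoints $\xi_{X,\pm}\in\partial X$'' does not exist in this subcase, and in any event $\overline{X}$ is not compact since $X$ is not locally finite, so the extraction of $\overline{X}$-convergent subsequences you invoke is not available; one must work with subsequences in the compact space $\overline{EG}$ throughout.)

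The paper supplies precisely the missing ingredient. It arranges that all the translated rays $[g_n^{-1}v_0,g_n^{-1}\eta)$ pass through one \emph{fixed} simplex $\sigma$ with vertex $v$, and then, before running the convexity argument, right-multiplies the $g_n$ by elements of $G_v$ so that the repelling point $\xi_-$ does \emph{not} lie in $\partial G_v$; this normalisation is harmless because $G_v$ stabilises $\sigma$, so the incidence with $\sigma$ is preserved. With that in place, if $g_n^{-1}\eta\to\xi_-\in\partial_{Stab}G$, the convexity of $D(\xi_-)$ forces the points $x_n\in\sigma$ on the translated rays into $D(\xi_-)$, hence $\sigma\subset D(\xi_-)$ and $\xi_-\in\partial G_\sigma\subset\partial G_v$ --- the contradiction. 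Your proof needs this normalisation (or an equivalent device) to terminate; as written, the sub-subcase you single out, and indeed the whole subcase $\xi_+=\xi_-\in\partial_{Stab}G$, remains open.
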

\begin{proof} Consider first an element in $\partial G_v$ for some vertex $v$ of $X$. It is a conical limit point for $G_v$ on $\partial G_v$, since $G_v$ is hyperbolic. Therefore it is a conical point for $G_v$ on $\partial G$, hence for $G$ since $G$ is a convergence group on $\partial G$ by \ref{convergencegroup}.\\

Now consider an element $\eta \in \partial X$. Since the action of $G$ on $X$ is cocompact, we can find a sequence $(g_n)$ of elements of $G$ and a simplex $\sigma$ such that ($g_n\sigma$) uniformly converges to $\eta$ in $\overline{X}$ and such that for every $n$, the geodesic ray $[v_0, \eta)$ meets $g_n\sigma$. Let $v$ be a vertex of $\sigma$ and $\widetilde{x} \in EG_{v}$. Up to a subsequence, we can assume that there exists $\xi_- \in \partial G$ such that $g_n^{-1}\widetilde{x}$ converges to $\xi_-$. Up to multiplying on the right the $g_n$ by elements of $G_{v}$, we can ensure that $\xi_-$ does not belong to $G_{v}$, and we still have that $\sigma$ meets the geodesic ray $[g_n^{-1}v_0, g_n^{-1}\eta)$ for every $n$. By \ref{convergencegroup3} we can assume, up to a subsequence, that for every $\xi \in \partial G \setminus \left\{\eta\right\}$ we have $g_n^{-1} \xi \ra \xi_-$. Hence it is enough to prove that $g_n^{-1}\eta$ does not converge to $\xi_-$, which we now prove by contradiction.

Suppose $g_n^{-1}\eta$ was converging to $\xi_-$. For every $n$, let $x_n$ be a point of $\sigma \cap[g_n^{-1}v_0, g_n^{-1}\eta)$. Since the geodesic ray $[g_n^{-1}v_0, g_n^{-1}\eta)$ meets $\sigma$ for every $n$, the Gromov product $\langle g_n^{-1}v_0, g_n^{-1}\eta\rangle_{v_0}$ is bounded. Thus, $\xi_-$ cannot belong to $\partial X$, and $\xi_- \in \partial_{Stab} G$. 

Now since both $g_n^{-1}\eta$ and $g_n^{-1}\widetilde{x}$ converge to $\xi_- \in \partial_{Stab} G$, both geodesics $[v, g_n^{-1}\eta)$ and $[v, g_n^{-1}v_0]$ must go through $D(\xi_-)$ for $n$ large enough. But \ref{compact1} implies that $[x_n, g_n^{-1}\eta)$ and $[x_n, g_n^{-1}v_0]$ also meet $D(\xi_-)$ for $n$ large enough. As $D(\xi-)$ is convex by \ref{finitedomain}, this implies that $x_n$ belongs to $D(\xi_-)$, hence so does $v$, which is absurd by construction of $(g_n)$. 
\end{proof}

\begin{cor} $G$ is a hyperbolic group and $\partial G$ is $G$-equivariantly homeomorphic to its Gromov boundary.
\label{uniformconvergencegroup}
\end{cor}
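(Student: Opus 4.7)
The plan is to apply Bowditch's topological characterisation of hyperbolic groups (Theorem~\ref{Bowditch}) to the action of $G$ on $\partial G$. To do this I need to verify four things: that $\partial G$ is a compact metrisable space, that $|\partial G| > 2$, that $G$ acts on $\partial G$ as a convergence group, and that every point of $\partial G$ is a conical limit point. The first is immediate from the construction of $\overline{EG}$: by Theorem~\ref{metrisable} and Theorem~\ref{compact}, $\overline{EG}$ is compact metrisable, and by Proposition~\ref{inducedtopology} $\partial G$ is a closed subspace, hence itself compact and metrisable. The convergence group property is exactly Corollary~\ref{convergencegroup}, and the conicality of every limit point is Lemma~\ref{conical}.

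For the cardinality condition, I would dispose of the small cases separately: if $|\partial G| \leq 2$ then $G$ is either finite (boundary empty) or virtually cyclic (two-point boundary), and in both cases $G$ is hyperbolic and $\partial G$ is manifestly its Gromov boundary. Otherwise $|\partial G| > 2$ and Theorem~\ref{Bowditch} applies directly, yielding that $G$ is hyperbolic and that $\partial G$ is $G$-equivariantly homeomorphic to the Gromov boundary $\partial_\infty G$.

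I do not expect any real obstacle here, since all of the heavy lifting has been carried out in the previous lemmas. The only moderately delicate bookkeeping is to be clear that Corollary~\ref{convergencegroup} and Lemma~\ref{conical} together give \emph{uniform} convergence in Bowditch's sense. Corollary~\ref{convergencegroup} provides, for any injective sequence $(g_n) \subset G$, a subsequence and points $\xi_\pm \in \partial G$ such that $g_{\varphi(n)}|_{\partial G \setminus\{\xi_-\}}$ uniformly converges to $\xi_+$ on compact subsets, which is precisely the definition of a convergence group action; combined with Lemma~\ref{conical} (every point conical), $G$ acts as a uniform convergence group on $\partial G$, so Bowditch's theorem applies and gives both conclusions simultaneously.
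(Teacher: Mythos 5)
Your proof is correct and follows exactly the paper's route: the paper deduces the corollary from Corollary \ref{convergencegroup} (convergence group), Lemma \ref{conical} (every point conical), and Bowditch's characterisation \ref{Bowditch}. Your extra care about compactness/metrisability of $\partial G$ and the degenerate case $|\partial G|\leq 2$ is sound bookkeeping that the paper leaves implicit, but it does not change the argument.
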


\begin{proof}
The group $G$ is a convergence group on $\partial G$ by \ref{convergencegroup}, and every point of $\partial G$ is conical by \ref{conical}, thus the result follows from \ref{Bowditch}.
\end{proof}

To conlude the proof of \ref{maintheorem2}, it remains to show that stabilisers embed as quasiconvex subsets.

\begin{prop} Stabilisers of simplices of $X$ are quasiconvex subgroups of $G$. 
\label{quasiconvexstabilisers}
\end{prop}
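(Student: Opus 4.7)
The strategy is to apply Bowditch's characterization of quasiconvex subgroups of a hyperbolic group in terms of their limit sets \cite{BowditchConvergenceGroups}: if $G$ is hyperbolic and $H \leq G$ is hyperbolic such that the natural inclusion $H \hra G$ extends continuously to a $H$-equivariant topological embedding $\partial H \hra \partial G$ whose image is $\Lambda H$, then $H$ is quasiconvex in $G$. Since by Corollary \ref{uniformconvergencegroup} the group $G$ is hyperbolic with $\partial G$ identified equivariantly with its Gromov boundary, this criterion is available.

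Let $\tilde{\sigma}$ be a simplex of $X$ lying above a simplex $\sigma$ of $Y$. Its stabiliser is a conjugate of $G_\sigma$, which is hyperbolic by hypothesis, so it suffices to prove quasiconvexity of $G_\sigma$ in $G$. The key input is property $(ii')$ of Theorem \ref{maintheorem1}: the equivariant embedding $\overline{EG_\sigma} \hra \overline{EG}$ restricts to a $G_\sigma$-equivariant homeomorphism from $\partial G_\sigma$ onto $\Lambda G_\sigma \subset \partial G$. Because $EG_\sigma \hra EG$ is a $G_\sigma$-equivariant embedding between cocompact classifying spaces for proper actions of $G_\sigma$ and $G$ respectively, the induced map on vertex orbits is a quasi-isometric embedding from a Cayley graph of $G_\sigma$ into one of $G$; consequently the boundary map produced by $(ii')$ must coincide, by uniqueness of equivariant continuous extensions, with the Cannon-Thurston-type extension of the inclusion $G_\sigma \hra G$.

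Bowditch's criterion thus applies and yields the quasiconvexity of $G_\sigma$ in $G$. Since quasiconvexity is preserved by conjugation, every simplex stabiliser in $X$ is quasiconvex in $G$, which completes the proof. The main subtlety is the identification of the boundary embedding supplied by $(ii')$ with the canonical extension of the subgroup inclusion; the rest of the argument is a formal application of Bowditch's characterization, already invoked in the earlier sections of the paper.
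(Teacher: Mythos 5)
Your proof follows essentially the same route as the paper: reduce to a single local group, invoke the equivariant homeomorphism from $\partial G_\sigma$ onto $\Lambda G_\sigma \subset \partial G$ (property $(ii')$, i.e.\ \ref{inducedtopology} and \ref{hereditary1}), identify $\partial G$ with the Gromov boundary via \ref{uniformconvergencegroup}, and apply Bowditch's limit-set characterization of quasiconvexity. One caveat: your middle paragraph is both unnecessary and unjustified. Bowditch's criterion, as recalled in the paper, only requires that the limit set be \emph{equivariantly homeomorphic} to $\partial G_\sigma$ --- no identification with a Cannon--Thurston extension is needed --- and the claim that an equivariant embedding of cocompact classifying spaces induces a quasi-isometric embedding of Cayley graphs is false in general (a distorted subgroup still embeds equivariantly at the level of classifying spaces); if that claim were available it would already give quasiconvexity directly, making the rest circular. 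Deleting that paragraph leaves a correct proof matching the paper's.
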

\begin{proof} It is enough to prove the result for the stabiliser of a vertex $v$ of $X$. Notice that, by \ref{inducedtopology}, the boundary of $G_v$ embeds $G_v$-equivariantly in $\partial G$, the latter being $G$-equivariantly homeomorphic to the Gromov boundary of $G$ by \ref{uniformconvergencegroup}. Hence, the result follows from a result of Bowditch \cite{BowditchConvergenceGroups} recalled in the introduction.
\end{proof} 

\begin{proof}[Proof of \ref{maintheorem2}:]
This follows from \ref{uniformconvergencegroup} and \ref{quasiconvexstabilisers}.
\end{proof}

\bibliographystyle{alpha}
\bibliography{biblioGGT}

\end{document}